\newtheorem{thm}{Theorem}[section]
\newtheorem{lem}[thm]{Lemma}
\newtheorem{sublem}{Claim}[thm]
\newtheorem{cor}[thm]{Corollary}
\newtheorem{conj}[thm]{Conjecture}
\newtheorem*{trivialpatternLemma}{Lemma~\ref{trivialpatternLem}}
\newtheorem*{AlmostTrivialPatternLemma}{Lemma~\ref{AlmostTrivialPattern}}
\newtheorem*{NUtheorem}{Theorem~\ref{NUtheoremLabel}}
\newtheorem*{theoremaboutsemilattice}{Theorem~\ref{aboutsemilattice}}
\newtheorem*{THMmainTheoremForFullPattern}{Theorem~\ref{mainTheoremForFullPattern}}
\DeclareMathOperator{\Pol}{Pol}
\DeclareMathOperator{\Key}{Key}
\DeclareMathOperator{\proj}{pr}
\DeclareMathOperator{\ar}{ar}
\let \kappa = \varkappa
\begin{document}

%
%

\title{Key (critical) relations preserved by a weak near-unanimity function}

\author[D. N. Zhuk]{Dmitriy N. Zhuk}
\email{zhuk@intsys.msu.ru}
\address{Moscow State University\\
119899 Moscow\\ Russia}
\thanks{The research of the author is supported by grant RFFI 13-01-00684-a.}

\keywords{clones, key relation, critical relation, essential relation, relational clone; weak near-unanimity operation}
\subjclass[2010]{Primary: 08A40}

\begin{abstract}
In the paper we introduce a notion of a key relation, which is similar to the notion of a critical relation
introduced by Keith A.\,Kearnes and \'Agnes Szendrei.
All clones on finite sets can be defined by only key relations.
In addition there is a nice description of all key relations on 2 elements.
These are exactly the relations that can be defined as a disjunction of linear equations.
In the paper we show that, in general key relations do not have such a nice description.
Nevertheless, we obtain a nice characterization of
all key relations preserved by a weak near-unanimity function.
This characterization is presented in the paper.

\end{abstract}

\maketitle

\section{Introduction}

The main result in clone theory is apparently the description of all clones on 2 elements obtained by E.Post in \cite{post1,post2}.
Nevertheless, it seems unrealistic to describe all clones on bigger sets.
For example, we know that we have continuum of them. 
Also, we have a lot of results that prove that the lattice of all clones is not only uncountable, but very complicated.

It turned out that uncountability is not crucial,
for example in \cite{mvlsc} the lattice of all clones of self-dual operations on 3 elements was described,
even though this lattice has continuum cardinality.
The main idea of that paper and many other papers in clone theory is an accurate work with relations.
The fact that we have known all maximal clones for 45 years \cite{rosmax}
and still don't have any description of all minimal clones
just proves that working with relations is much easier than with operations.

We have $2^{|A|^{n}}$ relations of arity $n$ on a set $A$, which is a huge number even for $|A|=4$ and $n=3$.
But if we check most of the significant papers in clone theory
we will see that all the relations arising there have a nice characterization: they are symmetric
or have some regular structure.
In this paper we will try to provide a mathematical background to this observation.

First, it is easy to notice that
we don't need relations that can be represented as a conjunction of relations with smaller arities
\cite{MinimalClones,mvlsc}. Relations that cannot be represented in this way are called \emph{essential}.
Second, observe that
if a relation is an intersection of other relations from the relational clone
then we don't need this relation to define this relational clone.
Relations that cannot be represented in this way are called maximal in~\cite{mvlsc} and
critical in~\cite{agnes}.

It turned out that all critical relations $\rho\subseteq A^{h}$ have the following property:
there exists a tuple $\beta\in A^h\setminus \rho$ such that for every
$\alpha\in A^h\setminus \rho$ there exists
a unary vector-function $\Psi = (\psi_1,\ldots,\psi_{h})$ which
preserves $\rho$ and gives $\Psi(\alpha) = \beta$.
This means that every tuple which is not from $\rho$ can be mapped to $\beta$ by a vector-function
preserving $\rho$.
A relation satisfying this property is called a \emph{key relation}, and
a tuple $\beta$ is called a \emph{key tuple} for this relation.

This property seems to be profitable because it is a combinatorial property of
a relation which doesn't involve any difficult objects (no clones, no relational clones, no primitive positive formulas).
Another motivation to study key relations is
a nice description of all key relations on 2 elements.
These are exactly the relations that can be defined as a disjunction of linear equations.

As we show in the paper, key relations on bigger sets can be complicated.
But it turned out that we can get a very similar characterization of
key relations if they are preserved by a weak near-unanimity function (WNU).
In this case we show that all the variables of the relation can be divided into two groups,
and the relation can be divided into two parts.
The first part is very similar to the relation $\{a,b\}^{n}\setminus \{a\}^{n}$,
and the second part can be defined by a linear equation in some abelian group.

The consideration of key relations preserved by a WNU seems to be justified
because of the following reason.
First, let us consider an algebra with all the operations from a clone.
We know that if we have an idempotent algebra $\mathbb{A}$ without a weak near-unanimity term,
then we can find a factor of $\mathbb{A}$ whose operations are essentially unary,
where a factor is a homomorphic image of a subalgebra of $\mathbb{A}$ \cite{BulatovAboutCSP,miklos}.
This means that if a relational clone is not preserved by a WNU,
then we can find relations in it which are as complicated as in general, i.e. in a relational clone of all relations on
a finite set.
To show this we need to consider the idempotent reduction of the corresponding clone, and then the corresponding factor.
Thus, if we cannot describe all key relations,
then we need to consider relational clones preserved by a WNU.

Second, the importance of a WNU was discovered while studying the constraint satisfaction problem.
The standard way to parameterize interesting subclasses of the constraint satisfaction
problem is via finite relational structures \cite{FederVardi,jeavons}. The main
problem is to classify those subclasses that are tractable (solvable in polynomial
time) and those that are NP-complete.
It was conjectured that if a core of a relational structure has a WNU polymorphism
then the corresponding constraint satisfaction problem is tractable,
otherwise it is NP-complete \cite{CSPconjecture,BulatovAboutCSP}.
We believe that this characterization can be helpful in proving this conjecture.

The paper is organized as follows.
In Sections 2 and 3 we give necessary definitions and formulate the main results of the paper.
That is, a description of all key relations on 2 elements (with a proof),
a characterization of all key relations preserved by a WNU on bigger sets.
We assign an equivalence relation on the set of variables to every key relation preserved by a WNU,
and present stronger versions of this characterization if the equivalence relation is a full equivalence relation,
trivial equivalence relation, or almost trivial equivalence relation.
This equivalence relation is called the pattern of a relation.
As a result we obtain a complete description of all
key relations preserved by a near-unanimity function,
a semilattice operation, or a 2-semilattice operation.

In Section 4 we give the remaining definitions and notations we will need in the paper.
In the next section we prove several auxiliary statements which are used later.

In Section 6 we formulate and prove one of the main statement of the paper.
Precisely, we show that if a relation of arity $n$ contains exactly $|A|^{n-1}$ tuples,
projection onto any $(n-1)$ coordinates is a full relation,
and the relation is preserved by a WNU, then this relation can be defined by a linear equation.

In Section 7 we introduce a notion of a core of a key relation and prove different properties of a core.
For example, we prove that a core with full pattern can be divided into isomorphic
key blocks and each of these key blocks can be defined by a linear equation.

In Section 8 we prove the main results of the paper.
That is, a characterization of a key relation with arbitrary pattern,
and a complete description of key relations with trivial pattern and almost trivial pattern.

The last section is devoted to key relations with full pattern.
First, we prove that a core with full pattern can be divided into blocks,
then we generalize this result for a key relation with full pattern.

I want to thank my colleagues and friends from the Department of Algebra in Charles University in Prague
for the very fruitful discussions, especially Libor Barto,  Jakub Opr\v{s}al, Jakub Bulin, and Alexandr Kazda.
I am grateful to my colleagues from the Chair of Mathematical Theory of Intelligent Systems
in Moscow State University, especially my supervisor Valeriy Kudryavtsev, Alexey Galatenko and Grigoriy Bokov.
Also I want to thank Stanislav Moiseev who found the first ugly example of a key relation with a computer.
I would like to give special thanks to a very kind mathematician Hajime Machida who always supported me and my research.  

\section{Key relations}

In this section we give necessary definitions, particularly the definition of a key relation.
Then, we prove the description of all key relations on 2 elements, which is a very simple result.
Finally, we give a definition of the pattern of a key relation and formulate the main properties of the pattern.

\subsection{Main definitions}

Let $A$ be a finite set, and let $O_A^{n}:=A^{A^n}$ be the
set of all $n$-ary functions on $A$, $O_A:=\bigcup_{n \geq 1} O_A^{n}$.

For a tuple $\alpha\in A^{n}$ by $\alpha(i)$ we denote the $i$-th element of $\alpha$.
By $R_A^{n}$ we denote the set of all $n$-ary relations on the set $A$.
$R_A = \bigcup_{n=0}^{\infty} R_A^n$.
If it is not specified we always assume that a relation is defined on the set $A$.
We do not distinguish between predicates and relations, and consider positive primitive formulas over sets of relations.
For a set of relations $D$ by $[D]$ we denote the closure of $D$ over positive primitive formulas.
Closed sets of relations containing equality and empty relations are called \emph{relational clones}.
For 
$C \subseteq R_A$, we define
$\Pol (C) := \{f \in O_A \mid \forall \sigma \in C: f \text{ preserves } \sigma\}.$

A function $f$ is called \emph{idempotent} if $f(x,x,\ldots,x) = x$.
A \emph{weak near-unanimity function} (WNU) is an idempotent function $f$ satisfying the following property
\[f(x,y,y,\ldots,y) = f(y,x,y,\ldots,y) = \dots = f(y,y,\ldots,y,x).\]

A relation $\rho\in R_{A}^{h}$ is called \emph{essential} if it cannot be represented as a conjunction of relations with smaller arities.
A tuple
$\left(\begin{smallmatrix}a_1\\\vdots\\a_h\end{smallmatrix}\right)\in A^{h}\setminus \rho$ is
called \emph{essential for} $\rho$
if for every $i\in \{1,2,\ldots,h\}$ there exists
$b$ such that
$\left(\begin{smallmatrix}a_1\\\vdots\\a_{i-1}\\b\\a_{i+1}\\\vdots\\a_h\end{smallmatrix}\right)\in\rho$.
For a relation $\rho$ by $\widetilde \rho$ we denote $\rho$ filled up with all essential tuples.

The following lemma can be easily checked.
We omit the proof and refer readers to \cite{dm_post, MinimalClones,mvlsc}.
\begin{lem}\label{sushnabor}
Suppose $\rho \in R_{A}^{n},$ where $n\ge 1$. Then $\rho$ is essential
if and only if there exists an essential tuple for $\rho$.
\end{lem}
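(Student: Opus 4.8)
The plan is to prove the two implications separately; the whole argument is an elementary unfolding of the definitions. Throughout, for $\alpha\in A^{h}$, an index $i$, and $b\in A$, let $\alpha[i/b]$ denote the tuple obtained from $\alpha$ by replacing its $i$-th entry with $b$.

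First I would show that the existence of an essential tuple $\alpha\in A^{h}\setminus\rho$ forces $\rho$ to be essential. Assume, towards a contradiction, that $\rho$ is a conjunction of relations of arity less than $h$; say $\rho(x_1,\dots,x_h)$ is equivalent to $\bigwedge_{j=1}^{m}\sigma_j(\bar y_j)$, where each tuple of variables $\bar y_j$ has length $\ar(\sigma_j)<h$. Since the $j$-th conjunct involves at most $h-1$ of the variables $x_1,\dots,x_h$, fix an index $i_j\in\{1,\dots,h\}$ of a variable not occurring in it. As $\alpha$ is essential, for each $j$ there is $b_j\in A$ with $\alpha[i_j/b_j]\in\rho$, so $\alpha[i_j/b_j]$ satisfies the $j$-th conjunct; because that conjunct does not involve coordinate $i_j$, while $\alpha[i_j/b_j]$ and $\alpha$ agree on every other coordinate, $\alpha$ satisfies the $j$-th conjunct as well. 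Letting $j$ range over $1,\dots,m$ shows that $\alpha$ satisfies the whole conjunction, i.e. $\alpha\in\rho$, contradicting $\alpha\notin\rho$. Hence $\rho$ is essential.

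For the converse I would argue contrapositively: assuming there is no essential tuple for $\rho$, I would exhibit $\rho$ as a conjunction of relations of lower arity. For $i\in\{1,\dots,h\}$ let $\delta_i\in R_A^{h-1}$ be the projection of $\rho$ onto the coordinates $\{1,\dots,h\}\setminus\{i\}$. I claim that
\[
\rho(x_1,\dots,x_h)\iff\bigwedge_{i=1}^{h}\delta_i(x_1,\dots,x_{i-1},x_{i+1},\dots,x_h),
\]
which presents $\rho$ as a conjunction of $h$ relations of arity $h-1<h$, so that $\rho$ is not essential. The left-to-right implication is immediate from the definition of $\delta_i$. For the converse, let $\alpha$ satisfy the right-hand side and suppose $\alpha\notin\rho$. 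Since $\alpha$ is not essential, there is an index $i$ with $\alpha[i/b]\notin\rho$ for every $b\in A$. But $\delta_i\bigl(\alpha(1),\dots,\alpha(i-1),\alpha(i+1),\dots,\alpha(h)\bigr)$ holds, which by definition of $\delta_i$ means $\alpha[i/b]\in\rho$ for some $b\in A$ --- a contradiction. This proves the claim.

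Finally I would dispose of the degenerate cases, which require no separate treatment: the displayed equivalence holds verbatim for $\rho=A^{h}$ (each $\delta_i=A^{h-1}$), for $\rho=\emptyset$ (each $\delta_i=\emptyset$), and when $h=1$ (with $\delta_1$ a nullary relation), and in each of these situations there is likewise no essential tuple. The only point demanding any care --- and presumably the reason the authors merely call the lemma ``easily checked'' rather than proving it --- is pinning down the intended reading of ``conjunction of relations with smaller arities'' (a quantifier-free conjunction of atomic formulas whose relation symbols all have arity $<h$) and confirming that the conjuncts $\delta_i(\dots)$ genuinely match it; this is routine bookkeeping rather than a real obstacle.
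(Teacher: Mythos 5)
Your proof is correct, and since the paper explicitly omits the proof of this lemma (referring instead to the cited references), there is no in-paper argument to compare against; what you give is precisely the standard folklore argument. Both directions are sound: for the forward implication you correctly exploit that any conjunct of arity less than $h$ must omit some variable $x_{i_j}$, so an essential tuple $\alpha$ can be pushed into each conjunct via $\alpha[i_j/b_j]$; for the converse you correctly show that, in the absence of essential tuples, $\rho$ coincides with the conjunction of its $(h-1)$-ary projections $\delta_i$, and hence is not essential. One small inaccuracy in your closing paragraph: the claim that for $h=1$ the displayed equivalence ``holds verbatim'' and that ``there is likewise no essential tuple'' is false for a unary $\rho$ that is neither full nor empty (such a $\rho$ does admit an essential tuple, and is indeed essential, since nullary conjuncts can only produce $A$ or $\emptyset$). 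This does no harm, because your two main arguments already cover $h=1$ without modification --- the contrapositive in the second direction only needs the equivalence under the hypothesis that no essential tuple exists, and under that hypothesis it does hold for $h=1$ --- so the degenerate-cases paragraph could simply be dropped.
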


A tuple $\Psi =(\psi_1,\psi_2,\ldots,\psi_h)$, where
$\psi_i:A\to A$, is called a \emph{unary vector-function}.
We say that $\Psi$ \emph{preserves} a relation $\rho$ of arity $h$ if
$\Psi\left(\begin{smallmatrix}
a_1\\
a_2\\
\vdots\\
a_h
\end{smallmatrix}\right):=
\left(\begin{smallmatrix}
\psi_1(a_1)\\
\psi_2(a_2)\\
\vdots\\
\psi_h(a_h)
\end{smallmatrix}\right)\in \rho$
for every $\left(\begin{smallmatrix}
a_1\\
a_2\\
\vdots\\
a_h
\end{smallmatrix}\right)\in \rho$.
We say that a relation $\rho$ of arity $h$ is \emph{a key relation} if
there exists a tuple $\beta\in A^h\setminus \rho$ such that for every
$\alpha\in A^h\setminus \rho$ there exists
a vector-function $\Psi$ which
preserves $\rho$ and gives $\Psi(\alpha) = \beta$.
A tuple $\beta$ is called a \emph{key tuple} for $\rho$.

We can check the following facts about key relations.

\begin{enumerate}

\item Suppose $\rho$ is a key relation. Then $\rho$ is essential if and only if $\rho$ has no dummy variables
(Lemma~\ref{DammuVariables}).

\item Suppose $\rho$ = $\sigma\times A^{s}$. Then $\rho$ is a key relation if and only if $\sigma$ is a key relation
(Lemma~\ref{KeyIsAlwaysKey}).

\item Suppose $\sigma(x_2,\ldots,x_n) = \rho(b_1,x_2,\ldots,x_n)$,
$(b_1,\ldots,b_n)$ is a key tuple for $\rho$.
Then $\sigma$ is a key relation and
$(b_2,\ldots,b_n)$ is a key tuple for $\sigma$ (Lemma~\ref{ReduceArityOfKey}).

\item
Suppose $\alpha$ is a key tuple for $\rho$, and a unary vector-function $\Psi$ preserves
$\rho$. Then either $\Psi(\alpha)\in \rho$, or $\Psi(\alpha)$ is a key tuple for $\rho$ (Lemma~\ref{KeyGotoRho}).

\end{enumerate}

A relation $\rho$ is called \emph{maximal in a relational clone $C$} if
there exists an essential tuple $\alpha$ for $\rho$ such that
$\rho$ is a maximal relation in $C$ with the property $\alpha\notin \rho$.
A relation is called \emph{critical in a relational clone $C$} if it is completely $\cap$-irreducible in $C$ and directly
indecomposable. 
\begin{lem}\cite[Lemma 2.1]{agnes} \label{agnesLemma}%
A relation $\rho$ is critical in a relational clone $C$ if and only if
it is maximal in a relational clone $C$.
\end{lem}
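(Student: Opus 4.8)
The statement is an equivalence, so I would prove the two implications separately. Throughout, fix $h=\ar(\rho)$ and call a relation $\sigma\in C$ of arity $h$ with $\rho\subsetneq\sigma$ a \emph{proper superrelation} of $\rho$; note that if $\rho$ is maximal in $C$ (witnessed by an essential tuple $\alpha\notin\rho$) then $\rho$ is neither empty (it has an essential tuple, so it is essential by Lemma~\ref{sushnabor}) nor full.

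For the direction ``maximal $\Rightarrow$ critical'': assume $\rho$ is maximal in $C$ with witness $\alpha\notin\rho$. For complete $\cap$-irreducibility, if $\rho=\bigcap_{i\in I}\sigma_i$ with each $\sigma_i\in C$ of arity $h$, then $\alpha\notin\sigma_j$ for some $j$, and $\rho\subseteq\sigma_j$, $\alpha\notin\sigma_j$ force $\sigma_j=\rho$ by maximality. For direct indecomposability, suppose $\rho=\proj_{I_1}\rho\times\proj_{I_2}\rho$ for a partition of $\{1,\dots,h\}$ into nonempty blocks $I_1,I_2$. Since $\alpha\notin\rho$, we get $\proj_{I_k}\alpha\notin\proj_{I_k}\rho$ for some $k$, say $k=1$; picking $i\in I_2$, essentiality of $\alpha$ yields $b$ with $\alpha[i\mapsto b]\in\rho$, yet $\proj_{I_1}(\alpha[i\mapsto b])=\proj_{I_1}\alpha\notin\proj_{I_1}\rho$, a contradiction.

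For the direction ``critical $\Rightarrow$ maximal'': let $\rho^{+}$ be the intersection of all proper superrelations of $\rho$. Complete $\cap$-irreducibility gives $\rho^{+}\supsetneq\rho$, and by construction every proper superrelation contains $\rho^{+}$. Fix any $\alpha\in\rho^{+}\setminus\rho$; then any $\sigma\in C$ of arity $h$ with $\rho\subseteq\sigma$ and $\alpha\notin\sigma$ equals $\rho$ (else $\sigma\supseteq\rho^{+}\ni\alpha$), so it only remains to prove $\alpha$ is essential for $\rho$. First, for $h\ge 2$ the relation $\rho$ has no dummy coordinate: a dummy coordinate $i$ would give $\rho=\proj_{\{1,\dots,h\}\setminus\{i\}}\rho\times A$, contradicting direct indecomposability (the case $h=1$ is immediate). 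Now suppose $\alpha$ is not essential and fails at coordinate $i$, i.e.\ $\alpha[i\mapsto b]\notin\rho$ for all $b\in A$. Put $\delta(x_1,\dots,x_h):=\exists y\,\rho(x_1,\dots,x_{i-1},y,x_{i+1},\dots,x_h)$, which lies in $C$ since $C$ is a relational clone. Then $\rho\subseteq\delta$ and $\alpha\notin\delta$. If $\delta=\rho$, one checks coordinate $i$ is dummy for $\rho$, contradicting the above; hence $\delta$ is a proper superrelation, so $\delta\supseteq\rho^{+}\ni\alpha$, contradicting $\alpha\notin\delta$. Thus $\alpha$ is essential for $\rho$, and $\rho$ is maximal in $C$.

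The main obstacle is the ``critical $\Rightarrow$ maximal'' direction: one must exhibit a single tuple that is simultaneously essential and a witness to maximality. The natural candidate is a new tuple of the unique cover $\rho^{+}$, and the crux is showing every such tuple is essential; the cylindrification $\delta$ handles this, but it relies on first translating ``directly indecomposable'' into ``no dummy variables'' and on keeping $\delta$ inside the relational clone. Degenerate cases ($\rho$ empty or full, or $|A|=1$) are harmless and should be dispatched separately.
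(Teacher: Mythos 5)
The paper does not prove this statement at all: it is quoted verbatim from Kearnes and Szendrei \cite{agnes}, so there is no internal proof to compare yours against. Taken on its own, your argument is correct and follows what I take to be the standard route. In the forward direction, the witnessing essential tuple $\alpha$ gives complete $\cap$-irreducibility immediately, and essentiality of $\alpha$ rules out any product decomposition exactly as you describe. In the converse direction your two key moves are the right ones: the unique upper cover $\rho^{+}$ exists because $A$ is finite (so the intersection of all proper superrelations of arity $h$ is a finite intersection, hence in $C$) and is strictly larger than $\rho$ by complete $\cap$-irreducibility; and the cylindrification $\delta(x_1,\dots,x_h)=\exists y\,\rho(x_1,\dots,x_{i-1},y,x_{i+1},\dots,x_h)$ lies in $C$, contains $\rho$, omits $\alpha$ precisely when $\alpha$ fails essentiality at coordinate $i$, and is a proper superrelation once dummy coordinates have been excluded via direct indecomposability --- which forces the contradiction $\alpha\in\rho^{+}\subseteq\delta$.

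The one loose thread is your closing claim that the degenerate cases are harmless. The full relation and the empty relation of arity at least $2$ are indeed never critical (the former is the top of the lattice, the latter decomposes as $\emptyset\times\emptyset$), but the empty relation of arity $1$ is directly indecomposable for trivial reasons and can be completely $\cap$-irreducible (for instance when the only unary relations in $C$ are $\emptyset$ and $A$), yet it is never maximal in the sense of this paper because it admits no essential tuple. So the equivalence as literally stated requires the convention, implicit in \cite{agnes}, that empty relations are not counted as critical; this should be stated explicitly rather than deferred. With that convention in place your proof is complete.
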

It follows from the definition that every relation in a relational clone
can be defined as a conjunction of critical relations from the relational clone,
thus we need only critical relations to generate any relational clone.

For a relational clone $C$ and a relation $\rho$ of arity $n$
by $\langle\rho\rangle_{C}$ we denote the minimal relation of arity $n$ in $C$ containing $\rho$.
It follows from the Galois connection between clones and relational clones that we have the following lemma \cite[Section 2.2]{lau}.
\begin{lem}\label{reformulation}
Suppose $\rho\in R_{A}^{n}$, $\rho = \{\alpha_1,\ldots,\alpha_s\}$, $C$ is a relational clone. Then
$\langle\rho\rangle_{C}=\{f(\alpha_1,\ldots,\alpha_s)\mid f\in \Pol(C)\cap O_A^s\}.$

\end{lem}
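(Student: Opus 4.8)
The plan is to prove the two inclusions between $\langle\rho\rangle_{C}$ and the set $D:=\{f(\alpha_1,\ldots,\alpha_s)\mid f\in \Pol(C)\cap O_A^s\}$ directly, using only the basic Galois correspondence between clones and relational clones on a finite set, namely that $C=\Inv(\Pol(C))$ for every relational clone $C$ (see \cite[Section 2.2]{lau}), together with the fact that $\Pol(C)$ is a clone.

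First I would check that $D$ is an $n$-ary relation that lies in $C$ and contains $\rho$. Containment of $\rho$ is immediate, since $\alpha_i=\proj_i(\alpha_1,\ldots,\alpha_s)$ and every projection preserves every relation, hence lies in $\Pol(C)$. To see that $D\in C$ it suffices, by the Galois correspondence, to show that $D$ is invariant under each $g\in\Pol(C)$. Given $g\in\Pol(C)\cap O_A^m$ and tuples $\beta_1,\ldots,\beta_m\in D$, write $\beta_j=f_j(\alpha_1,\ldots,\alpha_s)$ with $f_j\in\Pol(C)\cap O_A^s$; then
$$g(\beta_1,\ldots,\beta_m)=g\bigl(f_1(\alpha_1,\ldots,\alpha_s),\ldots,f_m(\alpha_1,\ldots,\alpha_s)\bigr)=h(\alpha_1,\ldots,\alpha_s),$$
where $h:=g(f_1,\ldots,f_m)$ is the superposition, which again belongs to $\Pol(C)$ because $\Pol(C)$ is closed under composition. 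Hence $g(\beta_1,\ldots,\beta_m)\in D$, so $D\in\Inv(\Pol(C))=C$. Since $D$ is an $n$-ary relation in $C$ containing $\rho$, minimality of $\langle\rho\rangle_{C}$ yields $\langle\rho\rangle_{C}\subseteq D$.

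For the reverse inclusion, take any $n$-ary $\sigma\in C$ with $\rho\subseteq\sigma$; this applies in particular to $\sigma=\langle\rho\rangle_{C}$. For every $f\in\Pol(C)\cap O_A^s$ the operation $f$ preserves $\sigma$, and $\alpha_1,\ldots,\alpha_s\in\rho\subseteq\sigma$, so $f(\alpha_1,\ldots,\alpha_s)\in\sigma$. Therefore $D\subseteq\sigma$, whence $D\subseteq\langle\rho\rangle_{C}$. Combining the two inclusions gives $D=\langle\rho\rangle_{C}$.

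I do not expect a genuine obstacle here. The only points requiring care are the appeal to $C=\Inv(\Pol(C))$ — which is exactly where finiteness of $A$ and the assumption that $C$ is a bona fide relational clone (closed, containing the equality and empty relations) enter — and the observation that the superposition $h=g(f_1,\ldots,f_m)$ stays in $\Pol(C)$, which is just the clone axiom. Everything else is bookkeeping with the definition of $\langle\cdot\rangle_{C}$ as a minimal relation and with the meaning of ``$f$ preserves $\sigma$''.
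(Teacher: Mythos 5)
Your proof is correct and is exactly the spelled-out version of what the paper does: the paper gives no proof but simply cites the Galois connection $C=\Inv(\Pol(C))$, which is precisely the fact your two-inclusion argument hinges on. No gaps; the closure of the set $D$ under $\Pol(C)$ via superposition and the minimality argument are the standard details behind that citation.
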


\begin{lem}\label{CriticalIsKey}
Suppose $\rho$ is a critical (maximal) relation in a relational clone $C$.
Then $\rho$ is a key relation.
\end{lem}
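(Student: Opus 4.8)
The plan is to unwind the definition of ``maximal in a relational clone'' and feed it directly into the definition of a key relation, using Lemma~\ref{reformulation} to produce the required vector-functions. Let $\rho\subseteq A^h$ be maximal in $C$ with respect to some essential tuple $\beta\notin\rho$; I claim $\beta$ is a key tuple. So fix an arbitrary $\alpha\in A^h\setminus\rho$; I must produce a unary vector-function $\Psi$ that preserves $\rho$ and sends $\alpha$ to $\beta$.

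First I would consider the relation $\rho'=\langle\rho\cup\{\alpha\}\rangle_C$, the smallest relation of arity $h$ in $C$ containing $\rho$ together with the tuple $\alpha$. Since $\alpha\notin\rho$, the relation $\rho'$ strictly contains $\rho$, so by maximality of $\rho$ among relations of arity $h$ in $C$ avoiding $\beta$, we must have $\beta\in\rho'$. Now I apply Lemma~\ref{reformulation}: writing $\rho=\{\alpha_1,\dots,\alpha_s\}$, we have $\rho'=\{f(\alpha_1,\dots,\alpha_s,\alpha)\mid f\in\Pol(C)\cap O_A^{s+1}\}$. Hence there is a polymorphism $f\in\Pol(C)$ of arity $s+1$ with $f(\alpha_1,\dots,\alpha_s,\alpha)=\beta$.

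The point now is to turn this single polymorphism into a vector-function preserving $\rho$. For each coordinate $i\in\{1,\dots,h\}$ define $\psi_i:A\to A$ by $\psi_i(x):=f(\alpha_1(i),\dots,\alpha_s(i),x)$; this is well defined because the first $s$ arguments are fixed constants depending only on $\rho$ and on $i$. Set $\Psi=(\psi_1,\dots,\psi_h)$. Then $\Psi(\alpha)=\beta$ by construction. To see that $\Psi$ preserves $\rho$, take any tuple $\gamma=\alpha_j\in\rho$; then $\Psi(\gamma)$ has $i$-th coordinate $f(\alpha_1(i),\dots,\alpha_s(i),\alpha_j(i))$, and since $f\in\Pol(C)$ preserves $\rho$ and each of $\alpha_1,\dots,\alpha_s,\alpha_j$ lies in $\rho$, the tuple $f(\alpha_1,\dots,\alpha_s,\alpha_j)$ lies in $\rho$; but this tuple is exactly $\Psi(\alpha_j)$. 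Thus $\Psi$ preserves $\rho$, $\Psi(\alpha)=\beta$, and since $\alpha$ was arbitrary in $A^h\setminus\rho$, the relation $\rho$ is a key relation with key tuple $\beta$, and by Lemma~\ref{agnesLemma} the same conclusion holds for critical relations.

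The only subtlety — and the step I would state most carefully — is the use of the essential tuple $\beta$: the definition of ``maximal in $C$'' requires $\beta$ to be essential for $\rho$, but for this argument all that is needed is $\beta\in A^h\setminus\rho$ together with the maximality property, so no extra work with essentiality is required here. The argument is otherwise a direct translation through the Galois connection, so I do not expect a genuine obstacle; the main thing to get right is the bookkeeping that converts the $(s+1)$-ary polymorphism into the $h$-tuple of unary maps $\psi_i$.
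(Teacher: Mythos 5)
Your proposal is correct and follows essentially the same route as the paper: use maximality to get $\beta\in\langle\rho\cup\{\alpha\}\rangle_C$, apply Lemma~\ref{reformulation} to extract an $(s+1)$-ary polymorphism $f$ with $f(\alpha_1,\dots,\alpha_s,\alpha)=\beta$, and freeze the first $s$ arguments coordinatewise to obtain the unary vector-function $\Psi$. The only difference is that you spell out the coordinatewise bookkeeping and the verification that $\Psi$ preserves $\rho$, which the paper leaves as ``easy to see.''
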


\begin{proof}
By Lemma~\ref{agnesLemma}
there exists an essential tuple $\beta$ for $\rho$ such that
$\rho$ is a maximal relation in $C$ such that $\beta\notin\rho$.
We want to show that $\beta$ is a key tuple for $\rho$.
Let $n$ be the arity of $\rho$.
For a tuple $\alpha\in A^{n}\setminus\rho$ we consider the
relation $\langle\rho\cup\{\alpha\}\rangle_{C}$.
Since $\rho$ is maximal, we have
$\beta\in \langle\rho\cup\{\alpha\}\rangle_{C}$.
Let $\rho = \{\gamma_1,\gamma_2,\ldots,\gamma_n\}$. It follows from Lemma~\ref{reformulation} that
there exists a function
$f$ preserving $\rho$ such that
$f(\gamma_1,\ldots,\gamma_n,\alpha) = \beta$.
Let
$\Psi\left(\begin{smallmatrix}
x_1\\
\vdots\\
x_h
\end{smallmatrix}\right)=
f\left(\gamma_1,\ldots,\gamma_n,\begin{smallmatrix}x_1\\\vdots\\x_h\end{smallmatrix}\right)$.
It is easy to see that
$\Psi$ preserves $\rho$ and
$\Psi(\alpha) = \beta$. This completes the proof.
\end{proof}

The next theorem follows from the definition of a critical relation and Lemma~\ref{CriticalIsKey}.
\begin{thm}
Suppose $KR_A$ is the set of all essential key relations from $R_A$.
Then
$[C\cap KR_A] = C$ for every relational clone $C$.
\end{thm}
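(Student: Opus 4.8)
The plan is to show that $C \subseteq [C \cap KR_A]$, since the reverse inclusion is immediate from the fact that $KR_A \subseteq R_A$ consists of relations and $C$ is a relational clone (so $C \cap KR_A \subseteq C$, whence $[C \cap KR_A] \subseteq [C] = C$). For the forward inclusion, I would take an arbitrary relation $\sigma \in C$ and aim to express it as a conjunction (a positive primitive formula) of relations lying in $C \cap KR_A$, i.e.\ of essential key relations belonging to $C$.

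\emph{First}, I would recall the observation made in the excerpt right after Lemma~\ref{agnesLemma}: every relation in a relational clone can be written as a conjunction of critical (equivalently, maximal) relations from that relational clone. This reduces the problem to the case where $\sigma$ itself is critical in $C$. \emph{Second}, if $\sigma$ is critical in $C$, then by Lemma~\ref{CriticalIsKey} it is a key relation. \emph{Third}, I must upgrade ``key relation'' to ``essential key relation.'' A critical relation is in particular directly indecomposable and $\cap$-irreducible; I would argue that such a relation has no dummy variables — if it did, it would be of the form $\rho' \times A$ (up to permutation of coordinates) and hence decomposable as a conjunction of $\rho'$ (on the relevant coordinates) with the full relation, contradicting criticality unless the arity is trivially small, in which case one handles it directly. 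By fact (1) in the excerpt (Lemma~\ref{DammuVariables}), a key relation with no dummy variables is essential. Hence every critical relation in $C$ lies in $C \cap KR_A$.

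\emph{Putting it together}: an arbitrary $\sigma \in C$ is a conjunction of critical relations $\tau_1, \ldots, \tau_k \in C$, each of which is an essential key relation, i.e.\ $\tau_i \in C \cap KR_A$. This conjunction is a positive primitive formula over $C \cap KR_A$, so $\sigma \in [C \cap KR_A]$. Therefore $C \subseteq [C \cap KR_A]$, and combined with the trivial reverse inclusion we get $[C \cap KR_A] = C$.

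\emph{The main obstacle} I anticipate is the bookkeeping around dummy variables and degenerate arities: one has to be careful that ``conjunction of critical relations'' genuinely produces relations that are essential (no dummy variables), since a critical relation whose underlying ``nontrivial part'' sits on a proper subset of coordinates would formally still be critical in some relational clones but carries padding coordinates. The clean way around this is to note that criticality (direct indecomposability plus $\cap$-irreducibility) already forbids such padding, so the identification of critical relations with essential key relations goes through; most of the real content is already packaged in Lemma~\ref{CriticalIsKey} and Lemma~\ref{DammuVariables}, and what remains is routine.
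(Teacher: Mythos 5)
Your proposal is correct and follows exactly the route the paper intends: the paper proves this theorem by combining the remark after Lemma~\ref{agnesLemma} (every relation in $C$ is a conjunction of critical relations from $C$) with Lemma~\ref{CriticalIsKey}, and your extra care about dummy variables via direct indecomposability and Lemma~\ref{DammuVariables} just fills in a detail the paper leaves implicit. No gaps.
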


This means that every relational clone can be determined by only key relations from this relational clone.

\subsection{Key relations on two elements}\label{twovaluedcaseSection}

Let $A=\{0,1\}$.
An equation \[a_{1}x_{1}+\ldots+a_{s}x_{s} = a_{0}\]
is called a \emph{linear equation} (``$+$'' is addition modulo 2).

\begin{thm}\label{OnlyLinearOn2Elements}

Suppose $\rho\in R_A^n$ , $A=\{0,1\}$.
Then $\rho$ is a key relation if and only if
$\rho(x_1,\ldots,x_n)=L_{1}\vee L_{2}\vee \ldots \vee L_{m}$
for some linear equations $L_{1},L_{2},\ldots,L_{m}$.

\end{thm}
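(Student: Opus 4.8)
The plan is to prove both directions by reducing everything to the structure of vector-functions on $\{0,1\}$. The unary maps $\psi : \{0,1\}\to\{0,1\}$ are exactly the four maps $\mathrm{id}$, the constant $0$, the constant $1$, and negation $x\mapsto x+1$; so every unary vector-function $\Psi=(\psi_1,\dots,\psi_n)$ acts coordinatewise by one of these four operations. The key observation is that the affine maps (identity and negation) are the invertible ones, while constants collapse a coordinate. I would phrase membership in $\rho$ via the polynomial/affine language: a disjunction of linear equations $L_1\vee\dots\vee L_m$ over $\mathrm{GF}(2)$ defines the complement of an affine subspace intersection pattern, and this is the viewpoint that makes the vector-function action transparent.

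For the ``if'' direction, suppose $\rho(x_1,\dots,x_n)=L_1\vee\dots\vee L_m$ where $L_j$ is $\sum_i a_{ji}x_i = a_{j0}$. A tuple $\alpha$ fails to be in $\rho$ exactly when $\alpha$ violates every $L_j$, i.e.\ $\sum_i a_{ji}\alpha(i) = a_{j0}+1$ for all $j$. I claim one can take as key tuple $\beta$ any fixed non-tuple, e.g.\ the all-zero tuple if it is outside $\rho$ (equivalently $a_{j0}=1$ for all $j$ after a translation — translation by a fixed tuple is itself an affine vector-function preserving the disjunctive form, so WLOG $\beta=\mathbf 0$ and every $L_j$ reads $\sum_i a_{ji}x_i=1$). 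Now given another non-tuple $\alpha$, I want an affine vector-function $\Psi$ (each $\psi_i$ either $\mathrm{id}$ or negation) with $\Psi(\alpha)=\mathbf 0$ and $\Psi$ preserving $\rho$: take $\psi_i = \mathrm{id}$ if $\alpha(i)=0$ and $\psi_i = (x\mapsto x+1)$ if $\alpha(i)=1$, so $\Psi(\alpha)=\mathbf 0$. Preservation: applying such a $\Psi$ to a generic tuple $\gamma$ replaces $\gamma(i)$ by $\gamma(i)+\alpha(i)$, so each $L_j$ becomes $\sum_i a_{ji}(\gamma(i)+\alpha(i)) = \sum_i a_{ji}\gamma(i) + \sum_i a_{ji}\alpha(i) = \sum_i a_{ji}\gamma(i) + 1$; hence $\gamma\in\rho$ iff some $L_j$ holds iff $\sum_i a_{ji}\gamma(i)=1$ for some $j$ iff $\sum_i a_{ji}\Psi(\gamma)(i)=0$ for some $j$. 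That is not quite $\Psi(\gamma)\in\rho$ with the same equations, so I would instead argue directly that $\Psi$ maps $\rho$ bijectively onto the relation $\rho'$ defined by $L'_j:\sum a_{ji}x_i=0$, and that $\rho'\supseteq\rho$ is false in general — so the clean route is: translation vector-functions are bijections of $\{0,1\}^n$ preserving the \emph{family} of disjunctions-of-linear-equations, and the right statement is that $\rho$ itself (not $\rho'$) is preserved only when the translation lands inside $\rho$. The correct argument: $\Psi$ as above preserves $\rho$ because $\Psi$ composed with $\Psi$ is the identity and $\Psi(\mathbf 0)=\alpha\notin\rho$, so $\Psi$ is an involution swapping $\rho\cup\{\text{non-tuples}\}$ consistently — more carefully, one checks $\Psi(\rho)=\rho$ iff $\alpha\in$ the ``kernel'' $\{x : L_j(x)\text{ has the same truth value as }L_j(x+ \alpha)\ \forall j\}$; when this fails we do not directly get preservation. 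The clean fix, which I expect the author uses, is to allow $\Psi$ to also use constant maps on coordinates, exploiting essentiality; I would set this up by induction on $n$ using Lemma~\ref{ReduceArityOfKey}: fix a coordinate, substitute the key-tuple value, apply induction to the resulting relation, and lift.

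For the ``only if'' direction, suppose $\rho$ is a key relation with key tuple $\beta$; translating by $\beta$ assume $\beta=\mathbf 0\notin\rho$. The goal is to show $\rho$ is an intersection over $j$ of sets of the form $\{x : L_j(x)\}$... no: a disjunction, i.e.\ $\complement\rho$ is an \emph{affine-closed} set. Concretely I claim $\complement\rho$ (the non-tuples) is closed under the affine span in $\mathrm{GF}(2)^n$ after translating $\beta$ to $\mathbf 0$: that is, $\complement\rho - \beta$ is a linear subspace $V$, and then $\rho=\{x : x-\beta\notin V\}$, and writing $V$ as the solution set of a homogeneous linear system $M(x-\beta)=0$, the complement is $\bigvee_j (M(x-\beta))_j = 1$, a disjunction of linear equations. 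To prove $\complement\rho-\beta$ is linear: it contains $\mathbf 0$; I must show it is closed under addition. Given two non-tuples $\alpha_1,\alpha_2$, the key property gives affine-ish vector-functions $\Psi_1,\Psi_2$ with $\Psi_k(\alpha_k)=\beta=\mathbf 0$ preserving $\rho$. On two elements a vector-function preserving $\rho$ and sending $\alpha_k$ to $\mathbf 0$ must (on each coordinate where it is not constant) be $\mathrm{id}$ or negation according to $\alpha_k(i)$; the constant coordinates are where preservation can afford to forget information. The main obstacle — and the crux of the whole theorem — is exactly this step: showing that the vector-functions witnessing keyness on $\{0,1\}$ can be taken to be affine (no constants needed on the ``relevant'' coordinates), so that $\complement\rho-\beta$ inherits closure under addition from composing negations. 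I would handle it by a counting/minimality argument: among all non-tuples, pick $\alpha$ realizing keyness with a $\Psi$ using the fewest constant coordinates; if some $\psi_i$ is constant, use essentiality of the relevant tuple (Lemma~\ref{sushnabor}, and the fact from the enumerated list that an essential key relation has no dummy variables) to find a tuple differing in coordinate $i$ and derive a contradiction or a reduction. Once affineness of the witnessing maps is secured, closure of $\complement\rho-\beta$ under addition, hence its being a subspace, hence the disjunctive-linear form, follows immediately; and I expect the cleanest writeup instead inducts on arity via Lemma~\ref{ReduceArityOfKey}, reducing to relations of arity $n-1$ and a case analysis on whether the first coordinate is ``linearly determined.''
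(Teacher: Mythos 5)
Your ``if'' direction contains an arithmetic slip that derails an otherwise correct argument. After your own normalization ($\beta=\mathbf 0\notin\rho$, so every $L_j$ reads $\sum_i a_{ji}x_i=1$), a non-tuple $\alpha$ \emph{violates} every $L_j$, so $\sum_i a_{ji}\alpha(i)=0$, not $1$. Hence translation by $\alpha$ sends $\sum_i a_{ji}\gamma(i)$ to $\sum_i a_{ji}\gamma(i)+0$, i.e.\ it preserves each $L_j$ exactly, and the direction closes on the spot (in general, for arbitrary $\alpha,\beta\notin\rho$, the translation $\psi_i(x)=x+\alpha(i)+\beta(i)$ shifts $L_j$ by $\sum_i a_{ji}(\alpha(i)+\beta(i))=(a_{j0}+1)+(a_{j0}+1)=0$). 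This is exactly the paper's argument: a bijective vector-function either preserves a linear equation or swaps solutions with non-solutions, and since $\alpha$ and $\beta$ are both non-solutions of every $L_j$ the first alternative must hold. Your subsequent detour --- introducing constant coordinates and ``induction via Lemma~\ref{ReduceArityOfKey}'' --- is both unnecessary and not a proof: that lemma only passes keyness \emph{down} to a restriction, and you give no mechanism for lifting keyness back up, which is what the ``if'' direction would require.

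In the ``only if'' direction your global strategy (show $\complement\rho-\beta$ is a linear subspace, hence $\rho$ is the disjunction of the negated rows of a defining system) is viable and genuinely different from the paper's induction on arity, but it rests entirely on the step you yourself flag as the crux --- that the witnessing vector-functions have no constant coordinates --- and for that step you offer only a vague ``minimality/counting argument \dots derive a contradiction or a reduction,'' which is not a proof. The gap is real but fixable by the paper's device: reduce to the essential case via Lemma~\ref{KeyIsAlwaysKey}, note that the key tuple $\beta$ is an essential tuple, and observe that if some $\psi_i$ were constant then its value would be $\beta(i)$ (since $\Psi(\alpha)=\beta$), so $\Psi\circ\Psi$ would send the tuple $\beta'\in\rho$ obtained by flipping the $i$-th coordinate of $\beta$ to $\Psi(\Psi(\beta))=\beta\notin\rho$, contradicting preservation. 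Once every witness is affine, your closure claim does follow (an affine bijection preserving $\rho$ permutes $\complement\rho$, so $\alpha_1+\alpha_2+\beta\notin\rho$), and your route arguably buys a cleaner structural statement ($\complement\rho$ is an affine subspace) than the paper's coordinate-by-coordinate induction --- but as written both directions have holes, one a computational error and one a missing argument at the acknowledged crux.
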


\begin{proof}
By Lemmas~\ref{DammuVariables} and \ref{KeyIsAlwaysKey}, without loss of generality we can assume that
$\rho$ is essential.

Suppose $\rho$ is a key relation, and $\beta$ is a key tuple for $\rho$. Let us prove by induction on the arity of $\rho$ that
$\rho$ can be represented as a disjunction of linear equations.
This is obvious if the arity of $\rho$ is less than 2.

Let $\rho' (x_2,\ldots,x_n) = \rho(\beta(1),x_2,\ldots, x_n)$.
By Lemma~\ref{ReduceArityOfKey}, $\rho'$ is a key relation and $(\beta(2),\ldots,\beta(n))$ is a key tuple for $\rho'$.
By the inductive assumption,
$\rho'(x_2,\ldots,x_n) = L_1' \vee L_2' \vee \dots \vee L_s'$, where $L_i'$ is a linear equation for every $i$.

We consider two cases. First, assume that
for every $\alpha\in A^n$ with $\alpha(1)\neq \beta(1)$ we have $\alpha \in \rho$.
Then the following equation proves the statement in this case
\[\rho(x_1,\ldots,x_n) = L_1' \vee L_2' \vee \dots \vee L_s' \vee (x_1 = \beta(1) +1).\]

Second, assume that there exists $\alpha\in A^{n}\setminus\rho$ such that
$\alpha(1)\neq \beta(1)$. Since $\beta$ is a key tuple,
there exists a vector-function $\Psi=(\psi_1,\ldots,\psi_n)$ preserving $\rho$ such that
$\Psi(\alpha) = \beta$.
Assume that $\psi_i$ is a constant for some $i$.
Since $\rho$ is essential, we can find a tuple $\beta'\in\rho$ that can be obtained from $\beta$
by changing the $i$-th component.
We can check that $\Psi(\Psi(\beta')) = \Psi(\Psi(\beta)) = \beta$, which contradicts the fact that
$\Psi$ preserves $\rho$.

Thus we know that $\psi_i$ is not a constant for every $i$,
then $\psi_i(x) = x+a_i$, where $a_i\in \{0,1\}$.
Suppose that for every $i\in\{1,2,\ldots,s\}$
\[L_i' = (b_{i,2}x_2+b_{i,3}x_3+ \ldots +b_{i,n}x_n = b_{i,0}).\]
For $i\in \{1,2,\ldots,s\}$ by $L_i$ we denote the following linear equation
\[(b_{i,2} a_2 + \ldots + b_{i,n} a_n)(x_1+\beta(1))+
b_{i,2} x_2 + \ldots +b_{i,n} x_n = b_{i,0}.\]
Put
$\sigma(x_1,\ldots,x_n) = L_1\vee L_2\vee \ldots \vee L_s$.
Let us prove that $\sigma = \rho$.
Let $\gamma \in A^{n}$. If $\gamma(1) = \beta(1)$ then it is obvious that
$\gamma\in \sigma\Leftrightarrow \gamma\in \rho.$

Suppose $\gamma(1) \neq \beta(1)$. It is easy to check that $\Psi$ preserves $\sigma$.
Then, since $\Psi$ is a bijection,
we have
\[\gamma\in \sigma\Leftrightarrow \Psi(\gamma)\in \sigma\Leftrightarrow \Psi(\gamma)\in \rho\Leftrightarrow \gamma\in \rho.\]
This completes the second case.

It remains to show that a disjunction of linear equations always defines a key relation.
Let $\rho(x_1,\ldots,x_n) = L_1\vee L_2\vee \ldots \vee L_s$,
where $L_1,\ldots,L_s$ are linear equations.
Let us show that every tuple $\beta \in A^n\setminus \rho$ is a key tuple.
For every $\alpha \in A^n\setminus \rho$ we have to find a unary vector-function $\Psi$ such that
$\Psi(\alpha) = \beta$.
Let $\Psi$ be the bijective vector-function with the above property.
It remains to show that $\Psi$ preserves $\rho$.
It is easy to see that a bijective vector-function either preserves a linear equation, or
maps all solutions of the equation to nonsolutions. 
Since $\alpha,\beta\in A^n\setminus \rho$, all equations $L_1,L_2,\ldots,L_s$ are incorrect for $\alpha$ and $\beta$.
Therefore $\Psi$ preserves all the above equations and preserves $\rho$.
\end{proof}

Theorem~\ref{OnlyLinearOn2Elements} shows that all key relations on two elements have a regular structure.
This fact allows to get a nice proof of Post's Lattice Theorem.

Let us consider another example of using this idea.
By $O_{A,s}^{n}$
we denote the set of all tuples $(f_{1},f_{2},\ldots,f_{s})$
such that $f_{1},f_{2},\ldots,f_{s}\in O_{A}^{n}.$
Let $O_{A,s} = \bigcup \limits_{n \ge 1} O_{A,s}^{n}.$
Elements of $O_{A,s}$ are called \emph{vector-functions}.

Then, in a natural way we define clones of such vector-functions.
It is proved in~\cite{taymanov} that we have only countably many clones of vector-functions for every $s$.
Recall that in this section $A = \{0,1\}$.

A relation is called $s$-sorted if every variable of this relation has a sort from the set $\{1,2,\ldots,s\}$.
A set of $s$-sorted relations is called a \emph{relational clone} if it is
closed under positive primitive formulas (where we cannot identify variables of different sorts) and
contains equality and empty relations.
It is shown in~\cite{romov} that there exists a one-to-one correspondence
between clones of $O_{A,s}$ and relational clones of $s$-sorted relations.

Note that Theorem~\ref{OnlyLinearOn2Elements} holds for multi-sorted relations.
Thus, all clones of vector-functions on 2 elements
can be described by disjunctions of linear equations with variables of different sorts.
This idea gives a simple proof of the fact that the set of all such clones is countable.
Also, in~\cite[Section 6]{MinimalClones}
it was shown that there are two types of
essential relations on three elements
preserved by the semiprojection
$s_5(x,y,z):= \begin{cases}
x, & |\{x,y,z\}|<3\\
y, & |\{x,y,z\}|=3\\
\end{cases}$ :
graphs of permutations and
relations whose projection onto every coordinate is a 2-element set.
The latter relations can be observed as multi-sorted relations with variables of 3 sorts,
where sorts depend on the projection onto the corresponding coordinate, that is  $\{0,1\}$,$\{1,2\}$, or $\{0,2\}.$
Therefore, clones on three elements containing this semiprojection can be described by disjunctions of linear equations.

Unfortunately, in general, for $|A|>2$, the author could not find such a nice characterization.
For example, the relation $\left(\begin{smallmatrix}
0 & 0 & 0 & 0 & 1 & 1 & 1 & 1 & 2 & 2 & 2 & 2\\
0 & 1 & 1 & 2 & 0 & 1 & 2 & 2 & 0 & 0 & 0 & 1\\
1 & 0 & 1 & 2 & 0 & 0 & 0 & 1 & 0 & 1 & 2 & 2
\end{smallmatrix}\right)$ is a key relation and
$\left(\begin{smallmatrix}
0 \\0 \\0
\end{smallmatrix}\right)$ is a key tuple for this relation.
However, 
the only proof of the above fact the author knows is to check manually that any tuple which is not from the relation
can be mapped to the key tuple by a vector-function preserving $\rho$.

\subsection{Pattern of a key relation}

For a relation $\rho\in R_{A}^{n}$ we define a binary relation on the set $\{1,2,\ldots,n\}$.
We say that $i\overset{\rho}{\sim} j$ if there do not exist
$a_1,\ldots,a_n,b_i,b_j\in A$ such that
%
$\left(\begin{smallmatrix}
a_1\\\dots\\a_{i-1}\\a_i\\a_{i+1}\\\dots\\a_{j-1}\\a_j\\a_{j+1}\\\dots\\a_n
\end{smallmatrix}\right)\notin \rho$,
$\left(\begin{smallmatrix}
a_1\\\dots\\a_{i-1}\\a_i\\a_{i+1}\\\dots\\a_{j-1}\\b_j\\a_{j+1}\\\dots\\a_n
\end{smallmatrix}\right),
\left(\begin{smallmatrix}
a_1\\\dots\\a_{i-1}\\b_i\\a_{i+1}\\\dots\\a_{j-1}\\a_j\\a_{j+1}\\\dots\\a_n
\end{smallmatrix}\right),
\left(\begin{smallmatrix}
a_1\\\dots\\a_{i-1}\\b_i\\a_{i+1}\\\dots\\a_{j-1}\\b_j\\a_{j+1}\\\dots\\a_n
\end{smallmatrix}\right)\in \rho$.
We put by definition that $i\overset{\rho}{\sim}i$ for every $i\in\{1,2,\ldots,n\}$.
The next lemma follows from the definition of a key tuple.

\begin{lem}\label{patternAndKeyTuple}

Suppose $\left(\begin{smallmatrix}a_1\\\vdots\\a_n\end{smallmatrix}\right)$ is a key tuple for $\rho$.
Then $i\overset{\rho}{\sim} j$ if and only if there do not exist $b_i,b_j\in A$ such that
$\left(\begin{smallmatrix}
a_1\\\dots\\a_{i-1}\\a_i\\a_{i+1}\\\dots\\a_{j-1}\\b_j\\a_{j+1}\\\dots\\a_n
\end{smallmatrix}\right),
\left(\begin{smallmatrix}
a_1\\\dots\\a_{i-1}\\b_i\\a_{i+1}\\\dots\\a_{j-1}\\a_j\\a_{j+1}\\\dots\\a_n
\end{smallmatrix}\right),
\left(\begin{smallmatrix}
a_1\\\dots\\a_{i-1}\\b_i\\a_{i+1}\\\dots\\a_{j-1}\\b_j\\a_{j+1}\\\dots\\a_n
\end{smallmatrix}\right)\in \rho$.

\end{lem}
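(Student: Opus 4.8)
Write $\beta$ for the given key tuple, and recall that by definition a key tuple lies in $A^{n}\setminus\rho$. The plan is to obtain both implications directly from the defining condition of $\overset{\rho}{\sim}$: in the forward direction I use $\beta$ itself as the distinguished witness tuple $(a_1,\ldots,a_n)$, and in the backward direction I transport an arbitrary witness onto $\beta$ by a unary vector-function that exists because $\beta$ is a key tuple.

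First I would handle the forward implication. Assume $i\overset{\rho}{\sim}j$. If there were $b_i,b_j\in A$ for which the three displayed single- and double-coordinate modifications of $\beta$ all belonged to $\rho$, then choosing $(a_1,\ldots,a_n)$ in the definition of $\overset{\rho}{\sim}$ to be the coordinates of $\beta$ would exhibit a tuple outside $\rho$ (namely $\beta$) whose coordinate-$i$, coordinate-$j$, and two-coordinate alterations all lie in $\rho$, contradicting $i\overset{\rho}{\sim}j$. Hence no such $b_i,b_j$ exist.

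Next I would prove the backward implication by contraposition. Suppose $i\overset{\rho}{\sim}j$ fails, so there are $c_1,\ldots,c_n,b_i,b_j\in A$ with $\alpha:=(c_1,\ldots,c_n)\notin\rho$ while the tuples obtained from $\alpha$ by changing coordinate $j$ to $b_j$, by changing coordinate $i$ to $b_i$, and by changing both, all lie in $\rho$. Since $\beta$ is a key tuple and $\alpha\notin\rho$, there is a unary vector-function $\Psi=(\psi_1,\ldots,\psi_n)$ preserving $\rho$ with $\Psi(\alpha)=\beta$. Because $\Psi$ acts coordinatewise, applying it to the tuple obtained from $\alpha$ by changing coordinate $j$ to $b_j$ yields the tuple obtained from $\beta$ by changing coordinate $j$ to $\psi_j(b_j)$, and similarly for the other two altered tuples; since $\Psi$ preserves $\rho$, all three images lie in $\rho$. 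Thus $b_i':=\psi_i(b_i)$ and $b_j':=\psi_j(b_j)$ witness precisely the configuration the statement forbids, and the implication follows.

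I do not expect a genuine obstacle here; the only point needing care is the bookkeeping in the contrapositive, namely checking that the coordinatewise action of $\Psi$ sends a modification of $\alpha$ in coordinates $i$ and/or $j$ to the corresponding modification of $\beta=\Psi(\alpha)$, and in particular sends $\alpha$ itself to $\beta$. Beyond that, the whole argument is an immediate consequence of the definition of a key tuple together with preservation of $\rho$ by a unary vector-function.
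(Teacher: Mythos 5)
Your argument is correct and is exactly the argument the paper has in mind (the paper omits the proof, noting only that the lemma "follows from the definition of a key tuple"): the forward direction is immediate since a key tuple lies outside $\rho$ and hence would itself serve as the witness tuple, and the backward direction transports an arbitrary witness onto the key tuple via a $\rho$-preserving unary vector-function, whose coordinatewise action sends the three modified tuples to the corresponding modifications of the key tuple. No gaps.
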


\begin{cor}\label{PatternChanging}

Suppose $(a_{1},\ldots,a_{n})$ is a key tuple for $\rho$,
$\sigma(x_{1},\ldots,x_{n-1}) = \rho(x_{1},\ldots,x_{n-1},a_{n})$.
Then for $i,j\in \{1,2,\ldots,n-1\}$ we have
$i\overset{\rho}{\sim} j \Leftrightarrow i\overset{\sigma}{\sim} j$.

\end{cor}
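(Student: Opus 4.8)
The plan is to apply the preceding lemma twice: once to $\rho$ with its given key tuple $(a_1,\dots,a_n)$, and once to $\sigma$ with its key tuple $(a_1,\dots,a_{n-1})$, which is legitimate because Lemma~\ref{ReduceArityOfKey} guarantees that $(a_1,\dots,a_{n-1})$ is indeed a key tuple for $\sigma$. By the lemma, for $i,j \in \{1,\dots,n-1\}$ the relation $i \overset{\rho}{\sim} j$ fails exactly when there exist $b_i, b_j \in A$ such that the three tuples obtained from $(a_1,\dots,a_n)$ by substituting $b_i$ in coordinate $i$, $b_j$ in coordinate $j$, or both, all lie in $\rho$. Likewise $i \overset{\sigma}{\sim} j$ fails exactly when there exist $b_i, b_j \in A$ such that the analogous three tuples built from $(a_1,\dots,a_{n-1})$ all lie in $\sigma$.

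The key observation is that these two conditions are literally the same condition. Since $\sigma(x_1,\dots,x_{n-1}) = \rho(x_1,\dots,x_{n-1},a_n)$, a tuple $(c_1,\dots,c_{n-1})$ belongs to $\sigma$ if and only if $(c_1,\dots,c_{n-1},a_n)$ belongs to $\rho$. The three tuples appearing in the criterion for $i \overset{\rho}{\sim} j$ all have $a_n$ in their last coordinate (we only modify coordinates $i$ and $j$, both among $\{1,\dots,n-1\}$), so each of them lies in $\rho$ if and only if its truncation to the first $n-1$ coordinates lies in $\sigma$. Hence the existential statement witnessing $i \not\overset{\rho}{\sim} j$ holds if and only if the existential statement witnessing $i \not\overset{\sigma}{\sim} j$ holds, and therefore $i \overset{\rho}{\sim} j \Leftrightarrow i \overset{\sigma}{\sim} j$.

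There is no real obstacle here; the only thing to be careful about is the hypothesis of the lemma, namely that one is working with an actual key tuple in each case. For $\rho$ this is given; for $\sigma$ it is supplied by Lemma~\ref{ReduceArityOfKey}. One should also note explicitly that $i$ and $j$ are restricted to $\{1,\dots,n-1\}$, so that deleting the last coordinate never touches the coordinates being varied in the definition of $\overset{\cdot}{\sim}$. With those points in place the argument is just the translation of membership in $\rho$ to membership in $\sigma$ described above.
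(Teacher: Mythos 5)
Your argument is correct and is exactly the route the paper intends: the corollary is stated immediately after the unnamed lemma characterizing $\overset{\rho}{\sim}$ at a key tuple, and your proof applies that lemma to $\rho$ and to $\sigma$ (using Lemma~\ref{ReduceArityOfKey}, transposed to the last coordinate, to certify $(a_1,\ldots,a_{n-1})$ as a key tuple for $\sigma$) and observes that the two existential criteria are literally the same condition. No gaps.
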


The relation $\overset{\rho}{\sim}$ is called the \emph{pattern} of $\rho$.

Unfortunately, this relation is not an equivalence relation in general.
To show this let us consider
a relation on the set $A = \{0,1,2,3\}$ defined as follows
\[\rho = \{(x,y,z) \mid x,y\in A, z\in \{0,2\}, x+y+z\in \{0,1\}\},\]
where ``$+$'' is addition modulo 4.
This relation is shown in the following figure.

\begin{picture}(120,60)

\put(27,52){0}
\put(127,52){2}

\put(4,11){0}
\put(4,21){1}
\put(4,31){2}
\put(4,41){3}

\put(104,11){0}
\put(104,21){1}
\put(104,31){2}
\put(104,41){3}

\put(13,2){0}
\put(23,2){1}
\put(33,2){2}
\put(43,2){3}

\put(113,2){0}
\put(123,2){1}
\put(133,2){2}
\put(143,2){3}

\multiput(10,10)(0,10){5}
{\line(1,0){40}}
\multiput(10,10)(10,0){5}
{\line(0,1){40}}

\multiput(110,10)(0,10){5}
{\line(1,0){40}}
\multiput(110,10)(10,0){5}
{\line(0,1){40}}

\put(15,15){\circle*{7}}
\put(25,15){\circle*{7}}
\put(15,25){\circle*{7}}
\put(45,25){\circle*{7}}
\put(25,45){\circle*{7}}
\put(35,35){\circle*{7}}
\put(45,35){\circle*{7}}
\put(35,45){\circle*{7}}

\put(145,15){\circle*{7}}
\put(135,25){\circle*{7}}
\put(125,35){\circle*{7}}
\put(115,45){\circle*{7}}

\put(135,15){\circle*{7}}
\put(125,25){\circle*{7}}
\put(115,35){\circle*{7}}

\put(145,45){\circle*{7}}

\end{picture}

It is easy to see that $1\overset{\rho}{\sim} 3$, $2\overset{\rho}{\sim} 3$, and $1\overset{\rho}{\not\sim} 2$.
To prove that the relation $\rho$ is a key relation we just need to show
that for every two tuples \[\alpha_{1},\alpha_{2}\in (A\times A\times \{0,2\})\setminus\rho\]
there exists a vector-function $\Psi$ preserving $\rho$ such that
$\Psi(\alpha_{1}) = \alpha_{2}$.
Combining
bijective vector-functions $(x+1,y-1,z)$, $(x+2,y,z+2)$, and $(-x,1-y,z)$,
we can easily get all necessary vector-functions.

Nevertheless, as we prove later, if $\rho$ is preserved by a WNU then the pattern is an equivalence relation.

\section{Main Results}\label{MainResultsSection}

The aim of this section is to formulate the main results of the paper.
Here, we present the characterization of key relations preserved by a WNU.
Then, we consider three special cases of the pattern (a trivial equivalence relation,
an almost trivial equivalence relation, and a full equivalence relation)
in more details and provide stronger statements for these cases.
All the statements in this section are listed without the proof and will be proved in the next sections.

\subsection{The pattern of a key relation}

\begin{thm}\label{PatternIsEquivalenceForWNUF}

Suppose $\rho$ is a key essential relation preserved by a WNU. Then the pattern of $\rho$ is an equivalence relation.
Moreover, at most one equivalence class contains more than one element.

\end{thm}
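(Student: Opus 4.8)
The plan is to prove the two assertions in turn, using the characterization of $\overset{\rho}{\sim}$ at a key tuple (the Lemma just before Corollary~\ref{PatternChanging}) together with the WNU assumption. Fix a key tuple $\beta = (a_1,\ldots,a_n)$ for $\rho$ and let $f$ be a WNU preserving $\rho$ of some arity $k$. By the Lemma, for $i \neq j$ we have $i \overset{\rho}{\not\sim} j$ precisely when there exist $b_i, b_j \in A$ with the three tuples obtained from $\beta$ by flipping coordinate $i$ to $b_i$, flipping coordinate $j$ to $b_j$, and flipping both, all lying in $\rho$; equivalently (since then the all-flipped tuple is a key tuple, or lies in $\rho$, by Lemma~\ref{KeyGotoRho} and an argument as in Lemma~\ref{CriticalIsKey}), witnessing a ``$2$-dimensional defect'' localized at coordinates $i$ and $j$.

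\textbf{Transitivity.} Suppose $i \overset{\rho}{\not\sim} j$ and $j \overset{\rho}{\not\sim} \ell$ with $i,j,\ell$ distinct; I want $i \overset{\rho}{\not\sim} \ell$. From the first relation get witnesses $b_i^{(1)}, b_j^{(1)}$; from the second get $b_j^{(2)}, b_\ell^{(2)}$. The idea is to feed suitable combinations of the four (resp.\ four) witness tuples, plus copies of $\beta$, into $f$, exploiting the WNU identities $f(x,y,\ldots,y) = f(y,x,y,\ldots,y) = \cdots$ to ``merge'' the defect at $j$ coming from the two sides, so that in coordinate $j$ the output equals $a_j$, while in coordinates $i$ and $\ell$ the outputs can be forced to be the intended values $b_i$ and $b_\ell$. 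Concretely, I expect to build three tuples in $\rho$ witnessing $i \overset{\rho}{\not\sim} \ell$ by applying $f$ coordinatewise to lists of tuples of the form $\beta$ with at most coordinates $i$, $j$, $\ell$ altered; the WNU property guarantees that whenever exactly one argument differs from $a_j$ in coordinate $j$, the result in that coordinate is a fixed value $c$ independent of which argument it is, and idempotency handles the ``all equal'' case. One then checks these three $f$-images have the required pattern, using that they lie in $\rho$ because $f$ preserves $\rho$ and each input list consists of members of $\rho$. This is the technical heart of the proof and the step I expect to be the main obstacle: getting the bookkeeping right so that the merged defect at $j$ actually collapses to $a_j$ while the defects at $i$ and $\ell$ survive simultaneously.

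\textbf{At most one nontrivial class.} Having established that $\overset{\rho}{\sim}$ is an equivalence relation, suppose for contradiction that two distinct classes each contain at least two elements; pick $i \overset{\rho}{\not\sim} j$, $i \overset{\rho}{\not\sim} k$, $j \overset{\rho}{\not\sim} k$ (so $i, j$ in one class, and a pair $k, \ell$ straddling class lines), i.e.\ find four coordinates pairwise $\not\sim$ or, more carefully, coordinates $i,j$ with $i \overset{\rho}{\not\sim} j$ and another coordinate $\ell$ not $\sim$-related to at least one of them but in a different class. The strategy is again to combine witness tuples via the WNU $f$: a configuration with two ``independent'' $2$-dimensional defects can be pushed, by a single application of $f$ to an appropriate short list of tuples built from $\beta$, into a tuple that is a key tuple differing from $\beta$ in a way incompatible with $\beta$ being a key tuple (using Lemma~\ref{KeyGotoRho}: the $f$-image of a key tuple is either in $\rho$ or again a key tuple, and here it will be forced into $\rho$ although it shouldn't be). Alternatively, and probably cleaner, reduce arity: using Corollary~\ref{PatternChanging} repeatedly to restrict $\rho$ at coordinates outside $\{i,j,k,\ell\}$ to their $\beta$-values, pass to a relation of arity $\le 4$ whose pattern still has two nontrivial classes and which is still a key relation preserved by (the restriction of) a WNU; then a finite case analysis on $A$-free combinatorics — really just the structure of a two-element-like relation in each block — yields a contradiction, e.g.\ because the two defects would let us move $\beta$ off itself. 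I would present the arity-reduction version since it isolates the combinatorics into a bounded situation.

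Throughout, the only external inputs are Lemma~\ref{KeyGotoRho}, Corollary~\ref{PatternChanging}, and the Lemma characterizing $\overset{\rho}{\sim}$ at a key tuple; the WNU identities are used exactly to turn ``one coordinate perturbed among $k$ arguments'' into a canonical perturbed value, which is what makes defects combine predictably.
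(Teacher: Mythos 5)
There is a genuine error and a genuine gap. First, your ``transitivity'' step is set up backwards: you assume $i\overset{\rho}{\not\sim}j$ and $j\overset{\rho}{\not\sim}\ell$ and try to conclude $i\overset{\rho}{\not\sim}\ell$. That is transitivity of the \emph{complement} of $\overset{\rho}{\sim}$, not of $\overset{\rho}{\sim}$ itself, and it is false under the theorem's own conclusion: for a key relation with almost trivial pattern $\{\{1,2\},\{3\},\ldots\}$ (which the paper shows exists, e.g.\ for semilattices) we have $1\not\sim 3$ and $3\not\sim 2$ but $1\sim 2$. What you actually need is the implication ``$i\sim j$, $j\sim\ell$, $i\not\sim\ell$ yields a contradiction,'' and the paper packages exactly this (together with the one-nontrivial-class claim) into Lemma~\ref{MainPatternLemma}: if $1\not\sim 2$ and $1\sim 3$ then $2\not\sim p$ for every $p\ge 3$.

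Second, even for the correctly oriented statement, the mechanism you propose does not close. Feeding arguments that differ from the key tuple $\beta$ in coordinate $j$ in exactly one slot gives $f^{(j)}(a_j,\ldots,a_j,b_j)$ in that coordinate, which in general is \emph{not} $a_j$; so your $f$-images are perturbed at $j$ as well as at $i$ and $\ell$ and do not witness $i\not\sim\ell$ at $\beta$. The paper gets around this with Lemma~\ref{FirstLemma} (an induction along a third coordinate using $1\sim 3$ to pull the perturbed $j$-value back into $\rho$) and, crucially, with the core construction: it first replaces $\rho$ by a core $\sigma$ (Lemma~\ref{CoreProperties}(2) guarantees the pattern is unchanged), where properties (5)--(6) of a core force the unary vector-functions $x\mapsto f(a,\ldots,a,x)$ built from the WNU to be bijections on $\proj\sigma$ (Lemmas~\ref{Keytupleb1b2} and~\ref{KeyBijections}); the contradiction is then that such a bijection preserving $\sigma$ cannot map a key tuple into $\sigma$. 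This bijectivity leverage is the real engine of the argument, and nothing in your sketch supplies a substitute for it; you also flag the bookkeeping as the main unresolved obstacle, so the technical heart of the proof is absent.
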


We say that the pattern is \emph{full} if it is a full equivalence relation,
the pattern is \emph{trivial} if it is a trivial equivalence relation,
the pattern is \emph{almost trivial} if it is an equivalence relation such that
just one equivalence class contains 2 elements, all other classes contain one
element.

\subsection{A characterization of key relations preserved by a WNU}

As we know from Section~\ref{twovaluedcaseSection} every key relation on two elements can be represented as a disjunction of
linear equations. Moreover, if this relation is preserved by a WNU, then we can check that only one equation
contains more than one variable.
To show this, it is sufficient to check that every minimal WNU on 2 elements (conjunction, disjunction, majority operation, x+y+z)
cannot preserve a disjunction of several nontrivial equations.
In this section we will generalize this statement for bigger sets.
In the paper we always assume that $a\neq b$ if we consider the set $\{a,b\}$.

\begin{thm}\label{MainTheorem}
Suppose $\rho$ is a key essential relation of arity $n$ preserved by a WNU
whose pattern is $\{\{1,2,\ldots,r\},\{r+1\},\{r+2\},\ldots,\{n\}\},$ $r\ge 1$.
Then for every key tuple $(a_1,\ldots,a_n)$ there exist
$\boldsymbol B = B_1\times B_2 \times \dots \times B_n$,
a prime number $p$
and bijective mappings
$\phi_i: B_i\to \mathbb Z_{p}$ for $i=1,2,\ldots,r$ such that
$(a_1,\ldots,a_n) \in \boldsymbol B$,
$B_i = \{a_i,b_i\}$ for $i=r+1,\ldots,n$,
\[
\rho \cap \boldsymbol B
=
(\phi_1(x_1) + \ldots +\phi_r(x_r) = 0)
\vee
(x_{r+1} = b_{r+1})
\vee \dots \vee (x_n = b_n),
\]
and every tuple $\gamma \in \boldsymbol B\setminus \rho$ is a key tuple for $\rho$.
\end{thm}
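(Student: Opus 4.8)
The plan is to reduce Theorem~\ref{MainTheorem} to the full-pattern case, which is handled by the block decomposition promised for cores (Section~6 and Section~8), and to the ``almost trivial'' disjunct-stripping used in the two-element proof of Theorem~\ref{OnlyLinearOn2Elements}. Fix a key tuple $(a_1,\dots,a_n)$. Since the last $n-r$ coordinates are singleton classes of the pattern, each of them behaves exactly like a coordinate of a relation of the form $\{c,d\}^{k}\setminus\{c\}^{k}$: for such an index $i$ there is a witness $b_i\neq a_i$ together with a tuple in $\rho$ differing from the key tuple only in coordinate $i$, and moreover the pattern being trivial at $i$ says that flipping coordinate $i$ to $b_i$ ``fixes'' $\rho$ regardless of the other coordinates. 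I would first make this precise: define $B_i=\{a_i,b_i\}$ for $i>r$ and show, by repeated use of Lemma~\ref{ReduceArityOfKey} and Corollary~\ref{PatternChanging}, that

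\begin{equation*}
\rho\cap(A^{r}\times B_{r+1}\times\dots\times B_n)
=\tau(x_1,\dots,x_r)\vee(x_{r+1}=b_{r+1})\vee\dots\vee(x_n=b_n),
\end{equation*}

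where $\tau(x_1,\dots,x_r)=\rho(x_1,\dots,x_r,a_{r+1},\dots,a_n)$ is, again by Lemma~\ref{ReduceArityOfKey} and Corollary~\ref{PatternChanging}, a key relation with key tuple $(a_1,\dots,a_r)$ whose pattern is the \emph{full} equivalence relation on $\{1,\dots,r\}$, and which is still preserved by the WNU (substituting constants preserves a WNU-invariant relation provided the constants form a singleton subalgebra, which holds by idempotence).

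\textbf{Reducing the full-pattern part to a linear equation.} Now I would invoke the core/block machinery. By the results announced for Section~6 and Section~8, a key relation with full pattern has a core that splits into isomorphic key blocks, each of which is definable by a linear equation over an abelian group; and the full-pattern case forces this group to be $\mathbb{Z}_p$ for a prime $p$ (a key relation cannot tolerate a proper subgroup or a product decomposition of the group, since that would refine or destroy the pattern, contradicting fullness). Concretely, taking one key block gives $\boldsymbol B' = B_1\times\dots\times B_r$ containing $(a_1,\dots,a_r)$ together with bijections $\phi_i\colon B_i\to\mathbb{Z}_p$ such that $\tau\cap\boldsymbol B'$ is exactly the solution set of $\phi_1(x_1)+\dots+\phi_r(x_r)=0$; the normalization making the right-hand side $0$ rather than some constant $c$ is achieved by translating one $\phi_i$ by $-c$, which is again a bijection onto $\mathbb{Z}_p$. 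Here I would need to check that the block can be chosen to contain the prescribed key tuple $(a_1,\dots,a_r)$ and that $(a_1,\dots,a_r)\notin\tau$, i.e.\ the key tuple of a full-pattern key relation lies in some block — this is where I would lean on Section~6's structural statements about cores and on Lemma~\ref{KeyGotoRho}. Setting $\boldsymbol B=\boldsymbol B'\times B_{r+1}\times\dots\times B_n$ then yields exactly the displayed identity for $\rho\cap\boldsymbol B$.

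\textbf{Every tuple of $\boldsymbol B\setminus\rho$ is a key tuple.} For the last claim I would mimic the final paragraph of the proof of Theorem~\ref{OnlyLinearOn2Elements}. A tuple $\gamma\in\boldsymbol B\setminus\rho$ must violate all the disjuncts, so $\gamma_i=a_i$ for $i>r$ and $\phi_1(\gamma_1)+\dots+\phi_r(\gamma_r)\neq0$. Given another such $\delta\in\boldsymbol B\setminus\rho$, I want a vector-function $\Psi$ preserving $\rho$ with $\Psi(\gamma)=\delta$. On coordinates $i>r$ I take $\psi_i$ to be any fixed point of $B_i$-action that is compatible — in fact since both $\gamma$ and $\delta$ agree with the key tuple there, $\psi_i$ can be the identity on $B_i$; on coordinates $i\le r$ I transport the affine bijection of $\mathbb{Z}_p$ sending $\phi(\gamma\restriction_{[r]})$'s ``coordinate sum'' to $\delta$'s through the $\phi_i$, i.e.\ choose permutations $\psi_i$ of $B_i$ with $\phi_i\psi_i\phi_i^{-1}$ an affine map $t\mapsto\lambda_i t+\mu_i$ on $\mathbb{Z}_p$ arranged so that $\sum_i(\lambda_i\phi_i(\gamma_i)+\mu_i)=$ the required value while the map preserves the hyperplane sum $=0$ (so it preserves $\tau\cap\boldsymbol B'$), and extend $\psi_i$ outside $B_i$ arbitrarily so that $\Psi$ still preserves all of $\rho$. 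The point is that $\Psi$ maps $\boldsymbol B$ bijectively to $\boldsymbol B$, preserves $\rho\cap\boldsymbol B$ by the displayed formula, and sends the complement of $\rho$ inside $\boldsymbol B$ to itself; combined with the fact that $\rho$ is a key relation (so \emph{some} tuple outside $\rho$ is a key tuple, and by Lemma~\ref{KeyGotoRho} the $\Psi$-orbit of a key tuple consists of key tuples and elements of $\rho$) this pins down every $\gamma\in\boldsymbol B\setminus\rho$ as a key tuple.

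\textbf{Main obstacle.} The genuinely hard step is the middle one: extracting the prime $p$, the box $\boldsymbol B'$, and the bijections $\phi_i$ from the full-pattern key relation $\tau$, and in particular arguing that the abelian group appearing in the block decomposition is cyclic of prime order and that one may choose a block through the given key tuple. Everything else is bookkeeping with Lemma~\ref{ReduceArityOfKey}, Corollary~\ref{PatternChanging}, and Lemma~\ref{KeyGotoRho}, or a direct imitation of the two-element argument; but the reduction ``full pattern $\Rightarrow$ affine $\mathbb{Z}_p$ on each block'' is exactly the content that Sections~5, 6 and 8 are built to supply, and the proof of Theorem~\ref{MainTheorem} will essentially be an assembly of those sections plus the disjunct-stripping for the trivial part of the pattern.
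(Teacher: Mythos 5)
Your high-level architecture matches the paper's: fix the last $n-r$ coordinates of the key tuple to get a full-pattern key relation $\tau=\boldsymbol\rho_1$, invoke the core/key-block machinery for the linear part, and use the trivial-pattern analysis for the singleton coordinates. But there are genuine gaps at exactly the places you flag as "the hard step." First, your claim that the block group "is forced to be $\mathbb Z_p$" because "a key relation cannot tolerate a proper subgroup or a product decomposition" is not something the paper can deliver: Theorem~\ref{MainForFullPattern} only produces a finite \emph{field} $F$ (equivalently an elementary abelian $p$-group), and the statement that $F=\mathbb Z_p$ is explicitly left as an open conjecture in Section~6. The actual proof of Lemma~\ref{LemmaForMainTheorem} circumvents this by \emph{not} taking $B_i$ to be the whole key block: it sets $e=\phi_1(a_1)+\dots+\phi_r(a_r)\neq 0$ and restricts to the line $B_i=\phi_i^{-1}(\phi_i(a_i)+\mathbb Z_p\cdot e)$ inside $F$, where $p$ is the characteristic. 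Without this line-restriction your $\boldsymbol B$ need not admit bijections onto $\mathbb Z_p$ at all.

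Second, the gluing step is not "bookkeeping with Lemma~\ref{ReduceArityOfKey} and Corollary~\ref{PatternChanging}." Those lemmas give you, for each $i>r$ separately, a witness $b_i$ relative to the key tuple; they do not show that $(d_1,\ldots,d_r,d_{r+1},\ldots,d_n)\in\rho$ for \emph{arbitrary} $(d_1,\ldots,d_r)\in B_1\times\dots\times B_r$ once some $d_i=b_i$ with $i>r$ (the pattern condition $i\not\sim j$ is an existential statement about witnessing quadruples, not a universal statement that flipping coordinate $i$ always lands in $\rho$). The paper spends most of Lemma~\ref{LemmaForMainTheorem} on exactly this: it needs a perfect pair from Theorem~\ref{trivialPattern}, the normalized WNU of Lemma~\ref{findbetterWNU}, the bijectivity results of Lemmas~\ref{Keytupleb1b2} and~\ref{keyBlockBijection} (which are only available after passing to a core, a reduction you skip), two explicit WNU matrix computations, and finally Lemma~\ref{AddTupleToCube} to propagate from $(a_1,\ldots,a_r)$ and from $\boldsymbol\rho_1\cap(B_1\times\dots\times B_r)$ to all of $B_1\times\dots\times B_r$. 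Finally, in your last paragraph the phrase "extend $\psi_i$ outside $B_i$ arbitrarily so that $\Psi$ still preserves all of $\rho$" is the entire difficulty of that step: unlike the two-element case, $B_i$ is a proper subset of $A$ and an arbitrary extension will generally destroy preservation of $\rho$; the key-tuple property of $\boldsymbol B\setminus\rho$ has to come from the core structure (Lemma~\ref{Keytupleb1b2} and Lemma~\ref{CoreProperties}), not from an explicitly built affine $\Psi$.
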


This means, that in every key relation preserved by a WNU we can find a part $\boldsymbol B$ which is
well-organized. This part is defined as a disjunction of at most one nontrivial linear equation and
several trivial linear equations. Thus, we proved the statement which is very similar to the statement we have for $|A|=2$.

\subsection{Key relations with trivial pattern}

Here we consider the first special case of a pattern, i.e. a key relation whose pattern is a trivial equivalence relation.

By Theorem~\ref{MainTheorem} for any key essential
relation $\rho$ with trivial pattern preserved by a WNU
we can find  a part 
which is organized as follows:
\[(x_1 = b_1) \vee (x_2=b_2) \vee  \dots \vee (x_n=b_n),\]
or equivalently
there exist
$(a_1,a_2,\ldots,a_n)\notin\rho$ and $b_1,b_2,\ldots,b_n\in A$ such that
\[(\{a_1,b_1\}\times \{a_2,b_2\}\times \dots\times\{a_n,b_n\})\setminus \{(a_1,a_2,\ldots,a_n)\}\subseteq \rho.\]

It turned out that this is not only a necessary condition but also a sufficient condition.

\begin{lem}\label{trivialpatternLem}

Suppose $\rho\in R_{A}^{n}$,
$(a_1,a_2,\ldots,a_n)\notin\rho$,
$b_{1},b_2,\ldots,b_{n}\in A$, 
and
$(\{a_1,b_1\}\times \{a_2,b_2\}\times \dots\times\{a_n,b_n\})\setminus \{(a_1,a_2,\ldots,a_n)\}\subseteq \rho.$
Then $\rho$ is a key relation and $(a_1,a_2,\ldots,a_n)$ is a key tuple for the relation $\rho$.

\end{lem}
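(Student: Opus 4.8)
The plan is to show directly that $\beta := (a_1,\ldots,a_n)$ is a key tuple for $\rho$, i.e. that every $\alpha\in A^n\setminus\rho$ can be mapped to $\beta$ by a unary vector-function preserving $\rho$. Write $\boldsymbol B = \{a_1,b_1\}\times\cdots\times\{a_n,b_n\}$, so the hypothesis says $\boldsymbol B\setminus\{\beta\}\subseteq\rho$ and $\beta\notin\rho$. The key observation is that on the "box" $\boldsymbol B$ the relation $\rho$ looks exactly like $\{a,b\}^n\setminus\{a\}^n$ after renaming coordinates, which is the prototypical key relation of Theorem~\ref{OnlyLinearOn2Elements} (a single disjunction $\bigvee_i (x_i = b_i)$).

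First I would handle the degenerate cases: if $a_i = b_i$ for some $i$ then the hypothesis forces $\beta\in\rho$ (take the tuple agreeing with $\beta$ except keep $a_i$ — it still lies in $\boldsymbol B\setminus\{\beta\}$ once some other coordinate differs, but if $n=1$ or all coordinates coincide one checks $\beta\in\rho$ directly), contradicting $\beta\notin\rho$; so we may assume $a_i\neq b_i$ for all $i$, and $\boldsymbol B$ is a genuine $2^n$-element box. Now fix $\alpha = (\alpha_1,\ldots,\alpha_n)\in A^n\setminus\rho$. For each coordinate $i$ define $\psi_i:A\to A$ by $\psi_i(\alpha_i) = a_i$ and $\psi_i(x) = b_i$ for all $x\neq\alpha_i$. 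I claim $\Psi = (\psi_1,\ldots,\psi_n)$ preserves $\rho$ and sends $\alpha$ to $\beta$. The second part is immediate. For the first part, take any $\gamma = (\gamma_1,\ldots,\gamma_n)\in\rho$; then $\Psi(\gamma)\in\boldsymbol B$, and $\Psi(\gamma) = \beta$ would force $\psi_i(\gamma_i) = a_i$, i.e. $\gamma_i = \alpha_i$, for every $i$ — that is $\gamma = \alpha$, contradicting $\alpha\notin\rho$. Hence $\Psi(\gamma)\in\boldsymbol B\setminus\{\beta\}\subseteq\rho$, so $\Psi$ preserves $\rho$.

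This shows $\beta$ is a key tuple, hence $\rho$ is a key relation, completing the proof. I do not anticipate a serious obstacle here: the whole argument is the two-element disjunction-of-linear-equations picture transported to an arbitrary base set via the coordinatewise maps $\psi_i$ that collapse everything outside the relevant two-element set $\{a_i,b_i\}$ onto $b_i$. The only point needing a moment of care is the degenerate case $a_i = b_i$ (and, relatedly, checking the hypothesis is not vacuous when $n$ is small), which is why I would dispose of it at the outset; after that the verification that $\Psi$ preserves $\rho$ is the short combinatorial core and everything else is bookkeeping.
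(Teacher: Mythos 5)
Your construction of $\Psi$ is exactly the paper's proof: $\psi_i$ sends $\alpha_i$ to $a_i$ and everything else to $b_i$, and the verification that $\Psi(\gamma)=\beta$ would force $\gamma=\alpha\notin\rho$ is the same one-line argument. One caveat: your degenerate-case claim that $a_i=b_i$ forces $\beta\in\rho$ is not actually true (e.g.\ $n=2$, $a_1=b_1$, $a_2\neq b_2$), but the issue is moot because the paper's standing convention is that $a\neq b$ whenever a set $\{a,b\}$ is written, so $a_i\neq b_i$ is already part of the hypothesis and your main argument goes through as stated.
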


Thus, we have the following characterization of key relations with trivial pattern preserved by a WNU.

\begin{thm}\label{trivialPattern}

Suppose $\rho$ is a relation preserved
by a WNU whose pattern is a trivial equivalence relation.
Then $\rho$ is a key relation if and only if
there exist $(a_1,a_2,\ldots,a_n)\notin\rho$ and
$b_1,b_2,\ldots,b_n\in A$ such that
\[(\{a_1,b_1\}\times \{a_2,b_2\}\times \dots\times\{a_n,b_n\})\setminus \{(a_1,a_2,\ldots,a_n)\}\subseteq \rho.\]

\end{thm}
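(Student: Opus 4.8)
\textbf{Proof proposal for Theorem~\ref{trivialPattern}.}

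The plan is to treat the two implications separately, using the earlier machinery for the easy direction and reducing the hard direction to Lemma~\ref{trivialpatternLem}. The backward implication is immediate: if such tuples $(a_1,\ldots,a_n)$ and $b_1,\ldots,b_n$ exist, then Lemma~\ref{trivialpatternLem} (the \trivialpatternLemma) directly gives that $\rho$ is a key relation, with no reference to the WNU or to the pattern being trivial. So all the work is in the forward implication, and this is where I expect the main obstacle to lie. Here one assumes $\rho$ is a key relation preserved by a WNU whose pattern is trivial, and must produce the combinatorial box structure.

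For the forward direction I would invoke Theorem~\ref{MainTheorem} with $r=1$, which is exactly the case where the pattern $\{\{1\},\{2\},\ldots,\{n\}\}$ is trivial. That theorem, applied to any fixed key tuple $(a_1,\ldots,a_n)$, yields a box $\boldsymbol B = B_1 \times \cdots \times B_n$ with $(a_1,\ldots,a_n) \in \boldsymbol B$, a prime $p$, a bijection $\phi_1 \colon B_1 \to \mathbb{Z}_p$, and elements $b_i$ with $B_i = \{a_i, b_i\}$ for $i = 2,\ldots,n$, such that
$$
\rho \cap \boldsymbol B = (\phi_1(x_1) = 0) \vee (x_2 = b_2) \vee \cdots \vee (x_n = b_n).
$$
The one genuine subtlety is the factor $B_1$ and the term $(\phi_1(x_1)=0)$: if $p = 2$, then $B_1 = \{a_1,b_1\}$ has two elements and $(\phi_1(x_1) = 0)$ is exactly the condition $x_1 = b_1$ after relabelling, so we are done with the desired box. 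If $p > 2$, then $|B_1| = p \ge 3$ and the single-variable equation $\phi_1(x_1) = 0$ is \emph{not} of the form $x_1 = b_1$; however, since $(a_1,\ldots,a_n) \notin \rho$ forces $\phi_1(a_1) \ne 0$, I can simply pick any $b_1 \in B_1$ with $\phi_1(b_1) = 0$ and restrict the first coordinate to $\{a_1,b_1\}$: then $(\{a_1,b_1\} \times B_2 \times \cdots \times B_n) \setminus \{(a_1,\ldots,a_n)\} \subseteq \rho$, because every tuple in that restricted box with $x_1 = b_1$ satisfies $\phi_1(x_1) = 0$ and hence lies in $\rho \cap \boldsymbol B \subseteq \rho$, while every tuple with $x_1 = a_1 \ne$ the excluded tuple differs from $(a_1,\ldots,a_n)$ in some coordinate $i \ge 2$, so $x_i = b_i$ and again the tuple lies in $\rho$.

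Thus in all cases we obtain the required $(a_1,\ldots,a_n) \notin \rho$ and $b_1,\ldots,b_n \in A$, completing the forward implication and the theorem. The main obstacle I anticipate is not logical depth but making sure the bookkeeping around the $p > 2$ case is airtight — in particular that shrinking $B_1$ from $p$ elements down to two preserves the ``box minus one tuple is contained in $\rho$'' property; the argument above handles it, but one should double-check there are no edge cases hiding in the interaction between the excluded tuple and the various disjuncts. If one wishes to avoid quoting Theorem~\ref{MainTheorem} and give a self-contained argument, an alternative route would be to work directly from the key-tuple definition: fix a key tuple $\beta$, use Corollary~\ref{PatternChanging} and induction on arity (as in the proof of Theorem~\ref{OnlyLinearOn2Elements}) to peel off coordinates, and show at each step that a WNU-preserved key relation with trivial pattern forces a binary factor in each coordinate; but the cleanest path is to cite Theorem~\ref{MainTheorem} as above.
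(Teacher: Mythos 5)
Your proof is correct, and the backward direction (via Lemma~\ref{trivialpatternLem}) is exactly what the paper does. For the forward direction you take a slightly different route from the paper, and it is worth comparing the two. The paper does \emph{not} go through Theorem~\ref{MainTheorem}; it cites Lemma~\ref{trivialPatternMainLemma} directly, which states that for every key tuple $\alpha$ of a WNU-preserved key relation with trivial pattern there is a $\beta$ making $\alpha-\beta$ a perfect pair, and which is proved by an induction on arity that uses the WNU to assemble the pair from perfect pairs of the three relations obtained by fixing one coordinate. Your citation of Theorem~\ref{MainTheorem} with $r=1$ is legitimate but is really a wrapper around that same lemma: in the paper's logical order, Lemma~\ref{LemmaForMainTheorem} handles $r=1$ precisely by invoking Lemma~\ref{trivialPatternMainLemma}, while its $r\ge 3$ case invokes Theorem~\ref{trivialPattern} itself. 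So if you quote Theorem~\ref{MainTheorem} you must be explicit that you only use its $r=1$ instance, whose proof bottoms out in Lemma~\ref{trivialPatternMainLemma} and does not pass back through the theorem you are proving; otherwise the argument is circular on its face. Your ``alternative self-contained route'' sketched at the end (peel off coordinates using Corollary~\ref{PatternChanging} and induct) is in fact essentially the paper's actual proof of Lemma~\ref{trivialPatternMainLemma}, so the mathematical substance is the same either way.

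Two smaller remarks on your bookkeeping. First, your treatment of the case $p>2$ is correct as written (picking $b_1\in B_1$ with $\phi_1(b_1)=0$ and shrinking $B_1$ to $\{a_1,b_1\}$ does preserve the box-minus-one-tuple property, since every tuple with $x_1=b_1$ satisfies the first disjunct and every other tuple with $x_1=a_1$ must have some $x_i=b_i$, $i\ge 2$), but it is moot: in the $r=1$ case the construction underlying the theorem produces $p=2$, since the single-variable ``linear equation'' comes from a perfect pair. Being cautious here costs nothing and is harmless. Second, you correctly use that $\phi_1(a_1)\neq 0$ follows from $(a_1,\ldots,a_n)\notin\rho$, and that the paper's convention $a_i\neq b_i$ for sets written $\{a_i,b_i\}$ guarantees the boxes are genuine two-element sets in coordinates $i\ge 2$.
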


The following example shows that the existence of a WNU preserving the relation is a necessary condition.
We consider the relation $\left(\begin{smallmatrix}
0 & 0 & 0 & 0 & 1 & 1 & 1 & 1 & 2 & 2 & 2 & 2\\
0 & 1 & 1 & 2 & 0 & 1 & 2 & 2 & 0 & 0 & 0 & 1\\
1 & 0 & 1 & 2 & 0 & 0 & 0 & 1 & 0 & 1 & 2 & 2
\end{smallmatrix}\right)$, which was already mentioned in Section~\ref{twovaluedcaseSection}.
The pattern of this relation is trivial,
$(0,0,0)$ is the only key tuple
but we cannot find
$b_1,b_2,b_3\in \{1,2\}$ such that
$(\{0,b_1\}\times \{0,b_2\}\times\{0,b_3\})\setminus \{(0,0,0)\}\subseteq \rho.$
Thus we have a key relation with trivial pattern which does not satisfy the condition of  Theorem~\ref{trivialPattern}.

The case when the pattern of a key relation is a trivial equivalence relation
arises if the relation is preserved by a near-unanimity function, where
\emph{a near unanimity function} is a function $f$ satisfying
\[f(x,\ldots,x,y) = f(x,\ldots,x,y,x) = \dots = f(y,x,\ldots,x)=x.\]

\begin{thm}\label{NUtheoremLabel}
Suppose $\rho$ is a key essential relation of arity greater than 2 preserved by a near-unanimity function.
Then the pattern of $\rho$ is a trivial equivalence relation.
\end{thm}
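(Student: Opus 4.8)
The plan is as follows. A near‑unanimity function $f$ has some arity $k\ge 3$, and for $k\ge 3$ every term $f(y,\dots,y,x,y,\dots,y)$ (one $x$, the rest $y$) equals $y$ by the defining identities of $f$ (such a term has $k-1\ge 2$ equal coordinates); hence $f(x,y,\dots,y)=f(y,x,y,\dots,y)=\dots=f(y,\dots,y,x)$, so $f$ is a weak near‑unanimity function and $\rho$ is preserved by a WNU. By Theorem~\ref{PatternIsEquivalenceForWNUF} the pattern of $\rho$ is an equivalence relation with at most one non‑singleton class. I would then assume for contradiction that this class is $\{1,2,\dots,r\}$ with $r\ge 2$ (reindexing coordinates), fix a key tuple $(a_1,\dots,a_n)$, and apply Theorem~\ref{MainTheorem} to obtain $\boldsymbol B=B_1\times\dots\times B_n$, a prime $p$, bijections $\phi_i\colon B_i\to\mathbb{Z}_p$ for $i\le r$, and $b_{r+1},\dots,b_n$ with $B_j=\{a_j,b_j\}$, such that
$$\rho\cap\boldsymbol B=(\phi_1(x_1)+\dots+\phi_r(x_r)=0)\vee(x_{r+1}=b_{r+1})\vee\dots\vee(x_n=b_n).$$
Since $a_j\ne b_j$, substituting $x_j=a_j$ destroys the $j$‑th disjunct for each $j>r$. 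So, putting $r':=\max(r,3)$ and $\sigma(x_1,\dots,x_{r'}):=\rho(x_1,\dots,x_{r'},a_{r'+1},\dots,a_n)$, repeated use of Lemma~\ref{ReduceArityOfKey} makes $\sigma$ a key relation preserved by $f$ (as $f$ is idempotent), Corollary~\ref{PatternChanging} keeps a non‑singleton pattern class, and $\sigma\cap(B_1\times\dots\times B_{r'})$ is either the linear relation $\phi_1(x_1)+\dots+\phi_r(x_r)=0$ (when $r\ge 3$) or $(\phi_1(x_1)+\phi_2(x_2)=0)\vee(x_3=b_3)$ (when $r=2$, so $r'=3$). Hence I may assume $n=r'$, and two cases remain.

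\emph{Case $r\ge 3$.} Restricted to $\boldsymbol B$ and transported by the $\phi_i$, our relation is the linear relation $L=\{t\in\mathbb{Z}_p^{\,r}:t_1+\dots+t_r=0\}$. From a suitable positive primitive formula in a few copies of $\rho\cap\boldsymbol B$ (using copies that share all but two of their coordinates) I would extract the quaternary relation $\{(x,x',x'',x''')\in B_1^4:\phi_1(x)-\phi_1(x')=\phi_1(x'')-\phi_1(x''')\}$, preserved by $f$; transported by $\phi_1$ this is the full quaternary affine relation over $\mathbb{Z}_p$. Since $B_i=\proj_i(\rho\cap\boldsymbol B)$ and $\boldsymbol B$ is invariant under $f$ (a property of the construction in Theorem~\ref{MainTheorem}), the operation $g:=\phi_1\circ f|_{B_1}\circ(\phi_1^{-1},\dots,\phi_1^{-1})$ is a near‑unanimity operation on $\mathbb{Z}_p$ preserving the affine relation. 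But every polymorphism of the affine relation over $\mathbb{Z}_p$ is an idempotent affine operation $\sum_i\lambda_i x_i$ with $\sum_i\lambda_i=1$, and the identities $g(x,\dots,x,y)=\dots=g(y,x,\dots,x)=x$ force every $\lambda_i=0$, contradicting $\sum_i\lambda_i=1$. This rules out $r\ge 3$.

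\emph{Case $r=2$, $n=3$.} Here $\rho\cap\boldsymbol B=(\phi_1(x_1)+\phi_2(x_2)=0)\vee(x_3=b_3)$. The linear part alone \emph{can} admit a near‑unanimity polymorphism (it is the graph of a bijection $B_1\to B_2$, preserved e.g.\ by a median), so the disjunction must be exploited. The plan is to exhibit tuples $\gamma_1,\dots,\gamma_k\in\rho$ whose coordinatewise image under $f$ lies in $\boldsymbol B\setminus\rho$: force at least $k-1$ of the $\gamma_l$ to have last coordinate $a_3$, so that a near‑unanimity identity pins the last coordinate of the image to $a_3$ (killing the second disjunct), while arranging their first two coordinates so that the first two coordinates of the image violate $\phi_1(\cdot)+\phi_2(\cdot)=0$ (killing the first disjunct). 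For a ternary $f$ this is a short three‑tuple computation that "mixes" the graph of the bijection with one free tuple.

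\emph{Expected main obstacle.} The delicate point is making this last argument work for a near‑unanimity operation of arbitrary arity $k$: a near‑unanimity operation only determines the value on an almost‑constant column, and an almost‑constant column tends to reproduce one of the input tuples, so obtaining a genuinely new tuple outside $\rho$ requires either a more careful selection of the $\gamma_l$ or a preliminary reduction of the arity of $f$ in this configuration. I expect this step — together with the verification that $\boldsymbol B$ is $f$‑invariant, which is used in the case $r\ge 3$ — to be where the real work lies; the rest is bookkeeping with Theorems~\ref{PatternIsEquivalenceForWNUF} and~\ref{MainTheorem}, Lemma~\ref{ReduceArityOfKey}, and Corollary~\ref{PatternChanging}.
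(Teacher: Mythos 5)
Your reduction to a WNU and the appeal to Theorem~\ref{PatternIsEquivalenceForWNUF} are fine, and your case $r\ge 3$ is plausible in outline; but the proposal has a genuine gap exactly where you locate it yourself, and that gap is the heart of the theorem. In the case $r=2$ (after reduction, $n=3$ and $\rho\cap\boldsymbol B=(\phi_1(x_1)+\phi_2(x_2)=0)\vee(x_3=b_3)$), you never produce the tuples $\gamma_1,\dots,\gamma_k$. The difficulty is structural, not bookkeeping: forcing $k-1$ of the columns to have last coordinate $a_3$ pins the last output coordinate to $a_3$ by the near-unanimity identity, but then the first two output coordinates are $f^{(1)},f^{(2)}$ evaluated on rows over $B_1,B_2$, and you have no control over these values unless you already know that $f$ acts affinely on $B_1$ and $B_2$ — which in turn would require the quaternary difference relation on $B_1$ to be invariant under $f$, hence pp-definable from $\rho$. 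That pp-definability is not available from Theorem~\ref{MainTheorem} alone: the theorem asserts only that $\rho\cap\boldsymbol B$ has the stated shape, not that $\boldsymbol B$ (or $\rho\cap\boldsymbol B$) is preserved by the polymorphisms of $\rho$. The same unproved invariance claim ("$\boldsymbol B$ is invariant under $f$, a property of the construction") is also what your case $r\ge 3$ rests on, so both branches are open. Since an almost-trivial pattern is precisely the configuration the theorem must exclude, the argument as written does not establish the statement.

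For comparison, the paper's proof does not go through Theorem~\ref{MainTheorem} at all. It fixes a key tuple, sets $\sigma(x,y,z)=\rho(x,y,z,a_4,\dots,a_n)$, and builds by a positive primitive formula an $(m+1)$-ary relation $\sigma_m$ (a chain of $2m$ copies of $\sigma$ linked through existentially quantified variables). If $(a_3,\dots,a_3)\notin\sigma_m$, one checks that $(b_3,\dots,b_3)$ witnesses that $(a_3,\dots,a_3)$ is essential for $\sigma_m$, and applying $f$ to the $m$ tuples $a_3^{i}b_3a_3^{m-i}$ — each column being almost constant — yields $(a_3,\dots,a_3)$ by the near-unanimity identity, contradicting preservation. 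If $(a_3,\dots,a_3)\in\sigma_m$, the witnesses $y_i,z_i$ together with $1\overset{\rho}{\sim}2$ force $(a_1,a_2,a_3)\in\sigma$, a contradiction. This is entirely self-contained and uses only pp-definability and the absorption identity; if you want to salvage your route, you would need to separately prove the $f$-invariance of $\boldsymbol B$ and then still find a replacement for the missing mixing argument in the $r=2$ case.
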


\begin{cor}

Suppose $\rho$ (of arity greater than 2) is preserved by a near-unanimity function.
Then $\rho$ is a key essential relation if and only if there exist
$(a_1,a_2,\ldots,a_n)\notin\rho$ and $b_1,b_2,\ldots,b_n\in A$  such that
\[(\{a_1,b_1\}\times \{a_2,b_2\}\times \dots\times\{a_n,b_n\})\setminus \{(a_1,a_2,\ldots,a_n)\}\subseteq \rho.\]

\end{cor}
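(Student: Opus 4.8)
The plan is to obtain this corollary by simply chaining together Lemma~\ref{trivialpatternLem}, Theorem~\ref{NUtheoremLabel}, and Theorem~\ref{trivialPattern}; no genuinely new argument is required.

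First I would dispose of the ``if'' direction, which uses nothing about near-unanimity functions at all. If there exist $(a_1,\ldots,a_n)\notin\rho$ and $b_1,\ldots,b_n\in A$ with $(\{a_1,b_1\}\times\dots\times\{a_n,b_n\})\setminus\{(a_1,\ldots,a_n)\}\subseteq\rho$, then Lemma~\ref{trivialpatternLem} immediately gives that $\rho$ is a key relation (with $(a_1,\ldots,a_n)$ as a key tuple).

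For the ``only if'' direction I would first record the elementary fact that a near-unanimity function is a weak near-unanimity function. Substituting equal arguments into the defining identities shows it is idempotent, and the identity $f(y,x,\ldots,x)=x$, read with arbitrary values, says $f(u,v,\ldots,v)=v$ for all $u,v$; applying it to each of $f(x,y,\ldots,y), f(y,x,y,\ldots,y), \ldots, f(y,\ldots,y,x)$ shows all of these equal $y$, hence are equal to each other, so the WNU identity holds. Now assume $\rho$ has arity $n>2$, is preserved by a near-unanimity function $f$, and is a key relation. By Theorem~\ref{NUtheoremLabel} the pattern of $\rho$ is a trivial equivalence relation. Since $f$ is a WNU preserving $\rho$ and the pattern of $\rho$ is trivial, Theorem~\ref{trivialPattern} applies, and because $\rho$ is a key relation it yields the desired tuples $(a_1,\ldots,a_n)\notin\rho$ and $b_1,\ldots,b_n\in A$ with the stated box condition.

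I do not expect any real obstacle: all the content lives in Theorem~\ref{NUtheoremLabel} (trivial pattern under an NU) and Theorem~\ref{trivialPattern} (the equivalence for trivial pattern under a WNU). The only points worth a word of care are that the hypothesis ``arity greater than $2$'' is exactly the hypothesis of Theorem~\ref{NUtheoremLabel} and is carried over verbatim, and that an NU function must be checked to be a WNU so that Theorem~\ref{trivialPattern} is applicable to it.
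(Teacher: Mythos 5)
Your proposal is correct and is exactly the derivation the paper intends (the corollary is stated without proof as an immediate consequence of Lemma~\ref{trivialpatternLem}, Theorem~\ref{NUtheoremLabel}, and Theorem~\ref{trivialPattern}). Your explicit check that a near-unanimity function is a WNU, so that Theorem~\ref{trivialPattern} applies, is the only detail the paper leaves silent, and you handle it correctly.
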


\subsection{Key relations with almost trivial pattern}

Suppose $\rho$ is a key essential relation preserved by a WNU whose pattern is $\{\{1,2\},\{3\},\ldots,\{n\}\}$.
By Theorem~\ref{MainTheorem} we can find  a part
which is organized as follows:
$(x_1 + x_2=0) \vee (x_3=b_3)\vee \dots \vee (x_n=b_n)$.
Hence, there exist
$(a_1,a_2,\ldots,a_n)\notin\rho$ and $b_1,b_2\ldots,b_n\in A$ such that
\[(\{a_1,b_1\}\times \dots\times\{a_n,b_n\})\setminus \{(a_1,a_2,a_3,\ldots,a_n),
(b_1,b_2,a_3,\ldots,a_n)\}\subseteq \rho.\]

It turned out that this is not only a necessary condition but also a sufficient condition.

\begin{lem}\label{AlmostTrivialPattern}

Suppose $1\overset{\rho}{\sim}2$, $(a_1,a_2,\ldots,a_n)\notin\rho$, $b_{1},\ldots,b_{n}\in A$
and
\[(\{a_1,b_1\}\times \dots\times\{a_n,b_n\})\setminus \{(a_1,a_2,a_3,\ldots,a_n),
(b_1,b_2,a_3,\ldots,a_n)\}\subseteq \rho.\]
Then $\rho$ is a key relation and $(a_1,a_2,\ldots,a_n)$ is a key tuple for $\rho$.

\end{lem}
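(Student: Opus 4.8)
The plan is to verify directly that $(a_1,\dots,a_n)$ is a key tuple for $\rho$, i.e.\ that every tuple $\alpha\in A^n\setminus\rho$ can be sent to $\beta:=(a_1,\dots,a_n)$ by a unary vector-function preserving $\rho$. Write $B_i=\{a_i,b_i\}$ and $\boldsymbol B=B_1\times\dots\times B_n$; note that when some $b_i=a_i$ the set $B_i$ is a singleton, which only makes the argument easier, so I may assume $a_i\ne b_i$ where convenient. The two ``holes'' of $\rho$ inside $\boldsymbol B$ are $\beta=(a_1,a_2,a_3,\dots,a_n)$ and $\beta':=(b_1,b_2,a_3,\dots,a_n)$, and the hypothesis $1\overset{\rho}{\sim}2$ is exactly what rules out any further hole coming from a pattern-style $2\times 2$ obstruction between coordinates $1$ and $2$; I will use it to show that $\beta$ and $\beta'$ are the \emph{only} tuples of $\boldsymbol B\setminus\rho$, and more importantly that the vector-function below cannot accidentally land on $\beta'$.

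The first step is to fix $\alpha=(\alpha(1),\dots,\alpha(n))\in A^n\setminus\rho$ and define $\psi_i:A\to A$ as follows. For $i\ge 3$, let $\psi_i$ be the constant map sending everything to $a_i$ if $\alpha(i)\ne b_i$, and otherwise let $\psi_i$ be the (unique, since $|B_i|\le 2$) map fixing $a_i$ and sending every element $\ne b_i$ to $b_i$ — i.e.\ the retraction of $A$ onto $B_i$ composed with either identity or the swap on $B_i$, chosen so that $\psi_i(\alpha(i))=a_i$ while $\psi_i$ still maps into $B_i$. For $i\in\{1,2\}$ the two coordinates must be handled jointly: choose $\psi_1,\psi_2$ mapping into $B_1,B_2$ respectively so that $(\psi_1(\alpha(1)),\psi_2(\alpha(2)))=(a_1,a_2)$, which is possible since the ``diagonal'' pair $(a_1,a_2)$ is hit by any map into $B_1,B_2$ at the argument $\alpha(1),\alpha(2)$ once we are allowed to use constants. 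Then $\Psi(\alpha)=\beta$ by construction, so it only remains to check that $\Psi=(\psi_1,\dots,\psi_n)$ preserves $\rho$.

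To check preservation, take any $\gamma\in\rho$ and consider $\Psi(\gamma)\in\boldsymbol B$. Since $\boldsymbol B\setminus\rho=\{\beta,\beta'\}$, we only need to exclude $\Psi(\gamma)=\beta$ and $\Psi(\gamma)=\beta'$. The idea is that $\Psi(\gamma)=\beta$ would force $\gamma$ to lie in a $2\times 2$-box (in coordinates $1,2$, with all coordinates $\ge 3$ equal to $a_i$) whose other three corners are in $\rho$ but whose $\beta$-corner is not — contradicting either the definition of $\Psi$ on the $\ge 3$ coordinates (those $\psi_i$ hit $a_i$ only from inputs that are ``forced'', so $\gamma(i)\in\{a_i\}$ or a controlled value) or, in the coordinates $1,2$, contradicting $1\overset{\rho}{\sim}2$ via the characterization in the Lemma preceding Corollary~\ref{PatternChanging} (using that $\beta$ is the relevant reference tuple). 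The case $\Psi(\gamma)=\beta'$ is excluded because $\psi_i(\gamma(i))=a_i$ for $i\ge 3$ forces the tail of $\beta'$, and then the pair $(\psi_1(\gamma(1)),\psi_2(\gamma(2)))=(b_1,b_2)$ again produces, together with $\beta$, a forbidden $2\times 2$ configuration obstructing $1\overset{\rho}{\sim}2$. Carrying this out cleanly — i.e.\ showing that whenever $\Psi(\gamma)\in\{\beta,\beta'\}$ one can read off from $\gamma$ a genuine pattern-obstruction between coordinates $1$ and $2$ — is the main obstacle; the bookkeeping with which coordinates are ``constant'' versus ``swap'' in $\Psi$ is the delicate part, and it is where the hypothesis $1\overset{\rho}{\sim}2$ does all the work.

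I expect the final structure to mirror Lemma~\ref{trivialpatternLem}: that proof presumably handles the single-hole case by exactly this constant/retraction vector-function, and here the only change is that coordinates $1$ and $2$ are merged into one ``super-coordinate'' taking values in $B_1\times B_2$ with the single diagonal hole $(b_1,b_2)$ removed (besides $(a_1,a_2)$), so the pattern condition $1\overset\rho\sim 2$ is precisely the statement that no further hole appears in that super-coordinate. Thus a short reduction to (the proof of) Lemma~\ref{trivialpatternLem} applied to the $(n-1)$-ary relation obtained by this grouping may be the most economical write-up, but I would first do the direct verification above to be safe.
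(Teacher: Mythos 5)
Your overall target is right (exhibit, for each $\alpha=(c_1,\dots,c_n)\notin\rho$, an explicit vector-function into the box $\boldsymbol B$ and check it avoids the two holes), but the proof has a genuine gap exactly where you admit it does: $\psi_1$ and $\psi_2$ are never actually defined, and the choices you do sketch would fail. If $\psi_1,\psi_2$ are constants with values $a_1,a_2$, then $\Psi(\gamma)$ lands on $\beta=(a_1,\dots,a_n)$ for \emph{every} $\gamma\in\rho$ whose tail maps to $(a_3,\dots,a_n)$, e.g.\ for $\gamma=(c_1',c_2,c_3,\dots,c_n)\in\rho$, so $\Psi$ does not preserve $\rho$. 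Moreover your choice of \emph{constant} maps $\psi_i\equiv a_i$ for $i\ge 3$ (when $\alpha(i)\neq b_i$) compounds the problem: it makes the tail of $\Psi(\gamma)$ equal to $(a_3,\dots,a_n)$ for far too many $\gamma$, so the dangerous tuples are no longer confined to a single binary section and the hypothesis $1\overset{\rho}{\sim}2$ cannot be brought to bear. The paper's proof supplies precisely the two ingredients you are missing. First, for $i\ge 3$ one must take the indicator-type retraction $\psi_i(x)=a_i$ iff $x=c_i$ and $\psi_i(x)=b_i$ otherwise, so that $\Psi(\gamma)$ can only be a hole when $\gamma(i)=c_i$ for all $i\ge 3$, i.e.\ when $(\gamma(1),\gamma(2))$ lies in the section $\sigma(x,y)=\rho(x,y,c_3,\dots,c_n)$. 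Second, one splits into cases: if no $c_1'$ has $(c_1',c_2,\dots,c_n)\in\rho$, take $\psi_1\equiv a_1$ and the retraction on all $i\ge 2$ (then the only reachable hole would force $\gamma=(\gamma(1),c_2,\dots,c_n)\in\rho$, contradiction); otherwise, $1\overset{\rho}{\sim}2$ implies the connected components of $\sigma$ are rectangles $A_1\times A_2$ with pairwise disjoint projections, and one sets $\psi_1=b_1$ on $A_1$, $=a_1$ off $A_1$, and $\psi_2=a_2$ on $A_2$, $=b_2$ off $A_2$, where $A_1\times A_2$ is the component of $(c_1',c_2)$. Rectangularity then rules out both $(a_1,a_2)$ and $(b_1,b_2)$ as images of a pair in $\sigma$, and $(c_1,c_2)\mapsto(a_1,a_2)$ because $c_1\notin A_1$ while $c_2\in A_2$. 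This component construction is the concrete content of ``$1\overset{\rho}{\sim}2$ does all the work,'' and it is absent from your write-up.

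The fallback you propose at the end --- grouping coordinates $1,2$ into a super-coordinate and invoking Lemma~\ref{trivialpatternLem} for the resulting $(n-1)$-ary relation --- is not sound. A key tuple for the grouped relation is witnessed by a unary map on the product sort $A\times A$, and the map produced by Lemma~\ref{trivialpatternLem} ($(x_1,x_2)\mapsto(a_1,a_2)$ iff $(x_1,x_2)=(c_1,c_2)$, else $(b_1,b_2)$) does not factor as $(\psi_1(x_1),\psi_2(x_2))$; so keyness of the grouped relation does not yield keyness of $\rho$. That this reduction never uses $1\overset{\rho}{\sim}2$ is the telltale sign that it cannot work.
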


Thus, we have the following characterization of key relations with almost trivial pattern preserved by a WNU.

\begin{thm}

Suppose $\rho$ is a relation preserved
by a WNU, the pattern of $\rho$ is $\{\{1,2\},\{3\},\ldots,\{n\}\}$.
Then $\rho$ is a key essential relation iff
there exist $(a_1,\ldots,a_n)\notin\rho$ and
$b_1,\ldots,b_n\in A$ such that
\[(\{a_1,b_1\}\times \dots\times\{a_n,b_n\})\setminus \{(a_1,a_2,a_3,\ldots,a_n),
(b_1,b_2,a_3,\ldots,a_n)\}\subseteq \rho.\]

\end{thm}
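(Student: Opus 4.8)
The plan is to prove the two implications separately. The converse is immediate: since the pattern of $\rho$ is $\{\{1,2\},\{3\},\ldots,\{n\}\}$ we have $1\overset{\rho}{\sim}2$, so Lemma~\ref{AlmostTrivialPattern} applies verbatim and shows that $\rho$ is a key relation with $(a_1,\ldots,a_n)$ a key tuple. (Note that the WNU hypothesis is not even used in this direction.)

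For the forward direction I would first pick any key tuple $(a_1,\ldots,a_n)$ of $\rho$ (which exists because $\rho$ is a key relation) and invoke Theorem~\ref{MainTheorem} with $r=2$: this yields a box $\boldsymbol B=B_1\times\cdots\times B_n$ with $(a_1,\ldots,a_n)\in\boldsymbol B$, a prime $p$, bijections $\phi_1\colon B_1\to\mathbb Z_p$ and $\phi_2\colon B_2\to\mathbb Z_p$, and elements $b_3,\ldots,b_n$ with $B_i=\{a_i,b_i\}$ for $i\ge 3$, such that
$$\rho\cap\boldsymbol B=(\phi_1(x_1)+\phi_2(x_2)=0)\vee(x_3=b_3)\vee\cdots\vee(x_n=b_n).$$
Here $p$ may be larger than $2$, so I cannot simply read a two-element box off $\boldsymbol B$; instead I select one using the group structure. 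Since $(a_1,\ldots,a_n)\notin\rho$, we have $\phi_1(a_1)+\phi_2(a_2)\ne 0$. Set $b_1:=\phi_1^{-1}(-\phi_2(a_2))$ and $b_2:=\phi_2^{-1}(-\phi_1(a_1))$; these satisfy $b_1\ne a_1$ and $b_2\ne a_2$ exactly because $\phi_1(a_1)+\phi_2(a_2)\ne 0$. Now restrict to $\boldsymbol B':=\{a_1,b_1\}\times\{a_2,b_2\}\times B_3\times\cdots\times B_n\subseteq\boldsymbol B$ and evaluate the displayed formula on the four pairs $(x_1,x_2)\in\{a_1,b_1\}\times\{a_2,b_2\}$: the choice of $b_1,b_2$ makes $\phi_1(x_1)+\phi_2(x_2)=0$ for $(a_1,b_2)$ and for $(b_1,a_2)$, whereas $\phi_1(x_1)+\phi_2(x_2)=\pm(\phi_1(a_1)+\phi_2(a_2))\ne 0$ for $(a_1,a_2)$ and for $(b_1,b_2)$. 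Hence a tuple of $\boldsymbol B'$ lies outside $\rho$ if and only if $(x_1,x_2)\in\{(a_1,a_2),(b_1,b_2)\}$ and $x_i=a_i$ for every $i\ge 3$, i.e. the only tuples of $\boldsymbol B'$ missing from $\rho$ are $(a_1,a_2,a_3,\ldots,a_n)$ and $(b_1,b_2,a_3,\ldots,a_n)$. Using $B_i=\{a_i,b_i\}$ for $i\ge 3$ this gives exactly
$$(\{a_1,b_1\}\times\cdots\times\{a_n,b_n\})\setminus\{(a_1,a_2,a_3,\ldots,a_n),(b_1,b_2,a_3,\ldots,a_n)\}=\boldsymbol B'\cap\rho\subseteq\rho,$$
as required.

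The main obstacle — modest as it is — is precisely the bookkeeping just described: recognizing that the prime $p$ produced by Theorem~\ref{MainTheorem} need not equal $2$, and then choosing the two-element subsets $\{a_1,b_1\}\subseteq B_1$ and $\{a_2,b_2\}\subseteq B_2$ through the $\mathbb Z_p$-coordinates so that the restricted box $\boldsymbol B'$ omits exactly the two ``diagonal'' tuples. Everything else reduces to a direct application of Theorem~\ref{MainTheorem} in one direction and of Lemma~\ref{AlmostTrivialPattern} in the other.
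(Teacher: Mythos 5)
Your proof is correct and follows essentially the same route as the paper: the paper also obtains the forward direction from Theorem~\ref{MainTheorem} (whose $r=2$ case rests on Lemma~\ref{almostTrivialPatternFirstLemma}) and the converse from Lemma~\ref{AlmostTrivialPattern}. Your explicit extraction of the two-element sub-box via $b_1=\phi_1^{-1}(-\phi_2(a_2))$ and $b_2=\phi_2^{-1}(-\phi_1(a_1))$ correctly fills in the step that the paper passes over with a bare ``Hence'' when $p>2$.
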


The case when the pattern of a key relation is almost trivial arises if we consider
relations preserved by a 2-semilattice operation or a semilattice operation.
A \emph{semilattice operation} is a binary associative commutative idempotent operation.
A \emph{2-semilattice operation} is a binary commutative idempotent operation satisfying
$f(x,f(x,y)) = f(x,y)$.

\begin{thm}\label{aboutsemilattice}
Suppose $\rho$ is a key essential relation preserved by a semilattice operation or a 2-semilattice operation.
Then the pattern of $\rho$ is either trivial, or almost trivial.
\end{thm}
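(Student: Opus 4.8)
The plan is to reduce everything, via Theorem~\ref{MainTheorem}, to a short linear computation over $\mathbb Z_p$. First, observe that it suffices to treat the case of a 2-semilattice operation $f$: every semilattice operation is in particular a 2-semilattice operation, since associativity and idempotence give $f(x,f(x,y))=f(f(x,x),y)=f(x,y)$. A 2-semilattice operation is a binary commutative idempotent operation, hence a (binary) WNU, so Theorem~\ref{PatternIsEquivalenceForWNUF} applies and the pattern of $\rho$ is an equivalence relation with at most one class of size greater than $1$. Thus the only thing to prove is that the pattern cannot have a class of size at least $3$.

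Suppose, for contradiction, the pattern is $\{\{1,\dots,r\},\{r+1\},\dots,\{n\}\}$ with $r\ge 3$. Fix a key tuple $(a_1,\dots,a_n)$ and apply Theorem~\ref{MainTheorem} to obtain $\boldsymbol B=B_1\times\cdots\times B_n$, a prime $p$, bijections $\phi_i\colon B_i\to\mathbb Z_p$ for $i\le r$, values $B_i=\{a_i,b_i\}$ for $i>r$, so that $\rho\cap\boldsymbol B$ is cut out by $(\phi_1(x_1)+\dots+\phi_r(x_r)=0)\vee(x_{r+1}=b_{r+1})\vee\dots\vee(x_n=b_n)$ and every element of $\boldsymbol B\setminus\rho$ is a key tuple. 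Next I substitute $x_i:=a_i$ for $i>r$ (recall $a_i\ne b_i$); by Lemma~\ref{ReduceArityOfKey} and Corollary~\ref{PatternChanging}, applied repeatedly, the resulting relation $\rho'$ is a key relation of arity $r\ge 3$ with full pattern, still preserved by $f$ (plugging in constants is harmless because $f$ is idempotent), with $\rho'\cap(B_1\times\cdots\times B_r)=\{x:\phi_1(x_1)+\dots+\phi_r(x_r)=0\}$ and every element of $(B_1\times\cdots\times B_r)\setminus\rho'$ a key tuple. So from now on I may assume $\rho$ itself has full pattern of arity $n\ge 3$, with $\boldsymbol B=B_1\times\cdots\times B_n$, $|B_i|=p$, and $\rho\cap\boldsymbol B=\{x\in\boldsymbol B:\phi_1(x_1)+\dots+\phi_n(x_n)=0\}$.

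Now I exploit that $f$ preserves $\rho$. For a key tuple $\gamma\in\boldsymbol B\setminus\rho$ set $d(\gamma):=\phi_1(\gamma_1)+\dots+\phi_n(\gamma_n)\ne 0$; for each $i$ there is a unique $e_i\in B_i$ with $\phi_i(e_i)=\phi_i(\gamma_i)-d(\gamma)$, and then $\gamma[i\mapsto e_i]\in\rho\cap\boldsymbol B$. For $i\ne j$ apply $f$ coordinatewise to $\gamma[i\mapsto e_i]$ and $\gamma[j\mapsto e_j]$: by idempotence and commutativity the result agrees with $\gamma$ outside coordinates $i,j$, carrying $f(e_i,\gamma_i)$ and $f(\gamma_j,e_j)$ there, and it lies in $\rho$. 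Provided these two values lie back in $B_i$, $B_j$ (the delicate point, see below), the image lies in $\rho\cap\boldsymbol B$; transporting through the $\phi$'s and writing $v_k:=\phi_k(f(e_k,\gamma_k))-\phi_k(e_k)$, this reads $v_i+v_j=d(\gamma)$ in $\mathbb Z_p$ for every pair $i\ne j$. Since $n\ge 3$ these equations force a common value $v$ with $2v=d(\gamma)$; if $p=2$ this already contradicts $d(\gamma)\ne 0$. For odd $p$, letting $\gamma$ range over $\boldsymbol B\setminus\rho$ and varying $d(\gamma)$ shows that the operation induced by $f$ on each $B_i$ (read through $\phi_i$) is exactly averaging $(s,t)\mapsto(s+t)/2$ on $\mathbb Z_p$; but averaging violates the 2-semilattice law, since $f(x,f(x,y))$ becomes $(3x+y)/4\ne(x+y)/2$ whenever $x\ne y$, and such $x\ne y$ exist because $|B_i|=p\ge 3$. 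This is the desired contradiction.

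The hard part will be the delicate point just flagged: a priori $f$ need not map $\boldsymbol B$, or even $B_i\times B_i$, into $\boldsymbol B$, so the ``operation induced on $B_i$'' must be given meaning before the computation above makes sense. I would handle this by first passing to the core of $\rho$ (the machinery of Section~6), inside which the relevant box is stable under $f$ on the block coordinates; alternatively one argues directly that the one-coordinate modifications and their $f$-images used above stay in $\boldsymbol B$, using that every element of $\boldsymbol B\setminus\rho$ is a key tuple together with Lemma~\ref{KeyGotoRho} (a $\rho$-preserving vector-function sends a key tuple either into $\rho$ or to another key tuple). Once this box bookkeeping is settled, the remainder is the short $\mathbb Z_p$-linear argument above, and the semilattice case needs no separate treatment since every semilattice operation is a 2-semilattice operation.
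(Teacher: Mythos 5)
Your reductions are fine: semilattice $\Rightarrow$ 2-semilattice, 2-semilattice $\Rightarrow$ binary WNU, Theorem~\ref{PatternIsEquivalenceForWNUF} to get at most one nontrivial class, Theorem~\ref{MainTheorem} plus substitution of $a_{r+1},\ldots,a_n$ to reduce to a full-pattern key relation of arity $r\ge 3$ with $\rho\cap\boldsymbol B$ a linear equation over $\mathbb Z_p$, and the $v_i+v_j=d(\gamma)$ computation is algebraically correct \emph{conditional on} the $f$-image landing back in $\boldsymbol B$. But that condition is exactly where the proof is missing, and neither of your two proposed repairs closes it. (a) The ``direct'' route: the tuple $f(\gamma[i\mapsto e_i],\gamma[j\mapsto e_j])$ already lies in $\rho$, so Lemma~\ref{KeyGotoRho} tells you nothing; and even for tuples outside $\rho$, Theorem~\ref{MainTheorem} only asserts that every tuple of $\boldsymbol B\setminus\rho$ is a key tuple, not the converse, so ``is a key tuple'' does not place a tuple inside $\boldsymbol B$. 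Nothing forces $f(e_i,\gamma_i)\in B_i$, and without that the linear equation cannot be applied to the image. (b) The ``pass to the core'' route: by Lemma~\ref{coreispreservedbyWNU} the operation that stabilizes the core (and, via Lemma~\ref{DeriveKey} and Lemma~\ref{preserveConnectedComponent}, the key block) is not $f$ but $\Psi\circ f$ for the restricting vector-function $\Psi$. That induced operation is indeed averaging on the block (this is Lemma~\ref{WNUIsLinear} with $m=2$ and idempotence), but your final contradiction uses the 2-semilattice identity $f(x,f(x,y))=f(x,y)$, and this identity is \emph{not} inherited by $\Psi\circ f$: the inner value $f(x,y)$ may leave the block, get moved by $\Psi$, and the law for $f$ says nothing about $\Psi\circ f(x,\Psi\circ f(x,y))$. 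This loss is fatal to your endgame for odd $p$, because averaging \emph{is} a commutative idempotent binary operation, i.e.\ a perfectly good binary WNU; no contradiction can come from WNU-ness and idempotence of a binary operation alone.

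The paper's proof is built precisely to dodge this: from a 2-semilattice $s$ it derives WNUs $\boldsymbol f_m$ of \emph{every} arity $m\ge 2$, restricts to three coordinates of a key tuple to get a full-pattern key relation, passes to its core, and applies Lemma~\ref{WNUIsLinear} to the induced WNU of arity $m=|A_1|$ on a nontrivial key block: the induced operation is $x\mapsto t\cdot x_1+\dots+t\cdot x_m$, so $\boldsymbol f^{(1)}(x,\ldots,x)=tm\cdot x=0$, contradicting idempotence. Idempotence, unlike the 2-semilattice identity, \emph{does} survive the passage to the core (since $\Psi$ is the identity on $\proj\boldsymbol\sigma$), which is why the paper's invariant works where yours does not. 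To salvage your write-up you would have to either prove block-stability for the original $f$ (which is not available) or replace the 2-semilattice identity in the final step by an identity that transfers to the core, which in effect leads you back to the paper's higher-arity argument.
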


\begin{cor}

Suppose $\rho$ is a relation preserved by a semilattice or a 2-semilattice operation,
$1\overset{\rho}{\sim}2$.
Then $\rho$ is a key essential relation if and only if there exist
$(a_1,a_2,\ldots,a_n)\notin\rho$ and $b_1,b_2,\ldots,b_n\in A$ such that
\[(\{a_1,b_1\}\times \dots\times\{a_n,b_n\})\setminus \{(a_1,a_2,a_3,\ldots,a_n),
(b_1,b_2,a_3,\ldots,a_n)\}\subseteq \rho.\]

\end{cor}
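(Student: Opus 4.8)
The plan is to derive this corollary by assembling Theorem~\ref{aboutsemilattice}, Theorem~\ref{MainTheorem} and Lemma~\ref{AlmostTrivialPattern}; all the substantial work is already contained in those statements, so what is left is to stitch them together and to translate the ``mod~$p$'' description of Theorem~\ref{MainTheorem} into the box description that appears here.

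For the ``if'' direction I would argue directly from Lemma~\ref{AlmostTrivialPattern}. Given $1\overset{\rho}{\sim}2$ together with a tuple $(a_1,\dots,a_n)\notin\rho$ and elements $b_1,\dots,b_n\in A$ satisfying $(\{a_1,b_1\}\times\dots\times\{a_n,b_n\})\setminus\{(a_1,a_2,a_3,\dots,a_n),(b_1,b_2,a_3,\dots,a_n)\}\subseteq\rho$, that lemma immediately yields that $\rho$ is a key relation (in fact $(a_1,\dots,a_n)$ is a key tuple for $\rho$). Note that this implication uses neither the semilattice nor the $2$-semilattice polymorphism.

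For the ``only if'' direction I would first remark that a semilattice operation and a $2$-semilattice operation are both binary commutative idempotent operations, hence binary WNUs, so Theorem~\ref{aboutsemilattice} and Theorem~\ref{MainTheorem} apply to $\rho$. Assume $\rho$ is a key relation. By Theorem~\ref{aboutsemilattice} its pattern is trivial or almost trivial; since $1\overset{\rho}{\sim}2$ and $1\ne 2$, the pattern has a nonsingleton class, hence it is not trivial, so it is almost trivial and its unique $2$-element class must be $\{1,2\}$, i.e.\ the pattern equals $\{\{1,2\},\{3\},\dots,\{n\}\}$.

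It then remains to invoke Theorem~\ref{MainTheorem} with $r=2$ on a key tuple $(a_1,\dots,a_n)\notin\rho$ (which exists by the definition of a key relation): it produces $\boldsymbol B=B_1\times\dots\times B_n$ with $B_i=\{a_i,b_i\}$ for $i\ge 3$, a prime $p$, and bijections $\phi_1\colon B_1\to\mathbb Z_p$ and $\phi_2\colon B_2\to\mathbb Z_p$; since $B_1=\{a_1,b_1\}$ is a two-element set we must have $p=2$. Over $\mathbb Z_2$ the equation $\phi_1(x_1)+\phi_2(x_2)=0$ just says $\phi_1(x_1)=\phi_2(x_2)$, and because $(a_1,\dots,a_n)\notin\rho$ we have $\phi_1(a_1)\ne\phi_2(a_2)$, so among the four pairs of $B_1\times B_2$ precisely $(a_1,a_2)$ and $(b_1,b_2)$ violate it while $(a_1,b_2)$ and $(b_1,a_2)$ satisfy it. Feeding this into the formula $\rho\cap\boldsymbol B=(\phi_1(x_1)+\phi_2(x_2)=0)\vee(x_3=b_3)\vee\dots\vee(x_n=b_n)$, the only tuples of $\boldsymbol B$ lying outside $\rho$ are $(a_1,a_2,a_3,\dots,a_n)$ and $(b_1,b_2,a_3,\dots,a_n)$, which is exactly the required inclusion. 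The only place needing care is this last bookkeeping step — keeping track of which of the four pairs of $B_1\times B_2$ fall outside $\rho$ and checking that the naming of the elements of each $\{a_i,b_i\}$ is consistent with the statement — but this is routine once Theorem~\ref{MainTheorem} is in hand; the genuine difficulty sits entirely inside Theorem~\ref{aboutsemilattice} and Theorem~\ref{MainTheorem}.
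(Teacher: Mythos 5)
Your overall architecture is the right one and matches the paper's: the ``if'' direction is exactly Lemma~\ref{AlmostTrivialPattern} (and indeed uses no polymorphism hypothesis), and the ``only if'' direction is Theorem~\ref{aboutsemilattice} (pattern trivial or almost trivial, hence, given $1\overset{\rho}{\sim}2$, equal to $\{\{1,2\},\{3\},\ldots,\{n\}\}$) followed by the characterization of key relations with almost trivial pattern. The paper routes this last step through the unnamed theorem of Section~2.6, i.e.\ through Lemma~\ref{almostTrivialPatternFirstLemma}, which produces the required box directly; you route it through Theorem~\ref{MainTheorem}, which is legitimate but forces you to translate the $\mathbb Z_p$-description into the box description.

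That translation is where there is a genuine gap. Theorem~\ref{MainTheorem} asserts $B_i=\{a_i,b_i\}$ only for $i=r+1,\ldots,n$; for $i\le r$ the set $B_i$ is a $p$-element set in bijection with $\mathbb Z_p$, and nothing forces $p=2$. So the step ``since $B_1=\{a_1,b_1\}$ is a two-element set we must have $p=2$'' rests on a misreading of the theorem, and the claim $p=2$ is not true in general: already the relation $\{(x,y)\in\mathbb Z_3^{2}\mid x+y=0\}$ is preserved by the semilattice operation with top element $0$, is a key relation with $1\overset{\rho}{\sim}2$, and has $p=3$. The repair is short. Since $(a_1,\ldots,a_n)\notin\rho$ and $a_i\in B_i$ for all $i$, the description of $\rho\cap\boldsymbol B$ forces $\phi_1(a_1)+\phi_2(a_2)\ne 0$; put $b_1=\phi_1^{-1}(-\phi_2(a_2))\ne a_1$ and $b_2=\phi_2^{-1}(-\phi_1(a_1))\ne a_2$. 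Inside $\{a_1,b_1\}\times\{a_2,b_2\}$ the equation $\phi_1(x_1)+\phi_2(x_2)=0$ then holds exactly for $(a_1,b_2)$ and $(b_1,a_2)$ and fails for $(a_1,a_2)$ and $(b_1,b_2)$, so your final bookkeeping goes through with these $b_1,b_2$ and yields precisely the two missing corners $(a_1,a_2,a_3,\ldots,a_n)$ and $(b_1,b_2,a_3,\ldots,a_n)$.
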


\subsection{Key relations with full pattern}

Here we consider key relations
preserved by a WNU whose pattern is a full equivalence relation.
For example, this case arises if a relation is preserved by a Mal'tsev operation.

We state that
any key relation can be divided into blocks
such that every block is defined by a linear equation.

Recall that $\widetilde \rho$ is a relation $\rho$ filled up with
all essential tuples.
We define a graph whose vertices are tuples from $\widetilde \rho$.
Two tuples are \emph{adjacent} in the graph if they differ just in one element.
Then tuples of $\widetilde \rho$ can be divided into connected components.
A connected component of $\widetilde \rho$ is called \emph{a block of $\rho$}.
A block is called \emph{trivial} if it contains only tuples from $\rho$.
We have the following characterization of key relations with full pattern.

\begin{thm}\label{mainTheoremForFullPattern}

Suppose $\rho$ is a key essential relation  of arity greater than 2 preserved by a WNU, the pattern of $\rho$ is
a full equivalence relation. Then
\begin{enumerate}
\item Every block of $\rho$ equals $B_{1}\times \dots \times B_{n}$ for some
$B_{1},\ldots,B_{n}\subseteq A$.

\item For every nontrivial block $\boldsymbol B = B_{1}\times \dots \times B_{n}$ the intersection
$\rho\cap\boldsymbol B$
can be defined as follows.
There exist an abelian group $(G;+,-,0)$, whose order is a power of a prime number,
and surjective mappings
$\phi_i: B_{i}\to G$ for $i=1,2,\ldots,n$ such that
\[\rho\cap \boldsymbol B = \{(x_1,\ldots,x_n)\mid \phi_1(x_1)+\phi_2(x_2) + \ldots +\phi_n(x_n) = 0\}.\]
\end{enumerate}

\end{thm}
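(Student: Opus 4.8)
The plan is to reduce to the situation handled in Section~5 (the result that a relation of arity $m$ with exactly $|A|^{m-1}$ tuples, whose every $(m-1)$-projection is full, and which is preserved by a WNU, is defined by a linear equation), and to use the core machinery from Section~6. First I would fix a key tuple $(a_1,\ldots,a_n)$ and recall that, since the pattern is full, Corollary~\ref{PatternChanging} together with Theorem~\ref{PatternIsEquivalenceForWNUF} propagates fullness of the pattern to all the reductions we take. For item~(1), I would argue that $\widetilde\rho$ is again preserved by the WNU (adding essential tuples is a relational-clone-respecting closure, so one checks the WNU still applies), and then show that each connected component of the ``differ in one coordinate'' graph on $\widetilde\rho$ is a subproduct $B_1\times\cdots\times B_n$. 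The idea: if two tuples in $\widetilde\rho$ agree everywhere except coordinate $i$, and $\widetilde\rho$ contains all essential tuples, then a Maltsev-type or WNU-type combination lets us ``swap'' the value in any coordinate $j$ independently, so the component is closed under coordinatewise mixing, hence is a full box on its support $B_k := \proj_k(\text{component})$.

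For item~(2), let $\boldsymbol B = B_1\times\cdots\times B_n$ be a nontrivial block, so $\rho\cap\boldsymbol B \subsetneq \boldsymbol B$ and there is a tuple $\beta\in\boldsymbol B\setminus\rho$; this $\beta$ is essential for $\rho$ (since $\boldsymbol B\subseteq\widetilde\rho$), and I would want to see it is a key tuple. Because the pattern of $\rho$ is full, Lemma~2.8 (the key-tuple reformulation of the pattern) says that inside $\boldsymbol B$ no coordinate can be changed ``independently'' of any other — i.e.\ no sub-box $\{a_i,b_i\}\times\{a_j,b_j\}$ in two coordinates can miss exactly the tuple agreeing with $\beta$. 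Combined with nontriviality and with fact~(4) about key relations (a vector-function image of a key tuple is either in $\rho$ or is again a key tuple), this should force $\rho\cap\boldsymbol B$ to have exactly $|\boldsymbol B|/|B_i|$ tuples when we fix any one coordinate, and more precisely to have $|\boldsymbol B|$ minus the number of tuples in the complement, arranged so that projecting $\boldsymbol B\setminus\rho$ onto any $n-1$ coordinates is surjective. Rescaling each $B_k$ by a bijection onto $\{1,\ldots,|B_k|\}$ and pulling back, I would arrange that $\boldsymbol B\setminus\rho$ is exactly a relation with $|B_1|^{\,?}\cdots$ — more honestly: I expect the core argument shows $|\rho\cap\boldsymbol B| = |\boldsymbol B|\cdot\frac{p-1}{p}$ for a prime $p$, matching the complement being the solution set of a single equation. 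Then Section~5 applies to $\boldsymbol B\setminus\rho$ (or to a suitable derived relation of the right cardinality with full projections), yielding an abelian group $G$ of prime-power order and surjections $\phi_k:B_k\to G$ with $\rho\cap\boldsymbol B = \{x : \sum\phi_k(x_k)=0\}$.

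The main obstacle I anticipate is item~(2): getting the counting exactly right so that Section~5's hypothesis (``exactly $|A|^{m-1}$ tuples, all $(m-1)$-projections full'') is literally met. The raw block $\boldsymbol B$ need not have its sides $B_k$ all of the same size $p$, and $\rho\cap\boldsymbol B$ need not at first glance have the clean cardinality; the work is to use the full pattern plus the key-tuple property to show every proper ``line'' through $\beta$ (fixing all but one coordinate) meets $\rho$ in exactly $|B_k|-1$ points with the omitted point varying bijectively, which is precisely what pins down the linear-equation structure and forces the group order to be a prime power (via the Section~5 result on WNU-preserved affine-like relations). A secondary technical point is verifying that all the reductions and the passage to $\widetilde\rho$ keep us inside the class of WNU-preserved relations and keep the pattern full, so that the inductive/structural lemmas of Section~6 are applicable to each block; I would treat this as a short bookkeeping step citing Lemma~\ref{KeyIsAlwaysKey}, Lemma~\ref{ReduceArityOfKey}, and Corollary~\ref{PatternChanging}.
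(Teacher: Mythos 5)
There is a genuine gap in your treatment of item~(2), and it is exactly at the point you flag as "the main obstacle." You try to arrange for $\rho\cap\boldsymbol B$ itself to satisfy the hypotheses of the Section~5 result (strongly rich: a \emph{unique} completion in each coordinate), e.g.\ by claiming every line through a non-tuple $\beta$ meets $\rho$ in exactly $|B_k|-1$ points. That claim is false for a block of $\rho$ in general: the theorem's conclusion only promises \emph{surjective} maps $\phi_i:B_i\to G$, so the fibers $\phi_i^{-1}(g)$ can have size greater than one, and then fixing all but one coordinate gives several completions inside $\rho\cap\boldsymbol B$. (Uniqueness of completions holds for key blocks of a \emph{core} — that is Theorem~\ref{MainForFullPattern} — but not for blocks of $\rho$ itself.) The missing idea in the paper's proof is to first define, for each coordinate $i$, the relation $\sigma_i(x,y)$ ("$x$ and $y$ complete the same $(n-1)$-tuple to a member of $\rho\cap\boldsymbol B$"), prove it is an equivalence relation compatible with the WNU by pushing everything into a nontrivial block of the core via a restricting-type vector-function, and then pass to the quotient $\boldsymbol\rho'$ on $B_1/\sigma_1\times\dots\times B_n/\sigma_n$. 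It is the \emph{quotient} that is strongly rich and to which Theorem~\ref{StronglyRichRelationTHM} applies; composing with the natural projections gives the surjective $\phi_i$. You also omit the argument that the resulting group has prime-power order: for a block of $\rho$ this does not come for free from Section~5 but is obtained by mapping the block into a block of the core and invoking Lemma~\ref{NoMappingBetweenDifferentGroups} (orders of differences cannot grow under such maps).

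Item~(1) is also under-argued. Connected components of the "differ in one coordinate" graph on $\widetilde\rho$ are \emph{not} boxes for arbitrary WNU-preserved relations, so "coordinatewise mixing" cannot be a purely local WNU computation; the full pattern and the key property must enter. The paper's proof of the rectangle-completion lemma behind Theorem~\ref{blockIsCubic} maps the offending configuration into the core by a vector-function sending a putative non-essential tuple to a key tuple, uses that blocks of the core are boxes defined by linear equations (Theorem~\ref{CoreBlockIsLinear}), and needs Corollary~\ref{aaaabCor} (that $\Omega(\boldsymbol f^{(j)}(a_j,\dots,a_j,b_j))=\Omega(b_j)$ on nontrivial core blocks, which in turn rests on the order of group elements dividing $m-1$) to derive a contradiction. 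None of this is visible in your sketch, so as written the proposal does not constitute a proof of either item.
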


Note that the existence of a WNU preserving the relation $\rho$ is a necessary condition.
As a counterexample, let us consider the following key relation.
Let $s_0,s_1,\ldots,s_5$ be all permutations on the set $\{0,1,2\}$.
Put $A= \{0,1,2,\ldots,5\}$ and
\[\rho = \{(i,a,b) \mid i\in \{0,1,\ldots,5\},a,b\in \{0,1,2\},s_i(a) = b\}.\]
Let us show that $\rho$ is a key relation and every tuple
\[\alpha \in (\{0,1,\ldots,5\}\times \{0,1,2\}\times \{0,1,2\})\setminus \rho\] is a key tuple.
For every two permutations $\psi_2$, $\psi_3$ on the set $\{0,1,2\}$ we can find an appropriate
permutation $\psi_1$ on the set $\{0,1,\ldots,5\}$
such that the vector-function $(\psi_1,\psi_2,\psi_3)$ preserves $\rho$.
It is easy to see that using these vector-functions we can map
any tuple from $\{0,1,\ldots,5\}\times \{0,1,2\}\times \{0,1,2\}$ to $\alpha$.
The pattern of the relation is a full equivalence relation.
We can check that this relation doesn't satisfy the statement of the theorem.

\section{Definitions and Notations}

In this section we give the remaining definitions we need in the paper.

By $O_{A,s}^{n}$
we denote the set of all tuples $(f_{1},f_{2},\ldots,f_{s})$
such that \[f_{1},f_{2},\ldots,f_{s}\in O_{A}^{n}\]
Let $O_{A,s} = \bigcup \limits_{n \ge 1} O_{A,s}^{n}.$
The tuple $(f_{1},f_{2},\ldots,f_{s})$ is called a \emph{vector-function}.
To distinguish vector-functions and functions, we denote vector-functions with bold symbols,
except for unary vector functions which we usually denote by capital Greek letters.
For a vector function $\boldsymbol f\in O_{A,s}^{n}$,
the corresponding tuple of functions is
$(\boldsymbol f^{(1)},\boldsymbol f^{(2)},\ldots,\boldsymbol f^{(s)}).$
We define the composition for vector-functions in the following natural way.
The equation \[\boldsymbol h(x_{1},\ldots,x_n) =
\boldsymbol f(\boldsymbol g_1(x_{1},\ldots,x_n),\ldots,\boldsymbol g_{n}(x_{1},\ldots,x_n))\]
means that for every $i\in\{1,2,\ldots,s\}$ we have
\[\boldsymbol h^{(i)}(x_{1},\ldots,x_n) =
\boldsymbol f^{(i)}(\boldsymbol g_1^{(i)}(x_{1},\ldots,x_n),\ldots,\boldsymbol g_{n}^{(i)}(x_{1},\ldots,x_n)).\]
\emph{A clone of vector-functions} is a set of vector-functions closed under composition
and containing the vector function $(id,id,\ldots,id)$, where $id(x) = x$ for every $x\in A$.
A vector-function is called a WNU if every function in it is a WNU.

As it was mentioned in Section~\ref{twovaluedcaseSection},
a relation is called $s$-sorted (or multi-sorted) if every variable of this relation has a sort from the set $\{1,2,\ldots,s\}$.
The set of all $s$-sorted relations we denote by $R_{A,s}.$

Let $\boldsymbol \rho$ be an $s$-sorted relation of arity $h$,
and $r_{i}$ be the sort of the $i$-th variable for every $i\in\{1,2,\ldots,h\}$.
We say that $\boldsymbol f$
\emph{preserves} $\boldsymbol \rho$ 
if
\[\boldsymbol f\begin{pmatrix}
a_{1,1} & a_{1,2} & \dots & a_{1,n}\\
a_{2,1} & a_{2,2} & \dots & a_{2,n}\\
\vdots & \vdots & \ddots & \vdots\\
a_{h,1} & a_{h,2} & \dots & a_{h,n}
 \end{pmatrix}:=
 \begin{pmatrix}
\boldsymbol f^{(r_{1})}(a_{1,1} , a_{1,2} , \ldots , a_{1,n})\\
\boldsymbol f^{(r_{2})}(a_{2,1} , a_{2,2} , \ldots , a_{2,n})\\
 \vdots\\
\boldsymbol f^{(r_{h})}(a_{h,1} , a_{h,2} , \ldots , a_{h,n})\\
 \end{pmatrix}
 \in \boldsymbol \rho\]
for all
\[
\begin{pmatrix}a_{1,1}\\ a_{2,1}\\ \vdots \\a_{h,1}\end{pmatrix},
\begin{pmatrix}a_{1,2}\\ a_{2,2}\\ \vdots \\a_{h,2}\end{pmatrix},
 \ldots,
\begin{pmatrix}a_{1,n}\\ a_{2,n}\\ \vdots \\a_{h,n}\end{pmatrix}
\in \boldsymbol \rho.\]

Suppose $i\in \{1,2,\ldots,s\},$ then by $\sigma_{=}^{i,s}$ we denote the $s$-sorted relation
whose variables are of the $i$-th sort
such that $(x,y)\in\sigma_{=}^{i,s}\Longleftrightarrow (x = y).$
By $false$ we denote the empty relation of arity 0.
Put $\Sigma_{s} = \{\sigma_{=}^{i,s} \mid 1\le i \le s\}\cup\{false\}.$
In the same way as for the set $R_{A}$
we can define the closure operator on the set $R_{A,s}.$
Suppose $S\subseteq R_{A,s},$ then by $[S]$ we denote the set of all
$s$-sorted relations $\sigma\in R_{A,s}$ that can be represented by a positive primitive formula:
\[\boldsymbol\rho (x_{1},\ldots,x_{n}) = \exists
y_{1}\ldots \exists y_{l} \;
\boldsymbol\rho_{1}(z_{1,1},\ldots,z_{1,n_{1}})\wedge \ldots \wedge
\boldsymbol\rho_{m}(z_{m,1},\ldots,z_{m,n_{m}}),\]
where $\boldsymbol\rho_{1},\ldots,\boldsymbol\rho_{m}\in S,$ the variable symbols  
$z_{i,j}\in \{x_{1},\ldots,x_{n},y_{1},\ldots,y_{l}\}$ are subject to the following restriction:
if a variable is substituted in some relation as a variable of the $l$-th sort, then
this variable cannot be substituted in any relation as a variable of other sort.
It is shown in~\cite{romov} that there exists a one-to-one correspondence
between clones of $O_{A,s}$ 
and closed subsets of $R_{A,s}$ containing $\Sigma_{s}$.

In this paper we consider only two types of relations.
First, relations whose variables are of different sorts, moreover, the $i$-th variable
has the $i$-th sort. Second, relations whose variables are of one sort.
To distinguish them
relations with variables of different sorts we denote by bold symbols, like $\boldsymbol\rho$, $\boldsymbol\delta$.
Relations from $R_{A}$ are considered as relations with variables of the first sort.

For $c\in A$ and $i\in \{1,2,\ldots,s\}$ by $=_c^{(i)}$ we denote the unary relation
with the variable of sort $i$ containing only element $c$.

Suppose $\Psi_{1}$ and $\Psi_2$ are unary vector functions.
By $\Psi_{1}\circ\Psi_2$ we denote the unary vector-function
$\Psi$ defined as follows $\Psi(x) = \Psi_{1}(\Psi_2(x))$.

By $\ar(\rho)$ we denote the arity of the relation $\rho$,
by $\ar(f)$ we denote the arity of the function $f$.
By 0 we always denote the identity in an abelian group or the additive identity for a field.

For $\rho\in R_{A}^n$ and $i\in\{1,2,\ldots,n\}$
by $\proj_{i}\rho$ we denote the projection of $\rho$ onto the $i$-th coordinate,
that is
\[\proj_{i}\rho = \{c\mid \exists a_1\dots\exists a_n \colon(a_{1},\ldots,a_{i-1},c,a_{i+1},\ldots,a_n)\in \rho\}.\]
Denote $\proj\rho = \proj_{1}\rho\times \proj_{2}\rho\times \dots\times\proj_{n}\rho.$
We say that a tuple $(b_{1},\ldots,b_{n})$
\emph{witnesses} that
$(a_{1},\ldots,a_{n})$ is an essential tuple for $\rho$
if for every $i\in\{1,2,\ldots,n\}$
we have
$(a_{1},\ldots,a_{i-1},b_{i},a_{i+1},\ldots,a_{n})\in\rho$.

By $\Key(\boldsymbol\rho)$ we denote the relation $\boldsymbol\rho$ filled up with all key tuples for $\boldsymbol\rho$.

To simplify explanation,
we sometimes define tuples as words, for example
$a^{n}b^{m}$ is the tuple
$(\underbrace{a,\ldots,a}_n,\underbrace{b,\ldots,b}_m)$.

As it was mentioned in Section~\ref{MainResultsSection},
sometimes we consider a graph corresponding to a relation, where
tuples are vertices, and two tuples are adjacent if they differ just in one element.
Then we may consider a path in the graph and connected components of this graph.
Usually, we refer to a path or a connected component of the graph as
to a path of the relation and a connected component of the relation.

\section{Auxiliary statements}

\begin{lem}\label{DammuVariables}
Suppose $\rho$ is a key relation. Then $\rho$ is essential if and only if $\rho$ has no dummy variables.
\end{lem}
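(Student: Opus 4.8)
The plan is to prove the two directions separately; only one of them actually uses that $\rho$ is a key relation.

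\emph{Easy direction: essential $\Rightarrow$ no dummy variables.} I would prove the contrapositive, and here nothing about key relations is needed. If the $i$-th variable of $\rho\in R_A^n$ is dummy, then $\rho$ is closed under replacing the $i$-th coordinate by an arbitrary element, so $\rho(x_1,\dots,x_n)$ is logically equivalent to $\sigma(x_1,\dots,x_{i-1},x_{i+1},\dots,x_n)$, where $\sigma$ is the relation of arity $n-1$ obtained from $\rho$ by deleting the $i$-th coordinate. This is a (one-term) conjunction of relations of arity $<n$, so $\rho$ is not essential.

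\emph{Hard direction: no dummy variables $\Rightarrow$ essential.} Again I would argue the contrapositive: assuming $\rho$ is a key relation that is \emph{not} essential, I produce a dummy variable. Fix a key tuple $\beta$ of $\rho$; by definition $\beta\notin\rho$. Since $\rho$ is not essential, Lemma~\ref{sushnabor} says $\rho$ has no essential tuple at all, so in particular $\beta$ is not an essential tuple. Unfolding the definition of essential tuple, this yields a coordinate $i$ such that the whole axis-parallel line $\ell=\{\gamma\in A^n : \gamma(j)=\beta(j)\text{ for all }j\ne i\}$ is disjoint from $\rho$. I claim this $i$ is a dummy coordinate. If not, there are $\gamma\in\rho$ and $c\in A$ such that $\gamma'\notin\rho$, where $\gamma'$ is $\gamma$ with the $i$-th entry replaced by $c$. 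Because $\beta$ is a key tuple and $\gamma'\notin\rho$, pick a unary vector-function $\Psi=(\psi_1,\dots,\psi_n)$ preserving $\rho$ with $\Psi(\gamma')=\beta$. For every $j\ne i$ we have $\gamma(j)=\gamma'(j)$, hence $\psi_j(\gamma(j))=\psi_j(\gamma'(j))=\beta(j)$, so $\Psi(\gamma)$ agrees with $\beta$ off coordinate $i$, i.e. $\Psi(\gamma)\in\ell$. But $\gamma\in\rho$ and $\Psi$ preserves $\rho$, so $\Psi(\gamma)\in\rho$, contradicting $\ell\cap\rho=\varnothing$. Hence the $i$-th variable is dummy, completing the proof.

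I do not anticipate a real obstacle; the proof is short. The one step worth isolating is the passage, via Lemma~\ref{sushnabor}, from ``$\rho$ is not essential'' to ``some full axis-parallel line through the key tuple misses $\rho$'' — that is where non-essentiality is actually exploited. After that, the vector-function witnessing $\beta$ as a key tuple automatically drags any would-be non-dummy witness $\gamma\in\rho$ onto that forbidden line, which is the whole point.
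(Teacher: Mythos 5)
Your proof is correct and is essentially the paper's argument in contrapositive form: in both cases the vector-function $\Psi$ witnessing the key tuple transports a non-dummy witness at coordinate $i$ onto the axis-parallel line through the key tuple, so that line meets $\rho$ exactly when coordinate $i$ is non-dummy (the paper phrases this directly, showing the key tuple is essential coordinate by coordinate). The only substantive difference is that you also spell out the easy direction (a dummy variable makes $\rho$ a conjunction of a smaller-arity relation, hence not essential), which the paper leaves implicit.
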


\begin{proof}

Assume that $\rho$ has no dummy variables and $\ar(\rho) = n$.
Let us prove that a key tuple $(a_1,\ldots, a_n)$ is also an essential tuple.
Let $i\in \{1,2,\ldots, n\}$. Since $\rho$ has no dummy variables,
there exist a tuple $(c_1,\ldots, c_n)\notin \rho$ and $d_i$ such that
$(c_1,\ldots,c_{i-1},d_{i},c_{i+1},\ldots, c_n)\in \rho$.
We know that $(c_1,\ldots, c_n)$ can be mapped to the key tuple by a vector-function
$(\psi_{1},\ldots,\psi_{n})$ which
preserves $\rho$. Therefore,
\begin{multline*}
(\psi_1(c_1),\ldots,\psi_{i-1}(c_{i-1}),\psi_{i}(d_{i}),\psi_{i+1}(c_{i+1}),\ldots,\psi_{n}(c_{n})= \\
(a_1,\ldots,a_{i-1},\psi_{i}(d_{i}),a_{i+1},\ldots,a_{n})\in \rho.
\end{multline*}
Thus, for every $i\in \{1,2,\ldots, n\}$ we can change the $i$-th component of the key tuple to get a tuple from $\rho$.
Then, $(a_1,\ldots, a_n)$ is an essential tuple and, by Lemma \ref{sushnabor}, $\rho$ is essential.

It is obvious, that a relation that has dummy variables cannot be essential.
\end{proof}

\begin{lem}\label{KeyIsAlwaysKey}
Suppose $\sigma\in R_{A}$, $\rho$ = $\sigma\times A^{s}$.
Then $\rho$ is a key relation if and only if $\sigma$ is a key relation.
\end{lem}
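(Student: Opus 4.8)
The plan is to prove both directions by exploiting the fact that the extra $s$ coordinates in $\rho = \sigma \times A^s$ carry no constraint, so a vector-function can act arbitrarily (e.g.\ as a constant) on them without destroying the property of preserving $\rho$. Write $\ar(\sigma) = n$, so $\ar(\rho) = n+s$. Throughout I will identify a tuple in $A^{n+s}$ with a pair $(\alpha, \gamma)$, where $\alpha \in A^n$ and $\gamma \in A^s$, and note that $(\alpha,\gamma) \in \rho \iff \alpha \in \sigma$ regardless of $\gamma$.

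First I would prove the ``if'' direction: assume $\sigma$ is a key relation with key tuple $\beta \in A^n \setminus \sigma$. I claim $(\beta, \delta)$ is a key tuple for $\rho$ for any fixed $\delta \in A^s$. Indeed, take any $(\alpha,\gamma) \in A^{n+s}\setminus\rho$; then $\alpha \notin \sigma$, so there is a unary vector-function $\Psi = (\psi_1,\ldots,\psi_n)$ preserving $\sigma$ with $\Psi(\alpha) = \beta$. Extend $\Psi$ to $\widehat\Psi = (\psi_1,\ldots,\psi_n,c_1,\ldots,c_s)$, where each $c_j$ is the constant map sending everything to $\delta(j)$. Then $\widehat\Psi$ preserves $\rho$: given $(\alpha',\gamma')\in\rho$ we have $\alpha'\in\sigma$, hence $\Psi(\alpha')\in\sigma$, hence $(\Psi(\alpha'),\delta)\in\rho$; and $\widehat\Psi(\alpha,\gamma) = (\beta,\delta)$. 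So $\rho$ is a key relation.

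For the ``only if'' direction, assume $\rho$ is a key relation with key tuple $(\beta,\delta) \in A^{n+s}\setminus\rho$; then $\beta\notin\sigma$. I claim $\beta$ is a key tuple for $\sigma$. Take any $\alpha \in A^n\setminus\sigma$. Then $(\alpha,\delta)\notin\rho$, so there is a unary vector-function $\Xi = (\xi_1,\ldots,\xi_n,\eta_1,\ldots,\eta_s)$ preserving $\rho$ with $\Xi(\alpha,\delta) = (\beta,\delta)$. Let $\Psi = (\xi_1,\ldots,\xi_n)$ be the restriction to the first $n$ components. Then $\Psi(\alpha) = \beta$, and $\Psi$ preserves $\sigma$: given $\alpha'\in\sigma$, pick any $\gamma'\in A^s$, so $(\alpha',\gamma')\in\rho$; since $\Xi$ preserves $\rho$, $(\Psi(\alpha'),\eta_1(\gamma'(1)),\ldots) \in \rho$, which forces $\Psi(\alpha')\in\sigma$. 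Hence $\sigma$ is a key relation. I do not anticipate a genuine obstacle here; the only point requiring minor care is that the projection/extension operations on unary vector-functions interact correctly with the ``preserves'' relation, which is immediate from the product structure of $\rho$.
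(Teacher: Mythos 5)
Your proof is correct and follows essentially the same route as the paper's: for the forward direction you extend the vector-function by constants on the last $s$ coordinates, and for the converse you append arbitrary entries, invoke the key-tuple property of $\rho$, and project the resulting vector-function back onto the first $n$ coordinates. No gaps.
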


\begin{proof}

Assume that $\sigma$ is a key relation and $\alpha$ is a key tuple for $\sigma$.
Choose $\beta\in A^{s}$.
Let us prove that $\alpha\beta$ is a key tuple for $\rho$.
Suppose $\delta\in A^{\ar(\rho)}\setminus\rho$.
Remove the last $s$ elements of $\delta$ to get a tuple $\delta'$.
Obviously, $\delta'\notin \sigma$.
Then there exists a unary vector-function $\Psi$ which maps $\delta'$ to $\alpha$.
Define a unary vector-function $\Psi'$ as follows
$\Psi'^{(i)} = \Psi^{(i)}$ for $i\le \ar(\sigma)$,
$\Psi'^{(i)} = \beta(i-\ar(\sigma))$ for $i> \ar(\sigma)$.
We can check that $\Psi'$ maps $\delta$ to $\alpha\beta$ and preserves $\rho$.
Thus, $\rho$ is a key relation.

Assume that $\rho$ is a key relation and $\alpha$ is a key tuple for $\rho$.
Let $\beta$ be obtained from $\alpha$ by removing the last $s$ elements.
To prove that $\beta$ is a key tuple for $\sigma$, we
just add random $s$ elements to the end of a tuple $\gamma\in A^{\ar(\sigma)}\setminus \sigma$,
and consider a vector-function preserving
$\rho$ that maps the obtained tuple to $\alpha$. Then we remove the last $s$ functions of the vector-function to get a
vector-function that maps $\gamma$ to $\beta$ and preserves $\sigma$.
\end{proof}

\begin{lem}\label{ReduceArityOfKey}
Suppose $\sigma(x_2,\ldots,x_n) = \rho(b_1,x_2,\ldots,x_n)$,
$(b_1,\ldots,b_n)$ is a key tuple for $\rho$.
Then $\sigma$ is a key relation and
$(b_2,\ldots,b_n)$ is a key tuple for $\sigma$.
\end{lem}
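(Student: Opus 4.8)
The plan is to show directly that $(b_2,\ldots,b_n)$ is a key tuple for $\sigma$, using that $(b_1,\ldots,b_n)$ is a key tuple for $\rho$. First I would observe that $(b_2,\ldots,b_n)\notin\sigma$, which is immediate since $\sigma(x_2,\ldots,x_n)=\rho(b_1,x_2,\ldots,x_n)$ and $(b_1,b_2,\ldots,b_n)\notin\rho$. Now take an arbitrary tuple $\gamma=(c_2,\ldots,c_n)\in A^{n-1}\setminus\sigma$; I need a unary vector-function $\Phi=(\phi_2,\ldots,\phi_n)$ preserving $\sigma$ with $\Phi(\gamma)=(b_2,\ldots,b_n)$.

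The key step is to lift $\gamma$ to a tuple not in $\rho$. Consider $\gamma'=(b_1,c_2,\ldots,c_n)$. Since $\gamma\notin\sigma$, we have $\gamma'\notin\rho$. Because $(b_1,\ldots,b_n)$ is a key tuple for $\rho$, there is a unary vector-function $\Psi=(\psi_1,\psi_2,\ldots,\psi_n)$ preserving $\rho$ with $\Psi(\gamma')=(b_1,\ldots,b_n)$; in particular $\psi_1(b_1)=b_1$ and $\psi_i(c_i)=b_i$ for $i=2,\ldots,n$. Now define $\Phi=(\psi_2,\ldots,\psi_n)$ by simply dropping the first component. Then $\Phi(\gamma)=(b_2,\ldots,b_n)$ as required. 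It remains to check that $\Phi$ preserves $\sigma$: if $(d_2,\ldots,d_n)\in\sigma$, then $(b_1,d_2,\ldots,d_n)\in\rho$, so applying $\Psi$ gives $(\psi_1(b_1),\psi_2(d_2),\ldots,\psi_n(d_n))=(b_1,\psi_2(d_2),\ldots,\psi_n(d_n))\in\rho$, which by definition of $\sigma$ means $(\psi_2(d_2),\ldots,\psi_n(d_n))=\Phi(d_2,\ldots,d_n)\in\sigma$.

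This establishes that $(b_2,\ldots,b_n)$ is a key tuple for $\sigma$, and in particular that $\sigma$ is a key relation, completing the proof. I do not anticipate a genuine obstacle here: the only point requiring a little care is that $\psi_1$ must fix $b_1$ (so that the lifted image stays in the ``slice'' defining $\sigma$), and this is automatic from $\Psi(\gamma')=(b_1,\ldots,b_n)$ since the first coordinate of $\gamma'$ is already $b_1$. One could also phrase the argument via Lemma~\ref{KeyGotoRho} or the closure properties of key relations, but the hands-on verification above is shortest.
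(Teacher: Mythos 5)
Your proposal is correct and follows essentially the same route as the paper's own proof: lift the tuple to $(b_1,c_2,\ldots,c_n)\notin\rho$, use the key tuple property of $\rho$ to get $\Psi$, and drop the first component, noting that $\psi_1(b_1)=b_1$ makes the restriction preserve $\sigma$. No issues.
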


\begin{proof}

For every $(c_{2},\ldots,c_{n})\notin \sigma$ we need to find a vector-function which maps
$(c_{2},\ldots,c_{n})$ to $(b_2,\ldots,b_n)$.
We know that $(b_{1},c_{2},\ldots,c_{n})\notin \rho$,
therefore there exists a vector-function $\Psi$ which maps $(b_{1},c_{2},\ldots,c_{n})$
to $(b_1,b_2,\ldots,b_n)$ and preserves $\rho$.
It is easy to check that the vector function $(\Psi^{(2)},\ldots,\Psi^{(n)})$ preserves the relation $\sigma$
and maps $(c_{2},\ldots,c_{n})$ to $(b_2,\ldots,b_n)$.
This completes the proof.
\end{proof}

\begin{lem}\label{KeyGotoRho}
Suppose $\alpha$ is a key tuple for $\rho$, and a unary vector-function $\Psi$ preserves
$\rho$. Then either $\Psi(\alpha)\in \rho$, or $\Psi(\alpha)$ is a key tuple for $\rho$.
\end{lem}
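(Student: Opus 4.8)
The plan is to reduce everything to the fact that a composition of unary vector-functions each preserving $\rho$ again preserves $\rho$, which is immediate from the definition of a vector-function preserving a relation. So the real content is a one-line chase.

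First I would dispose of the trivial alternative: if $\Psi(\alpha)\in\rho$, there is nothing to prove. So assume $\Psi(\alpha)\notin\rho$; in particular $\Psi(\alpha)\in A^{h}\setminus\rho$, which is the membership condition required of a key tuple (recall a key tuple must lie outside $\rho$). It remains to check the universal part of the definition of a key tuple for the candidate tuple $\Psi(\alpha)$: namely, that every tuple not in $\rho$ can be sent to $\Psi(\alpha)$ by some vector-function preserving $\rho$.

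So fix an arbitrary $\gamma\in A^{h}\setminus\rho$. Since $\alpha$ is a key tuple for $\rho$, there is a unary vector-function $\Phi$ preserving $\rho$ with $\Phi(\gamma)=\alpha$. Now consider $\Psi\circ\Phi$. Since both $\Psi$ and $\Phi$ preserve $\rho$, their composition $\Psi\circ\Phi$ preserves $\rho$ as well, and $(\Psi\circ\Phi)(\gamma)=\Psi(\Phi(\gamma))=\Psi(\alpha)$. As $\gamma$ was arbitrary, $\Psi(\alpha)$ satisfies the defining property of a key tuple, so $\Psi(\alpha)$ is a key tuple for $\rho$.

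There is no genuine obstacle here; the only points to state explicitly are that $\Psi(\alpha)\notin\rho$ in the nontrivial case (this is exactly the case assumption) and that preservation of $\rho$ is closed under composition of vector-functions. Everything else is the direct composition argument above, so I would keep the write-up to these few lines.
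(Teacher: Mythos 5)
Your proof is correct and is essentially identical to the paper's: both assume $\Psi(\alpha)\notin\rho$, take an arbitrary $\gamma\notin\rho$, pull it to $\alpha$ by a vector-function $\Phi$ preserving $\rho$ (using that $\alpha$ is a key tuple), and conclude via the composition $\Psi\circ\Phi$. Nothing further is needed.
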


\begin{proof}
Assume that $\Psi(\alpha)\notin \rho$. 
Let $\beta\notin \rho$.
We know that
there exists a vector-function $\Psi'$ which maps
$\beta$ to $\alpha$. Then
$\Psi\circ\Psi'$ is a vector function preserving $\rho$ which maps $\beta$ to $\Psi(\alpha)$.
Hence $\Psi(\alpha)$ is a key tuple for $\rho$.
\end{proof}

\begin{lem}\label{findbetterWNU}

Suppose $\boldsymbol f\in O_{A,s}$ is a WNU.
Then using composition we can derive a WNU $\boldsymbol f'$
such that
for every $\alpha\in A^{s}$ and $\boldsymbol h(x) = \boldsymbol f'(\alpha,\alpha,\ldots,\alpha,x)$
we have
$\boldsymbol h(\boldsymbol h(x)) = \boldsymbol h(x)$.

\end{lem}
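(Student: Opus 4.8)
The plan is to exploit the WNU identities together with the idempotence of $\boldsymbol f$ to force a unary map, built by diagonalizing $\boldsymbol f$, to become idempotent after finitely many iterations. First I would fix $\alpha \in A^s$ and, for a vector-function $\boldsymbol g$ of arity $s$, look at the unary vector-function $\boldsymbol h_{\boldsymbol g}(x) := \boldsymbol g(\alpha,\dots,\alpha,x)$. Since $A$ is finite, $A^s$ is finite, so for the map $\boldsymbol h_{\boldsymbol g}$ there is some $k\ge 1$ with $\boldsymbol h_{\boldsymbol g}^{(k)} = \boldsymbol h_{\boldsymbol g}^{(k)}\circ\boldsymbol h_{\boldsymbol g}^{(k)}$ (a standard fact: iterating any self-map of a finite set eventually lands on an idempotent power). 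The obstacle is that the exponent $k$ a priori depends on $\alpha$, whereas we need a single WNU $\boldsymbol f'$ that works uniformly for all $\alpha\in A^s$ simultaneously.

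To handle the uniformity I would proceed as follows. Let $N = |A|^s$, so that $\boldsymbol h^{(N!)} = \boldsymbol h^{(N!)}\circ \boldsymbol h^{(N!)}$ for every unary vector-function $\boldsymbol h : A^s \to A^s$ of the relevant shape, since $N!$ is a common period-plus-tail bound valid for every self-map of a set of size $N$. So it suffices to derive, from $\boldsymbol f$ by composition, a WNU $\boldsymbol f'$ of arity $s$ with the property that $\boldsymbol f'(\alpha,\dots,\alpha,x) = \boldsymbol h_\alpha^{(N!)}(x)$ where $\boldsymbol h_\alpha(x) = \boldsymbol f(\alpha,\dots,\alpha,x)$; then applying the observation of the previous paragraph to each $\boldsymbol h_\alpha$ finishes the proof. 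The point is that ``$m$-fold self-composition on the last coordinate'' can be realized inside the clone: set
$$
\boldsymbol f_1 := \boldsymbol f, \qquad
\boldsymbol f_{j+1}(x_1,\dots,x_s) := \boldsymbol f\bigl(x_1,\dots,x_{s-1}, \boldsymbol f_j(x_1,\dots,x_s)\bigr),
$$
and take $\boldsymbol f' := \boldsymbol f_{N!}$. Then directly from the definition $\boldsymbol f'(\alpha,\dots,\alpha,x) = \boldsymbol h_\alpha^{(N!)}(x)$, which is what we wanted, so $\boldsymbol h(\boldsymbol h(x)) = \boldsymbol h(x)$ holds for $\boldsymbol h(x) = \boldsymbol f'(\alpha,\dots,\alpha,x)$ and every $\alpha$.

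The remaining thing to check is that $\boldsymbol f'$ is again a WNU, i.e.\ that it is idempotent and satisfies the WNU identities. Idempotence is immediate by induction on $j$: if $\boldsymbol f_j(x,\dots,x) = x$ then $\boldsymbol f_{j+1}(x,\dots,x) = \boldsymbol f(x,\dots,x,\boldsymbol f_j(x,\dots,x)) = \boldsymbol f(x,\dots,x,x) = x$. For the WNU identities I would argue, again by induction on $j$, that $\boldsymbol f_j(y,\dots,y,x) = \boldsymbol f_j(y,\dots,y,x,y,\dots,y) = \dots = \boldsymbol f_j(x,y,\dots,y)$; the inductive step uses the WNU identities for $\boldsymbol f$ in the ``outer'' application, the inductive hypothesis for the ``inner'' occurrence $\boldsymbol f_j$, and idempotence of $\boldsymbol f_j$ to collapse $\boldsymbol f_j(y,\dots,y) = y$ when the distinguished variable $x$ sits in the last coordinate of the outer $\boldsymbol f$. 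The only mildly delicate case is the position $i = s$ (the last coordinate), where one must use that $\boldsymbol f_j(y,\dots,y,x)$ occurs as the last argument of $\boldsymbol f$ and that $\boldsymbol f(y,\dots,y,z) = \boldsymbol f(z,y,\dots,y)$ lets one move it; this is routine once the bookkeeping is set up. I expect the main obstacle to be purely this case analysis in the WNU-identity induction, not anything conceptual; the finiteness/uniformity issue is fully resolved by choosing the exponent $N!$ in advance.
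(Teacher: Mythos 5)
There is a genuine gap: the composition you define, $\boldsymbol f_{j+1}(x_1,\ldots,x_{m}) := \boldsymbol f\bigl(x_1,\ldots,x_{m-1},\boldsymbol f_j(x_1,\ldots,x_{m})\bigr)$ (writing $m$ for the arity of $\boldsymbol f$, which you conflate with the number of sorts $s$), does not in general produce a WNU, and the induction you sketch for the WNU identities cannot go through. When you evaluate $\boldsymbol f_{j+1}$ on a tuple that is constant equal to $y$ except for one $x$, the outer application has the form $\boldsymbol f(y,\ldots,y,x,y,\ldots,y,z)$ with $z=\boldsymbol f_j(y,\ldots,x,\ldots,y)$ in the last slot; this argument tuple takes \emph{three} distinct values in general, and the WNU identities of $\boldsymbol f$ say nothing about such tuples — they only constrain tuples constant outside a single coordinate. (The collapse $\boldsymbol f_j(y,\ldots,y)=y$ that you invoke never occurs, because the inner $\boldsymbol f_j$ always receives the distinguished variable among its arguments.) Concretely this already fails at $j=1$: on $A=\{0,1,2\}$ take the commutative idempotent binary operation with $f(0,1)=2$, $f(0,2)=0$, $f(1,2)=1$; then $\boldsymbol f_2(0,1)=f(0,f(0,1))=f(0,2)=0$ while $\boldsymbol f_2(1,0)=f(1,f(1,0))=f(1,2)=1$, so $\boldsymbol f_2$ is not commutative, hence not a WNU. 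Since the lemma is used downstream precisely through the WNU property of $\boldsymbol f'$ (e.g.\ in Lemma~\ref{OnlyOneLinearWithWNU}), this cannot be dismissed.

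The remainder of your plan is sound: the identity $\boldsymbol f_{j}(\alpha,\ldots,\alpha,x)=\boldsymbol h_\alpha^{(j)}(x)$ does hold for your $\boldsymbol f_j$, and fixing a single exponent in advance that makes every iterate of every such self-map idempotent is correct (indeed $|A|!$ already suffices, since $\boldsymbol h_\alpha$ acts coordinatewise on $A^{s}$, so one only needs a uniform bound for self-maps of $A$). The repair — which is what the paper does — is to replace ``compose only into the last slot'' by the full composition
$\boldsymbol f_{j+1}(x_{1},\ldots,x_{m^{j+1}}) = \boldsymbol f\bigl(\boldsymbol f_{j}(x_{1},\ldots,x_{m^{j}}),\ldots,\boldsymbol f_{j}(x_{m^{j}(m-1)+1},\ldots,x_{m^{j+1}})\bigr)$,
accepting that the arity grows to $m^{j+1}$. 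This visibly preserves the WNU identities: the block containing the distinguished variable evaluates, by the inductive hypothesis, to a value independent of its position inside that block, the remaining blocks collapse to $y$ by idempotence, and then the WNU identity of the outer $\boldsymbol f$ applies. One still has $\boldsymbol h_{j+1}=\boldsymbol h_{1}\circ\boldsymbol h_{j}$, so taking $j=|A|!$ finishes the proof.
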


\begin{proof}

Let $\boldsymbol f_{1} = \boldsymbol f$, $m = \ar(f)$.
Put
\[\boldsymbol f_{i+1}(x_{1}, \ldots, x_{m^{i+1}}) = \boldsymbol f(\boldsymbol f_{i}(x_{1},\ldots,x_{m^{i}}),\ldots,
\boldsymbol f_{i}(x_{m^{i}(m-1)+1},\ldots,x_{m^{i+1}})),\]
$\boldsymbol h_{i}(x) = \boldsymbol f_{i}(\alpha,\ldots,\alpha,x)$.
We can easily check that
$\boldsymbol h_{i+1}(x) = \boldsymbol h_1(\boldsymbol h_{i}(x))$.
Therefore, for $k = |A|!$ we have
$\boldsymbol h_{k}(\boldsymbol h_{k}(x)) = \boldsymbol h_{k}(x)$, which means that
we can take $\boldsymbol f_k$ for $\boldsymbol f'$.
\end{proof}

\begin{lem}\label{DeriveKey}

Suppose $\boldsymbol f$ preserves a key relation $\boldsymbol\rho$.
Then $\boldsymbol f$ preserves $\Key(\boldsymbol\rho)$.

\end{lem}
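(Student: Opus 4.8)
The plan is to verify directly that $\boldsymbol f$ maps tuples of $\Key(\boldsymbol\rho)$ into $\Key(\boldsymbol\rho)$. Let $n=\ar(\boldsymbol f)$ and take arbitrary $\alpha_1,\dots,\alpha_n\in\Key(\boldsymbol\rho)$. I would first dispose of the easy cases: if every $\alpha_i$ lies in $\boldsymbol\rho$, then $\boldsymbol f(\alpha_1,\dots,\alpha_n)\in\boldsymbol\rho\subseteq\Key(\boldsymbol\rho)$ since $\boldsymbol f$ preserves $\boldsymbol\rho$; and if $\beta:=\boldsymbol f(\alpha_1,\dots,\alpha_n)$ happens to lie in $\boldsymbol\rho$ we are again done. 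So the real work is the case $\beta\notin\boldsymbol\rho$, which forces the set $S:=\{i\mid \alpha_i\notin\boldsymbol\rho\}$ of ``key-tuple arguments'' to be nonempty, and the goal becomes to prove that $\beta$ is a key tuple for $\boldsymbol\rho$.

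To establish this I would fix an arbitrary $\gamma\in A^{\ar(\boldsymbol\rho)}\setminus\boldsymbol\rho$ and build a unary vector-function preserving $\boldsymbol\rho$ that sends $\gamma$ to $\beta$. For each $i\in S$, since $\alpha_i$ is a key tuple there is a unary vector-function $\Psi_i$ preserving $\boldsymbol\rho$ with $\Psi_i(\gamma)=\alpha_i$; for each $i\notin S$ let $\Psi_i$ be the constant unary vector-function whose $j$-th component is the constant map with value $\alpha_i(j)$. Writing $r_j$ for the sort of the $j$-th variable of $\boldsymbol\rho$, I would then set
$$\Phi^{(j)}(x)=\boldsymbol f^{(r_j)}\bigl(\Psi_1^{(j)}(x),\dots,\Psi_n^{(j)}(x)\bigr).$$
Coordinatewise evaluation gives $\Phi(\gamma)=\boldsymbol f(\Psi_1(\gamma),\dots,\Psi_n(\gamma))=\boldsymbol f(\alpha_1,\dots,\alpha_n)=\beta$, and for $\delta\in\boldsymbol\rho$ we get $\Phi(\delta)=\boldsymbol f(\Psi_1(\delta),\dots,\Psi_n(\delta))$ with $\Psi_i(\delta)\in\boldsymbol\rho$ for $i\in S$ (because $\Psi_i$ preserves $\boldsymbol\rho$) and $\Psi_i(\delta)=\alpha_i\in\boldsymbol\rho$ for $i\notin S$, hence $\Phi(\delta)\in\boldsymbol\rho$ since $\boldsymbol f$ preserves $\boldsymbol\rho$. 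Thus $\Phi$ preserves $\boldsymbol\rho$ and $\Phi(\gamma)=\beta$; as $\gamma$ was arbitrary, $\beta$ is a key tuple and the lemma follows.

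The only point requiring any care is the construction of $\Phi$: one must compose $\boldsymbol f$ with the witnesses $\Psi_i$ in just the right way so that a single vector-function simultaneously realizes $\Phi(\gamma)=\beta$ and preserves $\boldsymbol\rho$. The key observation making this work is that the arguments coming from $\boldsymbol\rho$ should be frozen as constants while the key-tuple arguments should be routed through their witnessing vector-functions; after that everything is a routine coordinatewise check, and the multi-sorted bookkeeping (carrying the sorts $r_j$ through $\boldsymbol f$) introduces no new difficulty. I note that Lemma~\ref{KeyGotoRho} is exactly the $n=1$ instance of this argument, so what is needed here is just its natural generalization to arbitrary arity.
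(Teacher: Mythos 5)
Your proof is correct and is essentially the paper's argument: both hinge on composing $\boldsymbol f$ with witnessing unary vector-functions for the key-tuple arguments while freezing the arguments lying in $\boldsymbol\rho$ as constants. The only cosmetic difference is that the paper maps one fixed key tuple $\alpha$ to $\beta$ and then invokes Lemma~\ref{KeyGotoRho}, whereas you map an arbitrary $\gamma\notin\boldsymbol\rho$ to $\beta$ directly, which amounts to inlining that lemma's one-line proof (as you yourself observe).
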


\begin{proof}

Let $\alpha_1,\ldots,\alpha_{m}\in\Key(\boldsymbol\rho)$,
we need to show that $\beta = \boldsymbol f(\alpha_{1},\ldots,\alpha_m)\in \Key(\boldsymbol\rho)$.
Without loss of generality we assume that $\alpha_{i}$ is a key tuple if $i\le k$, and
$\alpha_{i}\in \boldsymbol\rho$ if $i>k$.
Let $\alpha$ be a key tuple for $\boldsymbol\rho$.
By the definition of a key tuple, for every $i\in\{1,2,\ldots,k\}$
there exists a vector-function $\Psi_{i}$ that preserves $\boldsymbol\rho$ and maps $\alpha$ to $\alpha_{i}$.
Put $\Psi(x) = \boldsymbol f(\Psi_{1}(x),\ldots,\Psi_{k}(x),\alpha_{k+1},\ldots,\alpha_{m})$.
Obviously $\Psi$ preserves $\boldsymbol\rho$ and maps $\alpha$ to $\beta$.
By Lemma~\ref{KeyGotoRho}, we obtain $\beta\in \Key(\boldsymbol\rho)$.
\end{proof}

Recall that
we have a Galois connection between clones of vector-functions and closed sets
of multi-sorted relations. Then it follows from the above lemma that the relation
$\Key(\boldsymbol\rho)$ can be derived from $\boldsymbol\rho$
using positive primitive formulas.

\begin{lem}\label{DeriveRhoTilde}

Suppose $\boldsymbol\rho$ is a multi-sorted relation, then
$\widetilde {\boldsymbol\rho} \in[\{\boldsymbol\rho\}].$

\end{lem}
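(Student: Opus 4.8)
The plan is to exhibit $\widetilde{\boldsymbol\rho}$ directly as a positive primitive formula over $\boldsymbol\rho$. Write $h = \ar(\boldsymbol\rho)$ and consider the relation $\boldsymbol\delta$ defined by
$$\boldsymbol\delta(x_1,\ldots,x_h) = \exists y_1\ldots\exists y_h\; \bigwedge_{i=1}^{h}\boldsymbol\rho(x_1,\ldots,x_{i-1},y_i,x_{i+1},\ldots,x_h).$$
Intuitively, the existential witnesses $y_1,\ldots,y_h$ play exactly the role of a tuple that witnesses that $(x_1,\ldots,x_h)$ is an essential tuple: the $i$-th conjunct asserts that replacing the $i$-th coordinate by $y_i$ lands in $\boldsymbol\rho$.

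First I would check that this is a legitimate multi-sorted positive primitive formula. Each $x_i$ is substituted only in the $i$-th position of a copy of $\boldsymbol\rho$, hence always as a variable of the sort of the $i$-th coordinate of $\boldsymbol\rho$; and each $y_i$ occurs only in the $i$-th conjunct, again only in the $i$-th position. So no variable is ever used with two different sorts, and the sort restriction from the definition of $[\,\cdot\,]$ is satisfied; thus $\boldsymbol\delta\in[\{\boldsymbol\rho\}]$.

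Next I would prove $\widetilde{\boldsymbol\rho}=\boldsymbol\delta$ by the two obvious inclusions. For $\widetilde{\boldsymbol\rho}\subseteq\boldsymbol\delta$: if $(a_1,\ldots,a_h)\in\boldsymbol\rho$ take $y_i:=a_i$, so every conjunct reduces to $\boldsymbol\rho(a_1,\ldots,a_h)$; if $(a_1,\ldots,a_h)$ is an essential tuple, take for $y_i$ any value $b_i$ such that replacing the $i$-th coordinate by $b_i$ gives a tuple of $\boldsymbol\rho$ (such $b_i$ exists by the definition of an essential tuple, equivalently by taking $(b_1,\ldots,b_h)$ to be a witness). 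For $\boldsymbol\delta\subseteq\widetilde{\boldsymbol\rho}$: if $(a_1,\ldots,a_h)\in\boldsymbol\delta$ is witnessed by $y_i=b_i$, then either $(a_1,\ldots,a_h)\in\boldsymbol\rho\subseteq\widetilde{\boldsymbol\rho}$, or it is outside $\boldsymbol\rho$, in which case the conjuncts show that $(b_1,\ldots,b_h)$ witnesses that $(a_1,\ldots,a_h)$ is an essential tuple, so again $(a_1,\ldots,a_h)\in\widetilde{\boldsymbol\rho}$.

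There is no real obstacle here: the only point requiring a moment of care is the verification that the displayed formula respects the sorting discipline for multi-sorted relations, which is why the construction keeps every variable pinned to a single coordinate position. Everything else is a direct unwinding of the definitions of essential tuple, witness, and $\widetilde{\boldsymbol\rho}$.
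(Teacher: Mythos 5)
Your proof is correct and is essentially identical to the paper's: the paper exhibits exactly the same positive primitive formula $\exists y_1\dots\exists y_n\,\bigwedge_{j}\boldsymbol\rho(x_1,\ldots,x_{j-1},y_j,x_{j+1},\ldots,x_n)$ and leaves the verification to the reader. Your additional checks of the two inclusions and of the sort discipline are exactly the routine details the paper omits.
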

\begin{proof}
It is sufficient to check the following positive primitive formula
\[\widetilde{\boldsymbol\rho}(x_1,\ldots,x_n) =
\exists y_1\dots\exists y_n
\bigwedge\limits_{j=1}^{n}\boldsymbol\rho(x_1,\ldots,x_{j-1},y_{j},x_{j+1},\ldots,x_{n}).\qedhere\]
\end{proof}

\begin{lem}\label{preserveConnectedComponent}

Suppose $\boldsymbol\rho$ is preserved by an idempotent vector-function $\boldsymbol f$.
Then $\boldsymbol f$ preserves every connected component of $\boldsymbol\rho$.

\end{lem}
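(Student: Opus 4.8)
The plan is to show that for a fixed connected component $C$ of $\boldsymbol\rho$ and any tuples $\alpha_1,\ldots,\alpha_m\in C$, where $m=\ar(\boldsymbol f)$, we have $\boldsymbol f(\alpha_1,\ldots,\alpha_m)\in C$. The whole argument rests on one elementary observation: applying $\boldsymbol f$ coordinatewise turns a change in a single coordinate of a single argument into a change in at most a single coordinate of the output. Concretely, if two inputs to $\boldsymbol f$ differ only in the $j$-th argument, and those two versions of the $j$-th argument differ only in coordinate $i$, then the two outputs agree on every coordinate $k\ne i$, since for such $k$ all the values fed to $\boldsymbol f^{(k)}$ coincide. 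Hence the two outputs are equal or adjacent in the graph of $\boldsymbol\rho$, so they lie in the same connected component.

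First I would record the base case: since $\boldsymbol f$ is idempotent, $\boldsymbol f(\alpha_1,\ldots,\alpha_1)=\alpha_1\in C$. Then I would prove, by induction on $\ell\in\{1,\ldots,m\}$, that $\boldsymbol f(\alpha_1,\ldots,\alpha_\ell,\alpha_1,\ldots,\alpha_1)\in C$, where the last $m-\ell$ arguments are copies of $\alpha_1$. For the step from $\ell$ to $\ell+1$, pick a path $\alpha_1=\gamma_0,\gamma_1,\ldots,\gamma_t=\alpha_{\ell+1}$ in $\boldsymbol\rho$ connecting $\alpha_1$ and $\alpha_{\ell+1}$; such a path exists because both tuples lie in $C$, and it automatically stays inside $C$. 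Put $\delta_s=\boldsymbol f(\alpha_1,\ldots,\alpha_\ell,\gamma_s,\alpha_1,\ldots,\alpha_1)$. Each $\delta_s$ lies in $\boldsymbol\rho$ because $\boldsymbol f$ preserves $\boldsymbol\rho$ and all its arguments lie in $\boldsymbol\rho$; since consecutive $\gamma_s,\gamma_{s+1}$ differ in exactly one coordinate, the observation above shows $\delta_s$ and $\delta_{s+1}$ are equal or adjacent, hence in the same component. As $\delta_0\in C$ by the inductive hypothesis, a short induction on $s$ gives $\delta_t\in C$, and $\delta_t$ is exactly $\boldsymbol f(\alpha_1,\ldots,\alpha_{\ell+1},\alpha_1,\ldots,\alpha_1)$. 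Taking $\ell=m$ yields $\boldsymbol f(\alpha_1,\ldots,\alpha_m)\in C$.

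There is essentially no serious obstacle; the only points needing a little care are that a path in the graph joining two vertices of a connected component never leaves that component, that every intermediate tuple $\delta_s$ genuinely belongs to $\boldsymbol\rho$ (so that adjacency in the graph of $\boldsymbol\rho$ is meaningful), and that idempotency is used in its coordinatewise form $\boldsymbol f^{(k)}(c,\ldots,c)=c$. All three are immediate. Finally, the argument is written so that it applies verbatim to multi-sorted relations and vector-functions, since it only uses that each component function $\boldsymbol f^{(k)}$ is idempotent and that ``differ in one element'' and ``connected component'' are defined sort by sort.
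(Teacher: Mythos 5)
Your argument is correct, but it follows a genuinely different route from the paper's. The paper proves the lemma by showing that the connected component $\boldsymbol\delta$ containing a fixed tuple $(b_1,\ldots,b_n)$ is positive-primitive definable from $\boldsymbol\rho$ together with the singleton relations $=_c^{(i)}$: it defines $\boldsymbol\zeta_0=\{(b_1,\ldots,b_n)\}$ and iterates a formula
$\boldsymbol\zeta_{j+1}(x_1,\ldots,x_n)=\exists y_1\ldots\exists y_n\bigwedge_{i=0}^{n}\boldsymbol\rho(x_1,\ldots,x_i,y_{i+1},\ldots,y_n)\wedge\boldsymbol\zeta_j(y_1,\ldots,y_n)$
that captures reachability along paths, so that $\boldsymbol\zeta_j=\boldsymbol\delta$ for large $j$; the conclusion then falls out of the Galois connection, since an idempotent vector-function preserves every constant relation and hence everything pp-definable from $\boldsymbol\rho$ and constants. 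You instead argue directly and combinatorially: substitute the arguments of $\boldsymbol f$ one at a time, walk along a path inside the component in the slot being changed, and observe that a one-coordinate change in one input produces at most a one-coordinate change in the output, so the image never leaves the component; idempotency anchors the induction at $\boldsymbol f(\alpha_1,\ldots,\alpha_1)=\alpha_1$. Your version is more elementary and self-contained (it needs no clone-theoretic machinery), while the paper's is shorter given the framework already in place and records the slightly stronger fact that $\boldsymbol\delta\in[\{\boldsymbol\rho\}\cup\{=_c^{(i)}\}]$. Both proofs are sound, and both transfer verbatim to the multi-sorted setting.
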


\begin{proof}
Let $\boldsymbol\delta$ be a connected component of $\boldsymbol\rho$,
and $(b_1,\ldots,b_{n})\in\boldsymbol\delta$.
Let us define a sequence of relations of arity $n$.
Put $\boldsymbol\zeta_{0} = \{(b_{1},\ldots,b_{n})\}$,
\[\boldsymbol\zeta_{j+1}(x_1,\ldots,x_{n}) =
\exists y_1\dots\exists y_n
\bigwedge\limits_{i=0}^{n}
\boldsymbol\rho(x_1,\ldots,x_{i},y_{i+1},\ldots,y_{n})
\wedge \boldsymbol\zeta_{j}(y_1,\ldots,y_n).\]
Obviously, for
$j>|A|^{n}$ we get $\boldsymbol\zeta_{j} = \boldsymbol\delta$. Therefore,
\[\boldsymbol\delta\in [\{\boldsymbol\rho\}\cup \{=_{c}^{(i)}\mid c\in A, i=1,2,\ldots,n\}]\]
and $\boldsymbol\delta$ is preserved by an idempotent vector-function $\boldsymbol f$.
\end{proof}

In the remaining part of this section
we prove that a maximal clone defined by an $h$-universal relation cannot contain a
WNU.

Put $E_{k} = \{0,1,\ldots,k-1\}$ for every positive integer $k$.
We can represent every $a\in E_{h^{m}}$ uniquely  in the following form
\[a = a^{(m-1)}\cdot h^{m-1} + a^{(m-2)}\cdot h^{m-2} + \ldots
+ a^{(1)}\cdot h +a^{(0)}.\]

A relation $\rho \subseteq A^{ð}$ is called $h$-universal relation if there exist $m\ge 1$ and
a surjective mapping $q\colon A\to E_{h^{m}}$ such that
\[(a_0,\ldots,a_{h-1})\in\rho
\Leftrightarrow \forall i\in E_{m}\colon |\{(q(a_{0}))^{(i)},(q(a_{1}))^{(i)},\ldots,(q(a_{h-1}))^{(i)})\}|<h.\]

\begin{thm}\label{slupetskythm}\cite{LauSlupecky}\cite[Theorem 5.2.6.1]{lau}
Suppose $\rho$ is an $h$-universal relation and $q:A\to E_{h^{m}}$ is an appropriate mapping.
Then $f:A^{n}\to A$ belongs to $\Pol(\rho)$ if and only if
for every $i\in\{0,1,\ldots,m-1\}$ and $f_{i}(x_1,\ldots,x_n) := (q(f(x_1,\ldots,x_n))^{(i)}$
we have $|Im(f_{i})|<h$
or there exist
$j\in\{1,2,\ldots,n\}$, $\nu\in E_{m}$, a permutation
$s$ on $E_{h}$ such that
$f_{i}(x_1,\ldots,x_{n}) = s((q(x_{j}))^{(\nu)})$.
\end{thm}

\begin{cor}\label{slupetskyLem}
A WNU cannot preserve an $h$-universal relation.
\end{cor}
\begin{proof}
Assume the converse. Suppose a WNU $f\in O_{A}^{n}$ preserves an $h$-universal relation $\rho$.
Let $q:A\to E_{h^{m}}$ be an appropriate surjective mapping.
Put $f_{i}(x_1,\ldots,x_n) := (q(f(x_1,\ldots,x_n))^{(i)}$.
Then we apply Theorem~\ref{slupetskythm} to $\rho$ and $f$.
Since $f$ is idempotent, $|Im(f_{i})|= h$ for every $i$.
Hence for every $i$
there exist
$j\in\{1,2,\ldots,n\}$, $\nu\in E_{m}$, a permutation
$s$ on $E_{h}$ such that
$f_{i}(x_1,\ldots,x_{n}) = s((q(x_{j}))^{(\nu)})$.
This contradicts the fact that
\[f_{i}(x,y,\ldots,y)= f_{i}(y,x,y,\ldots,y) = \dots = f_{i}(y,\ldots,y,x).\qedhere\]
\end{proof} 

By Lemma~\ref{DammuVariables}
a key relation is essential if and only if it has no dummy variables.
That is why later in the paper we always assume that every key relation is essential.

\section{Strongly rich relations preserved by a WNU}

A relation $\boldsymbol\rho\subseteq A^{n}$ is called \emph{(strongly) rich}
if for every tuple
$(a_{1},\ldots,a_{n})$ and every $j\in \{1,\ldots,n\}$ there exists (a unique) $b\in A$
such that \[(a_{1},\ldots,a_{j-1},b,a_{j+1},\ldots,a_n)\in \boldsymbol\rho.\]

A relation of arity $n$ is called \emph{totally reflexive} if it contains
all tuples $(a_{1},\ldots,a_{n})\in A^{n}$ such that
$|\{a_1,\ldots,a_n\}|<n$.
A relation $\rho\in R_{A}^{n}$ is called \emph{symmetric} if
for every permutation $\sigma:\{1,2,\ldots,n\}\to\{1,2,\ldots,n\}$
we have
$\rho(x_1,\ldots,x_n) =
\rho(x_{\sigma(1)},\ldots,x_{\sigma(n)})$.
A relation $\rho\in R_{A}^{n}$ is called \emph{full} if $\rho = A^{n}$.

\begin{lem}\label{PermutationFromStronglyRich}

Suppose
$\boldsymbol\rho\in R_{A,n}$, $n\ge 3$, is a strongly rich relation.
Then for every $a,b\in A$ there exists a bijective mapping
$\psi:A\to A$ such that $\psi(a) = b$ and
$\psi(\sigma)\in [\{\boldsymbol\rho,\sigma\}\cup \{=_{c}^{(i)}\mid c\in A, i=1,2,\ldots,n\}]$
for every $\sigma\in R_{A}$.

\end{lem}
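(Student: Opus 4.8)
The plan is to use the strong richness to build, for each pair $a,b \in A$, a unary vector-function that acts as a single fixed bijection $\psi$ in every coordinate, and which can be assembled from $\boldsymbol\rho$ and unary constant relations by a positive primitive formula. First I would exploit the ``unique $b$'' clause: strong richness says that for every $(a_1,\dots,a_n)$ and every coordinate $j$ there is a \emph{unique} $b$ with $(a_1,\dots,a_{j-1},b,a_{j+1},\dots,a_n)\in\boldsymbol\rho$. Fixing coordinate $j=n$ and viewing the first $n-1$ entries as parameters, this defines a function $A^{n-1}\to A$; more to the point, if I freeze all but one of the parameter coordinates to a constant, I get for each $i<n$ a unary function $g_i\colon A\to A$ (the value of the last coordinate forced by putting $x$ in coordinate $i$ and a fixed constant elsewhere). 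The first key step is to check that each such $g_i$ is a \emph{bijection} of $A$: surjectivity and injectivity both follow from the uniqueness part of strong richness applied now with the varying coordinate as the ``unknown'' one. So strong richness hands us a supply of pp-definable unary bijections.

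The second step is to turn this supply into bijections that act \emph{uniformly in all $n$ coordinates simultaneously} (that is what $\psi(\sigma)$ means in the statement) and that realize a prescribed value $\psi(a)=b$. Here I would form the pp-formula
$$\delta(x_1,\dots,x_n) \;=\; \exists y_1\dots\exists y_n\;\; \boldsymbol\rho(x_1,\dots,x_n)\wedge\boldsymbol\rho(y_1,\dots,y_n)\wedge(\text{equating chosen }x\text{'s and }y\text{'s to constants}),$$
iterated as in the proof of Lemma~\ref{preserveConnectedComponent}, so as to extract the graph of a unary vector-function $\Psi=(\psi,\psi,\dots,\psi)$ whose single bijection $\psi$ is the composite of the $g_i$'s above. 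Since the $g_i$ generate a transitive set of bijections on $A$ (again by uniqueness: one can move any element to any other), some composite $\psi$ satisfies $\psi(a)=b$. The conclusion $\psi(\sigma)\in[\{\boldsymbol\rho,\sigma\}\cup\{=_c^{(i)}\}]$ then holds because applying a pp-definable unary vector-function to a relation $\sigma$ stays inside the relational clone generated by that vector-function's graph together with $\sigma$: explicitly, $\psi(\sigma)(x_1,\dots,x_n)=\exists z_1\dots\exists z_n\,\sigma(z_1,\dots,z_n)\wedge\bigwedge_i \Gamma_\psi(z_i,x_i)$ where $\Gamma_\psi$ is the (pp-definable) graph of $\psi$.

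The main obstacle I expect is the bookkeeping needed to guarantee that a \emph{single} bijection $\psi$ works in \emph{every} coordinate at once, rather than a genuine vector-function with distinct components per coordinate — strong richness as stated is ``coordinatewise'', so the uniformity has to be forced by hand, presumably by choosing the constants symmetrically across coordinates and/or by composing the coordinatewise bijections in a way that cancels the coordinate-dependence. A secondary point to handle carefully is that the iterated pp-definitions actually \emph{close up} to the graph of $\psi$ (a connectivity/finiteness argument, $j > |A|^n$ steps suffice, exactly as in Lemma~\ref{preserveConnectedComponent}), and that the constant relations $=_c^{(i)}$ are what let us pin down the parameters. Once uniformity and transitivity are in place, the rest is the routine pp-calculus sketched above.
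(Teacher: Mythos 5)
Your overall shape is right and your final step (defining $\psi(\sigma)$ by $\exists y_1\dots\exists y_m\;\sigma(y_1,\ldots,y_m)\wedge\bigwedge_i\zeta(y_i,x_i)$ where $\zeta$ is a pp-definable graph of $\psi$) is exactly what the paper does. But the crucial point of the lemma --- that for \emph{arbitrary} prescribed $a,b$ one can realize $\psi(a)=b$ --- is left as the unproved assertion that ``the $g_i$ generate a transitive set of bijections,'' justified only ``by uniqueness.'' Uniqueness gives you injectivity and well-definedness of the coordinatewise maps; it does not give transitivity. Transitivity comes from the \emph{existence} half of richness, used twice with a shared witness, and this is where the actual work lies. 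The paper's construction is: pick $(a,a_2,\ldots,a_n)\in\boldsymbol\rho$ (richness), then pick $c$ with $(b,c,a_3,\ldots,a_n)\in\boldsymbol\rho$ (richness again, completing the partial tuple $(b,{?},a_3,\ldots,a_n)$), and set
$$\zeta(x,y)=\exists z\;\boldsymbol\rho(x,a_2,\ldots,a_{n-1},z)\wedge\boldsymbol\rho(y,c,a_3,\ldots,a_{n-1},z).$$
Strong richness makes each conjunct a bijection between the first and last coordinates, so $\zeta$ is the graph of a bijection $\psi$ on sort $1$, and the choice of $c$ forces $(a,b)\in\zeta$ because both tuples share the witness $z=a_n$. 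Your plan needs this explicit choice of constants; without it, ``some composite $\psi$ satisfies $\psi(a)=b$'' is precisely the statement being proved.

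Two smaller points. First, your worry about uniformity across coordinates dissolves once you notice that both free variables of $\zeta$ sit in the \emph{first} coordinate slot of $\boldsymbol\rho$, so $\zeta$ is a binary relation on a single sort and can be applied to every coordinate of any $\sigma\in R_A$; no vector-function $(\psi,\ldots,\psi)$ acting on $\boldsymbol\rho$ is needed. Second, the iterated pp-formula and the $j>|A|^n$ connectivity argument borrowed from Lemma~\ref{preserveConnectedComponent} are unnecessary here: the graph of $\psi$ is pp-definable in a single step as above, with the constants pinned by the relations $=_c^{(i)}$.
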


\begin{proof}

Let $(a,a_{2}, \ldots a_{n})$ be  a tuple from $\boldsymbol\rho$.
Since $\boldsymbol\rho$ is rich,
we can find  $c\in A$
such that
$(b,c, a_{3},\ldots a_{n})\in \rho$.
Let
\[\zeta(x,y) = \exists z \;\boldsymbol\rho(x,a_{2},\ldots,a_{n-1},z)\wedge \boldsymbol\rho(y,c,a_{3},\ldots,a_{n-1},z).\]
Since $\boldsymbol\rho$ is strongly rich, for every $d$ there exists a unique $e$ such that
$(d,e)\in \zeta$. Also, it is easy to see that $(a,b)\in \zeta$.
We define $\psi$ as follows $\psi(x) = y\Leftrightarrow (x,y)\in \zeta$.
Let $\sigma' = \psi(\sigma)$.
It is easy to check that
\[\sigma'(x_{1},\ldots,x_m) = \exists y_1 \dots \exists y_m \;\;\sigma(y_1,\ldots,y_m)
\wedge \zeta(y_{1},x_{1})\wedge \dots \wedge\zeta(y_{m},x_{m}).\]
This completes the proof.
\end{proof}

\begin{lem}\label{NoTotallyReflexiveA}
Suppose
$\sigma$ is a totally reflexive relation of arity $m\ge 2$ preserved by a WNU;
if $m=2$ then $\sigma$ is symmetric and the graph defined by $\sigma$ is connected;
for every $a,b\in A$ there exists a bijection $\psi_{a,b}:A\to A$ that
maps $a$ to $b$ and preserves $\sigma$.
Then $\sigma$ is a full relation.
\end{lem}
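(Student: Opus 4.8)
The plan is to argue by induction on $|A|$, using the hypotheses to reduce to a relation on a smaller set, or alternatively to argue directly that $\sigma$ must contain every tuple. First I would set up notation: suppose for contradiction that $\sigma$ is not full, so there is a tuple $\gamma = (c_1,\ldots,c_m)\notin\sigma$. Since $\sigma$ is totally reflexive, $\gamma$ must have pairwise distinct entries, so in particular $|A|\ge m$. The key device is the family of bijections $\psi_{a,b}$: applying $(\psi_{c_1,d_1},\ldots,\psi_{c_m,d_m})$ (a vector-function built from these bijections, each of which preserves $\sigma$) we can move $\gamma$ to any tuple with distinct entries, so in fact \emph{no} tuple with pairwise distinct entries lies in $\sigma$; thus $\sigma$ is exactly the set of tuples with a repeated coordinate, i.e. $\sigma = A^m\setminus\{\text{injective tuples}\}$. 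It then suffices to show this particular relation is \emph{not} preserved by any WNU (when $|A|\ge m$), which contradicts the hypothesis.

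Next I would exploit the WNU $f$, which by Lemma~\ref{findbetterWNU} we may assume satisfies the idempotency-on-unary-sections property, though the plain WNU identities may already suffice. Take the WNU to be $k$-ary. Consider $k$ injective tuples $\alpha_1,\ldots,\alpha_k\in A^m$ chosen so that in each coordinate $j$ the multiset $\{\alpha_1(j),\ldots,\alpha_k(j)\}$ has a prescribed shape; applying $f$ coordinatewise produces a tuple $f(\alpha_1,\ldots,\alpha_k)$, and since $f$ preserves $\sigma$ this tuple must have a repeated coordinate. The strategy is to choose the $\alpha_i$ cleverly — using a Latin-square-type pattern, which exists precisely because $|A|\ge m$ — so that the WNU identities $f(x,y,\ldots,y)=f(y,x,y,\ldots,y)=\cdots$ force $f(\alpha_1,\ldots,\alpha_k)$ to be injective, a contradiction. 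Concretely, one picks in coordinate $j$ an argument tuple that is a permutation of $(x_j, y_j, y_j,\ldots,y_j)$ with the position of $x_j$ varying with $j$; the WNU identities collapse all these to the same value $f(x_j,y_j,\ldots,y_j)=:g(x_j,y_j)$, and one arranges the $x_j,y_j$ so that $j\mapsto g(x_j,y_j)$ is injective. When $m=2$ the connectedness and symmetry hypotheses are what guarantee one can realize the needed values of $g$; for general $m$ one iterates, replacing $A$ by the image of an appropriate retraction and invoking the induction hypothesis via a primitive positive definition of a smaller totally reflexive relation.

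The main obstacle I expect is the combinatorial heart of the second paragraph: showing that the WNU identities genuinely \emph{force} injectivity of $f(\alpha_1,\ldots,\alpha_k)$ for a well-chosen input pattern, rather than merely failing to rule it out. This requires a careful choice of the tuples $\alpha_i$ so that, in each coordinate, the WNU acts as the binary operation $g(x,y)=f(x,y,\ldots,y)$ on \emph{distinct} pairs across coordinates, together with an argument that $g$ cannot simultaneously satisfy $g(x,y)\in\{x,y\}$-type constraints forced by total reflexivity and the requirement that it take $m$ distinct values; the $m=2$ base case, where one uses the connectivity of the graph of $\sigma$ to chain together binary constraints, is the technically cleanest instance and is likely where the real idea lives. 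An alternative route, which I would try in parallel, is to use Lemma~\ref{PermutationFromStronglyRich}-style definability: if $\sigma$ is the non-injective-tuples relation, then from $\sigma$ one can primitively positively define the disequality relation $\{(x,y):x\ne y\}$ on an $m$-element subset, and the latter admits no WNU polymorphism (a classical fact, essentially the hardness of $m$-coloring), giving the contradiction immediately.
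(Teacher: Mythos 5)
Your very first reduction does not go through, and it is the load-bearing step. The hypothesis gives, for each pair $(a,b)$, a \emph{single} bijection $\psi_{a,b}$ that preserves $\sigma$; preservation here means $\psi_{a,b}$ is applied diagonally, i.e.\ the \emph{same} function to every coordinate of a tuple. You are applying the vector-function $(\psi_{c_1,d_1},\ldots,\psi_{c_m,d_m})$ with \emph{different} components in different coordinates, and nothing in the hypotheses says such a vector-function preserves $\sigma$. What the hypotheses actually give is that the complement of $\sigma$ is a union of orbits of the transitive permutation group generated by the $\psi_{a,b}$ acting diagonally on $A^m$; for, say, a regular cyclic group this orbit is just $\{(c_1+t,\ldots,c_m+t)\}$, nowhere near all injective tuples. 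So the conclusion that $\sigma$ is exactly the set of non-injective tuples is unjustified, and both your main line and your ``alternative route'' (pp-defining disequality) rest on it. A second, independent gap is that even granting the reduction, the claim that the non-injective-tuples relation (equivalently $K_{|A|}$-type disequality) admits no WNU is a genuine theorem, not something the WNU identities force via a Latin-square computation; you correctly flag this as the hard part but do not supply the argument, and the $m=2$ case (where connectedness must enter) is not worked out at all.

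For comparison, the paper sidesteps both difficulties with Rosenberg's classification of maximal clones: if $\sigma$ is not full then $\Pol(\sigma)$ lies in a maximal clone, and each of the six Rosenberg types is excluded --- total reflexivity (or, for $m=2$, connectedness of the symmetric graph) puts all functions with two-element range into $\Pol(\sigma)$, killing the monotone, autodual, equivalence-relation and quasi-linear cases; the bijections $\psi_{a,b}$ kill the central-relation case; and a cited result that no WNU preserves an $h$-universal relation kills the last case. If you want to avoid Rosenberg's theorem you would need to import, at minimum, the fact that WNUs cannot preserve $h$-universal relations, plus a correct argument handling the orbit structure of $A^m\setminus\sigma$ under the diagonal group action; as written, the proposal does not constitute a proof.
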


\begin{proof}

Assume that $\sigma$ is not a full relation.
Then $\Pol(\sigma)$ belongs to a maximal clone on $A$.
By Rosenberg Theorem \cite{rosmax, lau}, we have one of the following cases.
\begin{enumerate}

\item Maximal clone of monotone functions;

\item Maximal clone of autodual functions;

\item Maximal clone defined by an equivalence relation;

\item Maximal clone of quasi-linear functions;

\item Maximal clone defined by a central relation;

\item Maximal clone defined by an $h$-universal relation.

\end{enumerate}

If $m\ge 3$, then since $\sigma$ is totally reflexive, $\Pol(\sigma)$ contains all functions that take only two values.
If $m=2$, then since $\sigma$ defines a connected graph,
for every edge $(a,b)$ in this graph $\Pol(\sigma)$ contains all functions that take only two values $a$ and $b$.
Therefore cases (1), (2), (3) and (4) are not possible.

Since $\psi_{a,b}$ preserves $\sigma$ for every $a,b\in A$, $\psi_{a,b}$ belongs to the maximal clone.
Therefore case (5) cannot happen as well.
By Corollary~\ref{slupetskyLem},
a WNU cannot preserve an $h$-universal relation.
Thus, we get a contradiction, which means that
$\sigma$ is a full relation.
\end{proof}

\begin{lem}\label{NoTotallyReflexive}

Suppose
$\boldsymbol\rho\subseteq A^{n}$, $n\ge 3$, is a strongly rich relation,
$\sigma$ is a totally reflexive relation of arity $m\ge 3$,
$\boldsymbol\rho$ and $\sigma$ are preserved by a WNU $\boldsymbol f$.
Then $\sigma$ is a full relation.

\end{lem}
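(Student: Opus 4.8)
The plan is to reduce the statement to Lemma~\ref{NoTotallyReflexiveA}, which contains the real content (a case analysis through Rosenberg's classification of maximal clones). That lemma requires, for every $a,b\in A$, a bijection of $A$ that maps $a$ to $b$ and preserves the totally reflexive relation in question; since $\sigma$ itself need not admit such bijections, I would first replace $\sigma$ by a carefully chosen totally reflexive subrelation $\tau$ that does, in such a way that $\tau$ is full iff $\sigma$ is.

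Since $\boldsymbol\rho$ is strongly rich, Lemma~\ref{PermutationFromStronglyRich} provides, for every $a,b\in A$, a bijection $\psi_{a,b}\colon A\to A$ with $\psi_{a,b}(a)=b$ and $\psi_{a,b}(\delta)\in[\{\boldsymbol\rho,\delta\}\cup\{=_{c}^{(i)}\mid c\in A,\ i=1,\ldots,n\}]$ for every $\delta\in R_A$. Let $G$ be the submonoid of the symmetric group on $A$ generated by all the $\psi_{a,b}$; since $A$ is finite, $G$ is a finite submonoid of a group, hence itself a group, and every $\psi\in G$ is a composition of finitely many of the $\psi_{a,b}$. Because $\boldsymbol f$ is idempotent (so it preserves every singleton relation $=_{c}^{(i)}$) and preserves $\boldsymbol\rho$, the displayed property together with the Galois connection between clones of vector-functions and relational clones of multi-sorted relations shows that whenever $\boldsymbol f$ preserves some $\delta\in R_A$ it also preserves $\psi_{a,b}(\delta)$. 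Since $\boldsymbol f$ preserves $\sigma$, induction on the length of the composition gives that $\boldsymbol f$ preserves $\psi(\sigma)$ for every $\psi\in G$.

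Now set $\tau=\bigcap_{\psi\in G}\psi(\sigma)$, and verify four facts. First, $\tau\subseteq\sigma$, because $G$ contains the identity. Second, $\tau$ is $G$-invariant: for $\psi_0\in G$ we have $\psi_0(\tau)=\bigcap_{\psi\in G}(\psi_0\circ\psi)(\sigma)=\tau$, since left translation by $\psi_0$ permutes $G$; in particular each $\psi_{a,b}$ is a bijection of $A$ preserving $\tau$ and mapping $a$ to $b$. Third, $\tau$ is totally reflexive: a bijection of $A$ maps the set of $m$-tuples having fewer than $m$ distinct coordinates onto itself, so every $\psi(\sigma)$ is totally reflexive, hence so is $\tau$. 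Fourth, $\tau$, being an intersection of relations preserved by $\boldsymbol f$, is preserved by $\boldsymbol f$, hence by the WNU $\boldsymbol f^{(1)}$. Since $m\ge 3$, the extra hypotheses of Lemma~\ref{NoTotallyReflexiveA} for the case $m=2$ are vacuous, so that lemma yields $\tau=A^{m}$; together with $\tau\subseteq\sigma\subseteq A^{m}$ this forces $\sigma=A^{m}$, as desired.

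The only real difficulty is organizational: one must keep the family of bijections closed under composition so that the single relation $\tau$ is simultaneously fixed by all of them, while keeping $\tau$ pp-definable from $\boldsymbol\rho$ and $\sigma$ so that $\boldsymbol f$ continues to preserve it. Exploiting finiteness of $A$ to replace inverses by positive powers makes it possible to do this using only the statement of Lemma~\ref{PermutationFromStronglyRich}; the substantive work — excluding every maximal clone — has already been carried out in Lemma~\ref{NoTotallyReflexiveA}.
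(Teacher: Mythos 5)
Your proposal is correct and follows essentially the same route as the paper: the paper likewise forms $\sigma_0=\bigcap_{\psi\in\Psi}\psi(\sigma)$ over a composition-closed family of permutations supplied by Lemma~\ref{PermutationFromStronglyRich}, checks that $\sigma_0$ is totally reflexive, invariant under every $\psi_{a,b}$, and preserved by the WNU, and then invokes Lemma~\ref{NoTotallyReflexiveA}. The only cosmetic difference is that you intersect over the group generated by the $\psi_{a,b}$ while the paper intersects over all permutations having the pp-definability property; both give the same invariance argument.
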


\begin{proof}

By Lemma~\ref{PermutationFromStronglyRich}, for every
$a,b\in A$ we have a permutation $\psi_{a,b}:A\to A$
such that for every $\delta\in R_{A}$
\[\psi_{a,b}(\delta)\in [\{\boldsymbol\rho,\delta\}\cup \{=_{c}^{(i)}\mid c\in A, i=1,2,\ldots,n\}].\]
Let $\Psi$ be the set of all permutations $\psi:A\to A$
satisfying the property
$\psi(\delta)\in [\{\boldsymbol\rho,\delta\}\cup \{=_{c}^{(i)}\mid c\in A, i=1,2,\ldots,n\}]$
for every $\delta\in R_{A}$.
Obviously, $\Psi$ is closed under composition.

Put $\sigma_{0} = \bigcap\limits_{\psi\in \Psi} \psi(\sigma)$.
For every $a,b\in A$ we have
\[\psi_{a,b}(\sigma_{0}) = \bigcap\limits_{\psi\in \Psi} \psi_{a,b}(\psi(\sigma))
\supseteq \bigcap\limits_{\psi\in \Psi} \psi(\sigma)=\sigma_0.\]
Since $\psi_{a,b}$ is a permutation,  $\sigma_{0}$ is preserved by $\psi_{a,b}$.

Since $\sigma_{0}$ is derived from $\boldsymbol\rho,\sigma$, and $\{=_{c}^{(i)}\mid c\in A, i=1,2,\ldots,n\}$,
$\sigma_{0}$ is preserved by a WNU $\boldsymbol f^{(1)}$.
We can check that $\sigma_{0}$ is totally reflexive.
By Lemma~\ref{NoTotallyReflexiveA}, $\sigma_{0}$ is a full relation, hence the relation $\sigma$ is also full.
\end{proof}

\begin{lem}\label{LinearWNU}

Suppose $(G;+)$ is a finite abelian group,
the relation $\sigma\subseteq G^{4}$ is defined by
$\sigma = \{(a_1,a_2,a_3,a_4)\mid a_1+a_2=a_3+a_4\}$,
$\sigma$ is preserved by a WNU $f$.
Then $f(x_{1},\ldots,x_{n}) = t\cdot x_{1}+t\cdot x_2 + \ldots + t\cdot x_{n}$
for some $t\in \{1,2,3,\ldots\}$.

\end{lem}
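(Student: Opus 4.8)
The plan is to first show that preserving $\sigma$ forces $f$ to be a group homomorphism $G^{n}\to G$, and then use the weak near-unanimity identities together with idempotence to pin down its exact shape.

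For the first step I would fix arbitrary tuples $\bar a=(a_1,\ldots,a_n)$, $\bar b=(b_1,\ldots,b_n)$, $\bar c=(c_1,\ldots,c_n)$ over $G$ and apply $f$ to the columns $(a_j+c_j,\;b_j,\;a_j,\;b_j+c_j)$ for $j=1,\ldots,n$, each of which lies in $\sigma$ because $(a_j+c_j)+b_j=a_j+(b_j+c_j)$. Since $f$ preserves $\sigma$, this yields
$$f(a_1+c_1,\ldots,a_n+c_n)+f(b_1,\ldots,b_n)=f(a_1,\ldots,a_n)+f(b_1+c_1,\ldots,b_n+c_n),$$
so $f(\bar a+\bar c)-f(\bar a)$ does not depend on $\bar a$. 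Taking $\bar a$ to be the zero tuple and using $f(0,\ldots,0)=0$ (idempotence of the WNU) gives $f(\bar a+\bar c)=f(\bar a)+f(\bar c)$ for all $\bar a,\bar c$; that is, $f$ is a homomorphism. Consequently $f(x_1,\ldots,x_n)=\tau_1(x_1)+\cdots+\tau_n(x_n)$, where $\tau_i(x):=f(0,\ldots,0,x,0,\ldots,0)$ (with $x$ in the $i$-th position) is an endomorphism of $G$.

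For the second step I would compare the weak near-unanimity identities $f(x,y,\ldots,y)=f(y,x,y,\ldots,y)=\cdots=f(y,\ldots,y,x)$. Expanding the $i$-th and $j$-th of these via the decomposition above and cancelling the common terms gives $\tau_i(x)+\tau_j(y)=\tau_i(y)+\tau_j(x)$ for all $x,y$; putting $y=0$ yields $\tau_i=\tau_j$. So all $\tau_i$ coincide with a single endomorphism $\tau$, and idempotence of $f$ gives $n\tau=\mathrm{id}_G$. In particular multiplication by $n$ is surjective, hence bijective, on the finite group $G$, so $\gcd(n,\exp(G))=1$; choosing a positive integer $t$ with $nt\equiv 1\pmod{\exp(G)}$, multiplication by $t$ is a two-sided inverse of multiplication by $n$ on $G$. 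Since $\tau$ is a homomorphism with $n\tau=\mathrm{id}$, we also have $\tau\circ(n\cdot\mathrm{id})=(n\cdot\mathrm{id})\circ\tau=\mathrm{id}$, so $\tau$ is another such inverse; by uniqueness of inverses $\tau=t\cdot\mathrm{id}$, and therefore $f(x_1,\ldots,x_n)=t\cdot x_1+\cdots+t\cdot x_n$.

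The only point that needs any care is the passage from ``$\tau$ is an endomorphism with $n\tau=\mathrm{id}_G$'' to ``$\tau$ is scalar multiplication by an integer''; I expect this to be the main (though still routine) obstacle, and it is resolved by the observation that inverses in the endomorphism monoid of $G$ are unique and that multiplication by a suitable positive integer $t$ inverts multiplication by $n$. Everything else is a direct computation with the defining equation of $\sigma$ and with the WNU identities.
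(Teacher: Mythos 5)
Your proof is correct and follows essentially the same route as the paper's: both extract from preservation of $\sigma$ that $f$ is a sum of identical unary endomorphisms, and then finish with idempotence plus invertibility of multiplication by $n$ modulo the exponent of $G$. The only cosmetic difference is that you obtain additivity in one shot from the parallelogram instance $(a_j+c_j,\,b_j,\,a_j,\,b_j+c_j)\in\sigma$ and then use the WNU identities to equate the $\tau_i$, whereas the paper proves $f(x_1,\ldots,x_m,0,\ldots,0)=h(x_1)+\cdots+h(x_m)$ by induction on $m$ with $h(x)=f(0,\ldots,0,x)$; your closing step identifying $\tau$ with $t\cdot\mathrm{id}$ via uniqueness of inverses is, if anything, spelled out more carefully than in the paper.
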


\begin{proof}

Denote $h(x) = f(0,0,\ldots,0,x)$.
Let us prove the equation \[f(x_{1},\ldots,x_{m},0,\ldots,0) = h(x_1)+\ldots+h(x_{m})\]
by induction on $m$.
For $m=1$ it follows from the definition.
We know that
$f\left(\begin{smallmatrix}
x_{1} & x_{2} & \dots & x_{m} & x_{m+1} & 0 & \dots & 0 \\
0     & 0     & \dots & 0     & 0       & 0 & \dots & 0 \\
x_{1} & x_{2} & \dots & x_{m} & 0       & 0 & \dots & 0 \\
0     & 0     & \dots & 0     & x_{m+1} & 0 & \dots & 0 \\
\end{smallmatrix}\right)\in \sigma$,
which by the inductive assumption gives
\begin{multline*}
f(x_{1},\ldots,x_{m},x_{m+1},0,\ldots,0) =\\
f(x_{1},\ldots,x_{m},0,\ldots,0) + h(x_{m+1}) =
h(x_1)+\ldots+h(x_{m})+h(x_{m+1}).
\end{multline*}

Thus, we know that
$f(x_{1},\ldots,x_{n}) = h(x_1)+\ldots+h(x_{n})$.
Let $k$ be the maximal order of an element in the group $(G;+)$.
We know that for every $a\in A$ we have $\underbrace{h(a)+h(a)+\ldots+h(a)}_n = a$.
Hence $k$ and $n$ are coprime,
and $h(x) = t\cdot x$ for any integer $t$ such that
$t\cdot n = 1 (\mod k)$.
This completes the proof.
\end{proof}

\begin{lem}\label{WNUIsLinearInAnyBlock}
Suppose $(G;+)$ is a finite abelian group,
the relation $\boldsymbol\rho\subseteq G^{n}$, $n>2$, is defined by
$\boldsymbol\rho = \{(a_{1},\ldots,a_{n})\mid a_{1}+\dots+a_{n}=0\}$,
$\boldsymbol \rho$ is preserved by a WNU $\boldsymbol f$ of arity $m$.
Then for every $j\in\{1,2,\ldots,n\}$
there exists $t\in\{1,2,3,\ldots\}$ such that
$\boldsymbol f^{(j)}(x_{1},\ldots,x_{m}) = t\cdot x_{1}+t\cdot x_2 + \ldots + t\cdot x_{m}$
\end{lem}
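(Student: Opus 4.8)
The plan is to reduce the statement to Lemma~\ref{LinearWNU}. Fix $j\in\{1,2,\ldots,n\}$. Since $\boldsymbol f$ is idempotent it preserves every constant relation $=_c^{(i)}$, so every relation lying in $[\{\boldsymbol\rho\}\cup\{=_c^{(i)}\mid c\in G,\ i=1,2,\ldots,n\}]$ is preserved by $\boldsymbol f$. Moreover $\boldsymbol f^{(j)}$ is a WNU of arity $m$, because every component of a WNU vector-function is a WNU. Hence it suffices to exhibit a positive primitive definition, from $\boldsymbol\rho$ and the constants, of the single-sorted relation $\sigma=\{(a_1,a_2,a_3,a_4)\in G^4\mid a_1+a_2=a_3+a_4\}$ in which all four variables have sort $j$: then $\boldsymbol f$ preserves this multi-sorted $\sigma$, which by definition means $\boldsymbol f^{(j)}$ preserves $\sigma$ as a relation on $G$, and Lemma~\ref{LinearWNU} applied to $\boldsymbol f^{(j)}$ immediately gives $\boldsymbol f^{(j)}(x_1,\ldots,x_m)=t\cdot x_1+t\cdot x_2+\ldots+t\cdot x_m$ for some $t\in\{1,2,3,\ldots\}$.

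To construct $\sigma$ I use that $\ar(\boldsymbol\rho)=n>2$, so there are distinct indices $i_1,i_2\in\{1,\ldots,n\}\setminus\{j\}$. Substituting $0$ into every coordinate of $\boldsymbol\rho$ except $j,i_1,i_2$ yields the relation $T(x_j,x_{i_1},x_{i_2})=\{(a,b,c)\mid a+b+c=0\}$ (with variables of sorts $j,i_1,i_2$, and non-emptiness witnessed by the all-zero tuple of $\boldsymbol\rho$); substituting $0$ into every coordinate except $j,i_1$ yields $\nu_1(x_j,x_{i_1})=\{(a,b)\mid a+b=0\}$, and similarly $\nu_2(x_j,x_{i_2})=\{(a,b)\mid a+b=0\}$ on sorts $j,i_2$. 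Each $\nu_k$ is the graph of $x\mapsto -x$, so it translates a sort-$i_k$ variable into a sort-$j$ variable. Therefore $S(a,b,c):=\exists q\,\exists r\ T(a,q,r)\wedge\nu_1(b,q)\wedge\nu_2(c,r)$, with $a,b,c$ of sort $j$ and $q,r$ of sorts $i_1,i_2$, defines $\{(a,b,c)\in G^3\mid a=b+c\}$, and finally $\sigma(a,b,c,d):=\exists e\ S(e,a,b)\wedge S(e,c,d)$ defines $\{(a,b,c,d)\mid a+b=c+d\}$ with all variables of sort $j$, which is exactly what was needed.

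The argument is short once Lemma~\ref{LinearWNU} is available; the only point requiring care is the sort bookkeeping in the positive primitive definition of $\sigma$, since we must end up with a relation of the single sort $j$ in order to pass from ``$\boldsymbol f$ preserves $\sigma$'' to ``$\boldsymbol f^{(j)}$ preserves $\sigma$ on $G$''. The trick enabling this is that restrictions of $\boldsymbol\rho$ to two free coordinates already give the negation bijection between any two sorts, which lets us move everything onto sort $j$. Under the standing convention $\ar(\boldsymbol\rho)>2$ there is no degenerate case to treat separately; if one insisted on handling $n=2$ it would have to be done by hand, since then $\boldsymbol\rho$ is just the graph of negation and the three-free-coordinate construction is unavailable.
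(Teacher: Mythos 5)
Your proof is correct and follows essentially the same route as the paper: both reduce the statement to Lemma~\ref{LinearWNU} by exhibiting a positive primitive definition, over $\boldsymbol\rho$ (and, in your case, the constant relations), of the relation $\{(a_1,a_2,a_3,a_4)\mid a_1+a_2=a_3+a_4\}$ with all four variables of sort $j$. The only difference is cosmetic: the paper produces this relation in one step via a four-conjunct ``fork'' formula with existential variables of sorts $2,\ldots,n$, while you build it in stages using the negation graphs $\nu_k$ to move everything onto sort $j$; both constructions need $n\ge 3$, which the standing arity convention supplies.
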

\begin{proof}
Without loss of generality we prove the statement just for $j=1$.
Let
\begin{multline*}\delta(x_1,x_2,x_3,x_4) =
\exists y_2\dots\exists y_n \exists z_2\dots\exists z_n\;
\boldsymbol\rho(x_{1},y_2,y_3,\ldots,y_n) \wedge \\
\boldsymbol\rho(x_{2},z_2,z_3,\ldots,z_n) \wedge
\boldsymbol\rho(x_{3},y_2,z_3,\ldots,z_n) \wedge
\boldsymbol\rho(x_{4},z_2,y_3,\ldots,y_n)
.\end{multline*}
It is easy to show that
$\delta = \{(a_1,a_2,a_3,a_4)\mid a_1+a_2=a_3+a_4\}.$
Then, the statement of the lemma follows from Lemma~\ref{LinearWNU}.
\end{proof}

\begin{lem}\label{OnlyOneLinearWithWNU}

Suppose $(A;+)$ is an abelian group ,
$\sigma, \delta\subseteq A^{4}$,
\[\sigma = \{(a_1,a_2,a_3,a_4)\mid a_1+a_2=a_3+a_4\};\]
if $(a_{1},a_2,a_3,a_4) \in \delta$ then
$a_1 = a_3 \Leftrightarrow a_2= a_4$ and
$a_1 = a_4 \Leftrightarrow a_2= a_3$;
$(a,b,a,b),(a,b,b,a)\in\delta$ for all $a,b\in A$;
$\sigma$ and $\delta$ are preserved by a WNU $f$.
Then $\sigma = \delta$.
\end{lem}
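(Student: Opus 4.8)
The plan is to determine $f$ first and then use the group structure it forces on $A$. The relation $\sigma$ is exactly the $4$-ary relation appearing in Lemma~\ref{LinearWNU}, so since $\sigma$ is preserved by the WNU $f$, that lemma gives $f(x_1,\dots,x_m)=t\cdot x_1+t\cdot x_2+\dots+t\cdot x_m$ for some positive integer $t$, where $m=\ar(f)$; from the proof of that lemma $t$ is invertible modulo $k$, the maximal order of an element of $(A;+)$, because $tm\equiv 1\pmod{k}$.

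Next I would observe that, by composing copies of $f$ and using idempotency ($f(x,\dots,x)=x$), one can derive from $f$ every $A$-linear term whose coefficients sum to $1$ modulo $k$ — a routine coefficient manipulation in which the invertibility of $t$ is used — and in particular the affine Mal'cev term $p(x,y,z)=x-y+z$ lies in the clone generated by $f$. Hence every relation preserved by $f$ is preserved by $p$, and a nonempty relation closed under the coordinatewise action of $p$ is a coset of a subgroup of the ambient power: fixing a tuple $r_0$ in the relation, $p(x,r_0,y)=x+y-r_0$ and $p(r_0,x,r_0)=2r_0-x$ show that the translate by $-r_0$ is closed under addition and negation. Applied to our two relations: $\sigma$ is the subgroup $\ker\phi$ with $\phi(a_1,a_2,a_3,a_4)=a_1+a_2-a_3-a_4$, and $\delta=\tau_0+K$ for some $\tau_0\in A^{4}$ and a subgroup $K\le A^{4}$ (the empty case being excluded, since $\sigma$ contains $(0,0,0,0)$ and $\sigma=\delta$ is the goal).

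It then remains to identify the coset $\tau_0+K$ with $\sigma$, and this is where the ``equality pattern'' hypothesis on $\delta$ does the work: it says precisely that on $\delta$ the coordinate patterns $\{x_1=x_3\}$ and $\{x_2=x_4\}$ coincide, and likewise $\{x_1=x_4\}$ and $\{x_2=x_3\}$ coincide, which is the characteristic feature of $\sigma$ (in a group $a_1+a_2=a_3+a_4$ together with $a_1=a_3$ forces $a_2=a_4$, and so on). I would first show $(0,0,0,0)\in\delta$, so that $\delta=K$ is a subgroup: otherwise one can exhibit a member of $\delta$ of the shape $(h,0,0,0)$ with $h\neq 0$, contradicting $x_1=x_3\Leftrightarrow x_2=x_4$; then the coincidence of the diagonal intersections of $K$ with the coordinate-agreement subgroups, combined with a count of $|K|$ (through the projections of $\delta$, i.e.\ its subdirectness, or its cardinality), forces $K\le\sigma$ and finally $K=\sigma$, hence $\delta=\sigma$. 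The main obstacle is exactly this last step: the equality-pattern condition is ``local'' in that it only constrains $\delta$ on the diagonal cosets, so converting it into the global identity of subgroups $K=\sigma$ requires carefully combining it with the coset structure from the previous step and with the ``largeness'' of $\delta$ — without such an input even proper subcosets like $\{(a,a+c,a,a+c):a\in A\}$ are closed under $f$ and satisfy the pattern.
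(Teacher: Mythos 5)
Your structural analysis is sound as far as it goes: after Lemma~\ref{LinearWNU} one does get $f(x_1,\dots,x_m)=t\cdot x_1+\dots+t\cdot x_m$ with $t$ invertible modulo the exponent, the clone of $f$ then contains the affine Mal'cev term $x-y+z$, and hence every nonempty relation preserved by $f$ is a coset of a subgroup of $A^4$. But the proof is not complete, and the gap is exactly the one you flag at the end. The intermediate step ``otherwise one can exhibit a member of $\delta$ of the shape $(h,0,0,0)$'' cannot be carried out from the stated hypotheses, and in fact no argument can close the gap from those hypotheses alone: over $\mathbb Z_2$ the singleton $\delta=\{(1,1,0,0)\}$, and more generally the cosets $\{(a,a+c,a,a+c)\mid a\in A\}$ that you yourself exhibit, satisfy the equality-pattern condition, are preserved by $f$, and are nonempty, yet differ from $\sigma$. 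So the conclusion genuinely requires extra input on $\delta$ beyond what the lemma states, and your proposal never identifies or uses it. (Your dismissal of the empty case ``since $\sigma=\delta$ is the goal'' is a symptom of the same issue.)

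The input the paper actually uses is the containment $\sigma\subseteq\delta$, which it derives from the fact that, in the intended application, $\delta$ contains every tuple of the forms $(a,c,c,a)$ and $(a,c,a,c)$, in particular all constant tuples. Normalizing $t=1$ via Lemma~\ref{findbetterWNU}, any $(a,b,c,d)$ with $a+b=c+d$ equals $f$ applied to the columns $(a,c,c,a)$, $(0,b-c,0,d-a)$ (note $b-c=d-a$) and copies of $(0,0,0,0)$, all of which lie in $\delta$. Once $\sigma\subseteq\delta$ is available, the reverse inclusion is a one-line computation: if $(a,b,c,d)\in\delta\setminus\sigma$, then applying $f$ to $(a,b,c,d)$, the $\sigma$-tuple $(-a,-b,-c,c-a-b)$ and zeros produces $(0,0,0,c+d-a-b)\in\delta$ with $c+d-a-b\neq 0$, violating $a_1=a_3\Leftrightarrow a_2=a_4$. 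This is precisely the global step your last paragraph is missing. Your coset framework would also finish from $\sigma\subseteq\delta$ (a subgroup $K$ with $\sigma\subseteq K$ and $K\neq\sigma$ contains $(a,b,c,d)-(a,b,c,a+b-c)=(0,0,0,-e)$ with $e\neq 0$, again violating the pattern), but without that containment the approach stalls, as you correctly anticipated.
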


\begin{proof}

First, let us show that
$\sigma\subseteq \delta$.
We can assume that $f$ is chosen using Lemma~\ref{findbetterWNU}.
By Lemma~\ref{LinearWNU}
$f(x_{1},\ldots,x_{n}) = t\cdot x_{1}+t\cdot x_2 + \ldots + t\cdot x_{n}$.
Since $f$ is idempotent, $t$ is coprime to the order of the group.
Because of the condition from Lemma~\ref{findbetterWNU}
we have $f(0,0,\ldots,0,f(0,0,\ldots,0,x)) = f(0,0,\ldots,0,x)$,
hence $t^{2}\cdot x = t\cdot x = x$.
Suppose
$(a,b,c,d)\in \sigma$, then
$a+b = c+d$.
Since $b-c = d-a$ and $f$ preserves $\delta $ we get
$\left(\begin{smallmatrix}
a \\
b \\
c \\
d \\
\end{smallmatrix}\right)
=
f
\left(\begin{smallmatrix}
a & 0   & 0 & \dots & 0\\
c & b-c & 0 & \dots & 0 \\
c & 0 & 0 &\dots & 0 \\
a & d-a & 0 & \dots & 0
\end{smallmatrix}\right)\in \delta.$
Thus $\sigma\subseteq \delta$.
Assume that $\sigma\neq \delta$
and $(a,b,c,d)\in \delta\setminus \sigma$.
Then
$f
\left(\begin{smallmatrix}
a & -a   & 0 & \dots & 0 \\
b & -b   & 0 & \dots & 0 \\
c & -c & 0 &\dots & 0 \\
d & c-a-b & 0 & \dots & 0
\end{smallmatrix}\right) =
\left(\begin{smallmatrix}
0 \\
0 \\
0 \\
c+d-a-b \\
\end{smallmatrix}\right)\in \delta$.
Combining $a+b\neq c+d$
and
$a_1 = a_3 \Leftrightarrow a_2= a_4$
for $(a_{1},a_2,a_3,a_4) \in \delta$,
we get a contradiction.
\end{proof}

\begin{thm}\label{StronglyRichRelationTHM}

Suppose $\boldsymbol\rho\subseteq A^{n}$ is a strongly rich relation
preserved by a WNU.
Then there exists an abelian group $(A;+)$
and bijective mappings
$\phi_1$, $\phi_2$, \ldots,$\phi_n: A\to A$ such that
\[\boldsymbol\rho = \{(x_1,\ldots,x_n)\mid \phi_1(x_1)+\phi_2(x_2) + \ldots +\phi_n(x_n) = 0\}.\]

\end{thm}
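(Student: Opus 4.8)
The plan is to build the abelian group structure directly on the set $A$ by using the bijections furnished by Lemma~\ref{PermutationFromStronglyRich}, and then to show that the WNU forces $\boldsymbol\rho$ to be an affine subspace-like object, i.e.\ a coset of a subgroup of a product, which in the strongly rich case must be a ``full rank'' linear equation. First I would fix a tuple $(c_1,\ldots,c_n)\in\boldsymbol\rho$ to serve as the origin. For each coordinate $j$ and each $a\in A$, strong richness gives, after fixing all other coordinates, a unique value in the $j$-th place lying in $\boldsymbol\rho$; this ``completion'' operation is the germ of the group law. More precisely I would define, for coordinate $n$ say, a ternary operation $d_n(x,y,z)$ on $A$ by: $d_n(x,y,z)$ is the unique element $w$ such that whenever $(a_1,\ldots,a_{n-1},x),(a_1,\ldots,a_{n-1},y),(a_1,\ldots,a_{n-1},z)$ are obtained from a common prefix and lie in $\boldsymbol\rho$-completions, the tuple with last coordinate $w$ also behaves consistently — this is exactly the Mal'tsev-type pattern one expects. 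The key input is that a WNU on a strongly rich relation behaves, on the relevant 4-element ``parallelogram'' relation, like the affine/linear operation: this is the content that Lemmas~\ref{LinearWNU}, \ref{WNUIsLinearInAnyBlock}, and especially \ref{OnlyOneLinearWithWNU} are designed to supply.

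Concretely, the main steps I would carry out are: (1) Using Lemma~\ref{PermutationFromStronglyRich}, for each pair $a,b\in A$ obtain a bijection $\psi_{a,b}$ preserving $\boldsymbol\rho$ (and definable from it together with constants), with $\psi_{a,b}(a)=b$; the set $\Psi$ of all such $\boldsymbol\rho$-compatible bijections is a group under composition acting transitively on $A$. (2) Show that the ``parallelogram relation'' on $A$ in any fixed coordinate $j$, namely $\delta_j = \{(x_1,x_2,x_3,x_4) : $ completing the first three in the $j$-th slot against a common context is consistent with completing the fourth$\}$, is a strongly rich subrelation of $A^4$ that is preserved by the WNU; using Lemma~\ref{NoTotallyReflexive} (to rule out degeneracies after symmetrising) and Lemma~\ref{OnlyOneLinearWithWNU} together with Lemma~\ref{LinearWNU}, identify $\delta_j$ with $\{a_1+a_2=a_3+a_4\}$ for some abelian group structure $(A;+)$ — and show the same group works for all $j$ by transport along the completion bijections between coordinates. (3) Define $\phi_j:A\to A$ by sending the chosen origin value $c_j$ to $0$ and extending via the group: $\phi_j(a)$ is the group element by which the $j$-th coordinate ``differs'' from $c_j$; strong richness guarantees $\phi_j$ is a bijection. (4) Finally verify $\boldsymbol\rho = \{(x_1,\ldots,x_n) : \phi_1(x_1)+\cdots+\phi_n(x_n)=0\}$ by a double counting / cardinality argument: both sides have $|A|^{n-1}$ elements (the right side obviously, the left side because strong richness says each choice of the first $n-1$ coordinates has a unique completion), and the containment ``$\subseteq$'' follows because any tuple in $\boldsymbol\rho$, compared coordinatewise to the origin via the $\delta_j$'s, satisfies the linear relation by the WNU-linearity established in step (2).

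The hard part will be step (2): making precise the ``parallelogram relation'' in a single coordinate and proving it is exactly a linear equation in a \emph{single, coordinate-independent} abelian group, rather than merely a collection of unrelated binary-operation structures on $A$. The subtlety is that strong richness gives completions coordinate by coordinate, and one must glue these into a coherent global group; the compatibility conditions needed to invoke Lemma~\ref{OnlyOneLinearWithWNU} (the hypotheses $a_1=a_3\Leftrightarrow a_2=a_4$ etc.) have to be checked from strong richness plus the existence of the $\boldsymbol\rho$-compatible bijections, and one must be careful that the WNU restricted to the derived $4$-ary relation is still a WNU (this is fine, since preservation is inherited through primitive positive definitions, as used in Lemma~\ref{NoTotallyReflexive}). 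Once the single group is pinned down and the $\phi_j$ are defined, the identification of $\boldsymbol\rho$ with the kernel of $\sum\phi_j$ is essentially forced by counting, so I expect the remaining work to be routine bookkeeping.
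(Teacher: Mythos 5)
Your proposal is correct and follows essentially the same route as the paper: both construct the four-ary ``parallelogram'' relation by a primitive positive formula over $\boldsymbol\rho$, use Lemma~\ref{NoTotallyReflexive} and Lemma~\ref{OnlyOneLinearWithWNU} (with Lemma~\ref{LinearWNU}) to identify it with $\{a_1+a_2=a_3+a_4\}$ for an abelian group on $A$, define the $\phi_j$ by completion against a fixed reference tuple, and close the argument by the strong-richness cardinality count (the paper phrases this as needing only one containment, proved by induction on the number of nonzero coordinates). The one simplification the paper makes that you might adopt is to build the group only on the first coordinate and define $\phi_j$ for $j\ge 2$ directly as (negated) completion maps into that coordinate, which avoids having to glue coordinate-wise group structures together.
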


\begin{proof}
If $\ar(\boldsymbol\rho)<3$ then the statement can be easily checked. Thus, we assume that $\ar(\boldsymbol\rho)\ge 3$.
Let $(a_{3},\ldots,a_n) \in A^{n-2}$,
\begin{multline*}
\sigma(x_{1},x_{2},x_{3},x_{4}) =
\exists y \exists y' \exists z'\;\;
\boldsymbol\rho(x_{1},y,a_3,a_4,\ldots,a_n) \wedge \\
\boldsymbol\rho(x_{2},y',z',a_4,\ldots,a_n) \wedge
\boldsymbol\rho(x_{3},y',a_3,a_4,\ldots,a_n) \wedge
\boldsymbol\rho(x_{4},y,z',a_4,\ldots,a_n).
\end{multline*}

\begin{sublem}

$\sigma$ is a strongly rich relation.

\end{sublem}

\begin{proof}

We know that $\boldsymbol\rho$ is strongly rich.
Therefore, if we pick values for 3 of 4 variables of $\sigma$
, then
the remaining variable can be uniquely calculated using $\boldsymbol\rho$.
For example, if we pick values for $x_{1},x_{2},x_{4}$ then we have a unique value for $y$, then we have a unique value for $z'$,
then a unique value for $y'$, and therefore a unique value for $x_3$.
\end{proof}

\begin{sublem}

Suppose $(x_{1},x_{2},x_{3},x_{4}) \in \sigma$,
then $x_{1}=x_{3} \Leftrightarrow x_2=x_4$
and $x_{1}=x_{4} \Leftrightarrow x_2=x_3.$

\end{sublem}
\begin{proof}
Let us prove the first statement. Assume that $x_1=x_3$.
Since $\boldsymbol\rho$ is strongly rich, $y = y'$, therefore $x_2 = x_4$.
\end{proof}

\begin{sublem}\label{commutativity}
$\sigma(x_1,x_2,x_3,x_4) = \sigma(x_2,x_1,x_3,x_4)$.
\end{sublem}

\begin{proof}

Put $\sigma'(x_1,x_2,x_3,x_4) = \sigma(x_1,x_2,x_3,x_4) \wedge \sigma(x_2,x_1,x_3,x_4)$.
Let us show that $\sigma'=\sigma$.

Put $\sigma_{4}'(x_{1},x_{2},x_{3}) = \exists x_4 \;\sigma' (x_{1},x_{2},x_{3},x_{4})$.
Obviously, if $x_1=x_2$ then for every $x_3$ we can choose $x_{4}$ such that
$(x_{1},x_2,x_3,x_4)\in \sigma$. Therefore, if $x_1=x_2$ then $(x_{1},x_{2},x_{3})\in \sigma_{4}'$.
If $x_1=x_3$ then we can take $x_4=x_2$,
if $x_2=x_3$ then we can take $x_4=x_1$.
Then $\sigma_{4}'$ is totally reflexive, hence by Lemma~\ref{NoTotallyReflexive} $\sigma_4'$ is a full relation.
Therefore, $\sigma'$ contains at least $|A|^{3}$ tuples.
We know that $\sigma$ is strongly rich, which means that $\sigma$ contains exactly $|A|^{3}$ tuples.
Combining this with $\sigma'\subseteq \sigma$ we obtain $\sigma'=\sigma$.
\end{proof}

Let $d\in A$, we say $a\oplus_{d} b = c$ if $
(a,b,c,d)\in \sigma$. Since $\sigma$ is strongly rich, the operation is well-defined.
Let us prove the following properties of the operation $\oplus_d$.
\begin{sublem}\label{plusproperties}

For every $a,b,c,d\in A$
\begin{enumerate}
\item $a\oplus_d b = b\oplus_d a$ (commutativity);
\item $a\oplus_d d = a$, $d\oplus_d a = a$ (neutral element);
\item there exists $e$ such that $a\oplus_d e = d$(inverse element);
\item $(a\oplus_d b)\oplus_d c = c\oplus_d (b\oplus_d c)$ (associativity).

\end{enumerate}

\end{sublem}

\begin{proof}
Commutativity, existence of a neutral element and an inverse element easily
follow from the above properties of $\sigma$. Let us prove associativity.
Put
\begin{multline*}\sigma'(x_1,x_2,x_3,x_4) =
\exists y_1\exists y_2\exists y_3\;\;
\sigma(x_1,x_2,y_1,x_4)\wedge
\sigma(y_1,x_3,y_3,x_4)\wedge\\
\sigma(x_2,x_3,y_2,x_4)\wedge
\sigma(x_1,y_2,y_3,x_4),
\end{multline*}
which means
$\sigma' =\{(a,b,c,d) \mid
(a\oplus_d b)\oplus_d c =   a\oplus_d (b\oplus_d c)\}$.
We consider a tuple $(a,b,c,d)$.
If $a=d$, $b=d$, or $c=d$, then $(a,b,c,d)\in \sigma'$ because of the above properties.
If $a=c$ then it follows from the commutativity.
Let $\sigma''(x_{1},x_{2},x_{3}) = \sigma'(x_1,x_1,x_2,x_3)\wedge \sigma'(x_2,x_1,x_1,x_3)$.
Then $\sigma''$ is totally reflexive,
which, by Lemma~\ref{NoTotallyReflexive}, means that $\sigma''$ is a full relation.
Hence, $\sigma'$ is totally reflexive, which means
that $\sigma'$ is a full relation and $\oplus_d$ is an associative operation.
\end{proof}

Thus, $\oplus_d$ is a group operation for every $d\in A$.

\begin{sublem}\label{plusdefinition}
$\sigma = \{(a,b,c,d)\mid a\oplus_e b = c\oplus_e d\}$ for every $e\in A$.
\end{sublem}

\begin{proof}
First, we want to prove that $(a\oplus_d b)\oplus_b d = a$.
It follows from the definition
that $\sigma(x_1,x_2,x_3,x_4)=\sigma(x_3,x_4,x_1,x_2)$.
Therefore
the formula $\exists x_3\;\sigma(x_1,x_2,x_3,x_4)\wedge \sigma(x_3,x_4,x_1,x_2)$
defines a full relation, which means
that $(x_1\oplus_{x_4} x_2)\oplus_{x_2} x_4 = x_1$.
It remains to put $x_1 = a$, $x_2 = b$, $x_4 = d$.

Second, we prove that $(a\oplus_d b)\oplus_c d = a\oplus_c b$.
Put
\begin{multline*}
\sigma'(x_1,x_2,x_3,x_4) =
\exists y_1\exists y_2\exists y_3\;\;
\sigma(x_1,x_2,y_1,x_4)\wedge\\
\sigma(y_1,x_4,y_3,x_3)\wedge
\sigma(x_1,x_2,y_3,x_3),
\end{multline*}
which means
$\sigma' =\{(a,b,c,d) \mid
(a\oplus_d b)\oplus_c d =   a\oplus_c b\}$.
If $a = d$, $b = d$, or $c=d$ then we can easily check that $(a,b,c,d)\in\sigma'$.
If $a=c$ or $b =c$ then it follows from the previous statement that
$(a,b,c,d)\in\sigma'$.
Then the relation
$\sigma''(x_1,x_3,x_4) = \sigma'(x_1,x_1,x_3,x_4)$ is totally reflexive, and
therefore, by Lemma~\ref{NoTotallyReflexive}, is full.
Hence, $\sigma'$ is also totally reflexive and full,
which proves that $(a\oplus_d b)\oplus_c d = a\oplus_c b$.

We know that
\[\sigma = \{(a,b,c,d) \mid a\oplus_d b = c\} =
\{(a,b,c,d) \mid (a\oplus_d b)\oplus_e d = c\oplus_e d\}.\]
Using the above equation we obtain
$\sigma = \{(a,b,c,d) \mid a\oplus_e b = c\oplus_e d\}.$
\end{proof}

We choose an element from $A$ which we denote by $0$.
We denote the operation $\oplus_0$ by $+$.
Let $\phi_1$ be defined by $\phi_1(x) = x$.

We put
$\phi_2(x) = -y
\Leftrightarrow
(y, x, a_3,\ldots,a_n)\in \boldsymbol\rho$.
We define $\phi_i$ for
$i\ge 3$ as follows.
\[\phi_i(x) = -y
\Leftrightarrow
(y, \phi_{2}^{-1}(0),\ldots,\phi_{i-1}^{-1}(0),x, a_{i+1},\ldots,a_n)\in \boldsymbol\rho.\]
We can prove recursively that $a_{i} = \phi_{i}^{-1}(0)$ for $i\ge 3$.

Now we are ready to prove the statement of the theorem.
We need to prove that for every $b_{1},\ldots,b_{n}\in A$
we have
\[b_{1}+\ldots+b_{n} = 0 \Leftrightarrow (b_1,\phi_{2}^{-1}(b_2),\ldots,\phi_{n}^{-1}(b_n))\in \boldsymbol\rho.\]

Let $m$ be the number of $i\in \{2,3,\ldots,n\}$
such that $b_{i}\neq 0$.
We will prove the statement by induction on $m$.
For $m =0$ and $m=1$ it follows from the definition.

For $m\ge 2$, without loss of generality we assume that $b_{2}\neq 0$ and $b_{n}\neq 0$.
Let
\begin{multline}\label{eq5}
\delta(x_{1},x_{2},x_{3},x_{4}) =
\exists y_2 \dots\exists y_n \exists z\;\;
\boldsymbol\rho(x_{1},y_2,y_3,y_4,\ldots,y_{n-1},y_n) \wedge \\
\boldsymbol\rho(x_{2},z,y_3,y_4,\ldots,y_{n-1},a_n) \wedge
\boldsymbol\rho(x_{3},z,y_3,y_4,\ldots,y_{n-1},y_n) \wedge \\
\boldsymbol\rho(x_{4},y_2,y_3,y_4,\ldots,y_{n-1},a_n)
\end{multline}
We can easily check that
if $(x_{1},x_2,x_3,x_4)\in \delta$
then $x_1=x_3\Leftrightarrow x_{2}=x_{4}$
and $x_1=x_4\Leftrightarrow x_{2}=x_{3}$,
and $(a,b,a,b),$ $(a,b,b,a)\in \delta$ for all $a,b\in A$.
Then by Lemma~\ref{OnlyOneLinearWithWNU}
we get $\delta = \sigma$.

Put $y_i = \phi_{i}^{-1}(b_{i})$ for every $i\in \{2,3,\ldots,n\}$,
and $z = \phi_{2}^{-1}(0)$.
Since $\boldsymbol\rho$ is a strongly rich relation,
there exist unique $x_{1},x_{2},x_{3},x_{4}$ satisfying (\ref{eq5}).
Thus, $(x_{1},x_{2},x_{3},x_{4})\in\delta$,
and therefore $(x_{1},x_{2},x_{3},x_{4})\in \sigma$.
By the inductive assumption we have
\begin{align*}
x_{2} + b_3+&\ldots +b_{n-1} = 0,\\
x_{3} + b_3+&\ldots +b_{n-1} +b_n = 0,\\
x_{4} + b_2+b_3+&\ldots +b_{n-1} = 0.
\end{align*}

By Claim~\ref{plusdefinition}, we have
$x_{1}+x_{2} = x_{3} + x_{4}$ which means
$x_{1} + b_2+\ldots +b_{n} = 0.$

If $b_{1}+\ldots+b_{n}=0$ then $x_{1} = b_{1}$, and
$(b_1,\phi_{2}^{-1}(b_2),\ldots,\phi_{n}^{-1}(b_n))\in \boldsymbol\rho$.
If $(b_1,\phi_{2}^{-1}(b_2),\ldots,\phi_{n}^{-1}(b_n))\in \boldsymbol\rho$
then $b_1 = x_1$ because $\boldsymbol\rho$ is strongly rich.
Therefore $b_{1}+\ldots+b_{n}=0$.
This completes the proof.
\end{proof}

\section{A core of a key relation}

\subsection{Main properties}

A relation $\boldsymbol\sigma\subseteq \boldsymbol\rho$ is called
a core for a key relation $\boldsymbol\rho$ if
there exists a unary vector-function $\Psi$ such that
\begin{enumerate}
\item $\Psi$ preserves $\boldsymbol\rho$;
\item there exists a key tuple $\alpha$ for $\boldsymbol\rho$ such that $\Psi(\alpha) = \alpha$;
\item $\Psi(\boldsymbol\rho) = \boldsymbol\sigma$;
\item $\Psi\circ \Psi = \Psi$;
\item there does not exist a vector-function $\Psi'$ satisfying (1)-(4) such that
$\Psi'(A^n)\subset \Psi(A^n)$.

\end{enumerate}
We say that $\Psi$ is a \emph{restricting vector-function}.

\begin{lem}\label{CoreExistence}
For every key tuple $\alpha$ in a relation $\boldsymbol\rho$
there exists a core $\boldsymbol\sigma$ and a restricting vector-function $\Psi$ satisfying $\Psi(\alpha) = \alpha$.
\end{lem}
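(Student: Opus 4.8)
The statement asserts existence of a core $\boldsymbol\sigma\subseteq\boldsymbol\rho$ together with a restricting vector-function $\Psi$ fixing a prescribed key tuple $\alpha$. The natural approach is to build $\Psi$ as an idempotent vector-function of minimal image by an iterative/descent argument, starting from the identity vector-function (which trivially satisfies conditions (1)--(4) with $\Psi=(\mathrm{id},\ldots,\mathrm{id})$ and $\Psi(\alpha)=\alpha$), and then repeatedly shrinking its image until condition (5) holds. Since $A$ is finite, $A^n$ is finite, so any strictly decreasing chain of images must terminate; hence some vector-function in the process is minimal and we take it as $\Psi$, and set $\boldsymbol\sigma:=\Psi(\boldsymbol\rho)$.

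\textbf{Key steps.} First I would verify that the set $S$ of unary vector-functions satisfying (1)--(4) and $\Psi(\alpha)=\alpha$ is nonempty: the tuple $(\mathrm{id},\ldots,\mathrm{id})$ belongs to it. Second, I would order the members of $S$ by the size $|\Psi(A^n)|$ of their image (equivalently, by inclusion of images, noting that for idempotent $\Psi$ the image is exactly the fixed-point set), and pick $\Psi$ with $|\Psi(A^n)|$ minimal. Such a minimal element exists because $A^n$ is finite. Third, I would check $\Psi$ is a restricting vector-function: conditions (1)--(4) hold by membership in $S$, and (5) holds by minimality --- if some $\Psi'$ satisfied (1)--(4) with $\Psi'(A^n)\subsetneq\Psi(A^n)$, then $\Psi'$ would also need to fix \emph{some} key tuple, but the definition of core in (2) only requires \emph{existence} of a key tuple fixed by $\Psi'$, not that it fixes $\alpha$. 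This is the subtle point to handle carefully.

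\textbf{The main obstacle.} The delicate issue is reconciling ``fixes the prescribed $\alpha$'' (which we want for our $\Psi$) with the minimality condition (5) (which quantifies over all $\Psi'$ satisfying (1)--(4), where (2) only demands that \emph{some} key tuple is fixed). The clean fix is: among all vector-functions satisfying (1)--(4) and additionally $\Psi'(\alpha)=\alpha$, take one, call it $\Psi$, whose image is minimal \emph{within this subclass}; then separately argue that $\Psi$ is in fact minimal in the full class. For the latter, suppose $\Psi'$ satisfies (1)--(4) with $\Psi'(A^n)\subsetneq\Psi(A^n)$ and $\Psi'(\alpha')=\alpha'$ for some key tuple $\alpha'$. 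By Lemma~\ref{KeyGotoRho} applied along a vector-function sending $\alpha$ to $\alpha'$, composing with $\Psi$ and then with $\Psi'$ yields a vector-function $\Psi''$ preserving $\boldsymbol\rho$ with $\Psi''(\alpha)$ a key tuple or in $\boldsymbol\rho$; by a further idempotency adjustment (iterating $\Psi''$ until it stabilizes, as in Lemma~\ref{findbetterWNU}) one obtains an idempotent vector-function fixing a key tuple reachable from $\alpha$ and whose image sits inside $\Psi'(A^n)$, then pull the fixed tuple back to $\alpha$ using a vector-function in the reverse direction and conjugate. This produces a member of the $\alpha$-fixing subclass with strictly smaller image, contradicting minimality of $\Psi$. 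The bookkeeping here --- tracking which key tuple each vector-function fixes and conjugating back to $\alpha$ without enlarging the image --- is the one genuinely technical point; everything else is finiteness and the already-established lemmas on key tuples.
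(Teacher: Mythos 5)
Your proposal follows essentially the same route as the paper's proof: start from the identity vector-function, which satisfies (1)--(4) and fixes $\alpha$, and use finiteness of $A^n$ to pick a vector-function with minimal image. You are in fact more careful than the paper, whose four-line proof silently passes over exactly the mismatch you flag between ``fixes the prescribed $\alpha$'' and the quantification in condition (5) over vector-functions fixing \emph{some} key tuple; your conjugate-and-iterate repair (map $\alpha$ to $\alpha'$ and back around $\Psi'$, then iterate to idempotency --- cleanest if one minimizes the \emph{cardinality} of the image, so that any $\Psi'$ with properly smaller image yields an $\alpha$-fixing idempotent of strictly smaller image, a contradiction) correctly closes that gap.
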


\begin{proof}

We consider the set $\Psi(A^n)$
for every vector-function $\Psi$ satisfying properties (1),(2) and (4).
We choose a minimal set among these sets and choose the corresponding vector-function $\Psi$
as a restricting vector-function.
Since identity satisfies the above properties
we always can find a restricting vector-function.
It remains to put $\boldsymbol\sigma = \Psi(\boldsymbol\rho)$.
\end{proof}

Different properties of a core are summarized in the following lemma.

\begin{lem}\label{CoreProperties}

Suppose $\boldsymbol\sigma$ is a core of a key relation $\boldsymbol\rho$,
$\Psi$ is a restricting vector-function.
Then we have the following properties.
\begin{enumerate}
\item $\boldsymbol\sigma$ is a key relation;
\item the patterns of $\boldsymbol\sigma$ and $\boldsymbol\rho$ are equal;
\item $\boldsymbol\sigma$ is a core of $\boldsymbol\sigma$;
\item suppose $\alpha\in \Psi(A^n)$, then $\alpha$ is a key tuple for $\boldsymbol\rho$ if and only if
$\alpha$ is a key tuple for $\boldsymbol\sigma$.
\item
suppose $\alpha_{1}$ is a key tuple for $\boldsymbol\sigma$, $\alpha_{2}\notin\boldsymbol\sigma$, then
every vector-function $\Omega$ preserving $\boldsymbol\sigma$ and satisfying $\Omega(\alpha_{1}) = \alpha_{2}$ is a bijective mapping on $\proj\boldsymbol\sigma$;
\item
suppose a vector-function $\Phi$ preserves $\boldsymbol\sigma$, $\beta\in A^{n}\setminus \boldsymbol\sigma$, $\beta$ is not a key tuple for $\boldsymbol\sigma$,
but $\Phi(\beta)$ is a key tuple for $\boldsymbol\sigma$,
then $\Phi(\alpha) \in \boldsymbol\sigma$ for every key tuple $\alpha$ for $\boldsymbol\sigma$.
\end{enumerate}

\end{lem}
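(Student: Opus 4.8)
The plan is to establish the six items more or less in the order stated, since each later item leans on the properties set up earlier, and to exploit throughout the two defining facts about a restricting vector-function $\Psi$: it is idempotent ($\Psi\circ\Psi = \Psi$, so it acts as the identity on $\proj\boldsymbol\sigma = \Psi(A^n)$ coordinate-wise) and it is \emph{minimal} in the sense of property (5) of the definition of a core. For (1), I would show directly from the definition that the fixed key tuple $\alpha$ (with $\Psi(\alpha)=\alpha$) is a key tuple for $\boldsymbol\sigma$: given $\gamma\in\proj\boldsymbol\sigma\setminus\boldsymbol\sigma$, it is in particular not in $\boldsymbol\rho$, so there is a vector-function $\Xi$ preserving $\boldsymbol\rho$ with $\Xi(\gamma)=\alpha$; then $\Psi\circ\Xi\restriction_{\proj\boldsymbol\sigma}$ preserves $\boldsymbol\sigma = \Psi(\boldsymbol\rho)$ (the image of $\boldsymbol\rho$ under a vector-function that preserves $\boldsymbol\rho$ is preserved by $\Psi$ composed with anything preserving $\boldsymbol\rho$) and sends $\gamma$ to $\Psi(\alpha)=\alpha$. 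For tuples $\gamma\in A^n\setminus\proj\boldsymbol\sigma$ one first maps into $\proj\boldsymbol\sigma$ by $\Psi$ itself and then argues as above, or notes $\boldsymbol\sigma$ already contains $\Psi(\gamma)$ — care is needed here, and that is why item~(1) is not completely free.

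For (2), the pattern of a relation is defined purely in terms of which tuples are in and out of the relation on a given pair of coordinates; since $\Psi$ is an idempotent coordinate-wise map, one checks that a "defect rectangle" witnessing $i\overset{\boldsymbol\rho}{\not\sim}j$ can be pushed forward by $\Psi$ to a defect rectangle for $\boldsymbol\sigma$ (using that $\Psi$ preserves $\boldsymbol\rho$ to keep the three corners inside, and that $\Psi$ fixes key tuples to keep the fourth corner outside via Lemma~\ref{KeyGotoRho}), and conversely a defect rectangle for $\boldsymbol\sigma\subseteq\boldsymbol\rho$ is already one for $\boldsymbol\rho$; so the two patterns coincide. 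Item (3) is immediate: $\Psi$ itself is a restricting vector-function for $\boldsymbol\sigma$ — it satisfies (1)–(4) with $\boldsymbol\sigma$ in place of $\boldsymbol\rho$ because $\Psi(\boldsymbol\sigma)=\Psi(\Psi(\boldsymbol\rho))=\Psi(\boldsymbol\rho)=\boldsymbol\sigma$, and the minimality (5) for $\boldsymbol\sigma$ follows from the minimality of $\Psi$ for $\boldsymbol\rho$. Item (4) follows from (1) and (3) together with Lemma~\ref{KeyGotoRho}: if $\alpha\in\proj\boldsymbol\sigma$ is a key tuple for $\boldsymbol\rho$, then composing with $\Psi$ shows it is a key tuple for $\boldsymbol\sigma$; conversely a key tuple for $\boldsymbol\sigma$ can be mapped to a key tuple for $\boldsymbol\rho$ and back, and one uses that $\boldsymbol\sigma\subseteq\boldsymbol\rho$ so that non-membership in $\boldsymbol\rho$ is preserved.

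The main obstacle, I expect, is item (5) — and items (5) and (6) are where the \emph{minimality} clause of the core definition must really be used, not just idempotency. For (5): suppose $\Omega$ preserves $\boldsymbol\sigma$, sends a key tuple $\alpha_1$ to $\alpha_2\notin\boldsymbol\sigma$; by Lemma~\ref{KeyGotoRho} $\alpha_2$ is again a key tuple for $\boldsymbol\sigma$, so there is $\Omega'$ preserving $\boldsymbol\sigma$ with $\Omega'(\alpha_2)=\alpha_1$; then $\Psi\circ\Omega'\circ\Omega$ is a vector-function preserving $\boldsymbol\sigma$ (hence $\boldsymbol\rho$, after composing with the inclusion data), fixing $\alpha_1$, and with image contained in $\proj\boldsymbol\sigma$ — by minimality its image must equal $\proj\boldsymbol\sigma$, which forces each coordinate map $\psi_i\circ\omega'_i\circ\omega_i$ to be a bijection on $\proj_i\boldsymbol\sigma$, and therefore each $\omega_i\restriction_{\proj_i\boldsymbol\sigma}$ is injective, hence bijective. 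The delicate point is passing correctly between "preserves $\boldsymbol\sigma$" and "is a restricting vector-function for $\boldsymbol\rho$" so that clause (5) of the core definition applies — one should first apply (3) to work entirely inside $\boldsymbol\sigma$. Item (6) is then a variant of the same argument run in contrapositive: if $\Phi$ preserves $\boldsymbol\sigma$ and carries a non-key non-member $\beta$ to a key tuple, but carried some key tuple $\alpha$ outside $\boldsymbol\sigma$, then by (5) $\Phi$ restricted to $\proj\boldsymbol\sigma$ would be a bijection, and a bijection preserving $\boldsymbol\sigma$ maps the complement of $\boldsymbol\sigma$ onto itself and, by Lemma~\ref{KeyGotoRho} applied in both directions, maps key tuples to key tuples — contradicting $\Phi(\beta)$ being a key tuple while $\beta$ is not.
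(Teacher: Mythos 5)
Your outline for items (2), (3), (5) and (6) matches the paper's proof in structure, but there is a genuine gap at the very first step, in establishing that the fixed key tuple $\alpha$ is a key tuple for $\boldsymbol\sigma$ (items (1) and (4)). The relevant case split is not ``$\gamma\in\proj\boldsymbol\sigma$ versus $\gamma\notin\proj\boldsymbol\sigma$'' but ``$\gamma\notin\boldsymbol\rho$ versus $\gamma\in\boldsymbol\rho\setminus\boldsymbol\sigma$''. For $\gamma\notin\boldsymbol\rho$ your argument ($\Psi\circ\Xi$ with $\Xi$ preserving $\boldsymbol\rho$ and $\Xi(\gamma)=\alpha$) works and needs no hypothesis on $\proj\boldsymbol\sigma$. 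But for $\gamma\in\boldsymbol\rho\setminus\boldsymbol\sigma$ (which forces $\gamma\notin\Psi(A^n)$) both of your proposed fixes fail: $\Psi(\gamma)$ lies \emph{in} $\boldsymbol\sigma$, so no vector-function preserving $\boldsymbol\sigma$ can carry $\Psi(\gamma)$ onward to $\alpha\notin\boldsymbol\sigma$, and noting that $\boldsymbol\sigma$ contains $\Psi(\gamma)$ only restates the obstruction. The missing idea is the paper's: pick a coordinate, say the first, with $\gamma(1)\notin\Psi^{(1)}(A)$, use essentiality of $\boldsymbol\rho$ to find $a$ with $(a,\alpha(2),\ldots,\alpha(n))\in\boldsymbol\rho$, and build the almost-constant vector-function $\Omega$ with $\Omega^{(i)}\equiv\alpha(i)$ for $i\ge 2$ and $\Omega^{(1)}$ sending $\gamma(1)\mapsto\alpha(1)$ and everything else to $\Psi^{(1)}(a)$; this preserves $\boldsymbol\sigma$ precisely because every tuple of $\boldsymbol\sigma$ has first coordinate in $\Psi^{(1)}(A)$, hence different from $\gamma(1)$. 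Without this construction the whole lemma does not get off the ground, since (1), (3) and (4) all hinge on it.

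Two smaller points on item (5). First, the composition must be taken in the order $\Omega'\circ\Omega\circ\Psi$ (apply $\Psi$ first): your $\Psi\circ\Omega'\circ\Omega$ need not preserve $\boldsymbol\rho$, because $\Omega$ only preserves $\boldsymbol\sigma$ and can do anything to tuples of $\boldsymbol\rho\setminus\boldsymbol\sigma$. Second, clause (5) of the definition of a core only forbids \emph{idempotent} vector-functions with strictly smaller image, so before invoking minimality you must replace your composite by a suitable iterate $\Psi''=\Psi'\circ\cdots\circ\Psi'$ satisfying $\Psi''\circ\Psi''=\Psi''$, as the paper does. Both are easy repairs, unlike the gap above.
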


\begin{proof}

First, let us prove statement (4).
Suppose $\alpha$ is a key tuple for $\boldsymbol\rho$ which is in $\Psi(\alpha)$ (so $\Psi(\alpha) = \alpha$).
Let us prove that $\alpha$ is a key tuple for $\boldsymbol\sigma$.
Suppose $\gamma\in A^{n}\setminus \boldsymbol\sigma$.
We need to find an appropriate vector-function
which maps $\gamma$ to $\alpha$.
If $\gamma\notin\boldsymbol\rho$, then by definition
there exists a vector-function $\Omega$ which preserves $\boldsymbol\rho$ and maps $\gamma$ to $\alpha$.
Obviously $\Psi\circ\Omega$ preserves $\boldsymbol\sigma$ and maps $\gamma$ to $\alpha$.
Assume that $\gamma \in \boldsymbol\rho\setminus \boldsymbol\sigma,$
therefore
$\gamma \notin \Psi(A^{n})$.
Without loss of generality, we assume that
$\gamma(1)\notin \Psi^{(1)}(A)$.
Since $\boldsymbol\rho$ is essential, $\alpha $ is an essential tuple for $\boldsymbol\rho$
and there exists $a$ such that
$(a,\alpha(2),\ldots,\alpha(n))\in \boldsymbol\rho$.
Let us define $\Omega$ in the following way.
$\Omega^{(i)}(x) = \alpha(i)$ for every $i\in \{2,3,\ldots,n\}$,
$\Omega^{(1)}(x) =
\begin{cases}
\alpha(1), & \text{if $x = \gamma(1)$,}\\
\Psi^{(1)}(a), & \text{if $x \neq \gamma(1)$.}
\end{cases}.$
We can check that $\Omega$ preserves $\boldsymbol\sigma$ and maps $\gamma$ to $\alpha$.

Suppose $\alpha$ is a key tuple for $\boldsymbol\sigma$. Let us prove that $\alpha$ is a key tuple for $\boldsymbol\rho$.
By the definition of a restricting vector-function we have a key tuple for $\boldsymbol\rho$ such that $\Psi(\beta) = \beta$.
Using above argument we prove that $\beta$ is a key tuple for $\boldsymbol\sigma$.
Suppose $\gamma \notin \boldsymbol\rho$, then there exists a vector-function $\Omega$ that preserves
$\boldsymbol\rho$ and maps $\gamma$ to $\beta$.
Since $\alpha$ is a key tuple for $\boldsymbol\sigma$,
there exists a vector-function $\Omega_{0}$ that preserves $\boldsymbol\sigma$ and maps $\beta$ to $\alpha$.
Obviously, $\Omega_{0}\circ \Psi\circ\Omega$ maps $\gamma$ to $\alpha$ and preserves $\boldsymbol\rho$. Hence,
$\alpha$ is a key tuple for $\boldsymbol\rho$.

Statement (1) follows from statement (4).

Let us prove statement (2).
Suppose $\left(\begin{smallmatrix}a_1\\\vdots\\a_n\end{smallmatrix}\right)$ is a key tuple for $\boldsymbol\sigma$, and therefore it is a key tuple for $\boldsymbol\rho$.
Combining
Lemma~\ref{patternAndKeyTuple}
with the fact that $\boldsymbol\sigma\subseteq \boldsymbol\rho$
we obtain that
$i\overset{\boldsymbol\sigma}{\not\sim} j$ implies $i\overset{\boldsymbol\rho}{\not\sim} j$. 
Assume that $i\overset{\boldsymbol\rho}{\not\sim} j$.
By Lemma~\ref{patternAndKeyTuple} there exist $b_i,b_j\in A$ such that
$\left(\begin{smallmatrix}
a_1\\\dots\\a_{i-1}\\a_i\\a_{i+1}\\\dots\\a_{j-1}\\b_j\\a_{j+1}\\\dots\\a_n
\end{smallmatrix}\right),
\left(\begin{smallmatrix}
a_1\\\dots\\a_{i-1}\\b_i\\a_{i+1}\\\dots\\a_{j-1}\\a_j\\a_{j+1}\\\dots\\a_n
\end{smallmatrix}\right),
\left(\begin{smallmatrix}
a_1\\\dots\\a_{i-1}\\b_i\\a_{i+1}\\\dots\\a_{j-1}\\b_j\\a_{j+1}\\\dots\\a_n
\end{smallmatrix}\right)\in \boldsymbol\rho$.
It remains to apply the restricting function $\Psi$ to these tuples and apply Lemma~\ref{patternAndKeyTuple} to show that $i\overset{\boldsymbol\sigma}{\not\sim} j$.

Let us prove statement (3).
Assume that $\boldsymbol\sigma$ is not a core of $\sigma$, then there exists
a core $\boldsymbol\sigma'$ that can be obtained from $\boldsymbol\sigma$ by applying a restricting vector-function
$\Omega$ that maps  a key tuple $\alpha$ to $\alpha$.
By statement (4) the tuple $\alpha$ is also a key tuple for $\boldsymbol\rho$.
Then the vector-function
$\Psi' = \Omega\circ \Psi$ preserves $\boldsymbol\rho$, maps $\alpha$ to $\alpha$,
and $\Psi'(A^{n})\subset \Psi(A^n)$, which contradicts the definition of a core.

Let us prove statement (5).
Suppose a vector-function $\Omega$ preserves $\boldsymbol\sigma$ and maps
$\alpha_{1}$ to $\alpha_{2}$
Assume that $\Omega(\Psi(A^{n}))\subset \Psi(A^{n})$.
By the definition, there exists a vector-function $\Omega'$ which maps
$\alpha_{2}$ to $\alpha_1$ and preserves $\boldsymbol\sigma$.
Then the vector-function $\Psi' = \Omega'\circ\Omega\circ\Psi$
preserves $\boldsymbol\rho$ and maps $\alpha_{1}$ to $\alpha_{1}$.
We can easily find $t$ such that $\Psi'' = \underbrace{\Psi'\circ \Psi'\circ \dots\circ \Psi'}_{t}$
and $\Psi''\circ\Psi'' = \Psi''$.
Thus we have $\Psi''(A^{n})\subset\Psi(A^n)$
which contradicts the definition of a core.
Hence $\Omega(\Psi(A^{n}))= \Psi(A^{n})$, which means that $\Omega$ is a bijective mapping on $\proj\boldsymbol \sigma=\Psi(A^{n})$.

Let us prove statement (6).
Assume that $\Phi(\alpha)\notin \boldsymbol\sigma$ for a key tuple $\alpha$.
Then by statement (5), $\Phi$ is a bijection on $\proj\boldsymbol\sigma$.
Hence, by Lemma~\ref{KeyGotoRho} $\Phi$ maps key tuples to key tuples,
which contradicts our assumptions.
\end{proof}

\begin{lem}\label{coreispreservedbyWNU}

If a key relation $\boldsymbol\rho$ is preserved by a WNU $\boldsymbol f$,
then a core of $\boldsymbol\rho$ is preserved by a WNU of the same arity.

\end{lem}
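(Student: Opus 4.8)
The plan is to produce the required weak near-unanimity vector-function by composing the given WNU $\boldsymbol f$ with the restricting vector-function and then correcting the diagonal so that idempotency holds on all of $A$.

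Fix a core $\boldsymbol\sigma$ of $\boldsymbol\rho$ with restricting vector-function $\Psi=(\psi_1,\ldots,\psi_n)$, so that $\Psi$ preserves $\boldsymbol\rho$, $\Psi\circ\Psi=\Psi$ and $\Psi(\boldsymbol\rho)=\boldsymbol\sigma$. Writing $m=\ar(\boldsymbol f)$, I would define a vector-function $\boldsymbol g$ of arity $m$ coordinatewise by
\[
\boldsymbol g^{(i)}(x_1,\ldots,x_m)=
\begin{cases}
x_1, & \text{if } x_1=\cdots=x_m,\\[2pt]
\psi_i\bigl(\boldsymbol f^{(i)}(x_1,\ldots,x_m)\bigr), & \text{otherwise.}
\end{cases}
\]
First I would verify that $\boldsymbol g$ is a WNU: idempotency is built into the first case, and for the weak near-unanimity identities one observes that for $x\neq y$ all tuples $(x,y,\ldots,y),(y,x,y,\ldots,y),\ldots$ fall into the second case, where $\boldsymbol g^{(i)}$ returns $\psi_i$ of the corresponding value of $\boldsymbol f^{(i)}$; these values agree because $\boldsymbol f^{(i)}$ is a WNU, and for $x=y$ both sides equal $x$.

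The crucial step is to show that $\boldsymbol g$ preserves $\boldsymbol\sigma$. Given $\alpha_1,\ldots,\alpha_m\in\boldsymbol\sigma$, note that $\boldsymbol\sigma=\Psi(\boldsymbol\rho)\subseteq\boldsymbol\rho$ and $\Psi$ is idempotent, so $\alpha_j\in\boldsymbol\rho$ and $\Psi(\alpha_j)=\alpha_j$ for every $j$; in particular $\psi_i(\alpha_j(i))=\alpha_j(i)$. Since $\boldsymbol f$ preserves $\boldsymbol\rho$, the tuple $\tau$ with $\tau(i)=\boldsymbol f^{(i)}(\alpha_1(i),\ldots,\alpha_m(i))$ lies in $\boldsymbol\rho$, hence $\Psi(\tau)\in\Psi(\boldsymbol\rho)=\boldsymbol\sigma$. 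I would then check that $\boldsymbol g(\alpha_1,\ldots,\alpha_m)=\Psi(\tau)$: at a coordinate $i$ where the entries $\alpha_1(i),\ldots,\alpha_m(i)$ are not all equal this is the second case of the definition, which literally gives $\psi_i(\tau(i))$; at a coordinate where they all equal some $b$ the first case gives $b$, while $(\Psi(\tau))(i)=\psi_i(\boldsymbol f^{(i)}(b,\ldots,b))=\psi_i(b)=b$ by idempotency of $\boldsymbol f^{(i)}$ together with $\psi_i(b)=\psi_i(\alpha_1(i))=\alpha_1(i)=b$. Thus $\boldsymbol g(\alpha_1,\ldots,\alpha_m)\in\boldsymbol\sigma$, and $\boldsymbol g$ is the desired WNU of arity $m$.

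The point where care is needed is exactly this last verification. One cannot simply take $\boldsymbol g^{(i)}=\psi_i\circ\boldsymbol f^{(i)}$, since that is not idempotent outside the image of $\psi_i$, and one cannot use $\boldsymbol f$ itself, since $\boldsymbol f$ need not preserve $\boldsymbol\sigma$; the diagonal correction is therefore forced, and the reason it does no harm is that on tuples from $\boldsymbol\sigma$ every ``constant'' coordinate value is already a fixed point of the corresponding $\psi_i$, so the two cases in the definition of $\boldsymbol g^{(i)}$ produce the same answer there. Everything else is routine bookkeeping.
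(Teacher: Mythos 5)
Your proof is correct and follows essentially the same route as the paper: compose $\boldsymbol f$ with the restricting vector-function $\Psi$ to get something preserving $\boldsymbol\sigma$, then adjust it off $\proj\boldsymbol\sigma$ so that it is a genuine WNU on all of $A$. The only difference is that you make the adjustment explicit (correcting on the diagonal and checking that the two cases agree on tuples from $\boldsymbol\sigma$ because each $\psi_i$ fixes the entries of such tuples), a step the paper merely asserts can be done.
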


\begin{proof}

Let $\boldsymbol\sigma$ be a core of $\boldsymbol\rho$ and $\Psi$ is a restricting vector-function.
Put
$\boldsymbol f'(x_{1},\ldots,x_{m})=\Psi(\boldsymbol f(x_{1},\ldots,x_{m}))$.
It is easy to see that $\boldsymbol f'$ preserves $\boldsymbol\sigma$ and  $\boldsymbol f'$ is a WNU on $\proj\boldsymbol\sigma$.
To complete the proof we define a WNU $\boldsymbol f''$ that coincides with $\boldsymbol f'$ on $\proj\boldsymbol\sigma$.
\end{proof}

\subsection{Pattern of a core}

Suppose $\boldsymbol\rho\subseteq A^{n}$,
$\alpha,\beta\in A^{n}$.
We say that a pair of tuples $(\alpha,\beta)$ \emph{witnesses} $i\overset{\boldsymbol\rho}{\not\sim}j$
if $\alpha$ is a key tuple for $\boldsymbol\rho$,
$\alpha(k) = \beta(k)$ for every $k\notin\{i,j\}$,
and the tuples
$(\alpha(1), \ldots,\alpha(i-1),\beta(i),\alpha(i+1),\ldots,\alpha(n)),$
$(\alpha(1), \ldots,\alpha(j-1),\beta(j),\alpha(j+1),\ldots,\alpha(n))$,
and $\beta$
belong to $\boldsymbol\rho$.


\begin{lem}\label{FirstLemma}

Suppose $\boldsymbol\rho$ is a key relation preserved by a WNU $\boldsymbol f$,
$1\overset{\boldsymbol\rho}{\sim}3$,
the pair
$\left(
\left(\begin{smallmatrix}
a_1\\a_2\\a_{3}\\\vdots\\a_n
\end{smallmatrix}\right),
\left(\begin{smallmatrix}
b_1\\b_2\\a_{3}\\\vdots\\a_n
\end{smallmatrix}\right)\right)$
witnesses $1\overset{\boldsymbol\rho}{\not\sim}2$.
Then
$\left(\begin{smallmatrix}
a_1\\\boldsymbol f^{(2)}(b_2,a_2,\ldots,a_2)\\a_3\\\vdots\\a_n
\end{smallmatrix}\right)\in\boldsymbol\rho$.
\end{lem}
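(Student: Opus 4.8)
The plan is to exhibit one unary vector-function carrying the key tuple $\alpha=(a_1,\dots,a_n)$ to the tuple in the conclusion, and then to apply Lemma~\ref{KeyGotoRho}. Write $f_i=\boldsymbol f^{(i)}$ and $\delta=(a_1,b_2,a_3,\dots,a_n)$. Since the pair witnesses $1\overset{\boldsymbol\rho}{\not\sim}2$, the tuples $\delta$ (the key tuple with its second coordinate replaced), $\gamma:=(b_1,a_2,a_3,\dots,a_n)$ and $\beta=(b_1,b_2,a_3,\dots,a_n)$ all lie in $\boldsymbol\rho$. Define $\Psi=(\psi_1,\dots,\psi_n)$ by $\psi_i(x)=f_i(\delta(i),x,\dots,x)$, so that $\Psi(\tau)=\boldsymbol f(\delta,\tau,\dots,\tau)$ for every $\tau$; hence $\Psi$ preserves $\boldsymbol\rho$, because $\delta\in\boldsymbol\rho$ and $\boldsymbol f$ preserves $\boldsymbol\rho$. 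For $i\ne2$ we have $\delta(i)=a_i$, so idempotence of $f_i$ gives $\psi_i(a_i)=a_i$, while $\psi_2(a_2)=f_2(b_2,a_2,\dots,a_2)$; therefore
$$\Psi(\alpha)=(a_1,\ f_2(b_2,a_2,\dots,a_2),\ a_3,\dots,a_n).$$
By Lemma~\ref{KeyGotoRho}, since $\alpha$ is a key tuple and $\Psi$ preserves $\boldsymbol\rho$, either $\Psi(\alpha)\in\boldsymbol\rho$ — which is exactly the claim — or $\Psi(\alpha)$ is again a key tuple for $\boldsymbol\rho$.

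So it remains to exclude the second alternative, and this is the only place where $1\overset{\boldsymbol\rho}{\sim}3$ is used. Assume $\xi:=\Psi(\alpha)=(a_1,a_2',a_3,\dots,a_n)$, with $a_2'=f_2(b_2,a_2,\dots,a_2)$, is a key tuple. Applying $\boldsymbol f$ to $\beta$ together with $m-1$ copies of $\gamma$ produces $(b_1,a_2',a_3,\dots,a_n)\in\boldsymbol\rho$, and $\Psi(\gamma)=(f_1(a_1,b_1,\dots,b_1),a_2',a_3,\dots,a_n)\in\boldsymbol\rho$; thus the first coordinate of the key tuple $\xi$ admits values other than $a_1$. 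Being essential, $\xi$ also has a fixing $c_3\ne a_3$ of its third coordinate, i.e. $(a_1,a_2',c_3,a_4,\dots,a_n)\in\boldsymbol\rho$. Now $1\overset{\boldsymbol\rho}{\sim}3$ forces every ``fibre'' $\{(u,w):(u,v_2,w,v_4,\dots,v_n)\in\boldsymbol\rho\}$ to be rectangle-closed, hence a disjoint union of full grids, and one has to combine the tuples just produced — together with $\delta$ and with further images of $\alpha$ obtained through Lemma~\ref{KeyGotoRho} — so as to contradict the resulting block structure around $\xi$. For this bookkeeping it is convenient first to normalise $\boldsymbol f$ via Lemma~\ref{findbetterWNU}, and then to pass to a core $\boldsymbol\sigma$ of $\boldsymbol\rho$ with a restricting vector-function fixing $\xi$, so that parts (5)--(6) of Lemma~\ref{CoreProperties} control exactly which vector-functions may move the key tuples of $\boldsymbol\sigma$.

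The main obstacle is precisely this exclusion of the ``key tuple'' alternative. A direct attempt to complete a forbidden rectangle around $\xi$ is circular, because the missing corner is itself a membership of the form $(u,a_2',w,\dots)\in\boldsymbol\rho$ that $1\overset{\boldsymbol\rho}{\sim}3$ prohibits; the contradiction therefore has to be produced indirectly by carefully restricting which tuples the weak near-unanimity operation is allowed to combine, and it is essential that the whole argument is run through $\alpha$ rather than through $\xi$ directly.
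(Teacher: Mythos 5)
Your reduction is fine as far as it goes: setting $\Psi(\tau)=\boldsymbol f(\delta,\tau,\ldots,\tau)$ with $\delta=(a_1,b_2,a_3,\ldots,a_n)\in\boldsymbol\rho$ and invoking Lemma~\ref{KeyGotoRho} correctly leaves only the alternative that $\xi=(a_1,\boldsymbol f^{(2)}(b_2,a_2,\ldots,a_2),a_3,\ldots,a_n)$ might be a key tuple. But that alternative is the entire content of the lemma, and you do not exclude it: the final paragraphs only sketch a strategy (normalise $\boldsymbol f$, pass to a core, ``contradict the block structure around $\xi$'') and then explicitly concede that ``the main obstacle is precisely this exclusion.'' So the proof has a genuine gap at its only nontrivial step. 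Moreover, your diagnosis that a direct rectangle-completion is ``circular'' is mistaken: $1\overset{\boldsymbol\rho}{\sim}3$ does not \emph{prohibit} memberships, it \emph{forces} the fourth corner of a rectangle in coordinates $1$ and $3$ into $\boldsymbol\rho$ whenever the other three corners are already there, and the lemma is proved by exactly such a direct completion. None of the machinery you propose (cores, Lemma~\ref{CoreProperties}(5)--(6), Lemma~\ref{findbetterWNU}) is needed.

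The missing argument is a ladder of rectangles in coordinates $1$ and $3$. Since $\alpha$ is a key tuple it is essential, so there is $b_3$ with $(a_1,a_2,b_3,a_4,\ldots,a_n)\in\boldsymbol\rho$. With $m=\ar(\boldsymbol f)$, put $c_j=\boldsymbol f^{(1)}(b_1^{j}a_1^{m-j})$, $d_j=\boldsymbol f^{(3)}(a_3^{j}b_3^{m-j})$, $b_2'=\boldsymbol f^{(2)}(b_2,a_2,\ldots,a_2)$ and $\beta_j=(c_j,b_2',a_3,\ldots,a_n)$. Applying $\boldsymbol f$ columnwise to suitable arrangements of the four tuples $\beta$, $\gamma=(b_1,a_2,a_3,\ldots,a_n)$, $\delta$ and $(a_1,a_2,b_3,a_4,\ldots,a_n)$ (all in $\boldsymbol\rho$), one checks that $(c_j,b_2',d_j,a_4,\ldots,a_n)$ and $(c_{j-1},b_2',d_j,a_4,\ldots,a_n)$ lie in $\boldsymbol\rho$ for every $j$ — for the second one place $\delta$ in position $j$ and use the WNU identity to keep the second coordinate equal to $b_2'$. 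Hence $1\overset{\boldsymbol\rho}{\sim}3$ gives $\beta_j\in\boldsymbol\rho\Rightarrow\beta_{j-1}\in\boldsymbol\rho$. The base case $\beta_{m-1}=\boldsymbol f(\gamma,\ldots,\gamma,\delta)\in\boldsymbol\rho$ again uses the WNU identity, and descending to $j=0$ yields $\beta_0=(a_1,b_2',a_3,\ldots,a_n)\in\boldsymbol\rho$ by idempotence of $\boldsymbol f^{(1)}$, which is the claim.
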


\begin{proof}
Since $\boldsymbol\rho$ is essential, there is $b_{3}\in A$ such that
$(a_1,a_2,b_3,a_{4},\ldots,a_{n})\in \boldsymbol\rho$.
Let $m$ be the arity of $\boldsymbol f$.
For $j\in \{0,1,2,\ldots,m\}$
let \[c_{j} = \boldsymbol f^{(1)}(\underbrace{b_1,\ldots,b_1}_j,a_1,\ldots,a_1),\;\;
d_{j} = \boldsymbol f^{(3)}(\underbrace{a_3,\ldots,a_3}_j,b_3,\ldots,b_3).\]
Denote $b_{2}' = \boldsymbol f^{(2)}(b_2,a_2,\ldots,a_2)$,
$\beta_{j} = (c_{j}, b_{2}', a_{3},\ldots, a_{n})$.

Let us show by induction that
for every $j\in \{0,1, 2,\ldots,m-1\}$ we have
$\beta_j\in \boldsymbol\rho$.
Since $\boldsymbol f^{(2)}(b_2,a_2,\ldots,a_2)=\boldsymbol f^{(2)}(a_2,\ldots,a_2,b_2)$, we have $\beta_{m-1} \in \boldsymbol\rho$.

Since $\overset{\boldsymbol\rho}{1\sim 3}$ we have
\[
\left(\begin{smallmatrix}c_{j}\\b_{2}'\\d_{j}\\a_{4}\\\vdots\\a_n
\end{smallmatrix}\right),
\left(\begin{smallmatrix}c_{j-1}\\b_{2}'\\d_{j}\\a_{4}\\\vdots\\a_n
\end{smallmatrix}\right),
\left(\begin{smallmatrix}c_{j}\\b_{2}'\\a_3\\a_{4}\\\vdots\\a_n
\end{smallmatrix}\right)
\in \boldsymbol\rho\Rightarrow
\left(\begin{smallmatrix}c_{j-1}\\b_{2}'\\a_3\\a_{4}\\\vdots\\a_n
\end{smallmatrix}\right)\in \boldsymbol\rho.
\]
We can check that the first two tuples in the above formula always belong to $\boldsymbol\rho$.
Hence, $\beta_{j}\in \boldsymbol\rho \Rightarrow \beta_{j-1}\in \boldsymbol\rho$.
By induction we get $\beta_{0}\in \boldsymbol\rho$. This completes the proof.
\end{proof}

\begin{lem}\label{Keytupleb1b2}

Suppose $\boldsymbol\rho$ is a core,
$(b_{1},a_{2},\ldots,a_{n}),(a_{1},b_{2},a_{3},\ldots,a_{n})\in \boldsymbol\rho$,
$(a_{1},\ldots,a_{n})$ is a key tuple for $\boldsymbol\rho$,
$1\overset{\boldsymbol\rho}{\sim}2$.
Then $(b_1,b_2,a_{3},\ldots,a_n)$
is a key tuple for $\boldsymbol\rho$.

\end{lem}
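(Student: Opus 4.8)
The plan is to establish two things in turn: first that the tuple $\beta:=(b_1,b_2,a_3,\ldots,a_n)$ does not belong to $\boldsymbol\rho$, and then that $\beta$ is in fact a key tuple for $\boldsymbol\rho$.

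For the first claim I would argue by contradiction. Suppose $\beta\in\boldsymbol\rho$ and look at the four tuples $(a_1,a_2,a_3,\ldots,a_n)$, $(b_1,a_2,a_3,\ldots,a_n)$, $(a_1,b_2,a_3,\ldots,a_n)$, $(b_1,b_2,a_3,\ldots,a_n)$: the first is not in $\boldsymbol\rho$ since a key tuple never lies in its relation, and the remaining three are in $\boldsymbol\rho$ by hypothesis (the last one by the assumption $\beta\in\boldsymbol\rho$). This is exactly the configuration that $1\overset{\boldsymbol\rho}{\sim}2$ forbids, so $\beta\notin\boldsymbol\rho$.

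For the second claim, write $\alpha:=(a_1,\ldots,a_n)$ and suppose, towards a contradiction, that $\beta$ is not a key tuple. Since $\alpha$ is a key tuple and $\beta\notin\boldsymbol\rho$, there is a unary vector-function $\Psi$ preserving $\boldsymbol\rho$ with $\Psi(\beta)=\alpha$. Now $\boldsymbol\rho$ is a core of itself by Lemma~\ref{CoreProperties}(3), and $\Psi$ sends the non-key tuple $\beta\in A^n\setminus\boldsymbol\rho$ to the key tuple $\alpha$; hence Lemma~\ref{CoreProperties}(6) gives $\Psi(\alpha)\in\boldsymbol\rho$. The equality $\Psi(\beta)=\alpha$ forces $\Psi^{(i)}(a_i)=a_i$ for $i\ge 3$ (the tuples $\beta$ and $\alpha$ agree on those coordinates), $\Psi^{(1)}(b_1)=a_1$ and $\Psi^{(2)}(b_2)=a_2$. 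Put $c_1:=\Psi^{(1)}(a_1)$ and $c_2:=\Psi^{(2)}(a_2)$. Applying $\Psi$ to $(b_1,a_2,a_3,\ldots,a_n)\in\boldsymbol\rho$, to $(a_1,b_2,a_3,\ldots,a_n)\in\boldsymbol\rho$, and to $\alpha$, I obtain that $(a_1,c_2,a_3,\ldots,a_n)$, $(c_1,a_2,a_3,\ldots,a_n)$ and $(c_1,c_2,a_3,\ldots,a_n)$ all lie in $\boldsymbol\rho$, while $\alpha=(a_1,a_2,a_3,\ldots,a_n)\notin\boldsymbol\rho$. These four facts directly contradict $1\overset{\boldsymbol\rho}{\sim}2$ (note that automatically $c_1\ne a_1$ and $c_2\ne a_2$, since otherwise one of the three displayed tuples would coincide with $\alpha$). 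Hence $\beta$ is a key tuple.

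The step I expect to carry the weight is the use of Lemma~\ref{CoreProperties}(6) in the ``backward'' direction: it costs nothing to pull $\beta$ back to $\alpha$ by some $\Psi$, but coreness then forces $\Psi(\alpha)\in\boldsymbol\rho$, and this is exactly the ingredient needed to close up the forbidden $1$--$2$ square. Everything else is bookkeeping with the definition of the pattern $\overset{\boldsymbol\rho}{\sim}$ and the fact that $\Psi$ fixes coordinates $3,\ldots,n$ pointwise on the relevant values.
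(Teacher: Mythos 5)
Your proof is correct and follows essentially the same route as the paper's: assume $\beta=(b_1,b_2,a_3,\ldots,a_n)$ is not a key tuple, pull it back to the key tuple $\alpha$ by some $\Psi$, invoke Lemma~\ref{CoreProperties}(6) to force $\Psi(\alpha)\in\boldsymbol\rho$, and read off a configuration witnessing $1\overset{\boldsymbol\rho}{\not\sim}2$. Your explicit preliminary check that $\beta\notin\boldsymbol\rho$ is a small piece of bookkeeping the paper leaves implicit, but it is the same argument.
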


\begin{proof}

Assume that $(b_1,b_2,a_{3},\ldots,a_n)$ is not a key tuple for $\boldsymbol\rho$.
Since $\alpha = (a_{1},\ldots,a_{n})$ is a key tuple for $\boldsymbol\rho$ there exists a vector-function
$\Psi$ preserving $\boldsymbol\rho$ which maps $(b_{1},b_{2},a_{3},\ldots,a_{n})$ to $(a_{1},\ldots,a_{n})$.
By Lemma~\ref{CoreProperties} (6) we have
$\Psi(\alpha)\in \boldsymbol\rho$, then
$\left(\left(\begin{smallmatrix}
a_1\\a_2\\a_{3}\\\vdots\\a_n
\end{smallmatrix}\right),\left(\begin{smallmatrix}
\Psi^{(1)}(a_1)\\\Psi^{(2)}(a_2)\\a_{3}\\\vdots\\a_n
\end{smallmatrix}\right)\right)$ witnesses $1\overset{\boldsymbol\rho}{\not \sim}2$. Contradiction.
\end{proof}

\begin{lem}\label{KeyBijections}

Suppose $\boldsymbol\rho\subseteq A^{n}$ is a core preserved by a WNU $\boldsymbol f$, $p\in \{3, \ldots, n\}$,
$1\overset{\boldsymbol\rho}{\not\sim} 2$, $1\overset{\boldsymbol\rho}{\sim}  3$, $2\overset{\boldsymbol\rho}{\sim}  p$.
Then for every key tuple $(a_{1},\ldots, a_{n})$ for $\boldsymbol\rho$
and every $i\in \{1,\ldots,n\}$
the mapping $g(x) = \boldsymbol f^{(i)}(a_i,a_i,\ldots,a_i,x)$ is a bijection on $\proj_{i} \boldsymbol\rho$.

\end{lem}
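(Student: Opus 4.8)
The plan is to pass from $\boldsymbol f$ to its ``decision vector-function'' at the key tuple, arrange that this vector-function is idempotent, show that it preserves $\boldsymbol\rho$, and then derive the conclusion from the minimality built into the definition of a core.

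Write $\alpha=(a_1,\dots,a_n)$ and let $\boldsymbol h$ be the unary vector-function $\boldsymbol h(x_1,\dots,x_n)=\boldsymbol f(\alpha,\dots,\alpha,(x_1,\dots,x_n))$, so that in the $i$-th coordinate $\boldsymbol h^{(i)}=g$. First I would reduce to the case $\boldsymbol h\circ\boldsymbol h=\boldsymbol h$: by Lemma~\ref{findbetterWNU} we may replace $\boldsymbol f$ by the WNU $\boldsymbol f_k$ it produces, which is obtained from $\boldsymbol f$ by composition (hence still preserves $\boldsymbol\rho$, and $\boldsymbol\rho$ is still a core), and whose associated decision vector-function is the power $\boldsymbol h^{\circ k}$ of the original one; since a self-map of a finite set some power of which is a bijection is itself a bijection, it suffices to prove the lemma for $\boldsymbol f_k$. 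So assume $\boldsymbol h\circ\boldsymbol h=\boldsymbol h$, i.e.\ $g\circ g=g$ in every coordinate. As $\boldsymbol\rho$ is essential, $\alpha$ is an essential tuple for $\boldsymbol\rho$ (Lemma~\ref{DammuVariables}), so $a_i\in\proj_i\boldsymbol\rho$ for all $i$ and $\boldsymbol h(\alpha)=\alpha$; applying $\boldsymbol f$ to $m-1$ copies of a tuple of $\boldsymbol\rho$ with $i$-th entry $a_i$ together with a tuple of $\boldsymbol\rho$ with $i$-th entry $x_i$ shows that $\boldsymbol h$ maps $\proj\boldsymbol\rho$ into $\proj\boldsymbol\rho$. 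The goal becomes to show that $\boldsymbol h$ is the identity on $\proj\boldsymbol\rho$, equivalently (being idempotent on this finite set) that it is surjective onto $\proj\boldsymbol\rho$; then $g=\boldsymbol h^{(i)}$ is the identity on $\proj_i\boldsymbol\rho$, a fortiori a bijection, and undoing the reduction finishes the argument.

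The crucial step is to show that $\boldsymbol h$ \emph{preserves} $\boldsymbol\rho$. By Lemma~\ref{DeriveKey} the WNU $\boldsymbol f$ preserves $\Key(\boldsymbol\rho)$, so for each $\gamma\in\boldsymbol\rho$ the tuple $\boldsymbol h(\gamma)=\boldsymbol f(\alpha,\dots,\alpha,\gamma)$ lies in $\Key(\boldsymbol\rho)$, i.e.\ it is either in $\boldsymbol\rho$ or a key tuple; I would assume for contradiction that $\boldsymbol h(\gamma)$ is a key tuple for some $\gamma\in\boldsymbol\rho$. This is where all three hypotheses are used. The relation $1\overset{\boldsymbol\rho}{\sim}3$ restricts, through the characterisation of $\overset{\boldsymbol\rho}{\sim}$ at the key tuple $\alpha$, which single-coordinate modifications of $\alpha$ can coexist in $\boldsymbol\rho$; the witness for $1\overset{\boldsymbol\rho}{\not\sim}2$ taken at $\alpha$ is exactly the configuration to which Lemma~\ref{FirstLemma} applies, which (since $\boldsymbol f^{(2)}(b_2,a_2,\dots,a_2)=g(b_2)$) already says ``$\boldsymbol h$ preserves $\boldsymbol\rho$'' on one such tuple; and $2\overset{\boldsymbol\rho}{\sim}p$ provides the symmetric control on the other part of the relation. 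Transporting these local statements along the two $\overset{\boldsymbol\rho}{\sim}$-classes and feeding them into the core properties of Lemma~\ref{CoreProperties}(5),(6) should show that ``$\boldsymbol h$ sends a tuple of $\boldsymbol\rho$ to a key tuple'' is incompatible with $\boldsymbol\rho$ being a core. I expect this to be the main obstacle and the only place where the full force of the hypotheses is needed.

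Finally, granting that $\boldsymbol h$ preserves $\boldsymbol\rho$, the conclusion follows from minimality. Let $r=(r_1,\dots,r_n)$ be a coordinatewise retraction of $A$ onto $\proj_i\boldsymbol\rho$ in each coordinate $i$; then $r$ preserves $\boldsymbol\rho$, is idempotent, fixes $\alpha$, and has image $\proj\boldsymbol\rho$, so the restricting vector-function of $\boldsymbol\rho$ viewed as its own core (Lemma~\ref{CoreProperties}(3)) has image exactly $\proj\boldsymbol\rho$, and by minimality no vector-function satisfying conditions (1)--(4) of the definition of a core has image strictly smaller than $\proj\boldsymbol\rho$. Now $\boldsymbol h\circ r$ preserves $\boldsymbol\rho$, fixes $\alpha$ (as $\boldsymbol h(\alpha)=\alpha$), is idempotent (using $\boldsymbol h\circ\boldsymbol h=\boldsymbol h$ and that $\boldsymbol h$ maps $\proj\boldsymbol\rho$ into $\proj\boldsymbol\rho$, where $r$ is the identity), and has image $\boldsymbol h(\proj\boldsymbol\rho)\subseteq\proj\boldsymbol\rho$. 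Hence $\boldsymbol h(\proj\boldsymbol\rho)=\proj\boldsymbol\rho$, so $\boldsymbol h$ is a surjective idempotent self-map of the finite set $\proj\boldsymbol\rho$ and therefore the identity there; in particular $g$ is a bijection on $\proj_i\boldsymbol\rho$ for every $i$, which is what we wanted.
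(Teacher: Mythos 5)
Your outer scaffolding (pass to an idempotent $\boldsymbol h(x)=\boldsymbol f(\alpha,\dots,\alpha,x)$ via Lemma~\ref{findbetterWNU}, then use minimality of the core to upgrade ``$\boldsymbol h$ preserves $\boldsymbol\rho$'' to ``$\boldsymbol h$ is the identity on $\proj\boldsymbol\rho$'') is sound, but the step you yourself flag as ``the main obstacle'' is exactly the content of the lemma, and it is not proved. Two problems. First, the plan is circular: Lemma~\ref{CoreProperties}(5) and (6), which you propose to feed your ``local statements'' into, both apply only to vector-functions already known to \emph{preserve} the core, and preservation of $\boldsymbol\rho$ by $\boldsymbol h$ is precisely what you are trying to establish. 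Second, nothing has been reduced: once $\boldsymbol h$ is idempotent, ``$\boldsymbol h$ preserves $\boldsymbol\rho$'' is equivalent (via your own final paragraph) to ``$\boldsymbol h$ is the identity on $\proj\boldsymbol\rho$,'' which is a restatement of the conclusion. So the proposal replaces the lemma by an equivalent claim and gestures at ``transporting along the $\sim$-classes'' without an argument; where the hypotheses $1\overset{\boldsymbol\rho}{\not\sim}2$, $1\overset{\boldsymbol\rho}{\sim}3$, $2\overset{\boldsymbol\rho}{\sim}p$ actually do their work is left blank.

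The paper sidesteps the circularity by building the auxiliary vector-function from a tuple \emph{inside} $\boldsymbol\rho$: it takes a witnessing pair for $1\overset{\boldsymbol\rho}{\not\sim}2$ at the key tuple, a tuple $(a_1,a_2,b_3,a_4,\dots,a_n)\in\boldsymbol\rho$, and sets $\boldsymbol h'(\gamma)=\boldsymbol f(\delta,\dots,\delta,\gamma)$ with $\delta=(a_1,a_2,b_3,a_4,\dots,a_n)$, so that $\boldsymbol h'$ preserves $\boldsymbol\rho$ for free and agrees with your $\boldsymbol h$ in every coordinate except the third. The substantive work then goes the other way: Lemma~\ref{FirstLemma} (which needs $2\overset{\boldsymbol\rho}{\sim}p$) puts $(\boldsymbol f^{(1)}(b_1,a_1,\dots,a_1),a_2,\dots,a_n)$ into $\boldsymbol\rho$, whence $1\overset{\boldsymbol\rho}{\sim}3$ forces $(\boldsymbol f^{(1)}(b_1,a_1,\dots,a_1),a_2,b_3,a_4,\dots,a_n)\notin\boldsymbol\rho$; since $(b_1,a_2,b_3,a_4,\dots,a_n)$ is a key tuple by Lemma~\ref{Keytupleb1b2}, $\boldsymbol h'$ maps a key tuple outside $\boldsymbol\rho$, and Lemma~\ref{CoreProperties}(5) then yields bijectivity on $\proj\boldsymbol\rho$ in one stroke (with a second run for the coordinate $i=3$). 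To salvage your write-up you would need to supply an argument of this kind; as it stands the proof has a genuine gap at its center.
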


\begin{proof}

Let
$\left(\left(\begin{smallmatrix}
a_1\\a_2\\a_{3}\\\vdots\\a_n
\end{smallmatrix}\right),\left(\begin{smallmatrix}
b_1\\b_2\\a_{3}\\\vdots\\a_n
\end{smallmatrix}\right)\right)$
be a pair that witnesses $1\overset{\boldsymbol\rho}{\not\sim}2$,
let
$\left(\begin{smallmatrix}a_1\\a_2\\b_3\\a_4\\\vdots\\a_n
\end{smallmatrix}\right)\in \boldsymbol\rho$.
By Lemma \ref{FirstLemma} we have
$\left(\begin{smallmatrix}
\boldsymbol f^{(1)}(b_1,a_1\ldots,a_1)\\
a_2\\
\vdots\\a_n
\end{smallmatrix}\right)\in\boldsymbol\rho$.
Since
$
\left(\begin{smallmatrix}a_1\\a_2\\b_3\\a_4\\\vdots\\a_n
\end{smallmatrix}\right)\in \boldsymbol\rho,
\left(\begin{smallmatrix}a_1\\a_2\\a_3\\a_4\\\vdots\\a_n
\end{smallmatrix}\right)\not\in \boldsymbol\rho
$, and
$1\overset{\boldsymbol\rho}{\sim} 3$, we get
$\left(\begin{smallmatrix}\boldsymbol f^{(1)}(b_1,a_1\ldots,a_1)\\a_2\\b_3\\a_4\\\dots\\a_n
\end{smallmatrix}\right)\notin \boldsymbol\rho$.
Put
$\boldsymbol h\left(\begin{smallmatrix}
x_1\\x_2\\\vdots\\x_n
\end{smallmatrix}\right)
=
\boldsymbol f\left(\begin{smallmatrix}
a_1 & \dots & a_1 & x_1\\
a_2 & \dots & a_2 & x_2\\
b_3 & \dots & b_3 & x_3\\
a_4 & \dots & a_4 & x_4\\
\vdots & \ddots & \vdots & \vdots\\
a_n & \dots & a_n & x_n
\end{smallmatrix}\right).$
Obviously, $\boldsymbol h$ preserves $\boldsymbol\rho$.
It follows from Lemma~\ref{Keytupleb1b2} that the tuple
$\left(\begin{smallmatrix}b_1\\a_2\\b_3\\a_4\\\dots\\a_n\end{smallmatrix}\right)$
is a key tuple for $\boldsymbol\rho$.
Since $\boldsymbol h\left(\begin{smallmatrix}
b_1\\a_2\\b_3\\a_4\\\dots\\a_n
\end{smallmatrix}\right) = \left(\begin{smallmatrix}\boldsymbol f^{(1)}(b_1,a_1,\ldots,a_1)\\a_2\\b_3\\a_4\\\dots\\a_n
\end{smallmatrix}\right)\notin \boldsymbol\rho$,
by Lemma~\ref{CoreProperties} (5) the mapping $\boldsymbol h$ is a bijection on $\proj \boldsymbol\rho$.
So, we have $\boldsymbol h^{(i)}(x) = \boldsymbol f^{(i)}(a_i,a_i,\ldots,a_i,x)$
for $i\neq 3$,
therefore $\boldsymbol f^{(i)}(a_i,a_i,\ldots,a_i,x)$ is a bijection on $\proj_{i} \boldsymbol\rho$.

It remains to prove the statement for $i = 3$.
In this case we choose
$(b_{1},a_{2},b_{3},a_{4},\ldots,a_{n})$ as a key tuple
and $(b_{1},a_{2},a_{3},a_{4},\ldots,a_{n})$ as a tuple from $\boldsymbol\rho$,
and repeat the whole proof.
As a result we prove that $\boldsymbol f^{(3)}(a_3,a_3,\ldots,a_3,x)$ is a bijection on $\proj_{3} \boldsymbol\rho$.
\end{proof}

\begin{lem}\label{MainPatternLemma}

Suppose $\boldsymbol\rho\subseteq A^{n}$ is a core preserved by a WNU $\boldsymbol f$,
$1\overset{\boldsymbol\rho}{\not\sim} 2$, $1\overset{\boldsymbol\rho}{\sim}  3$.
Then $2\overset{\boldsymbol\rho}{\not\sim} p$ for any $p\in \{3,\ldots,n\}$.

\end{lem}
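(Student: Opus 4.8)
The plan is to argue by contradiction. Suppose $2\overset{\boldsymbol\rho}{\sim}p$ for some $p\in\{3,\ldots,n\}$. Then all three relations $1\overset{\boldsymbol\rho}{\not\sim}2$, $1\overset{\boldsymbol\rho}{\sim}3$, $2\overset{\boldsymbol\rho}{\sim}p$ hold, so Lemma~\ref{KeyBijections} applies and tells us that for every key tuple $(a_1,\ldots,a_n)$ and every $i$ the map $x\mapsto\boldsymbol f^{(i)}(a_i,\ldots,a_i,x)$ is a bijection of $\proj_i\boldsymbol\rho$. First I would normalize $\boldsymbol f$ via Lemma~\ref{findbetterWNU}: replacing $\boldsymbol f$ by a WNU derived from it by composition (which still preserves $\boldsymbol\rho$, and does not affect the pattern hypotheses), we may assume each translation $\boldsymbol h^{(i)}(x)=\boldsymbol f^{(i)}(a_i,\ldots,a_i,x)$ satisfies $\boldsymbol h^{(i)}\circ\boldsymbol h^{(i)}=\boldsymbol h^{(i)}$. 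An idempotent self-map of a finite set is the identity on its image, so combined with the bijectivity from Lemma~\ref{KeyBijections} we get $\boldsymbol f^{(i)}(a_i,\ldots,a_i,x)=x$ for every $x\in\proj_i\boldsymbol\rho$, for every key tuple $(a_1,\ldots,a_n)$ and every $i$.

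Next I would fix a pair $\bigl((a_1,a_2,a_3,\ldots,a_n),(b_1,b_2,a_3,\ldots,a_n)\bigr)$ witnessing $1\overset{\boldsymbol\rho}{\not\sim}2$; writing $\vec a=(a_3,\ldots,a_n)$, this means $\alpha=(a_1,\ldots,a_n)$ is a key tuple, $a_1\neq b_1$, $a_2\neq b_2$, and $(b_1,a_2,\vec a),(a_1,b_2,\vec a),(b_1,b_2,\vec a)\in\boldsymbol\rho$ (so in particular $a_1,b_1\in\proj_1\boldsymbol\rho$ and $a_2,b_2\in\proj_2\boldsymbol\rho$). By essentiality of $\boldsymbol\rho$ choose $b_3$ with $(a_1,a_2,b_3,a_4,\ldots,a_n)\in\boldsymbol\rho$ and $b_p$ with $(a_1,\ldots,a_{p-1},b_p,a_{p+1},\ldots,a_n)\in\boldsymbol\rho$. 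Applying Lemma~\ref{Keytupleb1b2} to coordinates $1,3$ (using $1\overset{\boldsymbol\rho}{\sim}3$ and the tuples $(b_1,a_2,\vec a)$ and $(a_1,a_2,b_3,a_4,\ldots,a_n)$) shows that $\gamma=(b_1,a_2,b_3,a_4,\ldots,a_n)$ is a key tuple, and applying it to coordinates $2,p$ (using $2\overset{\boldsymbol\rho}{\sim}p$ and the tuples $(a_1,b_2,\vec a)$ and $(a_1,\ldots,b_p,\ldots,a_n)$) shows that $\beta=(a_1,b_2,a_3,\ldots,a_{p-1},b_p,a_{p+1},\ldots,a_n)$ is a key tuple. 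Since $\gamma$ has first coordinate $b_1$ and $\beta$ has second coordinate $b_2$, the translation identity from the previous paragraph gives $\boldsymbol f^{(1)}(b_1,\ldots,b_1,a_1)=a_1$ and $\boldsymbol f^{(2)}(b_2,\ldots,b_2,a_2)=a_2$.

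Finally I would feed into $\boldsymbol f$ the $\ar(\boldsymbol f)$ columns $(b_1,a_2,\vec a)$, $(a_1,b_2,\vec a)$, $(b_1,b_2,\vec a),\ldots,(b_1,b_2,\vec a)$ (the first two once each, then copies of the third). All columns lie in $\boldsymbol\rho$, so the result lies in $\boldsymbol\rho$. But coordinate $1$ receives one argument $a_1$ and the rest $b_1$, so by the defining identity of a WNU its value is $\boldsymbol f^{(1)}(b_1,\ldots,b_1,a_1)=a_1$; coordinate $2$ receives one $a_2$ and the rest $b_2$, giving $\boldsymbol f^{(2)}(b_2,\ldots,b_2,a_2)=a_2$; and for $i\ge 3$ coordinate $i$ receives only $a_i$, giving $a_i$ by idempotency. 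Hence the result is $\alpha=(a_1,\ldots,a_n)$, contradicting that $\alpha$ is a key tuple and therefore not in $\boldsymbol\rho$. This contradiction establishes $2\overset{\boldsymbol\rho}{\not\sim}p$. The delicate point — and the only place the extra hypothesis is used — is the construction of the auxiliary key tuples $\gamma$ and $\beta$: the key tuple $\alpha$ by itself only makes translations by $a_1$ and $a_2$ identities, which is insufficient to evaluate the mixed column above; it is precisely $1\overset{\boldsymbol\rho}{\sim}3$ and $2\overset{\boldsymbol\rho}{\sim}p$ that, through Lemma~\ref{Keytupleb1b2}, manufacture key tuples carrying $b_1$ in the first coordinate and $b_2$ in the second, and the whole argument collapses if $2\overset{\boldsymbol\rho}{\sim}p$ fails.
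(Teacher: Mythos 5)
Your proof is correct, and it uses the same toolkit as the paper's proof (a witnessing pair for $1\overset{\boldsymbol\rho}{\not\sim}2$, essentiality of the key tuple, Lemma~\ref{Keytupleb1b2}, Lemma~\ref{KeyBijections}) to reach the same contradiction, namely that the key tuple $\alpha$ is forced into $\boldsymbol\rho$; but the endgame is organized differently. The paper treats the two coordinates asymmetrically: it applies $\boldsymbol f$ to obtain three auxiliary tuples in $\boldsymbol\rho$, unwinds the definition of $1\overset{\boldsymbol\rho}{\sim}3$ to conclude $(a_1,\boldsymbol f^{(2)}(b_2,\ldots,b_2,a_2),a_3,\ldots,a_n)\in\boldsymbol\rho$, and then notes that the vector-function $\boldsymbol h$ with $\boldsymbol h^{(2)}(x)=\boldsymbol f^{(2)}(b_2,\ldots,b_2,x)$ is (by Lemma~\ref{Keytupleb1b2} applied to $\{2,p\}$ and Lemma~\ref{KeyBijections}) a bijection on $\proj\boldsymbol\rho$ that nevertheless sends the key tuple into $\boldsymbol\rho$. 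You symmetrize instead: after normalizing $\boldsymbol f$ via Lemma~\ref{findbetterWNU}, the bijectivity from Lemma~\ref{KeyBijections} upgrades every translation by a coordinate of a key tuple to the identity on the corresponding projection; two applications of Lemma~\ref{Keytupleb1b2} (one through $1\overset{\boldsymbol\rho}{\sim}3$, one through $2\overset{\boldsymbol\rho}{\sim}p$) manufacture key tuples carrying $b_1$ and $b_2$, giving $\boldsymbol f^{(1)}(b_1,\ldots,b_1,a_1)=a_1$ and $\boldsymbol f^{(2)}(b_2,\ldots,b_2,a_2)=a_2$; and a single evaluation of $\boldsymbol f$ on the columns of the witnessing pair then returns $\alpha$ itself. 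Your version never has to unwind the definition of the pattern relation by hand in the final step and consumes the bijectivity once and for all in the normalization, at the cost of one extra auxiliary key tuple and the passage to the composed WNU; the paper's version avoids the normalization but needs the three-tuple computation and the closing bijection argument. Both are complete.
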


\begin{proof}
Assume that
 $2\overset{\boldsymbol\rho}{\sim} p$ for $p\in \{3,\ldots,n\}$. Let
$\left(\left(\begin{smallmatrix}
a_1\\a_2\\a_{3}\\\vdots\\a_n
\end{smallmatrix}\right),\left(\begin{smallmatrix}
b_1\\b_2\\a_{3}\\\vdots\\a_n
\end{smallmatrix}\right)\right)$
be a pair that witnesses $1\overset{\boldsymbol\rho}{\not\sim}2$,
let
$\left(\begin{smallmatrix}a_1\\a_2\\b_3\\a_4\\\vdots\\a_n
\end{smallmatrix}\right)\in \boldsymbol\rho$.
Since $\boldsymbol f$ preserves $\boldsymbol\rho$,
we have
\[
\left(\begin{smallmatrix}a_1\\\boldsymbol f^{(2)}(b_2,\ldots,b_2,a_2)\\\boldsymbol f^{(3)}(a_3,\ldots,a_3,b_3)\\a_{4}\\\vdots\\a_{n}
\end{smallmatrix}\right),
\left(\begin{smallmatrix}\boldsymbol f^{(1)}(b_1,a_1,\ldots,a_1)\\\boldsymbol f^{(2)}(b_2,\ldots,b_2,a_2)\\a_3\\a_{4}\\\vdots\\a_{n}
\end{smallmatrix}\right),
\left(\begin{smallmatrix}\boldsymbol f^{(1)}(b_1,a_1,\ldots,a_1)\\\boldsymbol f^{(2)}(b_2,\ldots,b_2,a_2)\\\boldsymbol f^{(3)}(a_3,\ldots,a_3,b_3)\\a_{4}\\\vdots\\a_{n}
\end{smallmatrix}\right)\in \boldsymbol\rho.\]
$1\overset{\boldsymbol\rho}{\sim} 3$, therefore
$\left(\begin{smallmatrix}a_1\\\boldsymbol f^{(2)}(b_2,\ldots,b_2,a_2)\\a_{3}\\\vdots\\a_{n}
\end{smallmatrix}\right)\in \boldsymbol\rho$.
Put
$\boldsymbol h\left(\begin{smallmatrix}
x_1\\x_2\\\vdots\\x_n
\end{smallmatrix}\right)
=
\boldsymbol f\left(\begin{smallmatrix}
a_1 & \dots & a_1 & x_1\\
b_2 & \dots & b_2 & x_2\\
a_3 & \dots & a_3 & x_3\\
\vdots & \ddots &\vdots & \vdots \\
a_n & \dots & a_n & x_n
\end{smallmatrix}\right).$
By Lemma \ref{Keytupleb1b2}, $b_2$ occurs in a key tuple for $\boldsymbol\rho$.
Then by Lemma \ref{KeyBijections},
$\boldsymbol h$ is a bijection on $\proj \boldsymbol\rho$.
But $\boldsymbol h$ preserves $\boldsymbol\rho$ and maps a tuple $(a_{1},\ldots,a_{n})$, which is not in $\boldsymbol\rho$,
to a tuple from $\boldsymbol\rho$. This contradiction completes the proof.
\end{proof}

The following theorem follows directly from Lemma~\ref{MainPatternLemma}.

\begin{thm}\label{patternOfCoreIsEquiv}

Suppose $\boldsymbol\rho$ is a core preserved by a WNU, then $\overset{\boldsymbol\rho}{\sim}$ is an equivalence relation such that at most one equivalence class contains more than 1 element.

\end{thm}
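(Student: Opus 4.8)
Since the paper already announces that this theorem follows directly from Lemma~\ref{MainPatternLemma}, the plan is simply to spell out that reduction. Reflexivity of $\overset{\boldsymbol\rho}{\sim}$ is built into the definition, and symmetry is immediate because the condition defining $i\overset{\boldsymbol\rho}{\sim}j$ is literally symmetric under swapping $i$ with $j$ (and simultaneously renaming the existentially quantified $b_i$ to $b_j$): the ``bad'' tuple $(a_1,\dots,a_n)$ is unchanged, and the three ``good'' tuples get permuted among themselves. So the only real work is transitivity together with the ``at most one nontrivial class'' clause, and both follow from the contrapositive reading of Lemma~\ref{MainPatternLemma}: if $1\overset{\boldsymbol\rho}{\not\sim}2$ and $1\overset{\boldsymbol\rho}{\sim}3$, then $2\overset{\boldsymbol\rho}{\not\sim}p$ for every coordinate $p$.

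For transitivity I would argue by contradiction. Assume $i\overset{\boldsymbol\rho}{\sim}j$, $j\overset{\boldsymbol\rho}{\sim}k$, but $i\overset{\boldsymbol\rho}{\not\sim}k$. If two of $i,j,k$ coincide the conclusion is trivial, so they are pairwise distinct. Permute the coordinates of $\boldsymbol\rho$ so that $i,k,j$ become coordinates $1,2,3$; a coordinate permutation carries a core preserved by a WNU to a core preserved by a WNU and transports $\overset{\boldsymbol\rho}{\sim}$ accordingly, so Lemma~\ref{MainPatternLemma} applies verbatim. Its hypotheses $1\overset{\boldsymbol\rho}{\not\sim}2$ and $1\overset{\boldsymbol\rho}{\sim}3$ hold, so its conclusion gives $2\overset{\boldsymbol\rho}{\not\sim}3$, i.e.\ $k\overset{\boldsymbol\rho}{\not\sim}j$, contradicting $j\overset{\boldsymbol\rho}{\sim}k$. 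Hence $i\overset{\boldsymbol\rho}{\sim}k$, and $\overset{\boldsymbol\rho}{\sim}$ is an equivalence relation.

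For the last assertion, again by contradiction: suppose distinct classes $C$ and $C'$ each contain at least two coordinates, say $i\ne i'$ in $C$ and $j\ne j'$ in $C'$. Distinct equivalence classes are disjoint, so $i,i',j,j'$ are four distinct coordinates. Relabel the coordinates so that $j,i,j'$ become $1,2,3$; then $1\overset{\boldsymbol\rho}{\not\sim}2$ (as $i$ and $j$ lie in different classes) and $1\overset{\boldsymbol\rho}{\sim}3$ (as $j\overset{\boldsymbol\rho}{\sim}j'$), so Lemma~\ref{MainPatternLemma} gives $2\overset{\boldsymbol\rho}{\not\sim}p$ for every remaining coordinate $p$; taking $p$ to be $i'$ yields $i\overset{\boldsymbol\rho}{\not\sim}i'$, contradicting $i\overset{\boldsymbol\rho}{\sim}i'$. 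I do not expect a genuine obstacle here; the only points deserving a line of care are that relabelling coordinates is harmless (everything in sight — being a core, being preserved by a WNU, the pattern relation — is defined coordinate-symmetrically) and that the degenerate cases where the chosen indices are forced to collide must be disposed of first. Everything else is a two-line invocation of Lemma~\ref{MainPatternLemma}.
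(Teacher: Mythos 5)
Your proposal is correct and follows exactly the route the paper takes: the paper dismisses the theorem as a direct consequence of Lemma~\ref{MainPatternLemma}, and you have simply written out the routine details (reflexivity and symmetry from the definition, transitivity and the uniqueness of the nontrivial class by applying the lemma after a harmless permutation of coordinates). No gaps.
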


Combining the above theorem and Lemma~\ref{CoreProperties}(2) we obtain the proof of
Theorem~\ref{PatternIsEquivalenceForWNUF}
from Section~\ref{MainResultsSection}.

\subsection{Core of a key relation with full pattern}

\begin{lem}\label{KeyBlockDescription}

Suppose $\boldsymbol\sigma$ is a core with full pattern.
Then every connected component of $\Key(\boldsymbol\sigma)$ can be represented as
$A_{1}\times A_{2}\times \dots \times A_{n}$ for some
$A_{1},\ldots,A_{n}\subseteq A$.

\end{lem}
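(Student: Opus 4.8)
The plan is to show that $\Key(\boldsymbol\sigma)$ behaves like a ``rectangular'' relation on each connected component. Recall $\Key(\boldsymbol\sigma)$ is $\boldsymbol\sigma$ together with all its key tuples, and by Lemma~\ref{DeriveKey} it is preserved by the same WNU $\boldsymbol f$ as $\boldsymbol\sigma$; by Lemma~\ref{preserveConnectedComponent} each connected component $\boldsymbol\delta$ of $\Key(\boldsymbol\sigma)$ is also preserved by $\boldsymbol f$ (the WNU is idempotent). So it suffices to prove that a connected, $\boldsymbol f$-invariant subrelation $\boldsymbol\delta$ of $\Key(\boldsymbol\sigma)$ that arises as a connected component is of the form $A_1\times\dots\times A_n$. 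The natural candidate sets are $A_i=\proj_i\boldsymbol\delta$, and the inclusion $\boldsymbol\delta\subseteq A_1\times\dots\times A_n$ is trivial; the work is the reverse inclusion.

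The key structural input is the full pattern together with Lemma~\ref{KeyBijections}. Since the pattern of $\boldsymbol\sigma$ is full, for any two coordinates there are tuples differing in those two coordinates witnessing the ``$\sim$'' relation, and Lemma~\ref{KeyBijections}'s hypotheses fail in exactly the way that lets us instead extract bijective behaviour of $\boldsymbol f$ restricted to a component. Concretely, I would first establish the following local exchange property: if $\alpha,\beta\in\boldsymbol\delta$ differ only in coordinate $i$, then for any $\alpha'\in\boldsymbol\delta$ the tuple obtained from $\alpha'$ by replacing its $i$-th coordinate with $\beta(i)$ lies in $\boldsymbol\delta$. This is proved by applying $\boldsymbol f$ to a suitable matrix of tuples from $\boldsymbol\delta$: put $\alpha$ in one column, $\beta$ in another, $\alpha'$ in the remaining columns, and use the WNU identities $\boldsymbol f^{(k)}(x,y,\dots,y)=\boldsymbol f^{(k)}(y,\dots,y,x)$ coordinatewise; on coordinate $i$ the two relevant columns are $\alpha(i)$ and $\beta(i)$, on all other coordinates all columns agree, so idempotence pins the output down to exactly the desired tuple, and $\boldsymbol f$-invariance of $\boldsymbol\delta$ places it in $\boldsymbol\delta$. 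One must check that the resulting tuple is still in the \emph{same} connected component: but it is adjacent to $\alpha'$ (differs in at most coordinate $i$), so this is automatic.

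Granting the exchange property, the rest is a routine induction. Fix any $\gamma=(c_1,\dots,c_n)\in A_1\times\dots\times A_n$. For each $i$ there is some $\gamma_i\in\boldsymbol\delta$ with $\gamma_i(i)=c_i$; starting from an arbitrary $\alpha^{(0)}\in\boldsymbol\delta$ and applying the exchange property $n$ times, once per coordinate, using the pair of tuples $(\gamma_i,$ a neighbour of $\gamma_i$ along coordinate $i)$ --- or more directly, by iterating the exchange property with the tuples $\gamma_i$ themselves via connectedness of $\boldsymbol\delta$ --- we successively correct coordinates $1,2,\dots,n$ of $\alpha^{(0)}$ to $c_1,c_2,\dots,c_n$, never leaving $\boldsymbol\delta$, and arrive at $\gamma\in\boldsymbol\delta$. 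Hence $\boldsymbol\delta=A_1\times\dots\times A_n$.

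I expect the main obstacle to be the precise formulation and proof of the exchange property: one has to be careful that the ``correcting'' tuples $\gamma_i$ really do sit inside the fixed component $\boldsymbol\delta$ rather than some other component of $\Key(\boldsymbol\sigma)$, and that the single-coordinate change keeps us in $\boldsymbol\delta$ at every intermediate step. This is where connectedness of the component and the adjacency bookkeeping do the real work; once that is set up cleanly, the WNU computation and the final induction are short.
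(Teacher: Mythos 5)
Your plan has a genuine gap at its central step, the ``local exchange property''. You propose to prove that if $\alpha,\beta\in\boldsymbol\delta$ differ only in coordinate $i$ and $\alpha'\in\boldsymbol\delta$, then the tuple obtained from $\alpha'$ by replacing its $i$-th entry with $\beta(i)$ is again in $\boldsymbol\delta$, by applying $\boldsymbol f$ to the columns $\alpha,\beta,\alpha',\dots,\alpha'$. But your claim that ``on all other coordinates all columns agree'' is false: on a coordinate $j\neq i$ the row reads $(\alpha(j),\alpha(j),\alpha'(j),\dots,\alpha'(j))$, and neither idempotence nor the weak near-unanimity identity pins $\boldsymbol f^{(j)}$ of that row down to $\alpha'(j)$ (the WNU identity only governs arguments of the form $(y,\dots,y,x)$ up to permutation of positions). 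No arrangement of $\alpha$, $\beta$, $\alpha'$ into the columns of a WNU produces the desired tuple; transporting the value $\beta(i)$ from the context $\alpha$ to an unrelated context $\alpha'$ is precisely what a WNU cannot do. A secondary but telling problem is that the lemma does not assume any WNU: its hypothesis is only that $\boldsymbol\sigma$ is a core with full pattern, so a proof that leans on $\boldsymbol f$-invariance of the components is answering a different question.

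The paper's argument uses none of this machinery. It establishes a $2\times 2$ completion property directly: if $\alpha=(a_1,a_2,a_3,\dots,a_n)\notin\Key(\boldsymbol\sigma)$ while $(b_1,a_2,a_3,\dots,a_n)$, $(a_1,b_2,a_3,\dots,a_n)$, $(b_1,b_2,a_3,\dots,a_n)$ all lie in $\Key(\boldsymbol\sigma)$, then, since $\alpha$ is neither in $\boldsymbol\sigma$ nor a key tuple, there is a vector-function $\Psi$ preserving $\boldsymbol\sigma$ sending $\alpha$ to a key tuple, and property (6) of Lemma~\ref{CoreProperties} forces $\Psi$ to send all three neighbours into $\boldsymbol\sigma$; the four images then witness $1\overset{\boldsymbol\sigma}{\not\sim}2$, contradicting fullness of the pattern. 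That interplay between key tuples, the restricting structure of a core, and the definition of the pattern is the mechanism your proposal is missing. If you want to rescue your outline, replace the exchange property by this $2\times2$ completion property (your concluding connectivity induction can consume it just as well) and prove it via Lemma~\ref{CoreProperties}(6) rather than via a WNU computation.
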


\begin{proof}

Let $a_1,\ldots,a_n, b_1,b_2\in A$. The tuples
$\left(\begin{smallmatrix}
a_1\\a_2\\a_3\\\dots\\a_n
\end{smallmatrix}\right)$,
$\left(\begin{smallmatrix}
b_1\\a_2\\a_3\\\dots\\a_n
\end{smallmatrix}\right)$,
$\left(\begin{smallmatrix}
a_1\\b_2\\a_3\\\dots\\a_n
\end{smallmatrix}\right)$,
$\left(\begin{smallmatrix}
b_1\\b_2\\a_3\\\dots\\a_n
\end{smallmatrix}\right)$
we denote by
$\alpha,\beta_1,\beta_2,\beta_3$ correspondingly.
Assume that $\alpha\notin \Key(\boldsymbol\sigma)$
but each of the tuples $\beta_1$, $\beta_2$, $\beta_3$ belongs to $\Key(\boldsymbol\sigma)$.
By Lemma~\ref{CoreProperties}, $\boldsymbol\sigma$ is a key relation, then there exists a
key tuple $\beta$ for $\boldsymbol\sigma$ and a vector-function $\Psi$ preserving $\boldsymbol\sigma$ such that
$\Psi(\alpha) = \beta$.
Then, by the property (6) in Lemma~\ref{CoreProperties} we have
$\Psi(\beta_1), \Psi(\beta_2),\Psi(\beta_3)\in \boldsymbol\sigma$. Therefore
$\overset{\boldsymbol\sigma}{1\not\sim 2},$ which contradicts the
fact that the pattern is full. Hence $\alpha\in \Key(\boldsymbol\sigma)$.

Using the same argument for other coordinates we can show that
every connected component of $\Key(\boldsymbol\sigma)$ can be represented as
$A_{1}\times A_{2}\times \dots \times A_{n}$.
\end{proof}

Connected components of $\Key(\boldsymbol\sigma)$ that contain a key tuple of $\boldsymbol\sigma$ are called \emph{key blocks}.
If a core is preserved by a WNU, then we can get a description of key blocks of a core.

\begin{lem}\label{keyBlockBijection}

Suppose $\boldsymbol \sigma$ is a core preserved by a WNU $\boldsymbol f$,
$k\overset{\boldsymbol\sigma}{\sim}l$, $k\neq l$,
$(a_1,\ldots,a_n)$ is a key tuple for $\boldsymbol\sigma$.
Then $\boldsymbol f^{(i)}(a_{i},\ldots,a_{i},x)$ is a bijection on $\proj_{i} \boldsymbol\sigma$
for every $i\notin\{k,l\}$.

\end{lem}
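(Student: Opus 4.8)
The plan is to imitate the structure of the proof of Lemma~\ref{KeyBijections}, but working directly with a pair $(k,l)$ of coordinates in the (unique) nontrivial equivalence class of the pattern, rather than the fixed triple $\{1,2,3\}$ with the mixed $\sim/\not\sim$ situation used there. Fix $i\notin\{k,l\}$; we want to show $g(x)=\boldsymbol f^{(i)}(a_i,\ldots,a_i,x)$ is a bijection on $\proj_i\boldsymbol\sigma$. Since $\proj_i\boldsymbol\sigma$ is finite, it suffices to exhibit a vector-function $\boldsymbol h$ preserving $\boldsymbol\sigma$ with $\boldsymbol h^{(i)}(x)=\boldsymbol f^{(i)}(a_i,\ldots,a_i,x)$ which maps some key tuple of $\boldsymbol\sigma$ to a tuple \emph{not} in $\boldsymbol\sigma$, and then invoke Lemma~\ref{CoreProperties}(5), exactly as in Lemma~\ref{KeyBijections}.

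First I would locate, using $k\overset{\boldsymbol\sigma}{\not\sim}$-nothing (the class is nontrivial, so the pattern on the class behaves like $\{a,b\}^r\setminus\{a\}^r$), a small configuration witnessing that $\boldsymbol\sigma$ is essential: pick $b_i$ with the $i$-th coordinate of the key tuple changed to $b_i$ lying in $\boldsymbol\sigma$, and pick $b_k$ with the $k$-th coordinate changed to $b_k$ lying in $\boldsymbol\sigma$. Because $k\overset{\boldsymbol\sigma}{\sim}l$, changing both the $k$-th and $l$-th coordinates of the key tuple to suitable values $b_k,b_l$ again produces a key tuple (this is the analogue, in the $\sim$-class, of the statement of Lemma~\ref{Keytupleb1b2}, applied inside the class; more precisely one uses the characterization of $\sim$ together with the fact that the key tuple is not in $\boldsymbol\sigma$). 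Then I would set
$$
\boldsymbol h\left(\begin{smallmatrix} x_1\\ \vdots\\ x_n\end{smallmatrix}\right)
=\boldsymbol f\left(\begin{smallmatrix}
a_1 & \dots & a_1 & x_1\\
\vdots & \ddots & \vdots & \vdots\\
b_i & \dots & b_i & x_i\\
\vdots & \ddots & \vdots & \vdots\\
a_n & \dots & a_n & x_n
\end{smallmatrix}\right),
$$
where the $i$-th row uses $b_i$ in the first $m-1$ slots and $a_j$ elsewhere, so that $\boldsymbol h^{(i)}(x)=\boldsymbol f^{(i)}(b_i,\ldots,b_i,x)$ — wait, this gives $b_i$, not $a_i$, so the $i$-th row must instead be $a_i,\dots,a_i,x_i$ and it is a \emph{different} coordinate that carries the auxiliary value. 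The cleaner choice: keep row $i$ equal to $a_i,\dots,a_i,x_i$, and put the auxiliary value $b_k$ in row $k$. Then $\boldsymbol h^{(i)}(x)=\boldsymbol f^{(i)}(a_i,\dots,a_i,x)=g(x)$ as required, $\boldsymbol h$ preserves $\boldsymbol\sigma$, and I must check that $\boldsymbol h$ sends the key tuple $(b_1,\dots,b_{k-1},b_k,b_{k+1}',\dots)$ obtained above (with $k$- and $l$-coordinates adjusted to make it a key tuple) to a non-solution. This last check is where the hypothesis $k\overset{\boldsymbol\sigma}{\sim}l$ does the work: using that the pattern restricted to the class forces "changing one coordinate in the class, from a key tuple, out of $\boldsymbol\sigma$", one argues the image cannot lie in $\boldsymbol\sigma$, deriving a contradiction with $k\overset{\boldsymbol\sigma}{\sim}l$ otherwise via a witnessing pair for $k\overset{\boldsymbol\sigma}{\not\sim}l$.

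The main obstacle I expect is bookkeeping: correctly choosing which coordinate carries the auxiliary WNU-argument so that the $i$-th component of $\boldsymbol h$ comes out exactly as $\boldsymbol f^{(i)}(a_i,\dots,a_i,x)$ while simultaneously guaranteeing, via the $\sim$/$\not\sim$ pattern relations and Lemma~\ref{Keytupleb1b2}, that $\boldsymbol h$ applied to a genuine key tuple leaves $\boldsymbol\sigma$. A secondary subtlety is that Lemma~\ref{KeyBijections} was stated under the hypothesis of an \emph{almost trivial}-type pattern ($1\not\sim 2$, $1\sim 3$, $2\sim p$), whereas here we are in the full-pattern setting; I would need either to reduce to Lemma~\ref{KeyBijections} by fixing all but a few coordinates at the key tuple and passing to the resulting lower-arity key relation (using Corollary~\ref{PatternChanging} and Lemma~\ref{ReduceArityOfKey} to control the pattern of the reduct, producing the needed $\not\sim$ on two coordinates outside $\{k,l\}$ if the arity is large enough), or, if the class is the whole coordinate set, handle that degenerate case separately — but in that case $i\notin\{k,l\}$ together with $\overset{\boldsymbol\sigma}{\sim}$ being full and involving all coordinates would leave no room, so $i$ exists only when there is at least one trivial class, and then the reduction to Lemma~\ref{KeyBijections} applies cleanly.
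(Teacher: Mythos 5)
There is a genuine gap at the crux of your argument. You correctly identify the target: build a vector-function $\boldsymbol h$ preserving $\boldsymbol\sigma$ with $\boldsymbol h^{(i)}(x)=\boldsymbol f^{(i)}(a_i,\ldots,a_i,x)$ that maps a key tuple to a tuple outside $\boldsymbol\sigma$, then invoke Lemma~\ref{CoreProperties}(5). But your construction puts the auxiliary value $b_k$ into a single row $k$, so $\boldsymbol h(\alpha)$ differs from the key tuple $\alpha$ in only the $k$-th coordinate, and nothing forces it to lie outside $\boldsymbol\sigma$; the paragraph claiming that $k\overset{\boldsymbol\sigma}{\sim}l$ "does the work" here is precisely the missing proof. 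The paper's actual mechanism uses \emph{both} coordinates $k$ and $l$ and a whole family of candidates: with $(b_k,a_l,\ldots)$ and $(a_k,b_l,\ldots)$ witnessing essentiality, it sets $c_j=\boldsymbol f^{(k)}(b_k^{j}a_k^{m-j})$, $d_j=\boldsymbol f^{(l)}(a_l^{m-j}b_l^{j})$ and $\beta_j=(c_j,d_{m-j-1},a_3,\ldots,a_n)$ (coordinates relabelled so $k=1$, $l=2$). If some $\beta_j\notin\boldsymbol\sigma$, the vector-function $\boldsymbol h(x_1,\ldots,x_n)=\boldsymbol f(b_1^{j}x_1a_1^{m-j-1};\,a_2^{j}x_2b_2^{m-j-1};\,a_3^{j}x_3a_3^{m-1-j};\ldots)$ sends $\alpha$ to $\beta_j$ and Lemma~\ref{CoreProperties}(5) applies; its $i$-th component for $i\notin\{k,l\}$ is $\boldsymbol f^{(i)}(a_i,\ldots,a_i,x)$ by the WNU identity. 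If instead every $\beta_j\in\boldsymbol\sigma$, the tuples $\gamma_0-\beta_0-\gamma_1-\cdots-\beta_{m-1}-\gamma_m$ (with $\gamma_j=(c_j,d_{m-j},a_3,\ldots,a_n)$) form a path inside $\boldsymbol\sigma$ from $(a_1,b_2,a_3,\ldots)$ to $(b_1,a_2,a_3,\ldots)$, and iterating the definition of $1\overset{\boldsymbol\sigma}{\sim}2$ along this path forces $\alpha\in\boldsymbol\sigma$, a contradiction. This dichotomy over the split index $j$ is the idea your proposal lacks.

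Your fallback — reducing to Lemma~\ref{KeyBijections} after fixing coordinates — cannot be realized. That lemma's hypotheses ($1\overset{\boldsymbol\rho}{\not\sim}2$, $1\overset{\boldsymbol\rho}{\sim}3$, $2\overset{\boldsymbol\rho}{\sim}p$) describe a pattern with two distinct nontrivial classes, which Theorem~\ref{patternOfCoreIsEquiv} (via Lemma~\ref{MainPatternLemma}) rules out for a core preserved by a WNU; indeed Lemma~\ref{KeyBijections} exists only as an intermediate step toward that very impossibility. So no choice of fixed coordinates (and Corollary~\ref{PatternChanging} preserves the pattern under such restrictions) can produce the configuration you would need.
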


\begin{proof}

Without loss of generality we assume that $k=1$, $l=2$. Let $m$ be the arity of $\boldsymbol f$.
Since $\boldsymbol\sigma$ is essential,
$(b_{1},a_{2},\ldots,a_{n}), (a_{1},b_{2},a_{3},\ldots,a_{n})\in \boldsymbol\sigma$
for some $b_{1}, b_{2}\in A$.
Put $c_{j} = \boldsymbol f^{(1)}(b_{1}^{j}a_{1}^{m-j})$,
$d_{j} = \boldsymbol f^{(2)}(a_{2}^{m-j}b_{2}^{j})$,
$\alpha = (a_{1},\ldots,a_{n})$,
$\beta_{j} = (c_{j},d_{m-j-1},a_{3},\ldots,a_{n})$,
$\gamma_{j} = (c_{j},d_{m-j},a_{3},\ldots,a_{n})$.

Assume that $\beta_{j}\in \boldsymbol \sigma$ for every $j\in\{0,1,\ldots,m-1\}$.
Then we have the following path in $\boldsymbol\sigma$:
$\gamma_{0}-\beta_{0}-\gamma_{1}-\beta_{1}-\dots-\gamma_{m-1}-\beta_{m-1}-\gamma_{m}$.
Since $1\overset{\boldsymbol\sigma}{\sim}2$, we can easily derive that
$(c_{0}, d_{0}, a_{3},\ldots,a_{n}) = \alpha\in \boldsymbol \sigma.$
Contradiction.

Assume that $\beta_{j}\notin \boldsymbol \sigma$ for some $j\in\{0,1,\ldots,m-1\}$.
Then define
$\boldsymbol h\left(\begin{smallmatrix}
x_1\\x_2\\x_{3}\\\dots\\x_n
\end{smallmatrix}\right)
=
\boldsymbol f\left(\begin{smallmatrix}
b_1^{j} x_{1} a_{1}^{m-j-1} \\
a_2^{j} x_{2}b_{2}^{m-j-1}\\
a_{3}^{j}x_{3}a_{3}^{m-1-j}\\
\vdots \\
a_{n}^{j}x_{n}a_{n}^{m-1-j}
\end{smallmatrix}\right).$
We can check that
$\boldsymbol h(\alpha) = \beta_{j}$ and
$\boldsymbol h$ preserves $\boldsymbol \sigma$.
By Lemma~\ref{CoreProperties} (5), $\boldsymbol h$ is a bijection on $\proj\boldsymbol \sigma$.
Hence $\boldsymbol h^{(i)}(x) = \boldsymbol f^{(i)}(a_{i},\ldots,a_{i},x)$
is a bijection on $\proj_{i}\boldsymbol\sigma$ for every $i\in\{3,4,\ldots,n\}$.
\end{proof}

\begin{lem}\label{KeyBlockSpread}

Suppose $\boldsymbol\sigma$ is a core with full pattern of arity $n\ge 3$ preserved by a WNU $\boldsymbol f$,
$a_1\alpha$ and $b_1\alpha$ belong to a key block.
Then for every $\beta \in A^{n-1}$ we have
either $a_1\beta, b_1\beta\in \Key(\boldsymbol\sigma)$, or
$a_1\beta, b_1\beta\notin \Key(\boldsymbol\sigma)$.

\end{lem}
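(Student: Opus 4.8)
The plan is to show that the property ``$a_1\beta,b_1\beta$ are simultaneously in or out of $\Key(\boldsymbol\sigma)$'' propagates from one tuple $\beta$ to any tuple obtained from it by changing a single coordinate $i\in\{2,\ldots,n\}$; since the graph of all tuples in $A^{n-1}$ with adjacency ``differ in one coordinate'' is connected, this gives the claim for every $\beta\in A^{n-1}$. So fix $\alpha=(a_2,\ldots,a_n)\in A^{n-1}$ with $a_1\alpha,b_1\alpha$ in a common key block, and fix a coordinate, say $i=2$; let $\alpha'=(a_2',a_3,\ldots,a_n)$ differ from $\alpha$ only in the first entry. I must show $a_1\alpha',b_1\alpha'$ are either both in $\Key(\boldsymbol\sigma)$ or both out.

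The main tool is Lemma~\ref{keyBlockBijection}: since the pattern is full, $1\overset{\boldsymbol\sigma}{\sim}2$, so for any key tuple $(c_1,\ldots,c_n)$ the unary map $g_i(x)=\boldsymbol f^{(i)}(c_i,\ldots,c_i,x)$ is a bijection on $\proj_i\boldsymbol\sigma$ for every $i\notin\{1,2\}$; in particular for $i=2$ we instead use that $1\overset{\boldsymbol\sigma}{\sim}3$ (full pattern again) to get $\boldsymbol f^{(2)}(c_2,\ldots,c_2,x)$ a bijection on $\proj_2\boldsymbol\sigma$ as well. Thus, starting from the key block containing $a_1\alpha$ and $b_1\alpha$ (which by Lemma~\ref{KeyBlockDescription} has the form $A_1\times\cdots\times A_n$ with $\{a_1,b_1\}\subseteq A_1$), I want to build a vector-function $\boldsymbol h$ preserving $\boldsymbol\sigma$ that is the identity on coordinate $1$ (so it does not collapse $a_1$ and $b_1$) and sends $a_2$ to $a_2'$ on coordinate $2$. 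Concretely, using Lemma~\ref{DeriveKey} the relation $\Key(\boldsymbol\sigma)$ is itself derivable from $\boldsymbol\sigma$ by a pp-formula and hence preserved by $\boldsymbol f$; I apply $\boldsymbol f$ coordinate-wise to a matrix whose columns are key tuples and at most one other column, chosen so that on coordinate $1$ all entries equal $a_1$ (resp.\ $b_1$) — here I exploit that the key block is a full product, so I can find key tuples with prescribed first coordinate — and on coordinate $2$ the entries interpolate from $a_2$ to $a_2'$. Applying the resulting map to $a_1\alpha$ yields $a_1\alpha'$ up to a bijective correction on coordinates $\geq 3$, and likewise for $b_1\alpha\mapsto b_1\alpha'$; since the same map is used, $a_1\alpha'$ and $b_1\alpha'$ land in $\boldsymbol\sigma$, in $\Key(\boldsymbol\sigma)$, or neither simultaneously.

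The hard part will be the bookkeeping in the previous step: I need the auxiliary map to be simultaneously (i) a bijection on each coordinate $\geq 3$ — supplied by Lemma~\ref{keyBlockBijection} once I know the relevant columns are key tuples — (ii) literally the identity on coordinate $1$, and (iii) able to move $a_2$ to the arbitrary target $a_2'\in A_2$ on coordinate $2$. Point (iii) is delicate because $\boldsymbol f^{(2)}(c_2,\ldots,c_2,x)$ need only be a bijection, not surjective onto all of $A$, and $a_2'$ may not lie in its image; I expect to handle this by iterating (using Lemma~\ref{findbetterWNU} to arrange that the relevant unary operations are idempotent, so their images are exactly their fixed-point sets and compositions stabilize) and by walking through a path of key blocks rather than jumping directly, so that at each step I only need to reach a neighbouring value. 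Once the single-coordinate-change step is established for coordinate $2$ — and by symmetry for every coordinate in $\{2,\ldots,n\}$, using $1\overset{\boldsymbol\sigma}{\sim}i$ or $2\overset{\boldsymbol\sigma}{\sim}i$ from the full pattern — connectivity of the adjacency graph on $A^{n-1}$ finishes the proof.
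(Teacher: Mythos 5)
There is a genuine gap, and it is structural rather than a matter of bookkeeping. Your plan is to push the pair $a_1\alpha,\,b_1\alpha$ forward to $a_1\beta,\,b_1\beta$ by a chain of single-coordinate moves, each implemented by a $\boldsymbol\sigma$-preserving unary vector-function that fixes coordinate $1$ and sends $a_i$ to an arbitrary target $a_i'$. Two things go wrong. First, the surjectivity problem you flag yourself is fatal, not delicate: the unary polymorphisms available to you have the form $x\mapsto \boldsymbol f^{(i)}(c_1,\ldots,c_{m-1},x)$ with the constant columns forced to lie in $\Key(\boldsymbol\sigma)$, and compositions of such (idempotent) maps only shrink their images; there is no reason any of them hits an arbitrary $a_i'\in A$, and ``walking through key blocks'' does not create new values in the image. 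Second, and more decisively, your scheme takes as input only the two tuples $a_1\alpha,\,b_1\alpha$, which are \emph{both} in $\Key(\boldsymbol\sigma)$; if the propagation worked it would show that $a_1\beta$ and $b_1\beta$ are both \emph{in} $\Key(\boldsymbol\sigma)$ for every $\beta$, which is false --- the lemma explicitly allows the alternative that both are outside. A correct proof must use the assumption ``$b_1\beta\in\Key(\boldsymbol\sigma)$'' (for one direction of the dichotomy) as an input, and your argument never does.

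The paper's proof avoids moving $\alpha$ to $\beta$ altogether. Fix $\beta$, assume $b_1\beta\in\Key(\boldsymbol\sigma)$, and consider the single application
$\boldsymbol f(a_1\alpha,\,b_1\alpha,\ldots,b_1\alpha,\,b_1\beta)$; all columns lie in $\Key(\boldsymbol\sigma)$, so by Lemma~\ref{DeriveKey} the result does too. By the weak near-unanimity identity this tuple coincides with $\boldsymbol h(a_1\beta)$, where $\boldsymbol h^{(1)}(x)=\boldsymbol f^{(1)}(b_1,\ldots,b_1,x)$ and $\boldsymbol h^{(i)}(x)=\boldsymbol f^{(i)}(a_i,\ldots,a_i,x)$ for $i\ge 2$; since the pattern is full, Lemma~\ref{keyBlockBijection} makes $\boldsymbol h$ a bijection on $\proj\boldsymbol\sigma$ preserving $\Key(\boldsymbol\sigma)$, and pulling back through it yields $a_1\beta\in\Key(\boldsymbol\sigma)$. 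Note that the arbitrary tuple $\beta$ only ever appears as the last column, never as something a polymorphism must reach --- which is exactly the obstacle your route cannot overcome.
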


\begin{proof}

Let $\alpha=(a_2,\ldots,a_n)$.
Assume that $b_1\beta\in \Key(\boldsymbol\sigma)$, let us show that $a_1\beta\in \Key(\boldsymbol\sigma)$.
Put $\boldsymbol h\left(\begin{smallmatrix}x_1\\x_2\\x_3\\\dots\\x_n\end{smallmatrix}\right)
:=\left(\begin{smallmatrix}\boldsymbol f^{(1)}(b_1,\ldots, b_1,x_1)\\
\boldsymbol f^{(2)}(a_2,\ldots, a_2,x_2)\\
\boldsymbol f^{(3)}(a_3,\ldots, a_3,x_3)\\
\dots
\\
\boldsymbol f^{(n)}(a_n,\ldots, a_n,x_n)
\end{smallmatrix}\right) $.
Since $\boldsymbol \sigma$ has full pattern, for every tuple $\delta$ from a key block and every $j\in\{1,2,\ldots,n\}$
there exists a key tuple $\gamma$ from the key block such that $\gamma(j) = \delta(j)$.
Hence, by Lemma~\ref{keyBlockBijection}, $\boldsymbol h$ is a bijection on $\proj \boldsymbol\sigma$.

By Lemma~\ref{DeriveKey}, the vector-function $\boldsymbol h$ preserves $\Key(\boldsymbol\sigma)$,
therefore
$\boldsymbol h(a_1\beta) =\left(\begin{smallmatrix}\boldsymbol f^{(1)}(a_1,b_{1},\ldots,b_1)\\
\boldsymbol f^{(2)}(a_2,\ldots, a_2,\beta(1))\\
\boldsymbol f^{(3)}(a_3,\ldots, a_3,\beta(2))\\
\dots
\\
\boldsymbol f^{(n)}(a_n,\ldots, a_n,\beta(n-1))
\end{smallmatrix}\right)\in\Key(\boldsymbol\sigma)$.
Since $\boldsymbol h$ is a bijection on $\proj \boldsymbol \sigma$ preserving $\Key(\boldsymbol\sigma)$,
we obtain $a_1\beta\in \Key(\boldsymbol\sigma)$.
\end{proof}

\begin{cor}\label{KeyBlockIsCubic}
Suppose $\boldsymbol\sigma$ is a core of arity greater than 2 with full pattern preserved by a WNU.
Then every key block of $\boldsymbol\sigma$ can be represented as
$A_{1}\times\dots\times A_{n}$ for some $A_1,\ldots,A_n\subseteq A$.
\end{cor}

Using the same argument as in Lemma~\ref{KeyBlockSpread} we can prove the following lemma
(we just need change $\Key(\boldsymbol\sigma)$ to $\boldsymbol\sigma$ everywhere in the proof).

\begin{lem}\label{KeyBlockSpread2}

Suppose $\boldsymbol\sigma$ is a core with full pattern of arity $n\ge 3$ preserved by a WNU $\boldsymbol f$,
$a_1\alpha,$ $b_1\alpha$ belong to a key block,
and $a_1\alpha, b_1\alpha\in \boldsymbol \sigma$.
Then for every $\beta \in A^{n-1}$ we have
either $a_1\beta, b_1\beta\in \boldsymbol\sigma$, or
$a_1\beta, b_1\beta\notin \boldsymbol\sigma$.

\end{lem}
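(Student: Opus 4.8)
The plan is to reuse the proof of Lemma~\ref{KeyBlockSpread} almost verbatim, replacing $\Key(\boldsymbol\sigma)$ by $\boldsymbol\sigma$ throughout; the extra hypothesis $a_1\alpha,b_1\alpha\in\boldsymbol\sigma$ (where $\alpha=(a_2,\ldots,a_n)$) is precisely what compensates for this replacement. If $a_1=b_1$ there is nothing to prove, so assume $a_1\ne b_1$. Since $a_1$ and $b_1$ play symmetric roles in the hypotheses, it suffices to assume $b_1\beta\in\boldsymbol\sigma$ and deduce $a_1\beta\in\boldsymbol\sigma$. Put $m=\ar(\boldsymbol f)$ and consider the unary vector-function $\boldsymbol h$ given by $\boldsymbol h^{(1)}(x)=\boldsymbol f^{(1)}(b_1,\ldots,b_1,x)$ and $\boldsymbol h^{(i)}(x)=\boldsymbol f^{(i)}(a_i,\ldots,a_i,x)$ for $i\ge 2$.

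First I would note that $\boldsymbol h$ preserves $\boldsymbol\sigma$: for any $\delta\in\boldsymbol\sigma$ we have $\boldsymbol h(\delta)=\boldsymbol f(b_1\alpha,\ldots,b_1\alpha,\delta)$, which lies in $\boldsymbol\sigma$ because $b_1\alpha\in\boldsymbol\sigma$ and $\boldsymbol f$ preserves $\boldsymbol\sigma$. The step I expect to be the main obstacle is showing that $\boldsymbol h$ is a coordinatewise bijection on $\proj\boldsymbol\sigma$; this is carried out exactly as in Lemma~\ref{KeyBlockSpread}. Since $a_1\alpha$ and $b_1\alpha$ lie in a common key block, the same fact used there — that for every tuple of a key block and every coordinate there is a key tuple of that block agreeing with it in that coordinate — yields a key tuple of $\boldsymbol\sigma$ whose first entry is $b_1$ and, for each $i\ge 2$, a key tuple whose $i$-th entry is $a_i$. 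Applying Lemma~\ref{keyBlockBijection} to each of these key tuples shows that $\boldsymbol h^{(1)}$ and every $\boldsymbol h^{(i)}$ is a bijection on the corresponding projection, hence $\boldsymbol h$ is a bijection on $\proj\boldsymbol\sigma$.

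Next I would compute $\boldsymbol h(a_1\beta)$. Using the weak near-unanimity identities to move, in each coordinate, the active argument of $\boldsymbol f$ into a convenient slot, $\boldsymbol h(a_1\beta)$ can be written as $\boldsymbol f$ applied to the $m$ columns $a_1\alpha,b_1\alpha,\ldots,b_1\alpha,b_1\beta$ (first column $a_1\alpha$, last column $b_1\beta$, the remaining $m-2$ columns equal to $b_1\alpha$). All three tuples $a_1\alpha$, $b_1\alpha$, $b_1\beta$ lie in $\boldsymbol\sigma$ — the first two by hypothesis, the last by the case assumption — so $\boldsymbol h(a_1\beta)\in\boldsymbol\sigma$.

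Finally, $\boldsymbol h$ is a bijection on the finite set $\proj\boldsymbol\sigma$ and preserves $\boldsymbol\sigma$, so $\boldsymbol h(\boldsymbol\sigma)=\boldsymbol\sigma$ and $\boldsymbol h$ also \emph{reflects} membership in $\boldsymbol\sigma$; from $\boldsymbol h(a_1\beta)\in\boldsymbol\sigma$ it follows that $a_1\beta\in\boldsymbol\sigma$. Exchanging the roles of $a_1$ and $b_1$ gives the converse implication, and the two together establish the claimed dichotomy. Apart from the bijectivity step, the argument is just routine manipulation of the weak near-unanimity identities and of bijections on finite sets.
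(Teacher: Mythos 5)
Your proposal is correct and follows the paper's intended argument exactly: the paper proves this lemma by taking the proof of Lemma~\ref{KeyBlockSpread} and replacing $\Key(\boldsymbol\sigma)$ by $\boldsymbol\sigma$ throughout, which is precisely what you carry out (same $\boldsymbol h$, same bijectivity step via Lemma~\ref{keyBlockBijection}, same column decomposition $a_1\alpha,b_1\alpha,\ldots,b_1\alpha,b_1\beta$). Your observation that the hypothesis $b_1\alpha\in\boldsymbol\sigma$ is what makes $\boldsymbol h$ preserve $\boldsymbol\sigma$ itself (rather than only $\Key(\boldsymbol\sigma)$ via Lemma~\ref{DeriveKey}) correctly pinpoints where the extra assumption compensates for the substitution.
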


\begin{lem}\label{OrderOfElements}
Suppose $(G;+)$ is a finite abelian group, $n\ge 3$,
and the relation $\boldsymbol\rho\subseteq G^{n}$ is defined by
$\boldsymbol\rho = \{(a_{1},\ldots,a_{n})\mid a_{1}+\dots+a_{n}=0\}$,
a vector-function $\Psi$ preserves $\boldsymbol \rho$. 
Then for every $a,b\in G$ and every $i\in\{1,2,\ldots,n\}$ the order of $\Psi^{(i)}(a)-\Psi^{(i)}(b)$ divides the order of $a-b$.
\end{lem}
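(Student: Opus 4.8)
The plan is to reduce the claim to a single coordinate and then exploit the structure of $\boldsymbol\rho$ to produce, inside $\boldsymbol\rho$, a tuple that forces $k\cdot(\Psi^{(i)}(a)-\Psi^{(i)}(b))=0$ whenever $k\cdot(a-b)=0$. Fix $i\in\{1,2,\ldots,n\}$; without loss of generality take $i=1$. Let $k$ be the order of $a-b$ in $G$, so $k\cdot(a-b)=0$. The goal is to show $k\cdot(\Psi^{(1)}(a)-\Psi^{(1)}(b))=0$, which is exactly the statement that the order of $\Psi^{(1)}(a)-\Psi^{(1)}(b)$ divides $k$.

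First I would build two tuples of $\boldsymbol\rho$ that differ only in the first coordinate, one carrying $a$ and one carrying $b$ there. Since $\boldsymbol\rho$ is the relation $\{(a_1,\ldots,a_n)\mid a_1+\cdots+a_n=0\}$, pick any $c_2,\ldots,c_{n-1}\in G$ and set $c_n^{(a)} = -(a+c_2+\cdots+c_{n-1})$, $c_n^{(b)} = -(b+c_2+\cdots+c_{n-1})$; then $(a,c_2,\ldots,c_{n-1},c_n^{(a)})$ and $(b,c_2,\ldots,c_{n-1},c_n^{(b)})$ both lie in $\boldsymbol\rho$, and $c_n^{(a)}-c_n^{(b)} = b-a$, which also has order $k$. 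Applying $\Psi$ to each, and using that $\Psi$ preserves $\boldsymbol\rho$, yields
$$\Psi^{(1)}(a)+\Psi^{(2)}(c_2)+\cdots+\Psi^{(n)}(c_n^{(a)}) = 0 = \Psi^{(1)}(b)+\Psi^{(2)}(c_2)+\cdots+\Psi^{(n)}(c_n^{(b)}),$$
so that $\Psi^{(1)}(a)-\Psi^{(1)}(b) = \Psi^{(n)}(c_n^{(b)})-\Psi^{(n)}(c_n^{(a)})$. Thus the ``difference under $\Psi$'' in coordinate $1$ equals (up to sign) the difference under $\Psi$ in coordinate $n$ of two elements differing by $b-a$; this symmetrizes the problem across coordinates.

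The heart of the argument is then an iteration. Using coordinates pairwise as above, one shows that for any coordinate $j$, $\Psi^{(1)}(a)-\Psi^{(1)}(b)$ can be expressed as a sum/difference of $\Psi^{(j)}$-differences of pairs each differing by $\pm(a-b)$. The cleanest route: consider the tuple obtained by starting from a fixed tuple in $\boldsymbol\rho$ and, $m$ times in succession, replacing $a$ by $b$ in coordinate $1$ and compensating in coordinate $n$ (or distributing the compensation among several coordinates). Carrying this out $k$ times, each step contributes a copy of $\Psi^{(1)}(b)-\Psi^{(1)}(a)$ on one side, and the total compensating shift in the other coordinates amounts to $k\cdot(b-a)=0$; since $\Psi$ preserves $\boldsymbol\rho$ at each stage and the net shift is $0$, the accumulated $k$ copies of $\Psi^{(1)}(a)-\Psi^{(1)}(b)$ must sum to $0$. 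Concretely, one fixes a tuple $(a,c_2,\ldots,c_{n-1},c_n^{(a)})\in\boldsymbol\rho$ and builds a sequence of tuples $(b,c_2,\ldots,c_{n-1}+t\cdot(b-a),c_n^{(a)}-t\cdot(b-a))$ for $t=0,1,\ldots,k$; at $t=k$ the second-to-last coordinate has returned to $c_{n-1}$ and the last to $c_n^{(a)}$, so the tuple coincides with the original one shifted only by $b-a$ in coordinate $1$. Comparing the $\Psi$-images along this chain, telescoping gives $k\cdot(\Psi^{(1)}(b)-\Psi^{(1)}(a))$ equal to a telescoping sum of $\Psi^{(n-1)}$- and $\Psi^{(n)}$-differences that cancels, proving the claim.

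The main obstacle I anticipate is bookkeeping: making the telescoping/iteration precise so that the compensating shifts genuinely cancel and one is left with exactly $k$ copies of the coordinate-$1$ difference and nothing else. The cleanest implementation is probably to avoid explicit chains and instead invoke Lemma~\ref{WNUIsLinearInAnyBlock} or Lemma~\ref{LinearWNU}-style linearity — but $\Psi$ here is only a unary vector-function, not a WNU, so that shortcut is unavailable and the elementary argument above seems to be the right level. A slicker alternative: define $\delta := \Psi^{(1)}(a)-\Psi^{(1)}(b)\in G$ and, using the identity from the second paragraph repeatedly, realize $\langle a-b\rangle\ni x \mapsto (\text{corresponding }\Psi\text{-difference})$ as a well-defined group homomorphism from the cyclic subgroup $\langle a-b\rangle$ into $G$ sending $a-b$ to $\delta$; a homomorphism sends an element of order $k$ to an element whose order divides $k$, which is exactly the assertion. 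Establishing that this map is a homomorphism (additivity in the argument $a-b$) is precisely the content of the pairwise-coordinate computation, so this reformulation organizes the same work more transparently.
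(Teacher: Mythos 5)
Your route is genuinely different from the paper's and it does work, modulo one mis-specified chain. The paper proceeds relationally: it primitive-positively defines from $\boldsymbol\rho$, entirely within the $i$-th sort, a four-ary relation equivalent to $x_1+x_2=x_3+x_4$, iterates it to get $\delta_j=\{(x,y)\mid j\cdot(x-y)=0\}$, and then simply observes that $\Psi^{(i)}$ must preserve $\delta_k$; the order statement falls out immediately. You instead argue directly with tuples. Your second paragraph already contains the essential fact: since $c_n^{(a)}$ ranges over all of $G$ as $c_2$ varies, the identity $\Psi^{(1)}(a)-\Psi^{(1)}(b)=\Psi^{(n)}(u+(a-b))-\Psi^{(n)}(u)$ holds for \emph{every} $u\in G$, i.e.\ the $\Psi^{(n)}$-increment along $a-b$ is constant. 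Telescoping that constant increment over $u, u+(a-b),\ldots,u+k(a-b)=u$ gives $k\cdot(\Psi^{(1)}(a)-\Psi^{(1)}(b))=0$ in one line, which is exactly your homomorphism reformulation. However, the explicit chain you write in the third paragraph, $(b,c_2,\ldots,c_{n-1}+t(b-a),c_n^{(a)}-t(b-a))$, is not a chain in $\boldsymbol\rho$: its coordinate sum is $b-a\neq 0$ for every $t$, so those tuples never satisfy the linear equation and no conclusion can be drawn from them. This is repairable without new ideas (use the paragraph-two identity and telescope in coordinate $n$ as above), so I would not call it a gap in the plan, only in that particular implementation. Comparing the two approaches: the paper's pp-definition argument is more robust (it works verbatim for any relation pp-equivalent to the linear one and localizes everything to a single sort), while yours is more elementary and makes the underlying homomorphism $d\mapsto \Psi^{(i)}(u+d)-\Psi^{(i)}(u)$ explicit, which is arguably more illuminating. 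Both arguments silently use $n\ge 2$ (the paper's standing assumption of arity greater than $2$ covers this); for $n=1$ the statement is false.
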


\begin{proof}
Without loss of generality we assume that $i=1$.
Let
\begin{multline*}\sigma(x_1,x_2,x_3,x_4) = \exists y_2' \exists y_3' \exists y_2 \exists y_3\dots \exists y_n \;
\boldsymbol \rho (x_1,y_2,y_3,y_4\ldots,y_n)\wedge\\
\rho (x_2,y_2',y_3',y_4,\ldots,y_n)\wedge
\rho (x_3,y_2,y_3',y_4,\ldots,y_n)\wedge
\rho (x_4,y_2',y_3,y_4,\ldots,y_n).
\end{multline*}
We can easily check that $(x_1,x_2,x_3,x_4)\in \sigma\Leftrightarrow (x_1+x_2=x_3+x_4)$.
Let $\sigma_{2}(x,y,z) = \sigma(x,x,y,z)$,
$\sigma_{j+1}(x,y,z) = \exists y' \;\sigma_j(x,y',z) \wedge \sigma(x,y',y,z)$.
We can check that
$(x,y,z)\in \sigma_{j}\Leftrightarrow j\cdot x = y + (j-1)\cdot z$.
Put $\delta_{j}(x,y) = \sigma_{j}(x,y,y)$, then $(x,y) \in \delta_{j}$ means that $j\cdot (x-y) = 0$.
We know that $\Psi^{(1)}$ preserves $\delta_{j}$ for every $j$.
Assume that the order of $(a-b)$ equals $k$, then $(a,b)\in \delta_{k}$.
Hence $(\Psi^{(1)}(a),\Psi^{(1)}(b))\in\delta_{k}$ and the order of $\Psi^{(1)}(a)-\Psi^{(1)}(b)$ divides $k$.
\end{proof}

\begin{lem}\label{NoMappingBetweenDifferentGroups}
Suppose $(G_1;+)$ and $(G_2;+)$ are finite abelian groups, $n\ge 3$,
the relations $\boldsymbol\rho_1\subseteq G_1^{n}$, $\boldsymbol\rho_2\subseteq G_2^{n}$ are defined by
\[
\boldsymbol\rho_1 = \{(a_{1},\ldots,a_{n})\mid a_{1}+\dots+a_{n}=0\},\;
\boldsymbol\rho_2 = \{(a_{1},\ldots,a_{n})\mid a_{1}+\dots+a_{n}=0\};
\]
there exists a vector-function
$\Psi = (\psi_1,\ldots,\psi_n)$,
where $\psi_{i}\colon G_1\to G_2$,
such that $\Psi(\boldsymbol\rho_{1})\subseteq \boldsymbol\rho_2$ and $\Psi((c_1,0,\ldots,0))=(c_2,0,\ldots,0)$.
Then the order of $c_2$ divides the order of $c_1$.
\end{lem}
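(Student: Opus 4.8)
The plan is to adapt the argument of Lemma~\ref{OrderOfElements} so that it simultaneously tracks the two groups $G_1$ and $G_2$, together with the single coordinate that matters here, namely the first one.

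First I would record that $\psi_1(0)=0$. Since $(0,\ldots,0)\in\boldsymbol\rho_1$ and $\psi_i(0)=0$ for every $i\ge 2$, the tuple $\Psi((0,\ldots,0))=(\psi_1(0),0,\ldots,0)$ must lie in $\boldsymbol\rho_2$, and the defining equation $a_1+\dots+a_n=0$ of $\boldsymbol\rho_2$ then forces $\psi_1(0)=0$.

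Next, for each $t\in\{1,2\}$ I would run the pp-constructions from the proof of Lemma~\ref{OrderOfElements} on the relation $\boldsymbol\rho_t$: first obtain a $4$-ary relation $\sigma^{(t)}$ with $(x_1,x_2,x_3,x_4)\in\sigma^{(t)}\Leftrightarrow x_1+x_2=x_3+x_4$ in $G_t$, then the ternary relations $\sigma^{(t)}_j$ defined by $j\cdot x=y+(j-1)\cdot z$, and finally $\delta^{(t)}_j(x,y):=\sigma^{(t)}_j(x,y,y)$, so that $(x,y)\in\delta^{(t)}_j\Leftrightarrow j\cdot(x-y)=0$ in $G_t$. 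The crucial observation is that in every one of these pp-formulas each variable, free or existentially quantified, is substituted only into one fixed coordinate of $\boldsymbol\rho$, and both free variables of $\delta^{(t)}_j$ land in the first coordinate. Consequently, a witnessing assignment for a tuple $(x,y)\in\delta^{(1)}_j$ is carried, by applying $\psi_1,\psi_2,\ldots,\psi_n$ coordinatewise, to a witnessing assignment for $(\psi_1(x),\psi_1(y))$, because $\Psi$ sends $\boldsymbol\rho_1$ into $\boldsymbol\rho_2$ atom by atom. Hence $\psi_1$ maps $\delta^{(1)}_j$ into $\delta^{(2)}_j$ for every $j$.

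It then remains to set $k$ equal to the order of $c_1$. From $k\cdot c_1=0$ we get $(c_1,0)\in\delta^{(1)}_k$, hence $(\psi_1(c_1),\psi_1(0))=(c_2,0)\in\delta^{(2)}_k$, i.e.\ $k\cdot c_2=0$ in $G_2$; thus the order of $c_2$ divides the order of $c_1$. The main point requiring care is the transfer statement of the previous paragraph: one must check that the pp-definitions of $\sigma^{(t)}$, $\sigma^{(t)}_j$ and $\delta^{(t)}_j$ are coordinate-consistent, so that the componentwise homomorphism $\Psi\colon\boldsymbol\rho_1\to\boldsymbol\rho_2$ genuinely descends to a map $\psi_1\colon\delta^{(1)}_j\to\delta^{(2)}_j$; everything else merely repeats the computations already carried out in Lemma~\ref{OrderOfElements}.
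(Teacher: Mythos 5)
Your proof is correct, but it follows a different route from the paper's. The paper's own proof is a short reduction: it forms the product group $G_1\times G_2$ with the relation $\{((a_1,b_1),\ldots,(a_n,b_n))\mid \sum a_i=0\wedge\sum b_i=0\}$, observes that $\Omega^{(i)}((a,b))=(0,\psi_i(a))$ preserves this relation (precisely because $\Psi(\boldsymbol\rho_1)\subseteq\boldsymbol\rho_2$), and then invokes Lemma~\ref{OrderOfElements} as a black box to conclude that the order of $(0,c_2)$ divides the order of $(c_1,0)$. You instead open up the proof of Lemma~\ref{OrderOfElements}, run its pp-constructions separately over $G_1$ and $G_2$, and verify that the formulas for $\sigma$, $\sigma_j$ and $\delta_j$ are coordinate-consistent, so that the componentwise map $\Psi$ carries witnesses over $\boldsymbol\rho_1$ to witnesses over $\boldsymbol\rho_2$ and hence $\psi_1$ maps $\delta_j^{(1)}$ into $\delta_j^{(2)}$. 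Your derivation of $\psi_1(0)=0$ (from $\psi_i(0)=0$ for $i\ge 2$, which follows from the hypothesis on $\Psi((c_1,0,\ldots,0))$, together with $\Psi((0,\ldots,0))\in\boldsymbol\rho_2$) is exactly the fact the paper asserts without proof. The trade-off: the paper's product trick is shorter and reuses the earlier lemma verbatim, while your argument is more self-contained and makes explicit the general principle that coordinate-consistent pp-definitions transfer along a relation homomorphism between two different structures, which the black-box statement of Lemma~\ref{OrderOfElements} (formulated for a single group) does not directly cover. Both arguments share the same underlying engine, namely the pp-definability of the relation $j\cdot(x-y)=0$ from the linear relation, and both implicitly assume $n\ge 3$ so that the formula for $\sigma$ makes sense, an assumption the paper makes globally.
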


\begin{proof}
Consider a group $(G_1\times G_2;+)$
and the relation $\boldsymbol\rho\subseteq (G_1\times G_2)^n$ defined by
\[\boldsymbol\rho= \{((a_{1},b_1),\ldots,(a_{n},b_n))\mid a_{1}+\dots+a_{n}=0\wedge b_{1}+\dots+b_{n}=0\}.\]
Define a vector-function $\Omega$ by
$\Omega^{(i)}((a,b)) = (0,\psi_i(a))$.
We can easily check that $\Omega$ preserves $\boldsymbol\rho$
and $\psi_1(0) = 0$.
Then by Lemma~\ref{OrderOfElements}
the order of $(0,c_2)-(0,0)$ divides the order
of $(c_1,0)-(0,0)$, which means that the order of $c_2$ divides the order of $c_1$.
\end{proof}

\begin{lem}\label{OnlyPrimeOrder}

Suppose $(G;+)$ is a finite abelian group, $n\ge 3$,
and the relation $\boldsymbol\rho\subseteq G^{n}$ is defined by
$\boldsymbol\rho = \{(a_{1},\ldots,a_{n})\mid a_{1}+\dots+a_{n}=0\}$,
$\Key(\boldsymbol\rho)= G^{n}$.
Then the order of every element in $G$ is a prime number.

\end{lem}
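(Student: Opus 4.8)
The plan is to first show that any two non-identity elements of $G$ have the same order, and then that this common order must be prime; the assertion of the lemma is then immediate (for the trivial group there is nothing to prove).

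For the first step I would fix non-identity elements $c,c'\in G$. Since their coordinate sums are nonzero, the $n$-tuples $(c,0,\ldots,0)$ and $(c',0,\ldots,0)$ both lie in $G^{n}\setminus\boldsymbol\rho$, hence by the hypothesis $\Key(\boldsymbol\rho)=G^{n}$ both are key tuples for $\boldsymbol\rho$. Using that $(c,0,\ldots,0)$ is a key tuple, pick a vector-function $\Psi=(\psi_1,\ldots,\psi_n)$ preserving $\boldsymbol\rho$ with $\Psi(c',0,\ldots,0)=(c,0,\ldots,0)$; reading this coordinatewise gives $\psi_1(c')=c$ and $\psi_i(0)=0$ for all $i\ge 2$. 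Applying $\Psi$ to the tuple $(0,\ldots,0)\in\boldsymbol\rho$ and using preservation then forces $\psi_1(0)=0$ as well. Now Lemma~\ref{OrderOfElements}, applied with $i=1$, $a=c'$, $b=0$, says that the order of $\psi_1(c')-\psi_1(0)=c$ divides the order of $c'-0=c'$. Exchanging the roles of $c$ and $c'$ yields the opposite divisibility, so the orders of $c$ and $c'$ coincide.

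For the second step, assume $G$ is nontrivial and let $k>1$ be the order shared by all non-identity elements of $G$. If $k$ were composite, write $k=pm$ with $p$ prime and $m>1$, so $p<k$; then for any non-identity element $c$ the element $m\cdot c$ is again non-identity but has order $p\ne k$, a contradiction. Hence $k$ is prime, which is exactly the claim.

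The whole argument is short once Lemma~\ref{OrderOfElements} is in hand; the only points that need a little care are verifying that the auxiliary tuples $(c,0,\ldots,0)$ really lie outside $\boldsymbol\rho$ (so that they are key tuples), and pinning down $\psi_1(0)=0$, which is what makes the difference $\psi_1(c')-\psi_1(0)$ equal to $c$ and thus lets Lemma~\ref{OrderOfElements} do its job. I do not anticipate any genuine obstacle beyond this bookkeeping.
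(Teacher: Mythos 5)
Your proof is correct and follows essentially the same route as the paper: both arguments reduce to Lemma~\ref{OrderOfElements} applied to a vector-function carrying one tuple of the form $(x,0,\ldots,0)$ to another, with the observation that preservation of $\boldsymbol\rho$ forces $\psi_1(0)=0$. The only cosmetic difference is that you first show all non-identity elements share a common order and then argue that order is prime, whereas the paper takes a composite-order element $a$, sets $b=k_1\cdot a$, and derives the contradiction directly.
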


\begin{proof}
Assume the converse.
Let $a$ be the element of $G$ whose order is not prime.
Let the order of this element be equal to $k_1\cdot k_2$ for $k_1,k_2>1$.
Put $b = k_1 \cdot a$, and consider a vector-function $\Psi$ preserving $\boldsymbol \rho$
that maps $(b,0,0,\ldots,0)$ to $(a,0,\ldots,0)$.
Obviously $\Psi$ maps $(0,0,\ldots,0)$ to $(0,0,\ldots,0)$.
Applying Lemma~\ref{OrderOfElements} for the first component we obtain that
the order of $(a-0)$ divides the order of $(b-0)$. Contradiction.
\end{proof}

\begin{lem}\label{OnlyPowerOfPrime}

Suppose $(G;+)$ is a finite abelian group, $n\ge 3$,
and a key relation $\boldsymbol\rho\subseteq G^{n}$ is defined by
$\boldsymbol\rho = \{(a_{1},\ldots,a_{n})\mid a_{1}+\dots+a_{n}=0\}$.
Then the order of the group $G$ is a power of a prime number.
\end{lem}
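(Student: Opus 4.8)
The plan is to use only the hypothesis that $\boldsymbol\rho$ is a key relation, together with Lemma~\ref{KeyGotoRho} and Lemma~\ref{OrderOfElements}; the goal is to extract from a single nonzero element $c\in G$ the conclusion that $\operatorname{ord}(c)$ is the \emph{only} prime dividing $|G|$. \textbf{Step 1 (normalize the key tuple).} Fix a key tuple $\beta_0\notin\boldsymbol\rho$ and set $c=\beta_0(1)+\dots+\beta_0(n)$, so $c\neq 0$. For any $t_1,\dots,t_n\in G$ with $t_1+\dots+t_n=0$, the unary vector-function $\Psi$ with $\Psi^{(i)}(x)=x+t_i$ is bijective and preserves $\boldsymbol\rho$ (translating a solution of $x_1+\dots+x_n=0$ by a tuple summing to $0$ gives another solution), and it leaves the component sum unchanged, so $\Psi(\beta_0)\notin\boldsymbol\rho$. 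By Lemma~\ref{KeyGotoRho}, $\Psi(\beta_0)$ is again a key tuple. Choosing $t_1=c-\beta_0(1)$ and $t_i=-\beta_0(i)$ for $i\geq 2$ (these sum to $0$), I conclude that $\beta:=(c,0,\dots,0)$ is a key tuple for $\boldsymbol\rho$.

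\textbf{Step 2 (divisibility of orders).} Let $c'\in G$ be any nonzero element. Then $\alpha:=(c',0,\dots,0)\notin\boldsymbol\rho$, so by the definition of a key tuple there is a vector-function $\Psi$ preserving $\boldsymbol\rho$ with $\Psi(\alpha)=\beta$. Comparing coordinates gives $\Psi^{(1)}(c')=c$ and $\Psi^{(i)}(0)=0$ for $i\geq 2$; applying $\Psi$ to the all-zero tuple, which lies in $\boldsymbol\rho$, then forces $\Psi^{(1)}(0)=0$ as well. Now Lemma~\ref{OrderOfElements} (with $i=1$, $a=c'$, $b=0$) says that the order of $\Psi^{(1)}(c')-\Psi^{(1)}(0)=c$ divides the order of $c'$.

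\textbf{Step 3 (conclude).} For each prime $q$ dividing $|G|$, Cauchy's theorem provides an element $c'$ of order $q$; Step~2 yields $\operatorname{ord}(c)\mid q$, and $\operatorname{ord}(c)>1$ since $c\neq 0$, hence $\operatorname{ord}(c)=q$. Thus every prime divisor of $|G|$ equals $\operatorname{ord}(c)$, so $|G|$ is a power of the prime $\operatorname{ord}(c)$. I do not expect a genuine obstacle here, since all the real work is packaged in Lemma~\ref{OrderOfElements}; the only points that need care are checking that the translations in Step~1 actually preserve $\boldsymbol\rho$ and that the coordinatewise bookkeeping in Step~2 correctly pins down $\Psi^{(1)}$ on both $0$ and $c'$.
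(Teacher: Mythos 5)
Your proof is correct and follows essentially the same route as the paper: both reduce to Lemma~\ref{OrderOfElements} applied to a vector-function carrying a tuple $(c',0,\dots,0)$ onto a key tuple, deducing that the order of a fixed nonzero element divides the order of every nonzero element obtained from Cauchy's theorem. The only cosmetic difference is that you first normalize the key tuple to the form $(c,0,\dots,0)$ via translations and Lemma~\ref{KeyGotoRho}, whereas the paper works with an arbitrary key tuple $(a_1,\dots,a_n)$ and reads off $\Psi^{(1)}(0)=b_1=-a_2-\dots-a_n$ directly.
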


\begin{proof}
Let $(a_1,\ldots,a_n)$ be a key tuple of $\rho$.
Obviously, $(b_1,a_2,\ldots,a_n)\in\boldsymbol\rho$
for $b_1 = -a_2-a_3-\ldots-a_n$.
Let the order of $(b_1-a_1)$ be equal to $k$.

Assume the converse. Then there exists an element $c\in G\setminus\{0\}$ such that
$k$ doesn't divide the order of $c$.
By Lemma~\ref{OrderOfElements} we cannot map
$(c,0,0\ldots,0)$ to $(a_1,\ldots,a_n)$ by a vector-function preserving $\boldsymbol \rho$.
This contradicts the fact that $(a_1,\ldots,a_n)$ is a key tuple.
\end{proof}

\begin{thm}\label{MainForFullPattern}

Suppose $\boldsymbol\sigma$ is a core with full pattern preserved by a WNU, $n\ge 3$,
$A_{1}\times A_{2}\times \dots \times A_{n}$ is a key block for $\boldsymbol\sigma$.
Then there exists a finite field $F$, and bijective mappings
$\phi_i: A_i\to F$ for $i=1,2,\ldots,n$ such that
\[\boldsymbol\sigma\cap (A_{1}\times A_{2}\times \dots \times A_{n}) =\\  \{(x_1,\dots,x_n) \mid \phi_1(x_1) + \phi_2(x_2) + \dots +\phi_n(x_n)=0\}.\]

\end{thm}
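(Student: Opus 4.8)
The plan is to restrict $\boldsymbol\sigma$ to the key block, identify this restriction with a strongly rich relation on a single set, invoke Theorem~\ref{StronglyRichRelationTHM}, and then pin down the resulting abelian group with Lemmas~\ref{OnlyPowerOfPrime} and~\ref{OnlyPrimeOrder}. Write $\boldsymbol B=A_1\times\dots\times A_n$ for the given key block and let $\boldsymbol f$ be a WNU preserving $\boldsymbol\sigma$. By Lemma~\ref{DeriveKey} and Lemma~\ref{preserveConnectedComponent}, $\boldsymbol f$ preserves $\Key(\boldsymbol\sigma)$ and each of its connected components, in particular $\boldsymbol B$, so $\boldsymbol f$ preserves $\boldsymbol\sigma\cap\boldsymbol B$; also each $A_i\subseteq\proj_i\boldsymbol\sigma$, since key tuples of the essential relation $\boldsymbol\sigma$ are essential tuples.

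The first step is to show that $\boldsymbol\sigma\cap\boldsymbol B$, regarded as a relation on the set $\boldsymbol B$, is itself a key relation with $\Key(\boldsymbol\sigma\cap\boldsymbol B)=\boldsymbol B$. Every tuple of $\boldsymbol B\setminus\boldsymbol\sigma$ is a key tuple of $\boldsymbol\sigma$ (it lies in the component $\boldsymbol B$ of $\Key(\boldsymbol\sigma)$), and given two such tuples $\gamma,\delta$ a vector-function $\Psi$ preserving $\boldsymbol\sigma$ with $\Psi(\gamma)=\delta$ exists because every key tuple is universal; by Lemma~\ref{CoreProperties}(5), $\Psi$ is bijective on $\proj\boldsymbol\sigma$, hence a graph automorphism of $\Key(\boldsymbol\sigma)$ which must send the component $\boldsymbol B$ onto itself, so its restriction to $\boldsymbol B$ preserves $\boldsymbol\sigma\cap\boldsymbol B$ and maps $\gamma$ to $\delta$. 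Thus every tuple of $\boldsymbol B\setminus\boldsymbol\sigma$ is a key tuple of $\boldsymbol\sigma\cap\boldsymbol B$.

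The second and main step is to show $\boldsymbol\sigma\cap\boldsymbol B$ is strongly rich on $\boldsymbol B$. Existence of a coordinate-$j$ replacement inside $\boldsymbol\sigma$ is easy: a tuple of $\boldsymbol\sigma$ works for itself, and a tuple of $\boldsymbol B\setminus\boldsymbol\sigma$ is an essential tuple of $\boldsymbol\sigma$, with the witnessing replacement staying in $\boldsymbol B$ because it is adjacent to a tuple of $\boldsymbol B$ and lies in $\Key(\boldsymbol\sigma)$. For uniqueness, suppose a tuple admits two distinct replacements $(b,a_2,\dots,a_n),(b',a_2,\dots,a_n)\in\boldsymbol\sigma$ in the first coordinate; since both lie in a key block, Lemma~\ref{KeyBlockSpread2} forces $b$ and $b'$ to have identical $\boldsymbol\sigma$-slices over all of $A^{n-1}$. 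Then the idempotent unary vector-function that is the identity away from coordinate $1$ and collapses $b$ and $b'$ there (in the direction fixing the first coordinate of the distinguished key tuple of the core) still preserves $\boldsymbol\sigma$ and fixes that key tuple; composing it with the restricting vector-function of the core yields a vector-function satisfying conditions (1),(2),(4) whose image is strictly smaller (it misses one of $b,b'$, both of which lie in $\proj_1\boldsymbol\sigma$), contradicting minimality in the definition of a core. Strong richness forces $|A_1|=\dots=|A_n|$, so I may fix bijections $\psi_i\colon A_i\to A_1$ and transport $\boldsymbol\sigma\cap\boldsymbol B$ to a strongly rich relation $\boldsymbol\sigma_0\subseteq A_1^{n}$ preserved by the WNU obtained by conjugating the restriction of $\boldsymbol f$ to $\boldsymbol B$ coordinatewise.

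Finally, apply Theorem~\ref{StronglyRichRelationTHM} to $\boldsymbol\sigma_0$ to get an abelian group $(A_1;+)$ and bijections $\theta_i\colon A_1\to A_1$ with $\boldsymbol\sigma_0=\{(y_1,\dots,y_n)\mid\theta_1(y_1)+\dots+\theta_n(y_n)=0\}$; then $\phi_i:=\theta_i\circ\psi_i$ satisfies $\boldsymbol\sigma\cap\boldsymbol B=\{(x_1,\dots,x_n)\mid\phi_1(x_1)+\dots+\phi_n(x_n)=0\}$. Transporting along the $\theta_i$ turns $\boldsymbol\sigma_0$ into $\{(a_1,\dots,a_n)\mid a_1+\dots+a_n=0\}$ on $(A_1;+)$, which is therefore a key relation whose $\Key$ is the whole space; hence by Lemma~\ref{OnlyPowerOfPrime} the underlying group has order a prime power $p^k$, and by Lemma~\ref{OnlyPrimeOrder} every element has order dividing $p$, so $(A_1;+)$ is elementary abelian and isomorphic to the additive group of $F=\mathbb{F}_{p^k}$; composing each $\phi_i$ with such an isomorphism completes the proof. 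The technical heart of the argument, and the point where I expect the most care, is the uniqueness half of strong richness: it genuinely uses the core structure, via Lemma~\ref{KeyBlockSpread2} together with the minimality of the restricting vector-function, rather than merely the fullness of the pattern.
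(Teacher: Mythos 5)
Your proof is correct and follows essentially the same route as the paper: strong richness of the key block (existence of replacements from essentiality of key tuples, uniqueness from Lemma~\ref{KeyBlockSpread2} plus the minimality of the restricting vector-function), then Theorem~\ref{StronglyRichRelationTHM}, then Lemma~\ref{OnlyPrimeOrder} to force an elementary abelian group, i.e.\ the additive group of a finite field. Your explicit first step verifying that $\Key(\boldsymbol\sigma\cap\boldsymbol B)=\boldsymbol B$ is a welcome clarification of a hypothesis the paper leaves implicit when invoking Lemma~\ref{OnlyPrimeOrder}; the additional appeal to Lemma~\ref{OnlyPowerOfPrime} is harmless but redundant.
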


\begin{proof}

Assume that we have two different tuples $a_1\alpha, b_1\alpha\in \boldsymbol\sigma$
and they belong to the key block $A_{1}\times \dots \times A_{n}$.
Let $\Psi$ be a unary vector-function
such that $\Psi(A^{n}) = \proj\boldsymbol\sigma$
and $\Psi(x) = x$ if $x\in \proj\boldsymbol\sigma$.
Obviously, $\Psi$ preserves $\boldsymbol\sigma$.
Let us define $\Psi'$ as follows.
$\Psi'^{(i)} = \Psi^{(i)}$ for $i\in \{2,3,\ldots,n\}$.
$\Psi'^{(1)}(x) = \begin{cases}
a_{1}, & \text{if $x \in \{a_{1},b_{1}\}$}\\
\Psi^{(1)}(x), & \text{otherwise.}
\end{cases}$.
It follows from Lemma~\ref{KeyBlockSpread2} that
$a_{1}\beta\in \boldsymbol\sigma \Leftrightarrow b_{1}\beta\in \boldsymbol\sigma$ for every $\beta\in A^{n-1}$.
Hence, $\Psi'$ preserves $\boldsymbol\sigma$
and $\Psi'(A^{n})\subset \proj\boldsymbol\sigma$, which contradicts the fact that $\boldsymbol\sigma$ is a core.

Hence, if $a_1\alpha\in \boldsymbol\sigma$ belongs to a key block,
then for every $b_1\neq a_{1}$ we have $b_1\alpha\notin \boldsymbol\sigma$.
We can use the same argument not only for the first coordinate.

Suppose $(a_{1},\ldots,a_{n})\in A_{1}\times \dots \times A_{n}$.
We know that the tuple $(a_{1},\ldots,a_{n})$ is either a key tuple, or belongs to $\boldsymbol\sigma$,
and we know that every key tuple is an essential tuple.
This means that for every
$i\in \{1,2,\ldots,n\}$ there exists $b$ such that
$(a_{1},\ldots,a_{i-1},b,a_{i+1},\ldots,a_{n})\in \boldsymbol\sigma.$
By the previous argument, we can state that
for every
$i\in \{1,2,\ldots,n\}$ there exists a unique $b$ such that
$(a_{1},\ldots,a_{i-1},b,a_{i+1},\ldots,a_{n})\in \boldsymbol\sigma.$
Then we can consider this key block as strongly rich relation.
Combining Lemma~\ref{DeriveKey} and Lemma~\ref{preserveConnectedComponent},
we can show that the WNU preserves the key block.
Then, by Theorem~\ref{StronglyRichRelationTHM}
there exist a finite abelian group $(G;+)$
and bijective mappings
$\phi_{i}:A_{i}\to G$ for $i=1,2,\ldots,n$ such that
\[\boldsymbol\sigma\cap (A_{1}\times A_{2}\times \dots \times A_{n}) =\\
\{(x_1,\dots,x_n) \mid \phi_1(x_1) + \phi_2(x_2) + \dots +\phi_n(x_n)=0\}.\]
By Lemma~\ref{OnlyPrimeOrder}, the order of every element in the group $G$ is a prime number,
hence, we may consider a finite field instead of group.
\end{proof}

We prefer to consider a finite field instead of group in Theorem~\ref{MainForFullPattern} because
the multiplication in $F$ can be used in the following way.

\begin{lem}
Suppose $F$ is a finite field,
the relation $\boldsymbol \rho\subseteq F^{n}$ is defined
by $\boldsymbol\rho = \{(x_{1},\ldots,x_{n})\mid x_{1}+\dots+x_{n}=0\}$.
Then $\boldsymbol \rho$ is a key relation and
$\Key(\rho) = F^{n}$.
\end{lem}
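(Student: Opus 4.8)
The plan is to prove the stronger statement $\Key(\boldsymbol\rho) = F^{n}$, from which the whole lemma follows at once: since $\boldsymbol\rho\neq F^{n}$ (for instance the tuple $(1,0,\ldots,0)$ has nonzero sum and so lies outside $\boldsymbol\rho$), the complement $F^{n}\setminus\boldsymbol\rho$ is nonempty, and once we know every tuple of this complement is a key tuple, $\boldsymbol\rho$ is a key relation and $\Key(\boldsymbol\rho)=\boldsymbol\rho\cup(F^{n}\setminus\boldsymbol\rho)=F^{n}$.

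First I would exhibit a convenient family of unary vector-functions preserving $\boldsymbol\rho$. For $a\in F$ and $b_{1},\ldots,b_{n}\in F$ with $b_{1}+\dots+b_{n}=0$, consider $\Psi=(\psi_{1},\ldots,\psi_{n})$ with $\psi_{i}(x)=a\cdot x+b_{i}$. For any $(x_{1},\ldots,x_{n})\in\boldsymbol\rho$ we have $\psi_{1}(x_{1})+\dots+\psi_{n}(x_{n})=a(x_{1}+\dots+x_{n})+(b_{1}+\dots+b_{n})=a\cdot 0+0=0$, so $\Psi$ preserves $\boldsymbol\rho$. These are exactly the affine vector-functions whose multiplicative coefficient is the same in every coordinate and whose additive shifts sum to zero.

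Next, fix an arbitrary $\beta\in F^{n}\setminus\boldsymbol\rho$ and let $\alpha\in F^{n}\setminus\boldsymbol\rho$ be arbitrary. Put $c_{1}=\alpha(1)+\dots+\alpha(n)$ and $c_{2}=\beta(1)+\dots+\beta(n)$; by assumption $c_{1}\neq 0$ and $c_{2}\neq 0$. Set $a=c_{2}c_{1}^{-1}$ (this is well-defined and nonzero because $F$ is a field and $c_{1}\neq 0$) and $b_{i}=\beta(i)-a\cdot\alpha(i)$ for $i=1,\ldots,n$. Then $b_{1}+\dots+b_{n}=c_{2}-a c_{1}=c_{2}-c_{2}=0$, so by the previous paragraph the vector-function $\Psi$ with $\psi_{i}(x)=a x+b_{i}$ preserves $\boldsymbol\rho$, and $\psi_{i}(\alpha(i))=a\alpha(i)+\beta(i)-a\alpha(i)=\beta(i)$, i.e.\ $\Psi(\alpha)=\beta$. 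Thus every $\alpha\notin\boldsymbol\rho$ is mapped to $\beta$ by a vector-function preserving $\boldsymbol\rho$, so $\beta$ is a key tuple; since $\beta$ was an arbitrary element of $F^{n}\setminus\boldsymbol\rho$, we conclude $\Key(\boldsymbol\rho)=F^{n}$.

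There is essentially no obstacle in this argument: it rests solely on the fact that affine maps with a common multiplicative coefficient and shifts summing to zero fix the hyperplane $\{x_{1}+\dots+x_{n}=0\}$, together with the freedom to choose that coefficient as the ratio $c_{2}c_{1}^{-1}$ of the two (nonzero) coordinate sums. The only place a hypothesis is used in an essential way is that $\alpha\notin\boldsymbol\rho$ forces $c_{1}\neq 0$, so that $c_{1}^{-1}$ exists in $F$.
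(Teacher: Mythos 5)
Your proof is correct and is essentially the same as the paper's: both map an arbitrary $\alpha\notin\boldsymbol\rho$ to a fixed $\beta\notin\boldsymbol\rho$ via the affine vector-function $\psi_i(x)=d\cdot x+\beta(i)-d\cdot\alpha(i)$ with $d$ the ratio of the two nonzero coordinate sums. You merely spell out the preservation check that the paper leaves as "easily done."
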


\begin{proof}

Let us show that all tuples that are not from the relation are key tuples.
For every $\alpha,\beta\in F^{n}\setminus \boldsymbol\rho$ we need to find a vector-function $\Psi$ preserving
$\boldsymbol\rho$ such that $\Psi(\alpha) = \beta$.
Let $a = \alpha(1)+\dots+\alpha(n)$ and
$b = \beta(1)+\dots+\beta(n)$.
We know that $a\neq 0$ and $b\neq 0$, then there exists $d\in F$ such that
$a\cdot d = b$.
Define $\Psi$ as follows
$\Psi^{(i)}(x) = d\cdot x + \beta(i) - d\cdot \alpha(i)$
for $i\in\{1,2,\ldots,n\}$. Obviously, $\Psi(\alpha) = \beta$.
It remains to check that $\Psi$ preserves $\boldsymbol\rho$, which can be easily done.
\end{proof}

There is a conjecture that Theorem~\ref{MainForFullPattern} can be strengthened as follows.

\begin{conj}

Suppose $\boldsymbol\sigma$ is a core with full pattern preserved by a WNU, $n\ge 3$,
$A_{1}\times A_{2}\times \dots \times A_{n}$ is a key block for $\boldsymbol\sigma$.
Then there exist a prime number $p$, and bijective mappings
$\phi_i: A_i\to \mathbb Z_{p}$ for $i=1,2,\ldots,n$ such that
\[\boldsymbol\sigma\cap (A_{1}\times A_{2}\times \dots \times A_{n}) =  \{(x_1,\dots,x_n) \mid \phi_1(x_1) + \phi_2(x_2) + \dots +\phi_n(x_n)=0\}.\]

\end{conj}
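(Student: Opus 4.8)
The plan is to strengthen Theorem~\ref{MainForFullPattern} from ``finite field'' to ``prime field'' by exploiting the minimality in the definition of a core. By Theorem~\ref{MainForFullPattern} there are a finite field $F$ and bijections $\phi_i\colon A_i\to F$ ($i=1,\dots,n$) with
$$
\boldsymbol\sigma\cap(A_1\times\dots\times A_n)=\{(x_1,\dots,x_n)\mid \phi_1(x_1)+\dots+\phi_n(x_n)=0\},
$$
and by Lemma~\ref{OnlyPrimeOrder} the additive group $(F;+)$ is elementary abelian, say $(F;+)\cong(\mathbb Z_p)^m$ with $|F|=p^m$. A bijection $A_i\to\mathbb Z_p$ forces $|A_i|=|F|=p$, while conversely if $m=1$ then $F$ is isomorphic to $\mathbb Z_p$ as a group and post-composing the $\phi_i$ with such an isomorphism yields the asserted description. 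Thus the conjecture is exactly the statement $m=1$, and the whole task is to exclude $m\ge 2$.

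Suppose $m\ge 2$ and write $\boldsymbol B=A_1\times\dots\times A_n$ for the given key block. Fix a decomposition $(F;+)=F'\oplus F''$ with $0\ne F'\ne(F;+)$, let $\pi$ be the idempotent endomorphism of $(F;+)$ with image $F'$ and kernel $F''$, and choose $c_1,\dots,c_n\in F''$ with $c_1+\dots+c_n=0$. On $\boldsymbol B$ define a unary vector-function by $\Psi^{(i)}(x)=\phi_i^{-1}\bigl(\pi(\phi_i(x))+c_i\bigr)$. A direct computation, in the spirit of the arguments behind Theorem~\ref{StronglyRichRelationTHM} and Lemma~\ref{OrderOfElements}, shows that $\Psi$ is idempotent (using $\pi^2=\pi$ and $\pi(c_i)=0$), maps $\boldsymbol B$ into itself, preserves $\boldsymbol\sigma\cap\boldsymbol B$ (because $\pi$ is additive and $\sum c_i=0$), and shrinks coordinate $i$ from $|A_i|=p^m$ to $|F'|<p^m$. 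Its fixed tuples are exactly those whose $F$-images lie in the coset $(F')^n+(c_1,\dots,c_n)$; the coordinate sum of such a tuple ranges over all of $F'\ne\{0\}$, so $\Psi$ fixes a tuple of $\boldsymbol B$ that is not a solution of the equation, hence --- since the key block $\boldsymbol B$ is a connected component of $\Key(\boldsymbol\sigma)$ --- a key tuple for $\boldsymbol\sigma$. Thus on the key block $\Psi$ already has all the features of a restricting vector-function and it properly decreases the image; if $\Psi$ extended to a unary vector-function on all of $\proj\boldsymbol\sigma$ still preserving the whole relation $\boldsymbol\sigma$, then --- being idempotent, fixing a key tuple, and having strictly smaller image --- it would contradict the minimality of the restricting vector-function for $\boldsymbol\sigma$, forcing $m=1$.

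The difficulty, and the reason this is only conjectured, is precisely this extension to the remaining blocks of $\boldsymbol\sigma$. By the block decomposition developed in the last section a core with full pattern splits into isomorphic key blocks, so all of its nontrivial blocks carry the same field $F$ and have the shape $\boldsymbol D=D_1\times\dots\times D_n$ with a linear coordinatization $\psi_i\colon D_i\to F$. What is needed, and is delicate, is that these coordinatizations can be chosen coherently on the overlaps of the coordinate projections (the intersections $A_i\cap D_i$, and analogous intersections for pairs of nontrivial blocks), so that the block-wise maps $x\mapsto\psi_i^{-1}\bigl(\pi(\psi_i(x))+c_i^{\boldsymbol D}\bigr)$ --- with offsets $c_i^{\boldsymbol D}\in F''$ forced to sum to $0$ inside each block --- agree wherever they overlap and hence glue into a single total map $\Psi^{(i)}$ on $\proj_i\boldsymbol\sigma$. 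Trivial blocks cause no trouble once this is arranged: taking $\Psi^{(i)}$ to be the identity on coordinate values occurring only in trivial blocks, every trivial block is mapped into $\boldsymbol\sigma$ by Lemma~\ref{preserveConnectedComponent}-type reasoning together with idempotency. So the crux is a purely combinatorial gluing problem --- showing that the local ``divide by a direct summand'' reductions on the individual blocks are never obstructed by coordinate values shared among several blocks. The single-block case carried out above and the absence of small counterexamples make it plausible that no such obstruction arises; turning this into a proof is the open part.
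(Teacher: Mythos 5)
This statement is explicitly labelled a \emph{conjecture} in the paper: the author offers no proof, only the weaker Theorem~\ref{MainForFullPattern} (finite field of prime-power order) together with the remark that the strengthening to $\mathbb Z_p$ is open. So there is no proof in the paper to compare against, and your proposal does not close the gap either --- you say so yourself. Your reduction of the conjecture to the statement $m=1$ is correct, and your single-block computation is sound: with $\pi$ an idempotent endomorphism of $(F;+)$ with image $F'$ and kernel $F''$, and $c_i\in F''$ summing to $0$, the map $\Psi^{(i)}(x)=\phi_i^{-1}(\pi(\phi_i(x))+c_i)$ is indeed idempotent, preserves $\boldsymbol\sigma\cap\boldsymbol B$, strictly shrinks each $A_i$, and fixes a tuple of the key block whose coordinate sum is a nonzero element of $F'$, hence a key tuple. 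This is a reasonable and, as far as it goes, correct line of attack on the conjecture.

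The gap is exactly where you place it, and it is fatal to the argument as written: property (5) in the definition of a core quantifies over unary vector-functions defined on all of $A$ that preserve the \emph{entire} relation, so the minimality of the core can only be contradicted by a map that preserves $\boldsymbol\sigma$ on every block simultaneously (and sends the remaining elements of each $\proj_i\boldsymbol\sigma$ somewhere compatible). Your block-local projections need not glue: distinct nontrivial blocks may share coordinate values, their linear coordinatizations are only determined up to affine automorphisms of $F$, and a direct summand $F'$ chosen in one block's coordinatization need not correspond to a summand respected by another block's; nothing in the paper (nor in your sketch) controls this. Until that coherence is established --- or the core's minimality is replaced by some argument that only needs to be verified block by block --- the contradiction with $m\ge 2$ cannot be drawn. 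So the proposal should be read as a plausible reduction of the conjecture to a concrete gluing problem, not as a proof; this is consistent with the paper leaving the statement open.
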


\begin{lem}\label{WNUIsLinear}
Suppose $\boldsymbol\sigma$ is a core with full pattern preserved by a WNU $\boldsymbol f$ of arity $m$,
$n\ge 3$,
$A_{1}\times A_{2}\times \dots \times A_{n}$ is a key block for $\boldsymbol\sigma$.
Then for every $j\in\{1,\ldots,n\}$ there exist an abelian group
$(A_{j};+)$ and an integer $t$ such that
for every $a_1,\ldots,a_m \in A_{j}$
we have $\boldsymbol f^{(j)}(a_{1},\ldots,a_{m}) = t\cdot a_{1}+t\cdot a_2 + \ldots + t\cdot a_{m}$.
\end{lem}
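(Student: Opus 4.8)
The plan is to reduce the statement to Lemma~\ref{WNUIsLinearInAnyBlock} by transporting the restriction of $\boldsymbol f$ to the key block through the coordinatewise bijection supplied by Theorem~\ref{MainForFullPattern}. So first I would apply Theorem~\ref{MainForFullPattern} to $\boldsymbol\sigma$ and the key block $\boldsymbol B:=A_{1}\times\dots\times A_{n}$ to obtain a finite field $F$ and bijections $\phi_{i}\colon A_{i}\to F$, $i=1,\dots,n$, with
$$\boldsymbol\sigma\cap\boldsymbol B=\{(x_{1},\dots,x_{n})\mid\phi_{1}(x_{1})+\dots+\phi_{n}(x_{n})=0\}.$$

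Next I would note that $\boldsymbol f$ preserves the set $\boldsymbol B$: by definition a key block is a connected component of $\Key(\boldsymbol\sigma)$, the WNU $\boldsymbol f$ preserves $\Key(\boldsymbol\sigma)$ by Lemma~\ref{DeriveKey}, and $\boldsymbol f$ is idempotent, so $\boldsymbol f$ preserves $\boldsymbol B$ by Lemma~\ref{preserveConnectedComponent}. Hence each $\boldsymbol f^{(j)}$ restricts to a WNU $A_{j}^{m}\to A_{j}$, and the restriction of $\boldsymbol f$ to $\boldsymbol B$ preserves $\boldsymbol\sigma\cap\boldsymbol B$.

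Then I would conjugate. For each $j$ let $(A_{j};+)$ be the abelian group obtained by transporting the additive group of $F$ along $\phi_{j}$ (so $\phi_{j}$ becomes a group isomorphism), and define an $m$-ary vector-function $\boldsymbol g=(\boldsymbol g^{(1)},\dots,\boldsymbol g^{(n)})$ on $F$ by $\boldsymbol g^{(j)}(y_{1},\dots,y_{m}):=\phi_{j}\bigl(\boldsymbol f^{(j)}(\phi_{j}^{-1}(y_{1}),\dots,\phi_{j}^{-1}(y_{m}))\bigr)$. Since the $\phi_{j}$ are bijections, $\boldsymbol g$ is again a WNU, and since the coordinatewise map $(\phi_{1},\dots,\phi_{n})$ carries $\boldsymbol\sigma\cap\boldsymbol B$ onto $\boldsymbol\tau:=\{(y_{1},\dots,y_{n})\mid y_{1}+\dots+y_{n}=0\}\subseteq F^{n}$, the vector-function $\boldsymbol g$ preserves $\boldsymbol\tau$. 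Now Lemma~\ref{WNUIsLinearInAnyBlock}, applied to $\boldsymbol g$ and the finite abelian group $(F;+)$, yields for each $j$ an integer $t_{j}\in\{1,2,3,\dots\}$ with $\boldsymbol g^{(j)}(y_{1},\dots,y_{m})=t_{j}\cdot y_{1}+\dots+t_{j}\cdot y_{m}$. Transporting this identity back through $\phi_{j}$ gives, for all $a_{1},\dots,a_{m}\in A_{j}$,
$$\boldsymbol f^{(j)}(a_{1},\dots,a_{m})=\phi_{j}^{-1}\bigl(t_{j}\cdot\phi_{j}(a_{1})+\dots+t_{j}\cdot\phi_{j}(a_{m})\bigr)=t_{j}\cdot a_{1}+\dots+t_{j}\cdot a_{m}$$
in the group $(A_{j};+)$, which is the assertion with $t=t_{j}$.

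I do not expect a genuine obstacle: the proof is essentially a repackaging of Theorem~\ref{MainForFullPattern} and Lemma~\ref{WNUIsLinearInAnyBlock}. The only points needing care are that $\boldsymbol f$ fixes the block $\boldsymbol B$ setwise, so that the restriction and the conjugation are well defined, and the routine checks that being a WNU and preserving $\boldsymbol\tau$ survive conjugation by the $\phi_{j}$; both are immediate.
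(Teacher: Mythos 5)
Your proposal is correct and follows essentially the same route as the paper: the paper likewise shows $\boldsymbol f$ preserves the key block via Lemma~\ref{DeriveKey} and Lemma~\ref{preserveConnectedComponent}, invokes Theorem~\ref{MainForFullPattern} to get the field $F$ and bijections $\phi_i$, transports the group structure to $A_j$ along $\phi_j$, and concludes with Lemma~\ref{WNUIsLinearInAnyBlock}. Your write-up merely makes the conjugation step more explicit than the paper's terse version.
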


\begin{proof}

Put $\boldsymbol\sigma' = \boldsymbol\sigma\cap (A_{1}\times A_2\times\dots \times A_{n})$.
Using Lemma~\ref{DeriveKey} and Lemma~\ref{preserveConnectedComponent}, we obtain that $\boldsymbol f$ preserves the key block,
hence $\boldsymbol f$ preserves $\boldsymbol\sigma'$.
By Theorem~\ref{MainForFullPattern}
there exist a finite field $F$ and bijective mappings
$\phi_i: A_i\to F$ for $i=1,2,\ldots,n$ such that
\[\boldsymbol\sigma'=\{(x_1,\dots,x_n) \mid \phi_1(x_1) + \phi_2(x_2) + \dots +\phi_n(x_n)=0\}.\]
Using the bijection $\phi_j$ we define an abelian group $(A_{j};+)$.
Then, by Lemma~\ref{WNUIsLinearInAnyBlock} there exists an integer $t$ such that
$\boldsymbol f^{(j)}(a_{1},\ldots,a_{m}) = t\cdot a_{1}+ \ldots + t\cdot a_{m}$
for every $a_1,\ldots,a_m \in A_{j}$.
\end{proof}

\section{Proof of the main results}

\subsection{Key relations with trivial pattern}

In this section we prove statements from Section~\ref{MainResultsSection} for a key relation with trivial pattern.
First, we prove the following lemma from that section.

\begin{trivialpatternLemma}

Suppose $\boldsymbol\rho\in R_{A,n}$,
$(a_1,a_2,\ldots,a_n)\notin\boldsymbol\rho$,
$b_{1},\ldots,b_{n}\in A$, 
and
$(\{a_1,b_1\}\times \{a_2,b_2\}\times \dots\times\{a_n,b_n\})\setminus \{(a_1,a_2,\ldots,a_n)\}\subseteq \boldsymbol\rho.$
Then $\boldsymbol\rho$ is a key relation and $(a_1,a_2,\ldots,a_n)$ is a key tuple for the relation $\boldsymbol\rho$.

\end{trivialpatternLemma}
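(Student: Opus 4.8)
The plan is to exhibit, for every $\gamma\in A^{n}\setminus\boldsymbol\rho$, an explicit unary vector-function $\Psi$ that preserves $\boldsymbol\rho$ and satisfies $\Psi(\gamma)=(a_{1},\ldots,a_{n})$. Since $(a_{1},\ldots,a_{n})\notin\boldsymbol\rho$, producing such a $\Psi$ for every $\gamma$ is exactly what is needed to witness that $(a_{1},\ldots,a_{n})$ is a key tuple and hence that $\boldsymbol\rho$ is a key relation.

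First I would fix $\gamma=(c_{1},\ldots,c_{n})\in A^{n}\setminus\boldsymbol\rho$ and, for each $i\in\{1,\ldots,n\}$, define $\psi_{i}\colon A\to A$ by $\psi_{i}(c_{i})=a_{i}$ and $\psi_{i}(x)=b_{i}$ for all $x\neq c_{i}$ (recall the standing convention $a_{i}\neq b_{i}$). Put $\Psi=(\psi_{1},\ldots,\psi_{n})$. The equality $\Psi(\gamma)=(a_{1},\ldots,a_{n})$ is immediate. The one property worth stating separately, which will be used twice, is that for any $d\in A$ we have $\psi_{i}(d)\in\{a_{i},b_{i}\}$, and $\psi_{i}(d)=a_{i}$ holds \emph{iff} $d=c_{i}$ (here $a_{i}\neq b_{i}$ is what makes the ``iff'' work, and one should check this uniformly whether $c_{i}$ equals $a_{i}$, equals $b_{i}$, or neither).

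The heart of the argument is verifying that $\Psi$ preserves $\boldsymbol\rho$. Take any $\delta=(d_{1},\ldots,d_{n})\in\boldsymbol\rho$. By the property above, $\Psi(\delta)\in\{a_{1},b_{1}\}\times\cdots\times\{a_{n},b_{n}\}$. If $\Psi(\delta)=(a_{1},\ldots,a_{n})$, then $\psi_{i}(d_{i})=a_{i}$ for every $i$, hence $d_{i}=c_{i}$ for every $i$, i.e. $\delta=\gamma$; this contradicts $\gamma\notin\boldsymbol\rho$ while $\delta\in\boldsymbol\rho$. Therefore $\Psi(\delta)\in(\{a_{1},b_{1}\}\times\cdots\times\{a_{n},b_{n}\})\setminus\{(a_{1},\ldots,a_{n})\}\subseteq\boldsymbol\rho$ by hypothesis, so $\Psi$ preserves $\boldsymbol\rho$, which finishes the proof.

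I do not expect a genuine obstacle: the shape of the hypothesis essentially dictates the choice of $\Psi$, and the non-membership $\gamma\notin\boldsymbol\rho$ is precisely what blocks $\Psi$ from sending a tuple of $\boldsymbol\rho$ to the forbidden tuple $(a_{1},\ldots,a_{n})$. The only point requiring a little care is the bookkeeping in the degenerate coordinates where $c_{i}$ happens to lie in $\{a_{i},b_{i}\}$, but the single definition of $\psi_{i}$ above handles all cases at once. In the multi-sorted setting nothing changes: each $\psi_{i}$ plays the role of the component of sort $i$, and the preservation condition for an $n$-sorted relation with the $i$-th variable of sort $i$ is literally the computation carried out above.
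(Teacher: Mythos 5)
Your proof is correct and is exactly the paper's argument: the paper defines the same vector-function $\Psi^{(i)}(x)=a_i$ if $x=c_i$ and $b_i$ otherwise, and leaves the verification (which you spell out, correctly using $a_i\neq b_i$ to rule out $\Psi(\delta)=(a_1,\ldots,a_n)$ for $\delta\in\boldsymbol\rho$) to the reader.
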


\begin{proof}

To prove the statement, for every $(c_{1},\ldots,c_{n})\notin \boldsymbol\rho$ we need to find
a vector-function $\Psi$ preserving $\boldsymbol\rho$ that maps $(c_{1},\ldots,c_{n})$ to $(a_{1},\ldots,a_{n})$.
Put $\Psi^{(i)}(x) =
\begin{cases}
a_{i}, & \text{if $x = c_{i}$;}\\
b_{i}, & \text{otherwise.}
\end{cases}$
It is easy to see that it satisfies the above properties.
\end{proof}

A pair of tuples $(a_{1},\ldots,a_{n}) - (b_{1},\ldots,b_{n})$ is called \emph{perfect}
if $(a_{1},\ldots,a_{n})\notin\boldsymbol\rho$,
$b_{i}\neq a_{i}$ for every $i$, and
$(\{a_1,b_1\}\times \{a_2,b_2\}\times \dots\{a_n,b_n\})\setminus (a_1,a_2,\ldots,a_n)\subseteq \boldsymbol\rho$.
By the previous lemma, if there exists a perfect pair for a relation then the relation is a key relation.

\begin{lem}\label{trivialPatternMainLemma}

Suppose $\boldsymbol\rho$ is a key relation preserved
by a WNU $\boldsymbol f$, pattern of $\boldsymbol\rho$ is a trivial equivalence relation.
Then for every key tuple $\alpha$ there exists a
tuple $\beta$ such that
$(\alpha)-(\beta)$ is a perfect pair for $\boldsymbol\rho$.

\end{lem}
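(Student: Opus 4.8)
The plan is to induct on the arity $n$ of $\boldsymbol\rho$, after a preliminary normalization of $\boldsymbol f$. First I would replace $\boldsymbol f$ by a WNU of large arity $m$ enjoying the conclusion of Lemma~\ref{findbetterWNU} (composing a WNU with itself in the obvious block fashion yields WNUs of arbitrarily large arity, and one checks this preserves the property that $x\mapsto\boldsymbol f^{(i)}(c,\dots,c,x)$ is idempotent for every $c\in A$ and every coordinate $i$). The base case $n\le 2$ needs no WNU: for $n=2$, triviality of the pattern means $1\overset{\boldsymbol\rho}{\not\sim}2$, so the lemma characterizing $\overset{\boldsymbol\rho}{\sim}$ in terms of the key tuple $\alpha=(a_1,a_2)$ provides $b_1,b_2$ with $(b_1,a_2),(a_1,b_2),(b_1,b_2)\in\boldsymbol\rho$; since $\alpha\notin\boldsymbol\rho$ we get $b_i\neq a_i$, and $(\alpha)-(b_1,b_2)$ is a perfect pair. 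The cases $n\le 1$ follow at once from essentiality of $\boldsymbol\rho$.

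For the inductive step let $n\ge 3$ and let $\alpha=(a_1,\dots,a_n)$ be a key tuple. Put $\boldsymbol\sigma(x_2,\dots,x_n):=\boldsymbol\rho(a_1,x_2,\dots,x_n)$. By Lemma~\ref{ReduceArityOfKey}, $\boldsymbol\sigma$ is a key relation with key tuple $(a_2,\dots,a_n)$; by Corollary~\ref{PatternChanging} (applied after permuting coordinates so that coordinate $1$ plays the role of the last one) the pattern of $\boldsymbol\sigma$ is again trivial; $\boldsymbol\sigma$ is preserved by $\boldsymbol f$, being positively primitively definable from $\boldsymbol\rho$ and the singleton unary relation $=_{a_1}^{(1)}$; and $\boldsymbol\sigma$ has no dummy variable, since a dummy coordinate of a relation of arity $\ge 2$ would be $\overset{\boldsymbol\sigma}{\sim}$-related to every other coordinate, contradicting triviality, hence $\boldsymbol\sigma$ is essential by Lemma~\ref{DammuVariables}. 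The inductive hypothesis yields $b_2,\dots,b_n$ with $b_i\neq a_i$ and
$$(\{a_1\}\times\{a_2,b_2\}\times\dots\times\{a_n,b_n\})\setminus\{\alpha\}\subseteq\boldsymbol\rho .$$
It then remains to find $b_1\neq a_1$ with $\{b_1\}\times\{a_2,b_2\}\times\dots\times\{a_n,b_n\}\subseteq\boldsymbol\rho$, for then $(\alpha)-(b_1,\dots,b_n)$ is a perfect pair and we are done (Lemma~\ref{trivialpatternLem} even re-verifies that the relation is a key relation). Equivalently, writing $\tau(x):=\bigwedge_{\vec d}\boldsymbol\rho(x,\vec d)$ where $\vec d$ ranges over $\{a_2,b_2\}\times\dots\times\{a_n,b_n\}$, I must show $\tau\neq\varnothing$; note $\tau$ is preserved by $\boldsymbol f$, that $a_1\notin\tau$ because the conjunct $\vec d=(a_2,\dots,a_n)$ fails, and that by essentiality of $\boldsymbol\rho$ in coordinate $1$ there is $c_1\neq a_1$ with $(c_1,a_2,\dots,a_n)\in\boldsymbol\rho$.

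The heart of the argument — and the step I expect to be the main obstacle — is producing an element of $\tau$. The idea is to apply $\boldsymbol f$ to one ``witness column'' $(c_1,a_2,\dots,a_n)$ together with suitable ``sub-cube columns'' $(a_1,\vec d)$ with $\vec d\neq(a_2,\dots,a_n)$, so that the output is $(b_1,\vec d)$ with a first coordinate $b_1$ independent of $\vec d$. The natural candidate is $b_1:=\boldsymbol f^{(1)}(c_1,\dots,c_1,a_1)$, which lies in the $\boldsymbol f$-closed set $\{c\mid(c,a_2,\dots,a_n)\in\boldsymbol\rho\}$ and is therefore different from $a_1$ and automatically satisfies the conjunct $\vec d=(a_2,\dots,a_n)$. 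The difficulty is to make a single choice of columns work for all $2^{n-1}$ targets $\vec d$ at once while keeping the first coordinate equal to $b_1$. I would handle this by first replacing the sub-cube $(b_2,\dots,b_n)$ by the normalized one $(\boldsymbol h^{(2)}(b_2),\dots,\boldsymbol h^{(n)}(b_n))$, where $\boldsymbol h$ is the $\boldsymbol\rho$-preserving unary vector-function obtained by fixing $m-1$ of the arguments of $\boldsymbol f$ to the tuple $(c_1,a_2,\dots,a_n)\in\boldsymbol\rho$ (so $\boldsymbol h^{(i)}=\boldsymbol f^{(i)}(a_i,\dots,a_i,{\cdot})$ for $i\ge 2$, which is idempotent by the choice of $\boldsymbol f$); one checks this does not collapse any coordinate precisely because $\alpha\notin\boldsymbol\rho$ forbids $\alpha[i\to\boldsymbol h^{(i)}(b_i)]=\alpha$. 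After this normalization the WNU identities $\boldsymbol f^{(i)}(x,y,\dots,y)=\dots=\boldsymbol f^{(i)}(y,\dots,y,x)$ let each target $\vec d$ be hit by an appropriate arrangement of columns, and the place where triviality of the pattern is indispensable is in excluding the obstruction that would arise if $\boldsymbol f$ restricted to some two-element set $\{a_i,b_i\}$ behaved like an affine operation — which, as the two-element case shows, corresponds exactly to a full pattern. Carrying out this combinatorial bookkeeping carefully is where the bulk of the technical work lies.
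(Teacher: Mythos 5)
Your setup is sound (the normalization of $\boldsymbol f$, the base case $n=2$, and the reduction of the inductive step to extending a good sub-cube sitting in the hyperplane $x_1=a_1$), but the step you yourself flag as ``the heart of the argument'' is genuinely missing, and the outline you give for it does not close. Applying the $\boldsymbol\rho$-preserving vector-function $\boldsymbol h$ (all but one argument of $\boldsymbol f$ frozen at the witness $(c_1,a_2,\ldots,a_n)$) to the punctured sub-cube yields $(b_1,\vec d)\in\boldsymbol\rho$ only for the normalized $\vec d\neq(a_2,\ldots,a_n)$; the one target it cannot produce is $(b_1,a_2,\ldots,a_n)$, since any choice of columns whose first coordinates are $m-1$ copies of $c_1$ and one $a_1$ must use a column of the form $(a_1,\vec e)$ with $\vec e\neq(a_2,\ldots,a_n)$, which forces some output coordinate $i\ge 2$ to be $\boldsymbol f^{(i)}(a_i,\ldots,a_i,b_i)$ rather than $a_i$. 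Equivalently, the set $T=\{c\mid(c,a_2,\ldots,a_n)\in\boldsymbol\rho\}$ is preserved by $\boldsymbol f^{(1)}$ but does not contain $a_1$, so nothing forces $\boldsymbol f^{(1)}(c_1,\ldots,c_1,a_1)\in T$. This single corner is exactly the obstruction that drives the paper's proof into a different shape: the paper applies the inductive hypothesis to \emph{three} coordinate restrictions (fixing $x_1$, $x_2$ and $x_3$ in turn to the key-tuple value), obtains three good sub-cubes, and then performs a case analysis (the set $B$, and whether $(\boldsymbol f^{(1)}(0^{r-1}a_1),0,\ldots,0)$ and $(0,\boldsymbol f^{(2)}(0,b_2,\ldots,b_2),0,\ldots,0)$ lie in $\boldsymbol\rho$) in which the troublesome corner is, in each case, either assumed to lie in $\boldsymbol\rho$ or is itself taken as the excluded tuple of a freshly assembled perfect pair built from a different mixture of the three sub-cubes. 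Your one-restriction induction has no mechanism for any of this, so the inductive step is not established.

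Two smaller defects: the claim that the normalization cannot collapse a coordinate is unjustified --- $\boldsymbol f^{(i)}(a_i,\ldots,a_i,\cdot)$ is only an idempotent retraction (Lemma~\ref{findbetterWNU}) and may perfectly well send $b_i$ to $a_i$ (for instance when $\boldsymbol f^{(i)}$ acts as a near-unanimity operation on $\{a_i,b_i\}$), in which case the new cube degenerates in direction $i$ and the pair is no longer perfect; and the closing remark that triviality of the pattern is used to ``exclude an affine obstruction'' is a heuristic rather than an argument --- in the actual proof triviality enters only through Corollary~\ref{PatternChanging}, to make the inductive hypothesis applicable to the restricted relations, and plays no direct role in the combinatorial assembly.
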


\begin{proof}

It is enough to show that there exists a perfect pair $(\gamma)-(\delta)$ for $\boldsymbol\rho$
because we can map $\gamma$ to the key tuple $\alpha$ by a vector-function $\Psi$ preserving $\boldsymbol\rho$
and put $\beta = \Psi(\delta)$.

The proof is by induction on the arity of $\boldsymbol\rho$. Denote $n = \ar(\boldsymbol\rho)$, $r = \ar(\boldsymbol f)$.
If $n=2$ then it follows from the definition.
To simplify explanation we assume that $(0,0,\ldots,0)$ is a key tuple for $\boldsymbol\rho$.

We say that a tuple $(d_{1},\ldots,d_{n})$ is \emph{good}
if \[(\{0,d_1\}\times \{0,d_2\}\times \dots\{0,d_n\})\setminus \{(0,0,\ldots,0)\}\subseteq \boldsymbol\rho.\]
Here we break our agreement and admit that $d_{i} = 0$.

We know from Lemma~\ref{ReduceArityOfKey} and Corollary~\ref{PatternChanging} that
the relations defined by
\[\boldsymbol\rho(0,x_{2},x_{3},x_{4},\ldots,x_{n}),\;
\boldsymbol\rho(x_{1},0,x_{3},x_{4},\ldots,x_{n}),\;
\boldsymbol\rho(x_{1},x_{2},0,x_{4},\ldots,x_{n})\]
are key relations with trivial pattern of smaller arity.
Then by the inductive assumption there
exist good tuples
$(a_1,0,a_3,\ldots,a_n)$, $(0,b_2,b_3,\ldots,b_n)$, $(c_1,c_2,0,c_{4},\ldots,c_n)$,
where $a_{i}\neq 0$, $b_{i}\neq 0$, and $c_{i}\neq 0$ for every $i$.
Let
\[B = \left\{m'\in \{0,\ldots,r-1\}\mid \left(\begin{smallmatrix}\boldsymbol f^{(1)}(0^{r-1}a_1)\\0\\\boldsymbol f^{(3)}(0^{r-m'}{a_3}^{m'})\\ 0\\0\\\dots\\0
\end{smallmatrix}\right)\in \boldsymbol\rho\right\}.\]
Obviously, $r-1\in B$.

Assume that $0\notin B$, then let $m$ be the maximal integer which is not from $B$.
Then the following pair is a perfect pair
\[\left(\begin{smallmatrix}\boldsymbol f^{(1)}(0^{r-1}a_1)\\0\\\boldsymbol f^{(3)}(0^{r-m}{a_3}^{m})\\ 0\\\vdots\\0
\end{smallmatrix}\right)-
\left(\begin{smallmatrix}\boldsymbol f^{(1)}( {c_1}^{r-1-m}a_{1} 0^{m})\\
\boldsymbol f^{(2)}({c_2}^{r-1-m}{0}^{m+1})\\
\boldsymbol f^{(3)}(0^{r-m-1}{a_3}^{m+1})\\
\boldsymbol f^{(4)}({c_4}^{r-1-m} {0}^{m+1})\\
\vdots\\
\boldsymbol f^{(n)}({c_n}^{r-1-m}  {0}^{m+1})\\
\end{smallmatrix}\right).\]

Thus we can assume that $0\in B$ and
$\left(\begin{smallmatrix}\boldsymbol f^{(1)}(0,\ldots,0,a_1)\\0\\0\\\vdots\\0
\end{smallmatrix}\right)\in \boldsymbol\rho$.

Assume that
$\left(\begin{smallmatrix}0\\ \boldsymbol f^{(2)}(0,b_2,\ldots,b_2)\\0\\\vdots\\0
\end{smallmatrix}\right)\notin \boldsymbol\rho$.
Then the following pair is prefect
\[
\left(\begin{smallmatrix}0\\\boldsymbol f^{(2)}(0,b_2,\ldots,b_2)\\0\\\vdots\\0
\end{smallmatrix}\right)-
\left(\begin{smallmatrix}\boldsymbol f^{(1)}(c_1,0,\ldots,0)\\
\boldsymbol f^{(2)}(c_2,b_2,\ldots,b_2)\\\boldsymbol f^{(3)}(b_3,0,\ldots,0)\\ \vdots\\\boldsymbol f^{(n)}(b_n,0,\ldots,0)
\end{smallmatrix}\right).\]

Thus, we can assume that
$\left(\begin{smallmatrix}0\\ \boldsymbol f^{(2)}(0,b_2,\ldots,b_2)\\0\\\vdots\\0
\end{smallmatrix}\right)\in \boldsymbol\rho$.
It remains to check that
\[
\left(\begin{smallmatrix}0\\0\\ 0\\0\\\vdots\\0
\end{smallmatrix}\right)-
\left(\begin{smallmatrix}\boldsymbol f^{(1)}(a_1,0,\ldots,0)\\\boldsymbol f^{(2)}(0,b_2,\ldots,b_2)\\
\boldsymbol f^{(3)}(a_3,b_3,\ldots,b_3)\\\boldsymbol f^{(4)}(a_4,b_4,\ldots,b_4)\\\vdots\\
\boldsymbol f^{(n)}(a_n,b_n,\ldots,b_n)
\end{smallmatrix}\right)\]
is a perfect pair.
\end{proof}

Theorem~\ref{trivialPattern} follows from Lemma~\ref{trivialPatternMainLemma}
and Lemma~\ref{trivialpatternLem}.

Let us prove another theorem from Section~\ref{MainResultsSection}.
\begin{NUtheorem}
Suppose $\rho$ is a key relation of arity greater than 2 preserved by a near-unanimity operation $f$.
Then the pattern of $\rho$ is a trivial equivalence relation.
\end{NUtheorem}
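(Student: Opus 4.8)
The plan is to suppose the pattern is \emph{not} trivial and derive a contradiction. Since a near-unanimity function is in particular a WNU, Theorem~\ref{PatternIsEquivalenceForWNUF} shows $\overset{\rho}{\sim}$ is an equivalence relation with a unique non-singleton class, so after permuting coordinates we may assume this class is $\{1,2,\ldots,r\}$ with $r\ge 2$. Fix a key tuple $(a_1,\ldots,a_n)$. First I would substitute the constants $a_{r+1},\ldots,a_n$ into the last $n-r$ coordinates: since an idempotent operation fixes a constant coordinate and since key relations stay key under such substitutions (Lemma~\ref{ReduceArityOfKey}), this yields a key relation $\tau$ of arity $r$ still preserved by $f$, and by Corollary~\ref{PatternChanging} the pattern of $\tau$ is the full equivalence relation on $\{1,\ldots,r\}$. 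Similarly, substituting $a_4,\ldots,a_n$ gives a key relation $\rho^{\ast}$ of arity $3$, preserved by $f$, with pattern $\{\{1,2\},\{3\}\}$. Thus it is enough to reach a contradiction in two cases: (a) a key relation of arity $\ge 3$ with full pattern preserved by a near-unanimity function (this covers $r\ge 3$, via $\tau$), and (b) a key relation of arity $3$ with pattern $\{\{1,2\},\{3\}\}$ preserved by a near-unanimity function (this covers $r=2$, via $\rho^{\ast}$).

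In case (a) the relation $\tau$ is essential, hence by Lemma~\ref{sushnabor} has an essential tuple outside it, so $\widetilde\tau\ne\tau$; therefore $\widetilde\tau$ has a nontrivial block $\boldsymbol B=B_1\times\cdots\times B_r$ and, by Theorem~\ref{mainTheoremForFullPattern}, $\tau\cap\boldsymbol B=\{x\mid \phi_1(x_1)+\cdots+\phi_r(x_r)=0\}$ for surjections $\phi_i$ onto an abelian group $G$ of prime-power order, with $|G|\ge 2$ because the block is nontrivial. Now $f$ is idempotent and preserves $\widetilde\tau$ (Lemma~\ref{DeriveRhoTilde}), hence preserves the connected component $\boldsymbol B$ (Lemma~\ref{preserveConnectedComponent}) and so preserves $\tau\cap\boldsymbol B$. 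Quotienting out the kernels of the $\phi_i$ — equivalently, first replacing $\tau$ by its core and applying Theorem~\ref{MainForFullPattern} and Lemma~\ref{WNUIsLinear} — shows $f$ acts on $G$ as a single linear map $x_1+\cdots+x_m\mapsto t(x_1+\cdots+x_m)$. But $f$ is a near-unanimity function, so $f(0,\ldots,0,y)=0$ gives $ty=0$ for all $y\in G$, while idempotency gives $y=f(y,\ldots,y)=mty=0$ for all $y\in G$; hence $|G|=1$, a contradiction.

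Case (b) is, I expect, the real obstacle. Theorem~\ref{MainTheorem} provides a box $\boldsymbol B=B_1\times B_2\times\{a_3,b_3\}$, a prime $p$ and bijections $\phi_i\colon B_i\to\mathbb Z_p$ with $\rho^{\ast}\cap\boldsymbol B=(\phi_1(x_1)+\phi_2(x_2)=0)\vee(x_3=b_3)$, where $(a_1,a_2,a_3)$ is a key tuple and every tuple of $\boldsymbol B\setminus\rho^{\ast}$ is a key tuple. The idea I would pursue is that a relation of this shape admits no near-unanimity polymorphism: given one of arity $m$, feeding it $m-1$ ``diagonal'' tuples $(u_i,-u_i,a_3)$ together with one tuple $(a,b,b_3)$ (which belongs to $\rho^{\ast}$ for every $a,b$ by the second disjunct) forces, via the near-unanimity identity in the third coordinate, the value $a_3$ there, so the image lies on the diagonal and $f(u_1,\ldots,u_{m-1},a)+f(-u_1,\ldots,-u_{m-1},b)=0$ for all choices; letting $a$ vary shows $f$ does not depend on its last argument, and iterating this (together with the idempotency and near-unanimity identities) forces a binary near-unanimity function, which cannot exist on a set of size $\ge 2$. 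To turn this into a contradiction with $1\overset{\rho^{\ast}}{\sim}2$ one must carry the computation out inside $\rho^{\ast}$, producing honest elements $e_1,e_2,g_1,g_2$ with $(e_1,e_2,a_3)\notin\rho^{\ast}$ but $(g_1,e_2,a_3),(e_1,g_2,a_3),(g_1,g_2,a_3)\in\rho^{\ast}$; the delicate point — the heart of the argument — is to secure that the box $\boldsymbol B$, or a suitable relation pp-derived from $\rho^{\ast}$, is closed under $f$ (using, e.g., that the off-diagonal tuples of $\boldsymbol B$ are key tuples, as in the construction of a core), so that the computation stays inside the diagonal rather than escaping $\boldsymbol B$.
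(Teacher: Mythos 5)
Your case (a) ($r\ge 3$) is essentially sound, if much heavier than necessary: passing to the core, invoking Theorem~\ref{MainForFullPattern} and Lemma~\ref{WNUIsLinearInAnyBlock}, and then clashing $ty=0$ (from the near-unanimity identity) with $mty=y$ (from idempotency) does kill a nontrivial block. But case (b) — the case $r=2$, where exactly one pair of coordinates is related — is left genuinely open, and you say so yourself. The obstacle you identify is real and is exactly where your plan fails: the box $\boldsymbol B=B_1\times B_2\times\{a_3,b_3\}$ produced by Theorem~\ref{MainTheorem} is not positive-primitive definable from $\rho^{\ast}$, so there is no reason for $f$ to preserve it, and your ``diagonal'' computation can escape $\boldsymbol B$ at the very first application of $f$. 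No amount of knowing that the off-box tuples are key tuples fixes this, because $f$ is an $m$-ary polymorphism, not a unary vector-function, so the core machinery (which controls unary images) does not constrain it. Since a completed case (b)-style argument would in fact have to work whenever $1\overset{\rho}{\sim}2$ (which already follows from non-triviality of the pattern, making case (a) redundant), the missing piece is really the whole theorem.

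The paper closes this gap by never trying to confine the computation to a box. It sets $\sigma(x,y,z)=\rho(x,y,z,a_4,\ldots,a_n)$, picks $b_3$ with $(a_1,a_2,b_3)\in\sigma$, and builds an $(m+1)$-ary relation $\sigma_m$ as an existential chain of $m+1$ copies of $\sigma$ linked through quantified variables $y_i,z_i$; being pp-defined from $\sigma$, the relation $\sigma_m$ is automatically preserved by $f$. Then: if $(a_3,\ldots,a_3)\notin\sigma_m$, one checks that $(b_3,\ldots,b_3)$ witnesses that $(a_3,\ldots,a_3)$ is essential for $\sigma_m$, and applying $f$ to the $m$ tuples $a_3^{i}b_3a_3^{m-i}\in\sigma_m$ yields $a_3^{m+1}$ by the near-unanimity identity, contradicting preservation; if instead $(a_3,\ldots,a_3)\in\sigma_m$, the chain of witnesses $y_1,\ldots,y_m,z_1,\ldots,z_m$ together with $1\overset{\sigma}{\sim}2$ forces $\{a_1,z_1,\ldots,z_m\}\times\{a_2,y_1,\ldots,y_m\}\times\{a_3\}\subseteq\sigma$, contradicting $(a_1,a_2,a_3)\notin\sigma$. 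This single elementary argument needs only $1\overset{\rho}{\sim}2$ and a third coordinate, covers both of your cases at once, and uses none of the structural theorems you rely on. If you want to salvage your outline, the lesson is to replace the non-definable box $\boldsymbol B$ by a pp-definable surrogate of arity tied to $\ar(f)$, which is precisely what $\sigma_m$ is.
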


\begin{proof}

Assume that the pattern of $\rho$ is not a trivial equivalence relation.
Without loss of generality we assume that $1\overset{\rho}{\sim}2$.
Let $m$ be the arity of $f$.
Let $(a_{1},\ldots,a_{n})$ be a key tuple for $\rho$.
Put $\sigma(x,y,z) = \rho(x,y,z,a_{4},\ldots,a_{n})$.
Since $f$ is idempotent, $\sigma$ is preserved by $f$.
By Corollary~\ref{PatternChanging} and Lemma~\ref{ReduceArityOfKey}, $1\overset{\sigma}{\sim}2$.
Choose $b_1,b_2,b_3\in A$ such that $(b_1,a_2,a_3),(a_1,b_2,a_3),(a_1,a_2,b_3)\in \sigma$.
Put
\begin{multline*}\sigma_{m}(x_{1},\ldots,x_{m+1}) = 
\exists y_{1}\dots \exists y_{m}\;
\exists z_{1}\dots \exists z_{m}\;
\sigma(a_{1},y_{1},x_{1})\wedge
\sigma(z_{1},y_{1},a_{3})\wedge\\
\sigma(z_{1},y_{2},x_{2})\wedge
\sigma(z_{2},y_{2},a_{3})\wedge
\dots
\wedge\sigma(z_{m},y_{m},a_{3})
\wedge\sigma(z_{m},a_2,x_{m+1}).
\end{multline*}

Let us consider two cases. Assume that
$(a_{3},\ldots,a_{3})\notin\sigma_{m}$.
Let us check that
$(b_{3},\ldots,b_{3})$
witnesses that
$(a_{3},\ldots,a_{3})$ is an essential tuple for $\sigma_{m}$.
Suppose $x_{j} = b_{3}$ and $x_{i} = a_{3}$ for every $i\neq j$, then we put
$z_{i} = a_{1}$ if $i< j$,
$z_{i} = b_{1}$ if $i\ge j$,
$y_{i} = b_{2}$ if $i<j$,
$y_{i} = a_{2}$ if $i\ge j$.
Put $\beta_{i} = a_{3}^{i}b_{3}a_{3}^{m-i}$.
Obviously, $f(\beta_1,\ldots,\beta_{m}) = a_{3}^{m+1}$, which
contradicts the fact that $f$ preserves $\sigma_{m}$.

Let us consider the second case. Assume that
$(a_{3},\ldots,a_{3})\in\sigma_{m}$, then we can find appropriate $y_{1},\ldots, y_{m}$, $z_{1},\ldots, z_{m}$.
Since $1\overset{\sigma}{\sim}2$ we have
\[\{a_1, z_{1},z_2,\ldots,z_{m}\}\times \{a_2, y_{1},y_2,\ldots,y_{m}\}\times\{a_3\}
\subseteq \sigma,\]
which contradicts the fact that $(a_1,a_2,a_3)\notin \sigma.$
\end{proof}

\subsection{Key relations with almost trivial pattern}

In this section we prove statements from Section~\ref{MainResultsSection} for a key relation with almost trivial pattern.
\begin{lem}\label{almostTrivialPatternFirstLemma}
Suppose $(a_1,\ldots,a_n)$ is a key tuple for a relation $\boldsymbol\rho$ preserved by a WNU $\boldsymbol f$
whose pattern is $\{\{1,2\},\{3\},\{4\},\ldots,\{n\}\}.$
Then there exist $b_1,\ldots,b_n\in A$ such that
\[(\{a_1,b_1\}\times \{a_2,b_2\}\times \dots \times\{a_n,b_n\}) \setminus\{(a_1,a_2,\ldots,a_n),(b_1,b_2,a_3,\ldots,a_n)\}\subseteq \boldsymbol\rho.\]

\end{lem}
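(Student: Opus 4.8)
The plan is to imitate the proof of Lemma~\ref{trivialPatternMainLemma}: argue by induction on $n=\ar(\boldsymbol\rho)$, assembling the required box one coordinate at a time with the help of $\boldsymbol f$. To ease notation, assume $(0,0,\dots,0)$ is the given key tuple, and (temporarily breaking the convention $a\neq b$, exactly as in the proof of Lemma~\ref{trivialPatternMainLemma}) call a tuple $(d_1,\dots,d_n)$ \emph{good} if $(\{0,d_1\}\times\{0,d_2\}\times\dots\times\{0,d_n\})\setminus\{(0,\dots,0),(d_1,d_2,0,\dots,0)\}\subseteq\boldsymbol\rho$. It suffices to produce a good tuple all of whose entries are nonzero. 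For $n=2$ the pattern is $\{\{1,2\}\}$; since $\boldsymbol\rho$ is essential and the key tuple $(0,0)$ is an essential tuple, there are $b_1,b_2\neq 0$ with $(b_1,0),(0,b_2)\in\boldsymbol\rho$, and $(b_1,b_2)$ is good.

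For $n\ge 3$ I would pass to the three relations obtained from $\boldsymbol\rho$ by fixing coordinate $1$, coordinate $2$, or coordinate $3$ at $0$. By Lemma~\ref{ReduceArityOfKey} each of them is a key relation with key tuple the all-zero tuple, each is preserved by $\boldsymbol f$ (idempotency of the component functions), and by Corollary~\ref{PatternChanging} its pattern is the restriction of $\overset{\boldsymbol\rho}{\sim}$ to the remaining coordinates. Hence $\boldsymbol\rho(0,x_2,\dots,x_n)$ and $\boldsymbol\rho(x_1,0,x_3,\dots,x_n)$ have trivial pattern, so Lemma~\ref{trivialPatternMainLemma} produces perfect pairs for them, i.e. ``good'' tuples $(b_2,\dots,b_n)$ and $(c_1,c_3,\dots,c_n)$ in the trivial sense; and $\boldsymbol\rho(x_1,x_2,0,x_4,\dots,x_n)$ has pattern $\{\{1,2\},\{4\},\dots,\{n\}\}$, which is again almost trivial and of arity $n-1$, so the induction hypothesis gives a good tuple $(d_1,d_2,d_4,\dots,d_n)$ for it (with $0$ in coordinate $3$).

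The remaining step --- merging these three pieces of data into one good tuple of full arity --- is where the real work lies, and I would carry it out by reproducing the WNU bookkeeping of Lemma~\ref{trivialPatternMainLemma} with the obvious modifications. Namely: introduce an index set $B\subseteq\{0,1,\dots,\ar(\boldsymbol f)-1\}$ recording how far coordinate $3$ can be pushed toward its new value while a fixed $\boldsymbol f$-combination of box members stays in $\boldsymbol\rho$; show $\ar(\boldsymbol f)-1\in B$ by applying $\boldsymbol f$ to tuples drawn from the three boxes above (each such tuple lies in $\boldsymbol\rho$ because it is distinct both from the all-zero tuple and from the single excluded tuple of its box); then branch on whether $0\in B$. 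If $0\notin B$, a maximal $m\notin B$ yields a good tuple directly as an $\boldsymbol f$-image; if $0\in B$, run the same dichotomy on coordinate $2$ using the perfect pair of the first slice, and in either subcase exhibit the good tuple in the form $\big(\boldsymbol f^{(1)}(\dots),\dots,\boldsymbol f^{(n)}(\dots)\big)$ and check membership in $\boldsymbol\rho$ coordinate block by coordinate block, using the near-unanimity identities and idempotency of $\boldsymbol f$.

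I expect this last merging step to be the main obstacle, just as in Lemma~\ref{trivialPatternMainLemma}. The extra difficulty compared with the trivial-pattern case is that coordinates $1$ and $2$ are now tied together in the pattern, so throughout every $\boldsymbol f$-combination one must carry along the single permitted exception --- $(d_1,d_2,0,\dots,0)$ for the slice, and $(b_1,b_2,a_3,\dots,a_n)$ at the top level --- and verify that it, and nothing else in the box, is allowed to fall outside $\boldsymbol\rho$; one also has to guarantee that the final entries are all nonzero, so that the box genuinely consists of $2^n$ tuples and satisfies the $a\neq b$ convention. Corollary~\ref{PatternChanging} is precisely what guarantees the three slices have the stated patterns, so the induction closes once the merging step is completed.
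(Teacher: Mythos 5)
Your setup (reducing to slices of $\boldsymbol\rho$ and invoking Lemma~\ref{trivialPatternMainLemma} for the trivial-pattern slices) matches the paper's opening moves, and your base case $n=2$ is fine. But the proof is not complete: the ``merging step'' that you defer is the entire content of the lemma, and the plan you sketch for it --- reproducing the index-set-$B$ bookkeeping of Lemma~\ref{trivialPatternMainLemma} ``with the obvious modifications'' --- is missing the one idea that actually makes the argument close. The paper does not use induction on the arity and does not use a third slice at all. It uses only the two slices $\boldsymbol\rho(a_1,x_2,\ldots,x_n)$ and $\boldsymbol\rho(x_1,a_2,x_3,\ldots,x_n)$, and then exploits the hypothesis $1\overset{\boldsymbol\rho}{\sim}2$ \emph{directly} as a deduction rule: if three corners of a square $\{u_1,v_1\}\times\{u_2,v_2\}$ over a fixed tail lie in $\boldsymbol\rho$ in the configuration that would otherwise witness $1\overset{\boldsymbol\rho}{\not\sim}2$, the fourth corner must lie in $\boldsymbol\rho$ as well. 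This is used twice: first in a downward induction on $j$ (with $k_j=\boldsymbol f^{(1)}(d_1^{\,j}a_1^{\,m-j})$, $l_j=\boldsymbol f^{(2)}(a_2^{\,j}c_2^{\,m-j})$) to prove that $(a_1,a_2,e_3,\ldots,e_n)\in\boldsymbol\rho$ for every non-all-$a$ choice of $(e_3,\ldots,e_n)$ from the final box, and second in the initial reduction to an ``almost perfect pair'', which makes it unnecessary to ever verify directly that the fourth corner $(b_1,b_2,e_3,\ldots,e_n)$ lies in $\boldsymbol\rho$.

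That second point is where your plan would fail as stated. In your target box the tuples $(b_1,b_2,e_3,\ldots,e_n)$ with $(e_3,\ldots,e_n)\neq(a_3,\ldots,a_n)$ must be shown to lie in $\boldsymbol\rho$, yet none of your three slices contains such tuples (the third slice only covers third coordinate equal to $a_3$), and an application of $\boldsymbol f$ to columns drawn from the slices cannot produce a tuple whose first \emph{and} second coordinates both sit at their ``$b$'' values while a later coordinate is simultaneously moved off $a_i$ --- every available input column keeps at least one of the first two coordinates at $a_1$ or $a_2$. So WNU identities and idempotency alone do not suffice; you must at some point convert ``three corners in $\boldsymbol\rho$'' into ``four corners in $\boldsymbol\rho$'' via the definition of $1\overset{\boldsymbol\rho}{\sim}2$. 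Once you add that device, the third slice and the induction on $n$ become superfluous, and the argument collapses to the paper's two-slice proof.
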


\begin{proof}
If $n=2$ then the statement can be easily checked. Assume that $n\ge 3$.
Let us show that it is sufficient to find a pair of tuples
$(c_1,\ldots,c_n)-(d_{1},\ldots,d_n)$
such that $(c_1,\ldots,c_n)\notin\boldsymbol \rho$,
\begin{align*}
(\{c_1\}\times\{c_2\}\times\{c_{3},d_{3}\}\times\dots\times\{c_{n},d_{n}\})
\setminus \{(c_{1},\ldots,c_{n})\}&\subseteq \boldsymbol\rho,\\
\{(c_1,d_2),(c_{2},d_1)\}\times\{c_{3},d_{3}\}\times\dots\times\{c_{n},d_{n}\}&\subseteq \boldsymbol\rho.
\end{align*}
In fact,
since $\overset{\boldsymbol\rho}{1\sim 2}$, we obtain
\[(\{c_1,d_1\}\times \{c_2,d_2\}\times \dots \times\{c_n,d_n\}) \setminus\{(c_1,c_2,\ldots,c_n),(d_1,d_2,c_3,\ldots,c_n)\}\subseteq \boldsymbol\rho.\]
It remains to map $(c_1,\ldots,c_n)$ to the key tuple $(a_1,\ldots,a_n)$ by a vector-function
preserving $\boldsymbol\rho$ to complete the proof.
A pair of tuples satisfying the above properties is called \emph{almost perfect}.

By Lemma~\ref{ReduceArityOfKey} and Corollary~\ref{PatternChanging} the relations $\boldsymbol\rho_1,\boldsymbol\rho_2$ defined by
\begin{align*}
\boldsymbol\rho_{1}(x_2,x_3,\ldots,x_{n}) &= \boldsymbol\rho(a_1,x_2,x_3,\ldots,x_{n}),\\
\boldsymbol\rho_{2}(x_1,x_{3},\ldots,x_{n}) &=  \boldsymbol\rho(x_1,a_2,x_3,\ldots,x_{n})
\end{align*}
are key relations with trivial pattern.
By Lemma \ref{trivialPatternMainLemma}, we can find perfect pairs $(a_2,\ldots,a_n)-(c_2,\ldots,c_n)$ and
$(a_1,a_3,\ldots,a_n)-(d_1,d_3,\ldots,d_n)$ for $\boldsymbol\rho_1$ and $\boldsymbol\rho_2$ correspondingly.

Put $b_{i} = \boldsymbol f^{(i)}(a_i,\ldots,a_i,c_{i})$ for $i=3,4,\ldots,n$.
Let $m$ be the arity of $\boldsymbol f$.
For $j\in \{0,1,2,\ldots,m\}$
let \[k_{j} = \boldsymbol f^{(1)}(\underbrace{d_1,\ldots,d_1}_j,a_1,\ldots,a_1),\;\;
l_{j} = \boldsymbol f^{(2)}(\underbrace{a_2,\ldots,a_2}_j,c_2,\ldots,c_2).\]
Suppose
$(e_3,\ldots,e_n)\in(\{a_3,b_3\}\times\dots\times\{a_n,b_n\})\setminus\{(a_3,\ldots,a_n)\}$.
Let us show that $(a_1,a_2,e_3,\ldots,e_n)\in\boldsymbol \rho$.
Denote $\beta_{j} = (k_{j}, a_2, e_{3},\ldots, e_{n})$.

Let us show by induction that
for every $j\in \{0,1, 2,\ldots,m-1\}$ we have
$\beta_j\in \boldsymbol\rho$.
We can check that $\beta_{m-1} \in \boldsymbol\rho$.
Since $\overset{\boldsymbol\rho}{1\sim 2}$ we have
\[
\left(\begin{smallmatrix}k_{j}\\l_{j}\\e_3\\\vdots\\e_n
\end{smallmatrix}\right),
\left(\begin{smallmatrix}k_{j-1}\\l_{j}\\e_3\\\vdots\\e_n
\end{smallmatrix}\right),
\left(\begin{smallmatrix}k_{j}\\a_2\\e_3\\\vdots\\e_n
\end{smallmatrix}\right)
\in \boldsymbol\rho\Rightarrow
\left(\begin{smallmatrix}k_{j-1}\\a_2\\e_3\\\vdots\\e_n
\end{smallmatrix}\right)\in \boldsymbol\rho.
\]
We can check that the first two tuples in the above formula always belong to $\boldsymbol\rho$.
Hence, $\beta_{j}\in \boldsymbol\rho \Rightarrow \beta_{j-1}\in \boldsymbol\rho$.
By induction we get $\beta_{0}\in \boldsymbol\rho$,
which means that $(a_1,a_2,e_3,\ldots,e_n)\in\boldsymbol \rho$.
Now, we can easily find an almost perfect pair.

If
$\left(\begin{smallmatrix}
\boldsymbol f^{(1)}(d_1,\ldots, d_1,a_1)\\
a_2\\
a_3\\
\vdots\\
a_n
\end{smallmatrix}\right)\notin \boldsymbol\rho$,
then $\left(\begin{smallmatrix}
\boldsymbol f^{(1)}(d_1,\ldots, d_1,a_1)\\
a_2\\
a_3\\
\vdots\\
a_n
\end{smallmatrix}\right)-
\left(\begin{smallmatrix}
d_1\\
\boldsymbol f^{(2)}(a_2,\ldots,a_2,c_2)\\
\boldsymbol f^{(3)}(a_3,\ldots,a_3,d_3)\\
\vdots\\
\boldsymbol f^{(n)}(a_n,\ldots,a_n,d_n)
\end{smallmatrix}\right)$
 is
an almost perfect pair.
Otherwise,
$\left(\begin{smallmatrix}
a_1\\
a_2\\
a_3\\
\vdots\\
a_n
\end{smallmatrix}\right)-
\left(\begin{smallmatrix}
\boldsymbol f^{(1)}(d_1,\ldots, d_1,a_1)\\
c_2\\
\boldsymbol f^{(3)}(a_3,\ldots,a_3,c_3)\\
\vdots\\
\boldsymbol f^{(n)}(a_n,\ldots,a_n,c_n)
\end{smallmatrix}\right)$
is an almost perfect pair.
This completes the proof.
\end{proof}

Let us prove a lemma from Section~\ref{MainResultsSection}.
\begin{AlmostTrivialPatternLemma}

Suppose $1\overset{\boldsymbol\rho}{\sim}2$, $(a_1,a_2,\ldots,a_n)\notin\boldsymbol\rho$,
$b_{1},\ldots,b_{n}\in A$, 
and
\[(\{a_1,b_1\}\times \dots\times\{a_n,b_n\})\setminus \{(a_1,a_2,a_3,\ldots,a_n),
(b_1,b_2,a_3,\ldots,a_n)\}\subseteq \boldsymbol\rho.\]
Then $\boldsymbol\rho$ is a key relation and $(a_1,a_2,\ldots,a_n)$ is a key tuple for the relation $\boldsymbol\rho$.

\end{AlmostTrivialPatternLemma}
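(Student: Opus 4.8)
The plan is to prove directly that $(a_1,\dots,a_n)$ is a key tuple. Fix an arbitrary $\alpha=(c_1,\dots,c_n)\in A^n\setminus\boldsymbol\rho$; I will construct a unary vector-function $\Psi=(\psi_1,\dots,\psi_n)$ with $\psi_i\colon A\to\{a_i,b_i\}$, such that $\Psi(\alpha)=(a_1,\dots,a_n)$ and $\Psi$ preserves $\boldsymbol\rho$. Write $\boldsymbol B=\{a_1,b_1\}\times\dots\times\{a_n,b_n\}$, $\mathbf a=(a_1,\dots,a_n)$ and $\mathbf b=(b_1,b_2,a_3,\dots,a_n)$. Since $\Psi(A^n)\subseteq\boldsymbol B$ and, by hypothesis, $\boldsymbol B\setminus\{\mathbf a,\mathbf b\}\subseteq\boldsymbol\rho$, preservation of $\boldsymbol\rho$ reduces to checking $\Psi(\gamma)\notin\{\mathbf a,\mathbf b\}$ for every $\gamma\in\boldsymbol\rho$. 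One may also note that $1\overset{\boldsymbol\rho}{\sim}2$ forces $\mathbf b\notin\boldsymbol\rho$, since otherwise $\mathbf a\notin\boldsymbol\rho$ together with $(b_1,a_2,a_3,\dots,a_n),(a_1,b_2,a_3,\dots,a_n),\mathbf b\in\boldsymbol\rho$ would witness $1\overset{\boldsymbol\rho}{\not\sim}2$; thus $\boldsymbol\rho\cap\boldsymbol B$ is exactly $\boldsymbol B\setminus\{\mathbf a,\mathbf b\}$. This mirrors the proof of Lemma~\ref{trivialpatternLem}, where a single excluded tuple let the naive coordinatewise collapse work; the second excluded tuple is what makes the present argument harder.

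On coordinates $i\ge 3$ I take $\psi_i(c_i)=a_i$ and $\psi_i(x)=b_i$ for $x\ne c_i$, so that the preimage of $(a_3,\dots,a_n)$ under $(\psi_3,\dots,\psi_n)$ is the single tuple $(c_3,\dots,c_n)$. Since $\mathbf a$ and $\mathbf b$ both end in $(a_3,\dots,a_n)$, a tuple $\gamma\in\boldsymbol\rho$ can be mapped by $\Psi$ to $\mathbf a$ or $\mathbf b$ only if $\gamma=(u,v,c_3,\dots,c_n)$ with $(u,v)\in\sigma$, where $\sigma:=\{(u,v)\mid(u,v,c_3,\dots,c_n)\in\boldsymbol\rho\}$. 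Here $(c_1,c_2)\notin\sigma$ because $\alpha\notin\boldsymbol\rho$, and specializing $1\overset{\boldsymbol\rho}{\sim}2$ with the last $n-2$ coordinates fixed to $(c_3,\dots,c_n)$ shows that $\sigma$ is closed under rectangle completion: whenever three of the four corners of an axis-parallel rectangle lie in $\sigma$, so does the fourth. So it remains to find $\psi_1\colon A\to\{a_1,b_1\}$ and $\psi_2\colon A\to\{a_2,b_2\}$ with $\psi_1(c_1)=a_1$, $\psi_2(c_2)=a_2$ and $(\psi_1(u),\psi_2(v))\notin\{(a_1,a_2),(b_1,b_2)\}$ for all $(u,v)\in\sigma$.

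Producing $\psi_1,\psi_2$ is the step I expect to be the main obstacle. I view $\sigma$ as the edge set of a bipartite graph $G$ on two disjoint copies $A_L\sqcup A_R$ of $A$; a valid pair $(\psi_1,\psi_2)$ is precisely a proper $2$-colouring of $G$ with colours $\{a,b\}$ (colour $a$ on $u^L$ meaning $\psi_1(u)=a_1$, colour $a$ on $v^R$ meaning $\psi_2(v)=a_2$, etc.) in which $c_1^L$ and $c_2^R$ both receive colour $a$. As $G$ is bipartite it has a proper $2$-colouring on each connected component, unique up to swapping the two colours within a component, so it suffices to show that $c_1^L$ and $c_2^R$ lie in different components. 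I will prove this by induction on distance using rectangle completion: every $v^R$ reachable from $c_1^L$ in $G$ satisfies $(c_1,v)\in\sigma$ — at the inductive step, the last left vertex $z$ and the second-to-last right vertex $w$ of a shortest path give $(z,v),(z,w)\in\sigma$ and, by induction, $(c_1,w)\in\sigma$, whence rectangle completion yields $(c_1,v)\in\sigma$. If $c_2^R$ were reachable from $c_1^L$ this would force $(c_1,c_2)\in\sigma$, contradicting $\alpha\notin\boldsymbol\rho$. Hence the two vertices lie in different components, and I pick in each the proper $2$-colouring assigning colour $a$ to the prescribed vertex, colouring the remaining components (and isolated vertices) arbitrarily.

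Finally I set $\Psi=(\psi_1,\dots,\psi_n)$. By construction $\Psi(\alpha)=(a_1,\dots,a_n)$ and $\Psi(A^n)\subseteq\boldsymbol B$. For $\gamma\in\boldsymbol\rho$: if applying $(\psi_3,\dots,\psi_n)$ to its last $n-2$ coordinates does not yield $(a_3,\dots,a_n)$, then $\Psi(\gamma)$ avoids both $\mathbf a$ and $\mathbf b$; otherwise $\gamma=(u,v,c_3,\dots,c_n)$ with $(u,v)\in\sigma$, and the colouring property gives $(\psi_1(u),\psi_2(v))\notin\{(a_1,a_2),(b_1,b_2)\}$, so $\Psi(\gamma)\in\boldsymbol B\setminus\{\mathbf a,\mathbf b\}\subseteq\boldsymbol\rho$. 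Thus $\Psi$ preserves $\boldsymbol\rho$ and maps $\alpha$ to $(a_1,\dots,a_n)$; as $\alpha\in A^n\setminus\boldsymbol\rho$ was arbitrary, $(a_1,\dots,a_n)$ is a key tuple and $\boldsymbol\rho$ is a key relation.
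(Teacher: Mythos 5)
Your proof is correct and follows essentially the same route as the paper's: both freeze coordinates $3,\dots,n$ at $(c_3,\dots,c_n)$ via $\psi_i(c_i)=a_i$, $\psi_i(x)=b_i$ otherwise, reduce to the binary relation $\sigma$ on the first two coordinates, and use the rectangle-completion consequence of $1\overset{\boldsymbol\rho}{\sim}2$ to build $\psi_1,\psi_2$ avoiding $(a_1,a_2)$ and $(b_1,b_2)$ on $\sigma$ — the paper phrases this as ``connected components of $\sigma$ are rectangles $A_1\times A_2$'' and splits into two cases according to whether some $(c_1',c_2,\dots,c_n)$ lies in $\boldsymbol\rho$, which is your bipartite $2$-colouring in different notation. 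Your reachability induction showing $c_1^L$ and $c_2^R$ lie in different components is a sound substitute for the paper's appeal to the rectangular shape of the components.
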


\begin{proof}

To prove the statement, for every $(c_{1},\ldots,c_{n})\notin \boldsymbol\rho$ we need to find
a vector-function $\Psi$ preserving $\boldsymbol\rho$ that maps $(c_{1},\ldots,c_{n})$ to $(a_{1},\ldots,a_{n})$.

We consider two cases. First, assume that $(c_{1}',c_2,\ldots,c_{n})\in \boldsymbol\rho$ for some $c_1'$.
Put $\boldsymbol\sigma(x_1,x_2) = \boldsymbol\rho(x_1,x_2,c_3,\ldots,c_n)$.
Since $1\overset{\boldsymbol\rho}{\sim}2$,
every connected component of $\boldsymbol\sigma$ can be defined as $A_1\times A_2$.
Let the connected component containing $(c_{1}',c_2)$ be $A_1\times A_2$.
Then we define the vector-function as follows.
$\Psi^{(1)}(x) =
\begin{cases}
a_{1}, & \text{if $x \notin A_1$;}\\
b_{1}, & \text{if $x \in A_1$.}
\end{cases}$,
$\Psi^{(2)}(x) =
\begin{cases}
a_{2}, & \text{if $x \in A_2$;}\\
b_{2}, & \text{if $x \notin A_2$.}
\end{cases}$,
$\Psi^{(i)}(x) =
\begin{cases}
a_{i}, & \text{if $x = c_{i}$;}\\
b_{i}, & \text{otherwise.}
\end{cases}$ for $i\ge 3$.
It is easy to see that it satisfies the above properties.

Second case. Assume that for every $c_1'$ we have $(c_{1}',c_2,\ldots,c_{n})\notin \boldsymbol\rho$.
Then we define $\Psi$ as follows.
$\Psi^{(1)}(x) = a_1$,
$\Psi^{(i)}(x) =
\begin{cases}
a_{i}, & \text{if $x = c_{i}$;}\\
b_{i}, & \text{otherwise.}
\end{cases}$ for $i\ge 2$.
It is easy to see that it satisfies the necessary properties. This completes the proof.
\end{proof}

\begin{theoremaboutsemilattice}
Suppose $\boldsymbol \rho$ is a key relation preserved by a semilattice operation or a 2-semilattice operation.
Then the pattern of $\boldsymbol \rho$ is either trivial, or almost trivial.
\end{theoremaboutsemilattice}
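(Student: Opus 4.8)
The plan is to reduce the statement to a purely combinatorial fact about ternary relations. First observe that a semilattice operation is in particular a $2$-semilattice operation (associativity and idempotence give $f(x,f(x,y))=f(f(x,x),y)=f(x,y)$), and that a $2$-semilattice operation is a binary commutative idempotent operation, hence a WNU. So we may assume throughout that $\boldsymbol\rho$ is preserved by a $2$-semilattice operation $f$. By Theorem~\ref{PatternIsEquivalenceForWNUF} the pattern $\overset{\boldsymbol\rho}{\sim}$ is already an equivalence relation with at most one class of size greater than $1$, so the only thing left to prove is that no class contains three or more elements.

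Suppose, for contradiction, that coordinates $1,2,3$ lie in one pattern class. Fix a key tuple $(a_1,\ldots,a_n)$ of $\boldsymbol\rho$ and, applying Lemma~\ref{ReduceArityOfKey} to the coordinates $4,\ldots,n$, pass to the ternary key relation $\sigma(x,y,z)=\boldsymbol\rho(x,y,z,a_4,\ldots,a_n)$; it has key tuple $(a_1,a_2,a_3)$, is preserved by $f$ (idempotence), and, by Corollary~\ref{PatternChanging}, satisfies $1\overset{\sigma}{\sim}2$, $1\overset{\sigma}{\sim}3$ and $2\overset{\sigma}{\sim}3$. Since $\boldsymbol\rho$ is an essential key relation, its key tuple $(a_1,\ldots,a_n)$ is an essential tuple (Lemma~\ref{DammuVariables}); restricting coordinates $4,\ldots,n$ to $a_4,\ldots,a_n$ shows that $(a_1,a_2,a_3)$ is an essential tuple for $\sigma$. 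Thus I would fix $b_1,b_2,b_3$ (necessarily $b_i\neq a_i$) with $(b_1,a_2,a_3),(a_1,b_2,a_3),(a_1,a_2,b_3)\in\sigma$ while $(a_1,a_2,a_3)\notin\sigma$, and set $c_i=f(a_i,b_i)$.

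Next I would apply $f$ componentwise to pairs of these three tuples: idempotence yields $(c_1,c_2,a_3),(c_1,a_2,c_3),(a_1,c_2,c_3)\in\sigma$, and applying $f$ once more to $(c_1,c_2,a_3)$ and $(a_1,a_2,b_3)$, simplifying $f(f(a_i,b_i),a_i)=f(a_i,f(a_i,b_i))=f(a_i,b_i)$ with the $2$-semilattice identity, also gives $(c_1,c_2,c_3)\in\sigma$; this is the only place where the $2$-semilattice axiom (beyond $f$ being a WNU) is used. The contradiction is then extracted by a short case analysis on how many of the $c_i$ equal $a_i$. If $c_i=a_i$ for two distinct indices, then $f$ applied to the two tuples among $(b_1,a_2,a_3),(a_1,b_2,a_3),(a_1,a_2,b_3)$ carrying $b$ in exactly those two positions equals $(a_1,a_2,a_3)$, forcing $(a_1,a_2,a_3)\in\sigma$, absurd. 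If $c_i=a_i$ for exactly one index — by the symmetry of coordinates $1,2,3$ say $c_3=a_3$ with $c_1\neq a_1$, $c_2\neq a_2$ — then $(c_1,a_2,a_3),(a_1,c_2,a_3),(c_1,c_2,a_3)\in\sigma$ together with $(a_1,a_2,a_3)\notin\sigma$ witness the failure of $1\overset{\sigma}{\sim}2$. Finally, if $c_i\neq a_i$ for all $i$, then in each of the planes $z=c_3$, $y=c_2$, $x=c_1$ the three relevant ``changed'' corners produced above lie in $\sigma$, so $1\overset{\sigma}{\sim}2$, $1\overset{\sigma}{\sim}3$, $2\overset{\sigma}{\sim}3$ respectively force the corresponding ``base'' corners $(a_1,a_2,c_3)$, $(a_1,c_2,a_3)$, $(c_1,a_2,a_3)$ into $\sigma$; but then $(c_1,a_2,a_3),(a_1,c_2,a_3),(c_1,c_2,a_3)\in\sigma$ while $(a_1,a_2,a_3)\notin\sigma$, again contradicting $1\overset{\sigma}{\sim}2$. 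In every case we reach a contradiction, so no pattern class has three or more elements, i.e.\ $\overset{\boldsymbol\rho}{\sim}$ is trivial or almost trivial.

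I expect the main obstacle to be the bookkeeping of this ternary case analysis: one must keep track of exactly which corners of the box over $\{a_i,c_i\}$ have been shown to lie in $\sigma$, handle the degenerate situations where some $c_i$ collapses onto $a_i$, and match each configuration precisely to the shape of the forbidden patterns in the definition of $\overset{\sigma}{\sim}$. Everything else — the reduction to the ternary relation and the production of the ``changed'' corners — is routine once the setup is in place, and the only genuinely $2$-semilattice-specific input is the identity used to place $(c_1,c_2,c_3)$ in $\sigma$.
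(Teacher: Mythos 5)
Your proof is correct, and it takes a genuinely different route from the paper's. The paper also reduces to the ternary relation $\boldsymbol\rho'(x_1,x_2,x_3)=\boldsymbol\rho(x_1,x_2,x_3,a_4,\ldots,a_n)$, but from there it passes to a core, invokes the structure theory of key blocks (Lemma~\ref{WNUIsLinear}, which rests on Theorem~\ref{MainForFullPattern} and ultimately on the strongly-rich-relation theorem), and derives the contradiction arithmetically: a WNU of arity $m=|A_1|$ acting as $t\cdot x_1+\cdots+t\cdot x_m$ on an abelian group of order $m$ would send the diagonal to $0$, contradicting idempotence. You instead stay entirely at the combinatorial level: the corners $(c_1,c_2,a_3)$, $(c_1,a_2,c_3)$, $(a_1,c_2,c_3)$ come from the WNU property alone, the corner $(c_1,c_2,c_3)$ is the one place the identity $f(x,f(x,y))=f(x,y)$ enters, and the case analysis on which $c_i$ collapse onto $a_i$ matches each configuration against the definition of $\overset{\sigma}{\sim}$; I checked all three cases (at least two collapses, exactly one, none) and each does yield a legitimate forbidden configuration or a direct membership contradiction, with the non-degeneracy $c_i\neq a_i$ holding exactly where it is needed. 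Your argument buys a short, self-contained proof depending only on Theorem~\ref{PatternIsEquivalenceForWNUF}, Lemmas~\ref{DammuVariables} and~\ref{ReduceArityOfKey}, and Corollary~\ref{PatternChanging}, at the cost of being special to the binary-operation setting; the paper's argument is heavier but reuses machinery it has already built and makes visible the conceptual obstruction, namely that a 2-semilattice cannot act as the linear WNU that a nontrivial key block forces.
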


\begin{proof}
We can easily check that
for every semilattice operation or 2-semilattice operation $\boldsymbol s$
we can define a WNU of arity $m$ as follows
$\boldsymbol f_{m}(x_{1},\ldots,x_m) = \boldsymbol s(\boldsymbol s(\ldots(\boldsymbol s(\boldsymbol s(
\boldsymbol s(x_{1},x_{2}),x_3),x_4),\ldots),x_{m-1}),x_m).$
Thus, for every $m\ge 2$ there exists a WNU of arity $m$ preserving $\boldsymbol\rho$.

Assume that the pattern of $\boldsymbol\rho$ is not trivial and not almost trivial.
Without loss of generality we assume
that $1\overset{\boldsymbol\rho}{\sim}2$ and $2\overset{\boldsymbol\rho}{\sim}3$.
Let $(a_{1},\ldots,a_{n})$ be a key tuple for $\boldsymbol\rho$.
Put
$\boldsymbol\rho'(x_1,x_2,x_3) = \boldsymbol\rho(x_1,x_2,x_3,a_4,\ldots,a_n)$.
Then $\boldsymbol\rho'$ is a key relation with full pattern.

Let $\boldsymbol\sigma$ be the core of $\boldsymbol\rho'$.
Put $m = |A_1|$,
by Lemma~\ref{coreispreservedbyWNU},
there exists a WNU $\boldsymbol f$ of arity $m$ preserving $\boldsymbol\sigma$.
By Lemma~\ref{WNUIsLinear}, for every key block
$A_{1}\times A_{2}\times \dots \times A_{n}$ for $\boldsymbol\sigma$
there exist an abelian group $(A_1,+)$ and an integer $t$ such that
for every $a_1,\ldots,a_m \in A_{1}$
we have $\boldsymbol f^{(1)}(a_{1},\ldots,a_{m}) = t\cdot a_{1}+t\cdot a_2 + \ldots + t\cdot a_{m}$.
Since $m = |A_1|$, and the order of a group divides the order of any element in the group,
we obtain $\boldsymbol f^{(1)}(x,x,\ldots,x) = 0$, which is not possible
because $\boldsymbol f^{(1)}$ is idempotent.
\end{proof}

\subsection{Key relations with arbitrary pattern}

\begin{lem}\label{AddTupleToCube}
Suppose $p$ is a prime number, $A = \{0,1,\ldots,p-1\}$,
$\rho\subseteq A^n,$ $n>2$, has a full pattern,
$\{(x_1,\ldots,x_n)\mid x_1+\ldots+x_n = 0 (\mod p)\}\subset \rho.$
Then $\rho = A^n$.
\end{lem}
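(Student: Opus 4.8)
The plan is to exploit that a full pattern forces every two–coordinate ``slice'' of $\rho$ to be closed under rectangle completion, and to feed this into the hypothesis together with the inclusion $L\subseteq\rho$, where $L:=\{(x_1,\dots,x_n)\mid x_1+\dots+x_n\equiv 0\pmod p\}$ and $L\ne\rho$. I assume throughout that $n\ge 3$; the statement is false for $n=2$ (e.g.\ $L\cup\{(a,b),(-b,-a)\}$ with $a+b\not\equiv 0$ has full pattern but is proper), so presumably the standing convention ``arity greater than $2$'' applies here.

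First I would record the immediate consequence of $i\overset{\rho}{\sim}j$ for all $i,j$: for every pair of coordinates and every fixing of the remaining coordinates, if three corners of an axis–parallel rectangle in the plane of those two coordinates lie in $\rho$, then so does the fourth. From this I would prove a \emph{parallel behaviour} claim: if $\tau,\sigma\in\rho$ differ in at most one coordinate, then for every coordinate $j$ and every $x\in A$, the tuple obtained from $\tau$ by putting $x$ in coordinate $j$ lies in $\rho$ if and only if the tuple obtained from $\sigma$ by the same replacement lies in $\rho$. This is a single rectangle completion in the coordinates $j$ and $m$, where $m$ is the coordinate in which $\tau$ and $\sigma$ differ (the cases $j=m$ and $x=\tau_j$ being trivial). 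Iterating along paths, the set $A_j:=\{x\in A\mid\text{replacing the }j\text{-th coordinate of }\tau\text{ by }x\text{ gives a tuple of }\rho\}$ depends only on the connected component of $\tau$, and each connected component of $\rho$ equals the ``box'' $A_1\times\dots\times A_n$.

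Now pick $\alpha\in\rho\setminus L$ and set $c:=\alpha_1+\dots+\alpha_n\not\equiv 0$; let $A_1\times\dots\times A_n$ be the connected component of $\alpha$. Replacing the $j$-th coordinate of $\alpha$ by $\alpha_j-c$ gives a tuple of $L\subseteq\rho$ differing from $\alpha$ only in coordinate $j$, hence $\alpha_j-c\in A_j$ and $|A_j|\ge 2$ for every $j$. Similarly, given any $j$ and any $\gamma_i\in A_i$ for $i\ne j$, the tuple with entries $\gamma_i$ in the coordinates $i\ne j$ and entry $-\sum_{i\ne j}\gamma_i$ in coordinate $j$ lies in $L\subseteq\rho$ and differs in coordinate $j$ only from a tuple of that box; therefore $-\sum_{i\ne j}\gamma_i\in A_j$. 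Thus the sumset inclusion $-\bigl(\sum_{i\ne j}A_i\bigr)\subseteq A_j$ holds for every $j$. Fixing all but one summand to a single element shows $|A_1|=\dots=|A_n|=:s\ge 2$, and isolating two of the summands gives $s=|A_1|\ge|A_2+A_3|$.

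The last step is the Cauchy--Davenport inequality, which applies because $p$ is prime: $|A_2+A_3|\ge\min(p,|A_2|+|A_3|-1)=\min(p,2s-1)$. Together with $s\ge|A_2+A_3|$ this forces $s\ge\min(p,2s-1)$, and since $s\ge 2$ this is possible only if $s=p$, i.e.\ $A_j=A$ for all $j$. Hence the component of $\alpha$ is all of $A^n$, so $\rho=A^n$. The point requiring the most care — and the only place where primality of $p$ and $n\ge 3$ are genuinely used — is this final sumset estimate; establishing the ``box'' structure of components beforehand is a routine but slightly lengthy unwinding of the definition of the pattern.
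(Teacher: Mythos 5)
Your proof is correct and follows essentially the same route as the paper's: connected components of $\rho$ are boxes by the rectangle‑completion property of a full pattern, the linear equation forces $-\bigl(\sum_{i\ne j}A_i\bigr)\subseteq A_j$, and a Cauchy--Davenport-type sumset estimate (using that $p$ is prime and $|A_j|\ge 2$) forces each $A_j=A$. You are in fact slightly more careful than the paper at two points: you actually prove the box structure of the components rather than asserting it, and you correctly convert the final inequality $s\ge |A_2+A_3|\ge\min(p,2s-1)$ into the conclusion $s=p$, whereas the paper phrases the endgame as a bare ``contradiction'' that only applies when $|B_n|<p$.
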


\begin{proof}
There exists
$(a_1,\ldots,a_{n})\in\rho$ such that
$a_1+\ldots+a_n \neq 0$.
Hence, the connected component of $\rho$ containing $(a_1,\ldots,a_{n})$ has more than one element.
Let it be $B_1\times\dots\times B_n$.
Without loss of generality we assume that $|B_1|\le |B_2|\le\dots\le|B_n|$.

Since $(-a_2-a_3-\ldots-a_{n-1}-c,a_2,\ldots,a_{n-1},c)\in \rho$ for every $c\in A$, we
have
$\{-a_2-a_3-\ldots-a_{n-1}-c\mid c\in B_{n}\}\subseteq B_1$.
Hence $|B_1|= |B_2|=\dots=|B_n|$.
Similarly,
$\{-a_2-a_3-\ldots-a_{n-2}-c-d\mid c\in B_{n-1}, d\in B_{n}\}\subseteq B_1$,
which cannot be true
because
$|B_{n-1}+B_{n}|>|B_{n}|=|B_{1}|$.
\end{proof}

\begin{lem}\label{LemmaForMainTheorem}
Suppose $\boldsymbol\rho$ is a core of arity $n$ preserved by a WNU $\boldsymbol f$
whose pattern is $\{\{1,2,\ldots,r\},\{r+1\},\{r+2\},\ldots,\{n\}\}.$
Then for every key tuple $(a_1,\ldots,a_n)$ there exist
$\boldsymbol B = B_1\times B_2 \times \dots \times B_n$,
a prime number $p$
and bijective mappings
$\phi_i: B_i\to \mathbb Z_{p}$ for $i=1,2,\ldots,r$ such that
$(a_1,\ldots,a_n) \in \boldsymbol B$,
$B_i = \{a_i,b_i\}$ for $i=r+1,\ldots,n$,
\[
\boldsymbol \rho \cap \boldsymbol B
=
(\phi_1(x_1) + \ldots +\phi_r(x_r) = 0)
\vee
(x_{r+1} = b_{r+1})
\vee \dots \vee (x_n = b_n),
\]
and every tuple $\gamma \in \boldsymbol B\setminus \boldsymbol \rho$ is a key tuple for $\boldsymbol \rho$.
\end{lem}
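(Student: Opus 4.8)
I would prove Lemma~\ref{LemmaForMainTheorem} by splitting the problem into the ``linear part'' on the first $r$ coordinates and the ``trivial part'' on the remaining coordinates, and gluing the two descriptions together along a suitable product box $\boldsymbol B$. The main engine for the linear part is Theorem~\ref{StronglyRichRelationTHM} together with Lemma~\ref{OnlyPrimeOrder}/Lemma~\ref{OnlyPowerOfPrime}; the main engine for the trivial part is Lemma~\ref{trivialPatternMainLemma} (perfect pairs) and its almost‑trivial analogue.

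\textbf{Step 1: Reduce to the two-block picture.} Fix a key tuple $\alpha=(a_1,\ldots,a_n)$. Since the pattern is $\{\{1,\ldots,r\},\{r+1\},\ldots,\{n\}\}$, I first want to describe what happens when the first $r$ coordinates are fixed to a tuple not extending inside $\boldsymbol\rho$. By iterating Lemma~\ref{ReduceArityOfKey} and Corollary~\ref{PatternChanging}, for any fixed values of the first $r$ coordinates the resulting relation on coordinates $r+1,\ldots,n$ is a key relation with trivial pattern; by Lemma~\ref{trivialPatternMainLemma} it has a perfect pair, giving the box $B_i=\{a_i,b_i\}$ for $i>r$ together with the equation $\bigvee_{i>r}(x_i=b_i)$ on that part. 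Dually, fixing $x_{r+1}=a_{r+1},\ldots,x_n=a_n$ yields (again by Lemma~\ref{ReduceArityOfKey}, Corollary~\ref{PatternChanging}) a key relation $\boldsymbol\rho_0$ on the first $r$ coordinates whose pattern is the full equivalence relation. Using Lemma~\ref{coreispreservedbyWNU}, a core of $\boldsymbol\rho_0$ is still preserved by a WNU; by Lemma~\ref{KeyBlockDescription} its key blocks are product boxes $A_1\times\cdots\times A_r$, and by Theorem~\ref{MainForFullPattern} (with Lemma~\ref{OnlyPrimeOrder}) each such block, intersected with the relation, is $\{(x_1,\ldots,x_r)\mid\phi_1(x_1)+\cdots+\phi_r(x_r)=0\}$ for bijections $\phi_i\colon A_i\to\mathbb Z_p$. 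Choosing the key block containing $(a_1,\ldots,a_r)$ gives $B_i=A_i$ for $i\le r$ and the nontrivial linear equation.

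\textbf{Step 2: Glue the descriptions over $\boldsymbol B=B_1\times\cdots\times B_n$.} Set $\boldsymbol B=B_1\times\cdots\times B_n$. I must show
\[
\boldsymbol\rho\cap\boldsymbol B=(\phi_1(x_1)+\cdots+\phi_r(x_r)=0)\vee(x_{r+1}=b_{r+1})\vee\cdots\vee(x_n=b_n).
\]
One inclusion is obtained by running each of the two extreme cases (all of $x_{r+1},\ldots,x_n$ equal to some $b_i$ versus the linear equation holding) and then propagating: if $x_j=b_j$ for some $j>r$, the tuple lies in $\boldsymbol\rho$ regardless of the other coordinates because fixing the remaining coordinates makes coordinate $j$ a perfect‑pair coordinate; if instead $\phi_1(x_1)+\cdots+\phi_r(x_r)=0$, then with $x_{r+1}=a_{r+1},\ldots,x_n=a_n$ we are inside $\boldsymbol\rho_0$, and then we flip the $i$‑th coordinate ($i>r$) from $a_i$ to $b_i$ one at a time, each flip staying inside $\boldsymbol\rho$ by the perfect‑pair property combined with $i\overset{\boldsymbol\rho}{\sim}i$ being vacuous — more carefully, by using the almost‑trivial‑style propagation argument of Lemma~\ref{almostTrivialPatternFirstLemma}, applied coordinate by coordinate with the WNU $\boldsymbol f$, exactly as in the $\beta_j\in\boldsymbol\rho\Rightarrow\beta_{j-1}\in\boldsymbol\rho$ induction there. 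For the reverse inclusion I use that $\boldsymbol B$ has at most $|\boldsymbol B|-(\text{number of non-}\rho\text{ tuples of the RHS})$ slack: the RHS already captures all of $\boldsymbol B$ except the tuples whose first-$r$ part is \emph{not} a solution of the linear equation and whose last coordinates are all $a_i$; these are precisely the candidates for key tuples, and one shows each such candidate is genuinely outside $\boldsymbol\rho$ by applying Lemma~\ref{AddTupleToCube} to the full-pattern slice $\boldsymbol\rho_0$ (if any extra first-$r$ tuple were in $\boldsymbol\rho_0$ beyond the $\mathbb Z_p$-solutions, then $\boldsymbol\rho_0$ would be all of $B_1\times\cdots\times B_r$, contradicting that it is a proper key relation).

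\textbf{Step 3: Every $\gamma\in\boldsymbol B\setminus\boldsymbol\rho$ is a key tuple.} Such $\gamma$ has $\gamma_i=a_i$ for all $i>r$ and $\phi_1(\gamma_1)+\cdots+\phi_r(\gamma_r)\neq0$. Since $\boldsymbol\rho$ is a core, by Lemma~\ref{CoreProperties} a key tuple exists in $\proj\boldsymbol\rho$; the point is that two distinct non-solutions of the $\mathbb Z_p$-equation can be mapped to each other by an affine bijection of $\mathbb Z_p$ in each of the first $r$ coordinates (using that $p$ is prime, so every nonzero residue is invertible), and the last $n-r$ coordinates are fixed to $a_i$ throughout; one checks such vector-functions preserve $\boldsymbol\rho\cap\boldsymbol B$, hence by the full-pattern argument and Lemma~\ref{KeyGotoRho} they send one key tuple of $\boldsymbol\rho$ to another, so all of $\boldsymbol B\setminus\boldsymbol\rho$ consists of key tuples.

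\textbf{Main obstacle.} I expect the delicate point to be Step~2's reverse inclusion: ruling out ``unexpected'' tuples in $\boldsymbol\rho\cap\boldsymbol B$ whose linear part is a non-solution but which nonetheless lie in $\boldsymbol\rho$ because \emph{some} later coordinate is pushed to $b_i$. This requires knowing that the box $\boldsymbol B$ was chosen large enough in the first $r$ coordinates (the whole key block) yet not so large that the perfect-pair structure in the later coordinates degenerates — i.e., that the $b_i$ produced by the trivial‑pattern reduction are consistent across all choices of the first-$r$ part inside the block. Securing this compatibility is where one must carefully combine Lemma~\ref{trivialPatternMainLemma} with the full-pattern bijection lemmas (Lemma~\ref{keyBlockBijection}, Lemma~\ref{KeyBlockSpread2}) so that a single box $\boldsymbol B$ works uniformly; once that is in place, the equation for $\boldsymbol\rho\cap\boldsymbol B$ and the key-tuple claim follow by the counting/propagation arguments sketched above.
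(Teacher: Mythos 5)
Your overall decomposition (full‑pattern slice on coordinates $1,\dots,r$ via Theorem~\ref{MainForFullPattern}, trivial‑pattern slice on coordinates $r+1,\dots,n$ via perfect pairs, then glue) is the same as the paper's, but there is a genuine gap in Step~1 that invalidates the construction of $\boldsymbol B$. You assert that the key block $A_1\times\cdots\times A_r$ of (a core of) $\boldsymbol\rho_1$ carries bijections $\phi_i\colon A_i\to\mathbb Z_p$. Theorem~\ref{MainForFullPattern} only yields a finite \emph{field} $F$, whose additive group is $(\mathbb Z_p)^k$ and need not be cyclic; Lemma~\ref{OnlyPrimeOrder} gives elementary abelian, not $|F|=p$. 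Whether one can always take $F=\mathbb Z_p$ on the whole key block is stated in the paper as an open conjecture, so your choice $B_i=A_i$ proves too much. The paper instead sets $e=\phi_1(a_1)+\cdots+\phi_r(a_r)\neq 0$ and takes $B_i=\{\phi_i^{-1}(\phi_i(a_i)+x\cdot e)\mid x\in\mathbb Z_p\}$, a $p$‑element affine line through $a_i$ inside the key block; this is exactly what makes the target bijections onto $\mathbb Z_p$ exist. Without this restriction the statement you are proving is false as literally formulated (the $\phi_i$ would not exist when $|F|>p$).

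The second weak point is the one you yourself flag as the ``main obstacle'': the uniformity of the $b_i$ ($i>r$) across the box. You do not resolve it, and it does not follow formally from ``each slice has some perfect pair.'' The paper resolves it constructively: it takes a single perfect pair $(a_r,\dots,a_n)-(c_r,\dots,c_n)$ for $\boldsymbol\rho_2=\boldsymbol\rho(a_1,\dots,a_{r-1},x_r,\dots,x_n)$, normalizes $\boldsymbol f$ via Lemma~\ref{findbetterWNU} and Lemma~\ref{keyBlockBijection} so that $\boldsymbol f^{(i)}(a_i,\dots,a_i,x)=x$ and $\boldsymbol f^{(r)}(c_r,\dots,c_r,x)=x$ on the relevant projections, defines $b_i=\boldsymbol f^{(i)}(c_i,\dots,c_i,a_i)$, and then verifies membership of all required tuples by two explicit applications of $\boldsymbol f$ to matrices of tuples of $\boldsymbol\rho$, finishing with Lemma~\ref{AddTupleToCube} applied to the slice $\boldsymbol\rho(x_1,\dots,x_r,d_{r+1},\dots,d_n)$. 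Your Step~2 ``propagation'' and Step~3 (where preserving $\boldsymbol\rho\cap\boldsymbol B$ is silently upgraded to preserving $\boldsymbol\rho$) would need to be replaced by arguments of this kind.
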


\begin{proof}

If $r=1$ then the statement follows from Lemma~\ref{trivialPatternMainLemma}.
If $r=2$ then the statement follows from Lemma~\ref{almostTrivialPatternFirstLemma}.
Suppose $r\ge 3$. Without loss of generality we can assume that
$\boldsymbol f$ satisfies the condition from Lemma~\ref{findbetterWNU}.
Let us define two relations:
\begin{align*}
\boldsymbol\rho_{1} (x_1,\ldots,x_r) &= \boldsymbol\rho(x_1,\ldots,x_r,a_{r+1},\ldots,a_{n}),\\
\boldsymbol\rho_{2} (x_r,\ldots,x_{n}) &= \boldsymbol\rho(a_1,\ldots,a_{r-1},x_r,\ldots,x_{n}).
\end{align*}
By Lemma~\ref{ReduceArityOfKey} and Corollary~\ref{PatternChanging}, these are key relations,
and the pattern of $\boldsymbol\rho_1$ is full, the pattern of $\boldsymbol\rho_2$ is trivial.
By Theorem~\ref{trivialPattern},
there exists a perfect pair $(a_{r},\ldots,a_{n})-(c_{r},\ldots,c_{n})$ for $\boldsymbol\rho_2$.

Choose $c_1$ such that
$(c_1,a_2,\ldots,a_n)\in\boldsymbol \rho$, then
by Lemma~\ref{Keytupleb1b2}
the tuple $(c_1,a_2,\ldots,a_{r-1},c_{r},a_{r+1},\ldots,a_{n})$ is
a key tuple for $\boldsymbol\rho$. Hence, by Lemma~\ref{keyBlockBijection},
$\boldsymbol f^{(r)}(c_{r},\ldots,c_{r},x)$
is a bijection on $\proj_{r}\boldsymbol \rho$.
By Lemma~\ref{keyBlockBijection},
$\boldsymbol f^{(i)}(a_{i},\ldots,a_{i},x)$
is a bijection on $\proj_{i}\boldsymbol \rho$ for every $i\in\{1,2,\ldots,n\}$.
Since $\boldsymbol f$ satisfies the condition from Lemma~\ref{findbetterWNU},
we obtain
$\boldsymbol f^{(i)}(a_{i},\ldots,a_{i},x) =x$ for every $x\in\proj_{i}\boldsymbol\rho$
and $\boldsymbol f^{(r)}(c_{r},\ldots,c_{r},x) =x$ for every $x\in\proj_{r}\boldsymbol\rho$.

By Lemmas~\ref{CoreExistence} and \ref{CoreProperties}
there exists a core $\boldsymbol\sigma_1$ of $\boldsymbol\rho_1$ such that $(a_1,\ldots,a_r)$ is a key tuple for $\boldsymbol\sigma_1$.
By Theorem~\ref{MainForFullPattern}, there exist a key block
$A_1\times A_2 \times \dots \times A_r$ for $\boldsymbol\sigma_1$ containing $(a_1,\ldots,a_r)$,
a finite field $F$, and bijective mappings
$\phi_i: A_i\to F$ such that
\[\boldsymbol\sigma_1\cap (A_1\times \dots \times A_r) =
 \{(x_1,\dots,x_r) \mid \phi_1(x_1) + \phi_2(x_2) + \dots +\phi_r(x_r)=0\}.\]
Let $e = \phi_1(a_1) + \phi_2(a_2) + \dots +\phi_r(a_r).$
Obviously $e\neq 0$. Let $p$ be the characteristic of $F$.
Let us define a mapping $\psi_{i}\colon \mathbb Z_{p}\to A_i$ for every $i\in\{1,2,\ldots,r\}$.
Put $\psi_{i}(x) = \phi_{i}^{-1}(\phi_{i}(a_i)+x\cdot e)$,
$B_{i} = \{\psi_{i}(x)\mid x\in\mathbb Z_{p}\}$.
Denote
$b_{i} = \boldsymbol f^{(i)}(c_i,\ldots,c_i,a_{i})$ for $i=r+1,\ldots,n$.
By the definition $\psi_{i}$ is a bijective mapping from $\mathbb Z_p$ to $B_i$ for every $i$ and
\[\boldsymbol\rho_1\cap (B_1\times \dots\times B_r) =
\{(x_1,\ldots,x_r)\mid \psi_1^{-1}(x_1)+\ldots+\psi_r^{-1}(x_r) = 0\}.\]
It remains to show that
for every $(d_{1},\ldots,d_{r})\in B_1\times\dots\times B_r$ and every
\[(d_{r+1},\ldots,d_n)\in (\{a_{r+1},b_{r+1}\}\times\dots \times\{a_{n},b_{n}\})\setminus \{(a_{r+1},\ldots,a_n)\}\]
we have $(d_{1},\ldots,d_{n})\in \boldsymbol\rho$.
Without loss of generality we assume that
$d_{i} = b_{i}$ for $i\in\{r+1,\ldots,r'\}$ and
$d_{i} = a_{i}$ for $i\in\{r'+1,\ldots,n\}$.
First, let us prove this if
$(d_1,\ldots,d_r)\in \boldsymbol\rho_1$.
Since $\boldsymbol f$ preserves $\boldsymbol\rho$ we have
\[\left(\begin{smallmatrix}
d_1\\d_2\\d_3\\\vdots\\d_n
\end{smallmatrix}\right) =
\boldsymbol f\left(\begin{smallmatrix}a_1&\dots& a_1&d_1\\
\vdots&\ddots&\vdots&\vdots
\\
a_{r-1}&\dots& a_{r-1}& d_{r-1}
\\
c_{r}&\dots& c_{r}& d_{r}
\\
c_{r+1}&\dots& c_{r+1}& a_{r+1}
\\
\vdots&\ddots&\vdots&\vdots
\\
c_{r'}&\dots& c_{r'}&a_{r'}
\\
a_{r'+1}&\dots& a_{r'+1}& a_{r'+1}
\\
\vdots& \ddots& \vdots& \vdots&
\\
a_{n}&\ldots& a_{n}&a_{n}
\end{smallmatrix}\right)\in\boldsymbol\rho.\]
Second, let us prove this fact if
$(d_1,\ldots,d_r) = (a_1,\ldots,a_r)$.
Then
\[\left(\begin{smallmatrix}
a_1\\a_2\\a_3\\\vdots\\a_r\\
d_{r+1}\\\vdots\\d_n
\end{smallmatrix}\right) =
\boldsymbol f\left(\begin{smallmatrix}
a_1&\dots& a_1&a_1\\
\vdots&\ddots&\vdots&\vdots
\\
a_{r-1}&\dots& a_{r-1}& a_{r-1}
\\
c_{r}&\dots& c_{r}& a_{r}
\\
c_{r+1}&\dots& c_{r+1}& a_{r+1}
\\
\vdots&\ddots&\vdots&\vdots
\\
c_{r'}&\dots& c_{r'}&a_{r'}
\\
a_{r'+1}&\dots& a_{r'+1}& a_{r'+1}
\\
\vdots& \ddots& \vdots& \vdots&
\\
a_{n}&\ldots& a_{n}&a_{n}
\end{smallmatrix}\right)=
\boldsymbol f\left(\begin{smallmatrix}
a_1&a_1&\dots& a_1&a_1\\
\vdots&\vdots&\ddots&\vdots&\vdots
\\
a_{r-1}&a_{r-1}&\dots& a_{r-1}& a_{r-1}
\\
a_{r}&c_r&\dots& c_{r}&c_r
\\
c_{r+1}&c_{r+1}&\dots& c_{r+1}& a_{r+1}
\\
\vdots&\vdots&\ddots&\vdots&\vdots
\\
c_{r'}&c_{r'}&\dots& c_{r'}&a_{r'}
\\
a_{r'+1}&a_{r'+1}&\dots& a_{r'+1}& a_{r'+1}
\\
\vdots& \vdots& \ddots& \vdots& \vdots&
\\
a_{n}&a_{n}&\ldots& a_{n}&a_{n}
\end{smallmatrix}\right)\in\boldsymbol\rho.\]
Then we apply Lemma~\ref{AddTupleToCube} to the relation defined by
\[\boldsymbol\sigma(x_1,\ldots,x_r) = \boldsymbol\rho(x_1,\ldots,x_r,d_{r+1},\ldots,d_n),\]
and prove that
$(d_1,\ldots,d_{n})\in\boldsymbol\rho$ in general.

It remains to show that any tuple $\beta = (d_1,d_2,\ldots,d_r,a_{r+1},\ldots,a_n)\notin\boldsymbol\rho$ is a key tuple.
Put $\beta' = (d_1,d_2,\ldots,d_r), \alpha' = (a_1,a_2,\ldots,a_r),\alpha = (a_1,a_2,\ldots,a_n)$.
Since $\alpha$ is a key tuple for $\boldsymbol\rho$, there exists a vector-function $\Psi$
that maps $\beta$ to $\alpha$ and preserves $\boldsymbol\rho$.
Let $\Omega$ be a restricting vector-function for the core $\boldsymbol\sigma_1$ of $\boldsymbol\rho_1$.
Put $\Psi' = (\Psi^{(1)},\Psi^{(2)},\ldots,\Psi^{(r)})$.
Since $\beta'$ belongs to a key block, $\beta'$ is a key tuple for $\boldsymbol\sigma_{1}$.
Hence, $\Omega\circ\Psi'$ preserves $\boldsymbol\sigma_{1}$ and maps the key tuple $\beta'$ to the key tuple $\alpha'$.
Then by Lemma~\ref{CoreProperties}(5) $\Omega(\Psi'(\alpha'))\notin\boldsymbol\sigma_{1}$. Therefore, $\Psi(\alpha)\notin\boldsymbol\rho$.
By Lemma~\ref{CoreProperties}(5), $\Psi$ is a bijective mapping on $\proj\boldsymbol\rho$.
Hence $\beta$ is a key tuple for $\boldsymbol\rho$.
\end{proof}

Now, Theorem~\ref{MainTheorem} is a trivial collorary of Lemma~\ref{LemmaForMainTheorem}.
In fact, for a key tuple we just consider a core for which it is a key tuple, and apply Lemma~\ref{LemmaForMainTheorem}.

\section{Key relations with full pattern}

\subsection{Blocks of a core}

A tuple $(a_1,\ldots,a_{n})\notin \boldsymbol\rho$ is called a \emph{weak essential tuple} for a relation $\boldsymbol \rho$ if
there exist distinct $i_{1},i_{2},i_{3}\in\{1,2,\ldots,n\}$ and $b_{1},b_{2},b_{3}\in A$ such that
$(a_{1},\ldots,a_{i_{j}-1},b_{j},a_{i_{j}+1},\ldots,a_{n})\in\boldsymbol \rho$
for every $j\in\{1,2,3\}$.
Obviously, every essential tuple of arity greater than 2 is a week essential tuple.

\begin{lem}\label{CoreConnectedMain}

Suppose $\boldsymbol\sigma$ is a core of arity greater than 2 with full pattern preserved by a WNU $\boldsymbol f$, and $\alpha-\beta-\gamma$ is a path, where
$\alpha\in \boldsymbol\sigma$ belongs to a key block,
$\beta$ is a weak essential tuple,
$\gamma\in \boldsymbol\sigma$.
Then $\gamma$ belongs to a key block.

\end{lem}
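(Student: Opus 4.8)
The plan is to show that the connected component of $\gamma$ in $\Key(\boldsymbol\sigma)$ contains a key tuple. I would begin with normalisations and reductions. By Lemma~\ref{findbetterWNU} I may assume that $x\mapsto\boldsymbol f^{(j)}(c,\dots,c,x)$ is idempotent for every coordinate $j$ and every $c\in A$, and (passing to an iterate of $\boldsymbol f$, which is again such a WNU) that $\ar(\boldsymbol f)=m\ge n$. Relabel coordinates so that $\alpha=(a_1,\dots,a_n)$, $\beta=(b_1,a_2,\dots,a_n)$, $b_1\ne a_1$. Three cheap observations kill the degenerate cases: if $\gamma=\alpha$ there is nothing to prove; if $\beta$ happens to be a key tuple, then $\gamma$ is adjacent to $\beta$ with both in $\Key(\boldsymbol\sigma)$, so $\gamma$ lies in $\beta$'s component, which is a key block; and if $\gamma$ differs from $\beta$ in coordinate $1$ as well (and $\gamma\ne\alpha$), then $\gamma$ and $\alpha$ are adjacent tuples of $\boldsymbol\sigma$, hence lie in one component of $\Key(\boldsymbol\sigma)$, namely the key block of $\alpha$. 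So henceforth $\beta\in A^n\setminus\boldsymbol\sigma$ is essential but not a key tuple, and, after one more relabelling, $\gamma=(b_1,c_2,a_3,\dots,a_n)\in\boldsymbol\sigma$ with $c_2\ne a_2$.

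Next I would use the structure of the key block $\boldsymbol B$ containing $\alpha$. By Lemma~\ref{KeyBlockDescription}, $\boldsymbol B=A_1\times\cdots\times A_n$, and by Theorem~\ref{MainForFullPattern} there are a finite field $F$ and bijections $\phi_i\colon A_i\to F$ with $\boldsymbol\sigma\cap\boldsymbol B=\{x\mid\phi_1(x_1)+\cdots+\phi_n(x_n)=0\}$. A component of $\Key(\boldsymbol\sigma)$ consists of $\boldsymbol\sigma$-tuples and key tuples only, so $\boldsymbol B\setminus\boldsymbol\sigma$ is a nonempty set of key tuples; since $|F|\ge 2$, for every coordinate $j$ the tuple obtained from $\alpha$ by replacing its $j$-th entry by another element of $A_j$ is a key tuple of $\boldsymbol B$. (In particular $b_1\notin A_1$, else $\beta$ would be such a key tuple.) Feeding these ``almost-$\alpha$'' key tuples into Lemma~\ref{keyBlockBijection} --- for a fixed $i$ using one that agrees with $\alpha$ off some coordinate $\ne i$, and taking any two coordinates distinct from $i$ as $k,l$ (legitimate since the pattern is full) --- gives that $\boldsymbol f^{(i)}(a_i,\dots,a_i,x)$ is a bijection on $\proj_i\boldsymbol\sigma$ for every $i$; being also idempotent, it is the identity. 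Hence $\boldsymbol f$ returns its last column whenever the other $m-1$ columns all equal $\alpha$, and, by the weak near-unanimity identity, whenever $m-1$ columns equal $\alpha$ in any arrangement.

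The heart of the proof is the next step. Since $\beta$ is essential, fix $\boldsymbol\sigma$-tuples $\eta_1=\alpha$, $\eta_2=\gamma$ and, for $i\ge 3$, $\eta_i=(b_1,a_2,\dots,a_{i-1},w_i,a_{i+1},\dots,a_n)$ (the tuple $\beta$ with its $i$-th entry moved to a witness). Out of $\boldsymbol f$ and these tuples I would assemble a unary vector-function $\boldsymbol h$ that preserves $\boldsymbol\sigma$ and carries a suitable key tuple of $\boldsymbol B$ into the $\Key(\boldsymbol\sigma)$-component $\boldsymbol C$ of $\gamma$. The assembly is an interpolation in the style of the proofs of Lemmas~\ref{FirstLemma}, \ref{KeyBijections}, \ref{KeyBlockSpread}: one converts the coordinates of the $\alpha$-region into those of the $\gamma$-region one coordinate, and one ``unit'', at a time; at each single step only one column of $\boldsymbol f$ deviates, so the weak near-unanimity identity applies, while the full-pattern hypothesis --- through the product shape of the components of $\Key(\boldsymbol\sigma)$ and through Lemma~\ref{CoreProperties}(6) --- keeps the intermediate tuples inside $\Key(\boldsymbol\sigma)$. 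Once $\boldsymbol h$ is in hand, the rest is formal: $\boldsymbol h$ sends a key tuple to a tuple outside $\boldsymbol\sigma$, so by Lemma~\ref{CoreProperties}(5) it is a bijection on $\proj\boldsymbol\sigma$; thus $\boldsymbol h(\boldsymbol\sigma)=\boldsymbol\sigma$, and by Lemma~\ref{DeriveKey} together with Lemma~\ref{KeyGotoRho} the map $\boldsymbol h$ sends $\Key(\boldsymbol\sigma)$ bijectively onto itself and key tuples to key tuples. Therefore $\boldsymbol h$ permutes the connected components of $\Key(\boldsymbol\sigma)$ and carries key blocks to key blocks; since it carries a key block into $\boldsymbol C$, the component $\boldsymbol C$ is a key block, i.e. $\gamma$ belongs to a key block.

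The step I expect to be the main obstacle is the construction of $\boldsymbol h$. A weak near-unanimity operation gives information only when a single argument is singled out, so the passage from the $\alpha$-configuration to the $\gamma$-configuration cannot be done in one application of $\boldsymbol f$; it must be broken into single-coordinate moves, and the real work is verifying that none of these intermediate moves leaves $\Key(\boldsymbol\sigma)$ --- precisely the point where ``full pattern'' is used (via the fact that components of $\Key(\boldsymbol\sigma)$ are boxes, and via Lemma~\ref{CoreProperties}(6), that a vector-function carrying a non-key tuple to a key tuple must push every key tuple into $\boldsymbol\sigma$). The reductions in the first paragraph are exactly what leave us in the configuration where these one-step moves make sense.
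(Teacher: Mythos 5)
Your reductions and the peripheral machinery are fine (the degenerate cases, the use of Lemma~\ref{KeyBlockDescription} and Theorem~\ref{MainForFullPattern} to see that $\boldsymbol B$ is a box cut by a linear equation, and the derivation via Lemma~\ref{findbetterWNU} and Lemma~\ref{keyBlockBijection} that $x\mapsto \boldsymbol f^{(i)}(a_i,\ldots,a_i,x)$ is the identity on $\proj_i\boldsymbol\sigma$ --- the paper does exactly this for the third coordinate). But the proof has a genuine gap at precisely the point you flag yourself: the vector-function $\boldsymbol h$ that is supposed to carry a key tuple of $\boldsymbol B$ into the component of $\gamma$ is never constructed. ``An interpolation in the style of Lemmas~\ref{FirstLemma}, \ref{KeyBijections}, \ref{KeyBlockSpread}'' is a description of a hope, not of a map: you do not specify the columns of $\boldsymbol f$ to be used, you do not exhibit the intermediate tuples, and you give no argument that they stay in $\Key(\boldsymbol\sigma)$. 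Worse, the target of $\boldsymbol h$ is problematic: to invoke Lemma~\ref{CoreProperties}(5) you need $\boldsymbol h$ to send a key tuple \emph{outside} $\boldsymbol\sigma$ into the component $\boldsymbol C$ of $\gamma$, but a priori you know no tuple of $\boldsymbol C\setminus\boldsymbol\sigma$ at all --- $\beta$ is essential but, after your reductions, not a key tuple, hence $\beta\notin\Key(\boldsymbol\sigma)$ and does not even lie in $\boldsymbol C$. So the existence of a suitable landing point is part of what has to be proved, not something you can aim at.

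For comparison, the paper does not build such an $\boldsymbol h$; it argues by contradiction. Assuming $\gamma$ is in no key block, it picks a key tuple $(a_1,a_2,a_3',a_4,\ldots,a_n)$ in the key block of $\alpha$, uses Lemma~\ref{KeyBlockSpread} to force $(b_1,b_2,a_3',a_4,\ldots,a_n)\in\boldsymbol\sigma$, then iterates $c_i^{[j+1]}=\boldsymbol f^{(i)}(c_i^{[j]},\ldots,c_i^{[j]},a_i)$ starting from $b_1,b_2,b_3$ (with $b_3$ a witness for the essentiality of $\beta$), shows by induction that three families of triples stay in the section $\boldsymbol\delta$, and uses finiteness to find indices with $c_2^{[l]}=c_2^{[m]}$; the full-pattern condition then forces $(c_1^{[m]},a_2,a_3,\ldots)$ and $(c_1^{[m]},a_2,a_3',\ldots)$ both into $\boldsymbol\sigma$, contradicting Lemma~\ref{KeyBlockSpread2}. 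If you want to salvage your plan, that pigeonhole-plus-full-pattern mechanism is the missing engine; without it, or an explicit construction of $\boldsymbol h$ with verified intermediate steps, the argument is incomplete.
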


\begin{proof}

Since $\beta$ is a weak essential tuple, without loss of generality we assume that $\alpha = (a_{1},\ldots,a_n)$,
$\beta = (b_{1},a_{2},\ldots,a_{n})$,
$\gamma = (b_{1},b_{2},a_{3},\ldots,a_{n})$,
and 
$(b_{1},a_{2},b_{3},a_{4},\ldots,a_{n}) \in \boldsymbol\sigma$
for some $b_3\in A$.

Also, we assume that
$\boldsymbol f$ satisfies the condition from Lemma~\ref{findbetterWNU}.
Then, by Lemma~\ref{keyBlockBijection},
$\boldsymbol f^{(3)}(a_3,\ldots,a_3,x) = x$ for every $x\in\proj_{3}\boldsymbol\sigma$.
If $\alpha$ and $\gamma$ are adjacent in $\boldsymbol\sigma$ then the statement is obvious. Therefore,
we can assume that $b_{1}\neq a_{1}$ and $b_{2}\neq a_{2}$.

Assume that $\gamma$ doesn't belong to a key block.
By Theorem~\ref{MainForFullPattern}, there exists a key tuple $(a_{1},a_2,a_{3}',a_{4},\ldots,a_n)$ in the key block containing $\alpha$.
Then by Lemma~\ref{KeyBlockSpread} we have
$(b_{1},b_{2},a_{3}',a_4,\ldots,a_{n})\in \boldsymbol\sigma$.

Put $c_{i}^{[0]} = b_{i}$ for $i = 1,2,3$,
$c_{i}^{[j+1]} = \boldsymbol f^{(i)}(c_{i}^{[j]},\ldots, c_{i}^{[j]},a_i)$.

Let $\boldsymbol\delta= \{(d_1,d_2,d_3)\mid (d_{1},d_{2},d_{3},a_{4},\ldots,a_{n})\in \boldsymbol\sigma\}$.

Since
$\left(\begin{smallmatrix}c_{1}^{[0]}\\a_{2}\\ c_{3}^{[0]}\end{smallmatrix}\right),
\left(\begin{smallmatrix}c_{1}^{[0]}\\c_{2}^{[0]}\\ a_3\end{smallmatrix}\right),
\left(\begin{smallmatrix}c_{1}^{[0]}\\c_{2}^{[0]}\\ a_3'\end{smallmatrix}\right)\in \boldsymbol\delta$,
and
$\left(\begin{smallmatrix}
c_{1}^{[j+1]}\\
a_2\\
c_{3}^{[j+1]}
\end{smallmatrix}\right)
 = \left(\begin{smallmatrix}
\boldsymbol f^{(1)}(c_{1}^{[j]},\ldots,c_{1}^{[j]},a_1)\\
\boldsymbol f^{(2)}(a_2,\ldots,a_2)\\
\boldsymbol f^{(3)}(c_{3}^{[j]},\ldots,c_{3}^{[j]},a_3)
\end{smallmatrix}\right)$,
$\left(\begin{smallmatrix}
c_{1}^{[j+1]}\\
c_{2}^{[j+1]}\\
a_3
\end{smallmatrix}\right)
 = \left(\begin{smallmatrix}
\boldsymbol f^{(1)}(c_{1}^{[j]},\ldots,c_{1}^{[j]},a_1)\\
\boldsymbol f^{(2)}(c_{2}^{[j]},\ldots,c_{2}^{[j]},a_2)\\
\boldsymbol f^{(3)}(a_3,\ldots,a_3)
\end{smallmatrix}\right)$,
$\left(\begin{smallmatrix}
c_{1}^{[j+1]}\\
c_{2}^{[j+1]}\\
a_3'
\end{smallmatrix}\right)
 = \left(\begin{smallmatrix}
\boldsymbol f^{(1)}(c_{1}^{[j]},\ldots,c_{1}^{[j]},a_1)\\
\boldsymbol f^{(2)}(c_{2}^{[j]},\ldots,c_{2}^{[j]},a_2)\\
\boldsymbol f^{(3)}(a_3',a_3,\ldots,a_3)
\end{smallmatrix}\right)$,
we obtain that
$\left(\begin{smallmatrix}c_{1}^{[j]}\\a_{2}\\ c_{3}^{[j]}\end{smallmatrix}\right),
\left(\begin{smallmatrix}c_{1}^{[j]}\\c_{2}^{[j]}\\ a_3\end{smallmatrix}\right),
\left(\begin{smallmatrix}c_{1}^{[j]}\\c_{2}^{[j]}\\ a_3'\end{smallmatrix}\right)
\in \boldsymbol\delta$ for every $j=0,1,2,\ldots$.

We have
$\left(\begin{smallmatrix}
c_{1}^{[j]}\\
c_{2}^{[j+1]}\\
c_{3}^{[j]}
\end{smallmatrix}\right)
 = \left(\begin{smallmatrix}
\boldsymbol f^{(1)}(c_{1}^{[j]},\ldots,c_{1}^{[j]})\\
\boldsymbol f^{(2)}(c_{2}^{[j]},\ldots,c_{2}^{[j]},a_2)\\
\boldsymbol f^{(3)}(a_3,\ldots,a_3,c_{3}^{[j]})
\end{smallmatrix}\right)$,
$\left(\begin{smallmatrix}
c_{1}^{[j]}\\
c_{2}^{[k+1]}\\
c_{3}^{[j]}
\end{smallmatrix}\right)
 = \left(\begin{smallmatrix}
\boldsymbol f^{(1)}(c_{1}^{[j]},\ldots,c_{1}^{[j]},c_{1}^{[j]})\\
\boldsymbol f^{(2)}(c_{2}^{[k]},\ldots,c_{2}^{[k]},a_2)\\
\boldsymbol f^{(3)}(c_{3}^{[j]},\ldots,c_{3}^{[j]},c_{3}^{[j]})
\end{smallmatrix}\right)$
for every $j$ and $k$,
hence
we get $\left(\begin{smallmatrix}
c_{1}^{[j]}\\
c_{2}^{[k]}\\
c_{3}^{[j]}
\end{smallmatrix}\right)\in\boldsymbol\delta$
for every $k>j$.
Since $A$ is finite,
$c_{2}^{[l]} = c_{2}^{[m]}$ for some $l>m$.
The above argument gives
$\left(\begin{smallmatrix}
c_{1}^{[m]}\\
c_{2}^{[m]}\\
c_{3}^{[m]}
\end{smallmatrix}\right)\in \boldsymbol\delta$.
Since $\boldsymbol\sigma$ has full pattern and
$
\left(\begin{smallmatrix}c_{1}^{[m]}\\a_{2}\\ c_{3}^{[m]}\end{smallmatrix}\right),
\left(\begin{smallmatrix}c_{1}^{[m]}\\c_{2}^{[m]}\\ a_3\end{smallmatrix}\right),
\left(\begin{smallmatrix}c_{1}^{[m]}\\c_{2}^{[m]}\\ a_3'\end{smallmatrix}\right)
\in \boldsymbol\delta$,
we get
$\left(\begin{smallmatrix}
c_{1}^{[m]}\\
a_2\\
a_3
\end{smallmatrix}\right),
\left(\begin{smallmatrix}
c_{1}^{[m]}\\
a_2\\
a_3'
\end{smallmatrix}\right)\in \boldsymbol\delta$.
This means that
$(c_{1}^{[m]},a_2,a_3,a_4,\ldots,a_{n}),
(c_{1}^{[m]},a_2,a_3',a_{4},\ldots,a_{n})\in \boldsymbol\sigma,$
which contradicts Lemma~\ref{KeyBlockSpread2}.
Therefore $\gamma$ belongs to a key block.
\end{proof}

\begin{cor}\label{corAboutTuplesFromBlock}

Suppose $\boldsymbol\rho$ is a core of arity greater than 2 with full pattern preserved by a WNU,
$\boldsymbol \delta$ is a connected component of $\widetilde{\boldsymbol\rho}$
containing a key tuple.
Then every tuple $\beta\in\boldsymbol\rho\cap \boldsymbol\delta$
belongs to a key block.

\end{cor}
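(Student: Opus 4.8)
The plan is to deduce the corollary from Lemma~\ref{CoreConnectedMain} by an induction along a path in $\widetilde{\boldsymbol\rho}$; the only genuine difficulty is that Lemma~\ref{CoreConnectedMain} crosses only \emph{one} essential tuple at a time, so runs of several consecutive essential tuples in the path need an extra trick.

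First I would record two elementary observations. (i) If two adjacent tuples both lie in $\Key(\boldsymbol\rho)$, they lie in the same connected component of $\Key(\boldsymbol\rho)$, hence one belongs to a key block if and only if the other does. (ii) Every key tuple $\alpha$ is an essential tuple (Lemma~\ref{DammuVariables}), so changing any one coordinate of $\alpha$ we can reach a tuple of $\boldsymbol\rho$; such a tuple lies in $\widetilde{\boldsymbol\rho}$ and, by (i), in the key block of $\alpha$. In particular $\boldsymbol\delta$ contains a tuple of $\boldsymbol\rho$ that belongs to a key block, and moreover, given any essential tuple $\mu$ on the path and the coordinate $q$ in which $\mu$ differs from its successor, we may pick $b$ so that the tuple $\mu'$ obtained from $\mu$ by resetting coordinate $q$ to $b$ lies in $\boldsymbol\rho$; then $\mu'$ is adjacent to that successor (or equal to it).

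The core of the argument is the claim, proved by induction on $m$: if $\mu_0-\mu_1-\dots-\mu_m$ is a path in $\widetilde{\boldsymbol\rho}$ with $\mu_0\in\Key(\boldsymbol\rho)$ belonging to a key block and $\mu_m\in\boldsymbol\rho$, then $\mu_m$ belongs to a key block. The case $m=0$ is immediate. For $m\ge 1$ look at $\mu_1$. If $\mu_1\in\Key(\boldsymbol\rho)$ (that is, $\mu_1\in\boldsymbol\rho$ or $\mu_1$ is a key tuple), then $\mu_1$ belongs to a key block by (i), and the induction hypothesis applied to $\mu_1-\dots-\mu_m$ finishes. If $\mu_1$ is an essential tuple that is not a key tuple, then $\mu_1\notin\boldsymbol\rho$ and hence $m\ge 2$; using (ii) I first replace $\mu_0$, if it happens to be a key tuple, by a $\boldsymbol\rho$-neighbour of $\mu_0$ obtained by changing the coordinate in which $\mu_0$ and $\mu_1$ differ, which is adjacent to $\mu_1$ and lies in the key block of $\mu_0$; so I may assume $\mu_0\in\boldsymbol\rho$. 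Taking $q$, $b$, $\mu_1'$ as in the previous paragraph for $\mu=\mu_1$, the triple $\mu_0-\mu_1-\mu_1'$ is a path with $\mu_0\in\boldsymbol\rho$ in a key block, $\mu_1$ essential, $\mu_1'\in\boldsymbol\rho$, so Lemma~\ref{CoreConnectedMain} shows $\mu_1'$ belongs to a key block; since $\mu_1'$ is adjacent to $\mu_2$ (or equals it), the strictly shorter path $\mu_1'-\mu_2-\dots-\mu_m$ lets the induction hypothesis conclude. The corollary then follows by taking $\mu_0$ to be a key tuple of $\boldsymbol\delta$, $\mu_m=\beta$, and any $\widetilde{\boldsymbol\rho}$-path between them, which exists because $\boldsymbol\delta$ is a connected component containing both.

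The step I expect to be the main obstacle is precisely this essential-tuple case: Lemma~\ref{CoreConnectedMain} is stated for a single essential tuple sandwiched between two tuples of $\boldsymbol\rho$, whereas the path may have a long consecutive block of essential tuples, so it cannot be crossed in one invocation. The shortening move above (replace the first essential tuple of such a block by an adjacent $\boldsymbol\rho$-tuple, certified to lie in a key block by Lemma~\ref{CoreConnectedMain}) dissolves this, and what remains is bookkeeping that I would want to check carefully: that $\mu_1'$ really lies in $\widetilde{\boldsymbol\rho}$ and is adjacent to $\mu_2$, that replacing a key tuple $\mu_0$ by a $\boldsymbol\rho$-neighbour does not cost anything, and that each application strictly shrinks the remaining path so the induction terminates.
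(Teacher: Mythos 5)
Your proposal is correct and follows essentially the same route as the paper: take a $\widetilde{\boldsymbol\rho}$-path from a key tuple to $\beta$, break up runs of consecutive essential tuples by inserting adjacent $\boldsymbol\rho$-tuples (available because essential tuples have $\boldsymbol\rho$-neighbours in the required coordinate), and propagate membership in a key block along the path via Lemma~\ref{CoreConnectedMain}. Your write-up just makes explicit the induction and the adjacency bookkeeping that the paper's two-line proof leaves to the reader.
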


\begin{proof}
Because of connectedness,
we can find a path $\alpha_{0}-\alpha_{1}-\alpha_{2}-\dots-\alpha_{m} = \beta$
where $\alpha_{0}$ belongs to a key block and $\alpha_{i}\in \widetilde{\boldsymbol\rho}$ for every $i$.
It is easy to avoid the situation
where $\alpha_{j}$, $\alpha_{j+1} \notin \boldsymbol\rho$. In this case we just find
$\alpha_{j}'\in \boldsymbol\rho$ to get a path
$\alpha_{j}-\alpha_j'- \alpha_{j+1}$.
It remains to use Lemma~\ref{CoreConnectedMain}
\end{proof}

\begin{thm}\label{CoreBlockIsCubic}
Suppose $\boldsymbol\rho$ is a core of arity $n\ge 3$ with full pattern preserved by a WNU.
Then every connected component $\boldsymbol\delta$ of $\widetilde{\boldsymbol\rho}$
containing a key tuple can be represented as $A_{1}\times A_2 \times \dots \times A_n$,
where $A_1,\ldots, A_n\subseteq A$.
\end{thm}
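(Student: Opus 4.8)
The plan is to show that $\boldsymbol\delta$ coincides with a single key block of $\boldsymbol\rho$; by Lemma~\ref{KeyBlockDescription} such a block is a cube $A_{1}\times\dots\times A_{n}$, which is exactly what the theorem asserts. So fix a key tuple $\alpha\in\boldsymbol\delta$ and let $\boldsymbol B$ be the connected component of $\Key(\boldsymbol\rho)$ containing $\alpha$. By Lemma~\ref{KeyBlockDescription} it is a cube $B_{1}\times\dots\times B_{n}$, and by Theorem~\ref{MainForFullPattern} the set $\boldsymbol\rho\cap\boldsymbol B$ is the solution set of a linear equation over a finite field $F$; note $|F|\ge 2$, since a one-element block would lie entirely in $\boldsymbol\rho$ and could not contain a key tuple, so every tuple of $\boldsymbol B\setminus\boldsymbol\rho$ is a key tuple. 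Since $\widetilde{\boldsymbol\rho}$ is positive primitive definable from $\boldsymbol\rho$ (Lemma~\ref{DeriveRhoTilde}) and $\Key(\boldsymbol\rho)$ likewise (Lemma~\ref{DeriveKey}), the WNU $\boldsymbol f$ preserves both; being idempotent it preserves every connected component (Lemma~\ref{preserveConnectedComponent}), hence it preserves $\boldsymbol\delta$ and it preserves $\boldsymbol B$.

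The inclusion $\boldsymbol B\subseteq\boldsymbol\delta$ is immediate: a cube is connected in the adjacency graph, $\boldsymbol B\subseteq\Key(\boldsymbol\rho)\subseteq\widetilde{\boldsymbol\rho}$, and $\boldsymbol B$ meets $\boldsymbol\delta$ in $\alpha$. For the reverse inclusion it suffices to prove that $\boldsymbol B$ is a union of connected components of $\widetilde{\boldsymbol\rho}$, i.e.\ that \emph{no} tuple of $\widetilde{\boldsymbol\rho}\setminus\boldsymbol B$ is adjacent to a tuple of $\boldsymbol B$; then, $\boldsymbol B$ being connected and meeting $\boldsymbol\delta$, we get $\boldsymbol\delta=\boldsymbol B$. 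So assume $\delta\in\boldsymbol B$ is adjacent to some $\gamma\in\widetilde{\boldsymbol\rho}\setminus\boldsymbol B$, differing from it in the $i$-th coordinate, so that $\gamma(i)\notin B_{i}$. If $\gamma\in\boldsymbol\rho$ or $\gamma$ is a key tuple, then $\gamma\in\Key(\boldsymbol\rho)$, so $\gamma$ and $\delta$ are adjacent vertices of $\Key(\boldsymbol\rho)$ lying in the same connected component; that component contains $\delta\in\boldsymbol B$, hence equals $\boldsymbol B$, and $\gamma\in\boldsymbol B$ --- a contradiction. The only remaining possibility is that $\gamma$ is an essential tuple with $\gamma\notin\boldsymbol\rho$ and $\gamma$ not a key tuple, and this is the heart of the argument.

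In this case essentiality of $\gamma$ yields a tuple $\hat\gamma\in\boldsymbol\rho$ differing from $\gamma$ only in coordinate $i$; since $\hat\gamma$ is then adjacent to $\delta\in\boldsymbol B$ (or equal to it) and lies in $\Key(\boldsymbol\rho)$, we get $\hat\gamma\in\boldsymbol\rho\cap\boldsymbol B$. Using the linear description of $\boldsymbol\rho\cap\boldsymbol B$ and $|F|\ge2$, choose a key tuple $\kappa\in\boldsymbol B$ differing from $\hat\gamma$, hence from $\gamma$, only in coordinate $i$. Also, for each $j\ne i$ essentiality gives a $\boldsymbol\rho$-neighbour $\gamma_{j}$ of $\gamma$ differing from it in coordinate $j$; applying Lemma~\ref{CoreConnectedMain} to the path $\hat\gamma-\gamma-\gamma_{j}$ places $\gamma_{j}$ in a key block, and since $\gamma_{j}(i)=\gamma(i)$, the value $\gamma(i)$ occurs as the $i$-th coordinate of a key tuple. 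The strategy is now to produce a unary vector-function preserving $\boldsymbol\rho$ that sends the key tuple $\kappa$ to $\gamma$: by Lemma~\ref{KeyGotoRho} this forces $\gamma\in\boldsymbol\rho$ or $\gamma$ a key tuple, the desired contradiction. To build it I would first pass to a WNU chosen via Lemma~\ref{findbetterWNU}, so that by Lemma~\ref{keyBlockBijection} the substitution $x\mapsto\boldsymbol f^{(k)}(a,\dots,a,x)$ is a bijection of $\proj_{k}\boldsymbol\rho$, and being idempotent is the identity there, whenever $a$ is the $k$-th coordinate of a key tuple --- in particular for every $a\in B_{k}$ and for $a=\kappa_{i}$, $a=\gamma(i)$ in coordinate $i$; combined with Lemma~\ref{keyBlockBijection}/Lemma~\ref{WNUIsLinear} for the linear action of $\boldsymbol f$ on $\boldsymbol B$, with Lemma~\ref{KeyBlockSpread}, with Lemma~\ref{CoreProperties}(5)--(6), and with the minimality built into the definition of a core, one pins down the action on $\proj_{i}\boldsymbol\rho$ needed to carry $\kappa$ to $\gamma$.

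The main obstacle is precisely this last step: turning the purely local data around $\gamma$ (a $\boldsymbol\rho$-neighbour inside $\boldsymbol B$, and in every other direction a $\boldsymbol\rho$-neighbour sitting in some key block) into a \emph{global} $\boldsymbol\rho$-preserving map of the $i$-th coordinate that moves $\kappa_{i}$ to $\gamma(i)$ --- the naive two-value collapse fails because $\gamma(i)$ and $\hat\gamma(i)$ lie in different $\boldsymbol\rho$-fibres, so the WNU and the core-minimality must be used to control the remaining values of $\proj_{i}\boldsymbol\rho$. Everything else --- the reduction to $\boldsymbol\delta=\boldsymbol B$, the two easy cases, and the passage from "no $\widetilde{\boldsymbol\rho}\setminus\boldsymbol B$ tuple touches $\boldsymbol B$'' to the statement of the theorem --- is routine connectivity bookkeeping built on Lemma~\ref{KeyBlockDescription} and Corollary~\ref{corAboutTuplesFromBlock}.
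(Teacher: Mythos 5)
There is a genuine gap, and it is not merely the unfinished ``last step'' you flag: the reduction on which your whole argument rests --- that the connected component $\boldsymbol\delta$ of $\widetilde{\boldsymbol\rho}$ containing a key tuple coincides with a key block $\boldsymbol B$ of $\Key(\boldsymbol\rho)$ --- is false. Take $A=\mathbb Z_4$ and $\boldsymbol\rho=\{(x,y,z)\mid x+y+z=0\}$, preserved by the WNU $f(x_1,\dots,x_5)=x_1+\dots+x_5$. This is a key relation with full pattern whose key tuples are exactly the tuples of sum $2$ (Lemma~\ref{OrderOfElements} forces any vector-function preserving $\boldsymbol\rho$ to send sum-$1$ and sum-$3$ tuples only to tuples whose defect has order dividing $4$, and doubling plus translations realizes all the required maps onto sum-$2$ tuples), and it is its own core: any $\Psi$ preserving $\boldsymbol\rho$ and fixing the key tuple $(2,0,0)$ must be injective in each coordinate, hence bijective. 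Here $\Key(\boldsymbol\rho)=\{(x,y,z)\mid x+y+z\in\{0,2\}\}$, whose component through $(2,0,0)$ is the key block $\boldsymbol B=\{0,2\}^3$; but every tuple outside $\boldsymbol\rho$ is essential, so $\widetilde{\boldsymbol\rho}=\mathbb Z_4^{\,3}$ and $\boldsymbol\delta=\mathbb Z_4^{\,3}\supsetneq\boldsymbol B$. In particular your ``heart of the argument'' case --- a tuple $\gamma$ adjacent to $\boldsymbol B$ that is essential, not in $\boldsymbol\rho$, and not a key tuple, e.g.\ $\gamma=(1,0,0)$ next to $(0,0,0)$ --- actually occurs, so no contradiction can be derived there; the vector-function you are trying to build (sending a key tuple to $\gamma$) provably does not exist. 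This is also why Theorem~\ref{CoreBlockIsLinear} only asserts a group of prime-power order on blocks of $\widetilde{\boldsymbol\rho}$, whereas Theorem~\ref{MainForFullPattern} gets a field on key blocks: the two kinds of blocks genuinely differ.

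The paper's proof takes a different route that does not try to identify $\boldsymbol\delta$ with a key block. It sets $A_i=\proj_i\boldsymbol\delta$ and, for each $k$, builds a positive-primitive relation $\sigma_k(y_1,\dots,y_k)$ asserting that some $\boldsymbol\rho$-tuple can have its first coordinate replaced by each of $y_1,\dots,y_k$ while staying in $\widetilde{\boldsymbol\rho}$; these $\sigma_k$ are totally reflexive, symmetric and connected on $A_1$, so Lemma~\ref{NoTotallyReflexiveA} (Rosenberg's classification plus the WNU) forces them to be full. This yields one $\boldsymbol\rho$-tuple all of whose first-coordinate perturbations within $A_1$ are essential; the core structure and Corollary~\ref{corAboutTuplesFromBlock} transport this property to every $\boldsymbol\rho$-tuple of $\boldsymbol\delta$ and every coordinate, and a final induction on the Hamming distance to $\boldsymbol\rho$ shows every tuple of $A_1\times\dots\times A_n$ lies in $\widetilde{\boldsymbol\rho}$. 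If you want to salvage your write-up, the parts worth keeping are the easy inclusions and the observation that $\boldsymbol f$ preserves $\boldsymbol\delta$; the core of the proof has to follow something like the paper's ``totally reflexive $\Rightarrow$ full'' mechanism rather than a collapse onto key blocks.
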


\begin{proof}

Let $A_i = \proj_i \boldsymbol\delta$ for every $i=1,2,\ldots,n$.
By Lemma~\ref{preserveConnectedComponent}, every connected component of $\widetilde{\boldsymbol\rho}$ is preserved by the WNU,
therefore $A_{i}$ is preserved by the WNU for every $i$.
Thus, we have a WNU on $A_{i}$ for every $i$.

For $k=2,3,\ldots,|A_1|$ we define a relation $\sigma_{k}$  as follows:
\begin{multline*}
\sigma_{k}(y_1,y_2,\ldots,y_k) =
\exists x_1 \exists x_2\dots \exists x_n 
 \;\;\boldsymbol\delta(x_{1},x_2,x_3,\ldots,x_n) \wedge\\
 \widetilde{\boldsymbol\rho}(y_1,x_2,x_3,\ldots,x_n)
\wedge \widetilde{\boldsymbol\rho}(y_2,x_2,x_3,\ldots,x_n)
\wedge\dots
\wedge \widetilde{\boldsymbol\rho}(y_k,x_2,x_3,\ldots,x_n).
\end{multline*}

By Corollary~\ref{corAboutTuplesFromBlock} and Corollary~\ref{KeyBlockIsCubic},
for every $g_1, g_2\in A_1$ we can find key tuples $\gamma_1, \gamma_2$ such that
$\gamma_1(1) = g_1$ and  $\gamma_2(1) = g_2$.
Then by Lemma~\ref{CoreProperties}(5) there exists a unary vector-function $\Psi$ preserving $\boldsymbol\rho$ such
that $\Psi(\gamma_1) = \gamma_2$ and $\Psi$ is a bijection on $\proj\boldsymbol\rho$.
Thus, we proved that for all $g_1, g_2\in A_1$
there exists a mapping $\Psi^{(1)}$ from $g_1$ to $g_2$ which preserves
$\sigma_i$ for every $i$ and is a bijection on $A_1$.

Obviously, $\sigma_2$ is reflexive and symmetric on $A_1$.
Since $A_1$ is a projection of the connected component $\boldsymbol\delta$,
for any $b,c\in A_{1}$ there exists a path $\alpha_1,\ldots,\alpha_{s}$ in $\boldsymbol\delta$ such that
$\alpha_{1}(1) = b$,  $\alpha_{s}(1) = c$.
It follows from the definition of $\sigma_2$ that if
$\alpha_{i}$ and $\alpha_{i+1}$ differ in the first component then $(\alpha_{i}(1),\alpha_{i+1}(1))\in\sigma_2$.
Therefore, the graph defined by $\sigma_2$ is connected on $A_1$.
Then by Lemma~\ref{NoTotallyReflexiveA} we get $\sigma_2 = A_1\times A_1$.

Then we consider the minimal $l>2$ such that $\sigma_l\neq A_{1}^{l}$.
Obviously $\sigma_l$ is totally reflexive and symmetric on $A_1$, which contradicts
Lemma~\ref{NoTotallyReflexiveA}.
Therefore $\sigma_{|A_1|} = A_{1}^{|A_1|}$, which means
that there exists
$(x_1,\ldots,x_n)\in \boldsymbol\rho\cap\boldsymbol\delta$ such that
for every $y\in A_1\setminus\{x_1\}$ the tuple
$(y, x_2,\ldots,x_n)$ belongs to $\widetilde{\boldsymbol\rho}.$

Recall that we consider a core, and every tuple from $\boldsymbol\rho\cap\boldsymbol\delta$ belongs to a key block.
By Corollary~\ref{corAboutTuplesFromBlock} and Theorem~\ref{MainForFullPattern},
two tuples from $\boldsymbol\rho\cap\boldsymbol\delta$ cannot differ in just one component.
Hence,
$(y, x_2,\ldots,x_n)\notin\boldsymbol\rho$ for every $y\in A_1\setminus\{x_1\}$,
and $(y, x_2,\ldots,x_n)$ is an essential tuple for $\boldsymbol\rho$.

Denote $\alpha = (x_1,\ldots,x_n)$.
Suppose $\beta\in \boldsymbol\rho\cap\boldsymbol\delta$, and  $\alpha',\beta'$ are key tuples
obtained from $\alpha$ and $\beta$ by changing the first component.
Since two tuples from $\boldsymbol\rho\cap\boldsymbol\delta$ cannot differ in just one component,
a unary vector-function that maps $\alpha'$ to $\beta'$ and preserves $\boldsymbol\rho$ also maps $\alpha$ to $\beta$.
This means, that for every tuple $\beta = (x_1',\ldots,x_n')\in \boldsymbol\rho\cap\boldsymbol\delta$
and every $y\in A_1\setminus\{x_1'\}$ the tuple
$(y, x_2',\ldots,x_n')$ is essential for $\boldsymbol\rho.$

In the same way as we did for the first component of the tuple,
we can show that
for every tuple $(x_1,\ldots,x_n) \in \boldsymbol\rho\cap\boldsymbol\delta$,
every $i\in\{2,3,\ldots,n\}$ and every $y\in A_i\setminus\{x_i\}$
the tuple $(x_1, \ldots,x_{i-1},y,x_{i+1},\ldots,x_n)$ is essential for $\boldsymbol\rho.$

We want to show that every tuple in $A_{1}\times\dots\times A_n$ is either essential,
or from $\boldsymbol\rho$.
Assume the converse, assume that $\beta\in(A_{1}\times\dots\times A_n) \setminus \widetilde{\boldsymbol\rho}$,
$\beta$ differs from every tuple of $\boldsymbol\rho$ in at least $t\ge 1$ components,
and $t$ cannot be reduced.
If $t=1$ then $\beta$ is essential because of the above property.
Suppose $t\ge 2$ and $\beta$ differs from $\gamma\in \boldsymbol\rho$ in $t$ components. 
Without loss of generality let it be the first $t$ components.
We know that the tuple $(\beta(1),\gamma(2),\ldots,\gamma(n))$ is an essential tuple for $\boldsymbol\rho$.
Hence we can change the second component of it to get a tuple from $\boldsymbol\rho$.
We denote this tuple by $\gamma' = (\beta(1),b, \gamma(3),\ldots,\gamma(n))$.
Obviously $\beta$ and $\gamma'$ differ in $t-1$ components, which contradicts our assumption.
This completes the proof.
\end{proof}

Recall that a connected component of $\widetilde{\boldsymbol\rho}$ is called a block of $\boldsymbol\rho$.

\begin{thm}\label{CoreBlockIsLinear}

Suppose $\boldsymbol\sigma$ is a core of arity $n\ge 3$ with full pattern preserved by a WNU $\boldsymbol f$.
Then
\begin{enumerate}
\item Every block of $\boldsymbol\sigma$ containing a key tuple equals $B_{1}\times \dots \times B_{n}$ for some
$B_{1},\ldots,B_{n}\subseteq A$.

\item For every block $\boldsymbol B = B_{1}\times \dots \times B_{n}$ containing a key tuple the intersection
$\boldsymbol\sigma\cap \boldsymbol B$
can be defined as follows.
There exist an abelian group $(G;+)$, whose order is a power of a prime number,
and bijective mappings
$\phi_i: B_{i}\to G$ for $i=1,2,\ldots,n$ such that
\[\boldsymbol\sigma\cap \boldsymbol B = \{(x_1,\ldots,x_n)\mid \phi_1(x_1)+\phi_2(x_2) + \ldots +\phi_n(x_n) = 0\}.\]

\end{enumerate}

\end{thm}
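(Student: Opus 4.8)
The plan is to reduce the theorem to the two structural results just established — Theorem~\ref{CoreBlockIsCubic} (a connected component of $\widetilde{\boldsymbol\sigma}$ containing a key tuple is a box) and Theorem~\ref{MainForFullPattern} (on a key block $\boldsymbol\sigma$ cuts out an affine subgroup of a finite field) — the only real work being a bridging step that matches the ``blocks'' of $\widetilde{\boldsymbol\sigma}$ up with the ``key blocks'' coming from $\Key(\boldsymbol\sigma)$.

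\emph{Step 1 (blocks versus key blocks).} First I would prove that in a core $\boldsymbol\sigma$ every essential tuple lying outside $\boldsymbol\sigma$ is in fact a key tuple; equivalently $\widetilde{\boldsymbol\sigma}=\Key(\boldsymbol\sigma)$. Given an essential $\beta\notin\boldsymbol\sigma$, since $\boldsymbol\sigma$ is a key relation (Lemma~\ref{CoreProperties}(1)) there is a vector-function $\Phi$ preserving $\boldsymbol\sigma$ with $\Phi(\beta)$ a key tuple. If $\beta$ were not a key tuple, Lemma~\ref{CoreProperties}(6) forces $\Phi$ to carry \emph{every} key tuple into $\boldsymbol\sigma$; using that $\beta$ is essential, so it has a neighbour in $\boldsymbol\sigma$ in each coordinate direction, together with Lemma~\ref{CoreProperties}(5), Lemma~\ref{KeyGotoRho} and the minimality built into the definition of a core, one manufactures a vector-function contradicting clause~(5) of that definition. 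Hence $\widetilde{\boldsymbol\sigma}=\Key(\boldsymbol\sigma)$, so a block (connected component of $\widetilde{\boldsymbol\sigma}$) that is not trivial contains a key tuple and, being connected inside $\Key(\boldsymbol\sigma)$, is precisely one key block.

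\emph{Step 2 (part (1)).} A trivial block is contained in $\boldsymbol\sigma$; for it the full-pattern hypothesis gives the rectangularity closure of $\boldsymbol\sigma$ — if three of the four corners of a two-coordinate rectangle lie in $\boldsymbol\sigma$, so does the fourth — and a straightforward induction on the number of coordinates still to be corrected, starting from a fixed tuple of the block and walking toward an arbitrary element of $\proj_1\boldsymbol B\times\dots\times\proj_n\boldsymbol B$, shows the block equals that product. A non-trivial block contains a key tuple by Step~1, so Theorem~\ref{CoreBlockIsCubic} applies directly and it equals $B_1\times\dots\times B_n$. This proves~(1). For part~(2), let $\boldsymbol B=B_1\times\dots\times B_n$ be a non-trivial block; by Step~1 it is a key block, so Theorem~\ref{MainForFullPattern} supplies a finite field $F$ and bijections $\phi_i\colon B_i\to F$ with
$$\boldsymbol\sigma\cap\boldsymbol B=\{(x_1,\dots,x_n)\mid \phi_1(x_1)+\dots+\phi_n(x_n)=0\}.$$
Taking $(G;+)$ to be the additive group of $F$ gives an abelian group, and $|G|=|F|$ is a power of the characteristic of $F$, hence a prime power; the $\phi_i$ are the required bijections, which is exactly the asserted description.

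The main obstacle is Step~1: an essential tuple outside a \emph{general} key relation need not be a key tuple, so the identification $\widetilde{\boldsymbol\sigma}=\Key(\boldsymbol\sigma)$ genuinely relies on the minimality of a core, and the delicate part is assembling the contradicting vector-function from the essentiality witnesses of $\beta$ while keeping track of which coordinates are left untouched, so that Lemma~\ref{CoreProperties}(5) can be invoked. The rectangularity argument for trivial blocks and the field-to-group translation in part~(2) are routine by comparison.
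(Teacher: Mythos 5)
Your Step~1 is false, and with it the reduction of part~(2) to Theorem~\ref{MainForFullPattern}. Take $A=\mathbb Z_4$ and $\boldsymbol\sigma=\{(x,y,z)\mid x+y+z=0\}$. This is a key relation whose key tuples are exactly the triples of sum $2$; it is preserved by the WNU $x_1+x_2+x_3+x_4+x_5$; its pattern is full; and it is its own core, since every unary vector-function preserving $\boldsymbol\sigma$ has the form $(tx+c_1,ty+c_2,tz+c_3)$ with $c_1+c_2+c_3=0$, and fixing a tuple of sum $2$ forces $2t=2$, hence $t\in\{1,3\}$ and bijectivity. Here $\widetilde{\boldsymbol\sigma}=\mathbb Z_4^{\,3}$, but $\Key(\boldsymbol\sigma)$ is only the set of triples of even sum: the essential tuple $(1,0,0)$ is not a key tuple, because a preserving vector-function multiplies the coordinate sum by $t$ and $2t\neq 1$, so $(2,0,0)$ cannot be mapped to $(1,0,0)$. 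Thus $\widetilde{\boldsymbol\sigma}\neq\Key(\boldsymbol\sigma)$, the unique nontrivial block $\mathbb Z_4^{\,3}$ is \emph{not} a key block (it contains four key blocks, the cosets on which the sum is even), and applying Theorem~\ref{MainForFullPattern} to the whole block would even yield a false conclusion: $x+y+z=0$ over $\mathbb Z_4$ is not of the form $\phi_1(x)+\phi_2(y)+\phi_3(z)=0$ over the four-element field with bijective $\phi_i$, since the derived relation $\{(a,b,c,d)\mid a+b=c+d\}$ contains $(0,2,1,1)$, which over a field of characteristic $2$ would force $\phi(0)=\phi(2)$. The fact that your route proves a field structure with bijections onto all of $B_i$ --- strictly stronger than the stated ``abelian group of prime power order'' --- is the tell-tale sign.

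The paper's proof of part~(2) therefore has to take a different route, and it does: it first shows that $\boldsymbol\sigma\cap\boldsymbol B$ is \emph{strongly rich} on $\boldsymbol B$ --- existence of a completion because $\boldsymbol B\subseteq\widetilde{\boldsymbol\sigma}$, uniqueness because two completions would be adjacent tuples of $\boldsymbol\sigma$ lying, by Corollary~\ref{corAboutTuplesFromBlock}, in a single key block, contradicting Theorem~\ref{MainForFullPattern} applied to \emph{that key block only} --- then applies Theorem~\ref{StronglyRichRelationTHM} to the whole block to obtain an abelian group with bijections, and finally checks that $\boldsymbol\sigma\cap\boldsymbol B$ is itself a key relation so that Lemma~\ref{OnlyPowerOfPrime} gives the prime-power order. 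Key blocks are used only as a tool inside the block, never identified with the block. Your part~(1) survives in outline (nontrivial blocks do contain key tuples, so Theorem~\ref{CoreBlockIsCubic} applies, and the rectangularity argument for trivial blocks is fine), but that containment must be argued directly rather than via the false identity $\widetilde{\boldsymbol\sigma}=\Key(\boldsymbol\sigma)$.
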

\begin{proof}
The first statement follows from Theorem~\ref{CoreBlockIsCubic}.
Let us prove the second statement.
Suppose $\boldsymbol B = B_{1}\times \dots \times B_{n}$ is a connected component of $\widetilde {\boldsymbol\sigma}$
containing a key tuple for $\boldsymbol\sigma$.
Let us show that $\boldsymbol\sigma\cap \boldsymbol B$ is a strongly rich relation.
Without loss of generality it is sufficient to prove that
for every $\alpha\in B_2\times \dots \times B_n$ there exists a unique $b\in B_1$
such that $b\alpha\in \boldsymbol\sigma$.
Since $\boldsymbol B\subseteq \widetilde{\boldsymbol\sigma}$,
there exists $b\in B_1$ such that $b\alpha\in \boldsymbol\sigma$.
It remains to prove that $b$ is unique. Assume that $b'\alpha\in \boldsymbol\sigma$
for $b'\in B_1\setminus\{b\}$.
By Corollary~\ref{corAboutTuplesFromBlock} every tuple from $\boldsymbol \sigma\cap \boldsymbol B$
belongs to a key block, therefore
two tuples $b\alpha,b'\alpha$ belong to the same key block.
This contradicts Theorem~\ref{MainForFullPattern}.

Thus we proved that $\boldsymbol\sigma\cap \boldsymbol B$ is a strongly rich relation.
By Lemma~\ref{DeriveRhoTilde} and Lemma~\ref{preserveConnectedComponent},
the connected component $\boldsymbol B$ is preserved by $\boldsymbol f$.
Then by Theorem~\ref{StronglyRichRelationTHM}
we have an abelian group $(G;+)$ and bijective mappings $\phi_i:B_{i}\to G$ for $i=1,2,\ldots,n$
such that
\[\boldsymbol\sigma\cap \boldsymbol B = \{(x_1,\ldots,x_n)\mid \phi_1(x_1)+\phi_2(x_2) + \ldots +\phi_n(x_n) = 0\}.\]
Suppose $\Psi$ is a vector-function preserving $\boldsymbol\sigma$ which maps a tuple from $\boldsymbol B$
to a tuple from $\boldsymbol B$. Then we can check that
$\Psi$ maps any tuple from $\boldsymbol B$ to a tuple from $\boldsymbol B$, i.e.
$\Psi$ preserves $\boldsymbol\sigma\cap \boldsymbol B$.
Therefore, $\boldsymbol\sigma\cap \boldsymbol B$ is a key relation on $\boldsymbol B$,
and by Lemma~\ref{OnlyPowerOfPrime} the order of the group $G$ is a power of a prime number.
This completes the proof.
\end{proof}

\subsection{Blocks of a key relation}

In this section we always assume that
$\boldsymbol\rho$ is a key relation of arity $n\ge 3$ with full pattern preserved by a WNU $\boldsymbol {f_0}$ of arity $m_0$,
$\boldsymbol\sigma$ is a core of $\boldsymbol\rho$.
Using Lemma~\ref{findbetterWNU},
we can find a WNU $\boldsymbol f$  of arity $m = m_0^{|A|!}$ preserving $\boldsymbol\rho$ and
satisfying the following property:
for every $\alpha\in A^{n}$ we have
$\boldsymbol h(\boldsymbol h(x)) = \boldsymbol h(x)$, where
$\boldsymbol h(x) = \boldsymbol f(\alpha,\ldots,\alpha,x)$.

\begin{lem}\label{oneBlockForEverything}

Suppose a tuple
$(b_{1},\ldots,b_{n})$ witnesses that
$(a_{1},\ldots,a_{n})$ is an essential tuple for $\boldsymbol\rho$,
$\Omega$ is a vector-function such that $\Omega(\boldsymbol\rho)\subseteq\boldsymbol\sigma$,
$\Omega(a_{1},\ldots,a_{n})$ belongs to a block $B_{1}\times \dots \times B_{n}$ of $\boldsymbol\sigma$.
Then $\Omega$ maps tuples from the set
$\boldsymbol f^{(1)}(\{a_1,b_1\}^{m})\times \boldsymbol f^{(2)}(\{a_2,b_2\}^{m})\times \dots \times \boldsymbol f^{(n)}(\{a_n,b_n\}^{m})$
to the block $B_{1}\times \dots \times B_{n}$.

\end{lem}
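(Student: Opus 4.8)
The plan is to fix the anchor tuple $(a_1,\dots,a_n)$, identify the block of $\widetilde{\boldsymbol\rho}$ it generates, and transport everything through $\Omega$ one coordinate at a time. Throughout I write $\tau_0=(a_1,\dots,a_n)$ and, for $i=1,\dots,n$, $\tau_i=(a_1,\dots,a_{i-1},b_i,a_{i+1},\dots,a_n)$, and I put $C_i=\boldsymbol f^{(i)}(\{a_i,b_i\}^m)$.

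First I would record how $\Omega$ interacts with the operation $\widetilde{(\cdot)}$ and with blocks. Since $\boldsymbol\sigma\subseteq\boldsymbol\rho$ we get $\Omega(\boldsymbol\sigma)\subseteq\boldsymbol\sigma$, and moreover $\Omega$ sends $\widetilde{\boldsymbol\rho}$ into $\widetilde{\boldsymbol\sigma}$: an essential tuple of $\boldsymbol\rho$ has a witness whose $n$ one-coordinate modifications lie in $\boldsymbol\rho$, and $\Omega$ carries those into $\boldsymbol\sigma$, so the image of the essential tuple is either in $\boldsymbol\sigma$ or an essential tuple of $\boldsymbol\sigma$. Since $\Omega$ maps tuples differing in one coordinate to tuples that coincide or differ in one coordinate, it maps every connected component of $\widetilde{\boldsymbol\rho}$ into a single connected component of $\widetilde{\boldsymbol\sigma}$, i.e.\ into a single block of $\boldsymbol\sigma$.

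Next I would pin down a block of $\widetilde{\boldsymbol\rho}$ and fill a ``cross'' inside it. The hypothesis that $(b_1,\dots,b_n)$ witnesses $(a_1,\dots,a_n)$ being essential gives $\tau_0\in\widetilde{\boldsymbol\rho}$, $\tau_i\in\boldsymbol\rho$, and $a_i\neq b_i$ for every $i$; hence $\tau_0,\tau_1,\dots,\tau_n$ all lie in one block $\boldsymbol D$ of $\boldsymbol\rho$. By Lemma~\ref{DeriveRhoTilde} the WNU $\boldsymbol f$ preserves $\widetilde{\boldsymbol\rho}$, and being idempotent it preserves $\boldsymbol D$ by Lemma~\ref{preserveConnectedComponent}. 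The key observation is then: for any $i$ and any $w=\boldsymbol f^{(i)}(e_1,\dots,e_m)\in C_i$ with $e_j\in\{a_i,b_i\}$, feeding $\boldsymbol f$ the tuples $\delta_1,\dots,\delta_m$, where $\delta_j=\tau_0$ if $e_j=a_i$ and $\delta_j=\tau_i$ if $e_j=b_i$, produces an element of $\boldsymbol D$ which, computed coordinatewise using idempotence in the coordinates $i'\neq i$, equals $(a_1,\dots,a_{i-1},w,a_{i+1},\dots,a_n)$. So every such ``axis tuple'' lies in the same block $\boldsymbol D$ as $\tau_0$.

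Finally I would assemble the pieces. Let $\boldsymbol B=B_1\times\dots\times B_n$ be the block of $\boldsymbol\sigma$ containing $\Omega(\tau_0)$; it is of this product form by Theorem~\ref{CoreBlockIsLinear}. Since $\tau_0$ and each axis tuple $(a_1,\dots,w,\dots,a_n)$ (for $w\in C_i$) lie in $\boldsymbol D$, their $\Omega$-images lie in one block of $\boldsymbol\sigma$, namely $\boldsymbol B$; reading off the $i$-th coordinate gives $\Omega^{(i)}(w)\in B_i$ for all $i$ and all $w\in C_i$. Hence $\Omega$ maps $C_1\times\dots\times C_n$ into $B_1\times\dots\times B_n=\boldsymbol B$. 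The one non-routine point, where I expect the real work to lie, is that the full cube $C_1\times\dots\times C_n$ need not itself sit inside a single block of $\widetilde{\boldsymbol\rho}$ — only its axes through $\tau_0$ do — so the argument must proceed coordinate by coordinate and then use that blocks of a core with full pattern are Cartesian products (Theorem~\ref{CoreBlockIsLinear}) to recombine the cube inside $\boldsymbol B$.
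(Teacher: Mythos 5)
Your proof is correct, and its overall skeleton matches the paper's: show that each ``axis tuple'' $(a_1,\dots,a_{i-1},w,a_{i+1},\dots,a_n)$ with $w\in \boldsymbol f^{(i)}(\{a_i,b_i\}^m)$ lies in the same connected component of $\widetilde{\boldsymbol\rho}$ as $(a_1,\dots,a_n)$, push this through $\Omega$ (which sends $\widetilde{\boldsymbol\rho}$ into $\widetilde{\boldsymbol\sigma}$ and hence components into blocks), and then recombine the full cube using the Cartesian-product shape of the blocks of the core (Theorem~\ref{CoreBlockIsCubic}). Where you genuinely diverge is in how the connectivity of the axis tuples is established. The paper builds an explicit zigzag path $\delta_0-\beta_1-\delta_1-\cdots-\delta_j$ whose vertices mix the first two coordinates, and must separately check that each intermediate $\beta_i$, if not in $\boldsymbol\rho$, is an essential tuple by exhibiting a concrete witness assembled from $\boldsymbol f$ and the $b_k$'s. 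You instead note that $\tau_0$ and $\tau_i$ lie in one block $\boldsymbol D$ of $\widetilde{\boldsymbol\rho}$, that $\boldsymbol f$ preserves $\boldsymbol D$ by Lemma~\ref{DeriveRhoTilde} together with Lemma~\ref{preserveConnectedComponent}, and that applying $\boldsymbol f$ to columns drawn from $\{\tau_0,\tau_i\}$ produces the axis tuple directly (idempotence in the coordinates other than $i$). This is shorter and eliminates the essentiality verification entirely; it costs only the invocation of Lemma~\ref{preserveConnectedComponent} for $\widetilde{\boldsymbol\rho}$, a move the paper itself makes freely elsewhere in the same section. Your closing caveat --- that the cube $C_1\times\dots\times C_n$ need not lie in one block of $\widetilde{\boldsymbol\rho}$, so one must argue per coordinate and only recombine inside $\boldsymbol\sigma$ --- is exactly the point on which the paper's proof also turns.
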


\begin{proof}

First, for every $\alpha \in \{a_1,b_1\}^{m}$ we show that
there exist $c_{2},\ldots,c_{n}\in A$ such that
$(\boldsymbol f^{(1)}(\alpha), c_{2},\ldots,c_{n})\in \boldsymbol\rho$,
and
the tuples
$(\boldsymbol f^{(1)}(\alpha), c_{2},\ldots,c_{n})$, $(a_{1},\ldots,a_{n})$ are connected in $\widetilde{\boldsymbol\rho}$.
Without loss of generality we can assume that $\alpha = b_{1}^{j} a_{1}^{m-j}$.
Put
\begin{align*}
\beta_{i} =& (\boldsymbol f^{(1)}(b_{1}^{i-1} a_{1}^{m-i+1}), \boldsymbol f^{(2)}(a_{2}^{i} b_{2}^{m-i}),a_{3},\ldots,a_{n}),\\
\delta_{i} =& (\boldsymbol f^{(1)}(b_{1}^{i} a_{1}^{m-i}), \boldsymbol f^{(2)}(a_{2}^{i} b_{2}^{m-i}),a_{3},\ldots,a_{n}).
\end{align*}
We can easily check that $\delta_{i}\in \boldsymbol\rho$.
If $\beta_{i}\notin \boldsymbol\rho$, then the tuple
\[(\boldsymbol f^{(1)}(b_{1}^{i} a_{1}^{m-i}), \boldsymbol f^{(2)}(a_{2}^{i-1} b_{2}^{m-i+1}),
\boldsymbol f^{(3)}(a_{3}^{m-1}b_{3}),\ldots,\boldsymbol f^{(n)}(a_{n}^{m-1}b_{n}))\]
witnesses that $\beta_{i}$ is an essential tuple.
Therefore we have a path 
$(a_{1},\ldots,a_{n}) - \delta_{0} - \beta_{1} - \delta_{1} - \beta_{2} - \delta_{2} -\dots - \beta_{j} - \delta_j$.
By the definition, $\delta_{j}(1) = \boldsymbol f^{(1)}(\alpha),$ which completes the first step.

Therefore, $\Omega^{(1)}(\boldsymbol f^{(1)}(\alpha))\in B_{1}$.
In the same way we prove that for every $i\in\{1,2,\ldots,n\}$
and $\alpha \in \{a_i,b_i\}^{m}$
we have $\Omega^{(i)}(\boldsymbol f^{(i)}(\alpha))\in B_{i}$.
This completes the proof.
\end{proof}

\begin{lem}\label{theOrderOfElementInGroupDividesM}
Suppose $(A_{1}\times \dots \times A_{n})$ is a block of $\boldsymbol\sigma$ defined by
\[(A_{1}\times \dots \times A_{n})\cap\boldsymbol\sigma = \{(x_{1},\ldots,x_{n})\mid \phi_{1}(x_{1})+\ldots+\phi_{n}(x_{n}) = 0\},\]
where $\phi_{i}$ is a bijective mapping from $A_{i}$ to $G$, and $(G;+)$ is an abelian group.
Then the order of every element in $G$ divides $m-1$.
\end{lem}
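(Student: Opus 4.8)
The plan is to transport the additive structure of the block to $G$, use Lemma~\ref{WNUIsLinearInAnyBlock} to read off the shape of the weak near-unanimity operation there, and then extract the divisibility from idempotence together with the ``stable translation'' property forced on $\boldsymbol f$ by Lemma~\ref{findbetterWNU}. So, write $\boldsymbol B=A_1\times\dots\times A_n$ and let $\boldsymbol\mu=\{(y_1,\dots,y_n)\in G^n\mid y_1+\dots+y_n=0\}$, so that the coordinatewise bijection $(\phi_1,\dots,\phi_n)$ identifies $\boldsymbol\sigma\cap\boldsymbol B$ with $\boldsymbol\mu$. Since $\boldsymbol\sigma$ is a core, by the proof of Lemma~\ref{coreispreservedbyWNU} there is a WNU $\boldsymbol g$ of arity $m$ preserving $\boldsymbol\sigma$ which agrees with $\Psi\circ\boldsymbol f$ on $\proj\boldsymbol\sigma$, where $\Psi$ is a restricting vector-function; by Lemma~\ref{DeriveRhoTilde} and Lemma~\ref{preserveConnectedComponent}, $\boldsymbol g$ preserves the block $\boldsymbol B$, hence $\boldsymbol\sigma\cap\boldsymbol B$. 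Transporting $\boldsymbol g|_{\boldsymbol B}$ through the $\phi_i$ yields an idempotent WNU $\boldsymbol g'$ of arity $m$ on $G$ preserving $\boldsymbol\mu$, and Lemma~\ref{WNUIsLinearInAnyBlock} gives an integer $t\ge 1$ with $\boldsymbol g'^{(j)}(y_1,\dots,y_m)=t\cdot(y_1+\dots+y_m)$ for every coordinate $j$.

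Idempotence of $\boldsymbol g'^{(j)}$ then reads $t\cdot m\cdot y=y$ for all $y\in G$, so the order of every element of $G$ divides $tm-1$; in particular this order is coprime to $t$. It remains to see $t\equiv 1$ modulo the exponent of $G$. For this, note that $\alpha:=(\phi_1^{-1}(0),\dots,\phi_n^{-1}(0))$ lies in $\boldsymbol\sigma\cap\boldsymbol B$ (it satisfies $\sum_i\phi_i=0$), so by the property imposed on $\boldsymbol f$ via Lemma~\ref{findbetterWNU} the unary vector-function $\boldsymbol h(x)=\boldsymbol f(\alpha,\dots,\alpha,x)$ satisfies $\boldsymbol h\circ\boldsymbol h=\boldsymbol h$; pushing this down to the block-level operation $x\mapsto\boldsymbol g(\alpha,\dots,\alpha,x)$ and transporting, its $j$-th component becomes $y\mapsto t\cdot y$, whose idempotence forces $t^2y=ty$, i.e.\ the order of every $y\in G$ divides $t(t-1)$, hence (being coprime to $t$) divides $t-1$. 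Thus the order of each $y\in G$ divides both $tm-1$ and $m(t-1)$, so it divides $(tm-1)-m(t-1)=m-1$, which is the claim.

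The step I expect to be the real obstacle is pushing $\boldsymbol h\circ\boldsymbol h=\boldsymbol h$ from $\boldsymbol f$ on $\boldsymbol\rho$ down to the block operation on $\boldsymbol B$: composing with $\Psi$ is unavoidable, since $\boldsymbol f$ preserves $\boldsymbol\rho$ but not $\boldsymbol\sigma$, and can a priori spoil idempotence. I would handle it by identifying the connected component $\boldsymbol D$ of $\widetilde{\boldsymbol\rho}$ that contains $\boldsymbol B$, checking that $\Psi$ maps $\boldsymbol D$ onto $\boldsymbol B$ while fixing $\boldsymbol B$ pointwise and that $\boldsymbol h$ (being an idempotent unary vector-function preserving $\boldsymbol\rho$) preserves $\boldsymbol D$ by Lemma~\ref{preserveConnectedComponent}, and then using strong richness of $\boldsymbol\sigma\cap\boldsymbol B$ (Corollary~\ref{corAboutTuplesFromBlock}, Theorem~\ref{MainForFullPattern}) to conclude that $\Psi\circ\boldsymbol h$ restricted to $\boldsymbol B$ is the correct retraction. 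The remaining bookkeeping, and the elementary arithmetic of the two congruences $tm\equiv1$ and $t^2\equiv t$, are routine.
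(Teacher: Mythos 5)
Your reduction to showing $t\equiv 1$ modulo the exponent of $G$ is sound as far as it goes: the first half (a WNU of arity $m$ preserving the core, linearity on the block via Lemma~\ref{WNUIsLinearInAnyBlock}, and $tm\equiv 1$ from idempotence) is fine, and the closing arithmetic $(tm-1)-m(t-1)=m-1$ is correct. The gap is exactly where you suspect it, and your sketch does not close it. The identity $\boldsymbol h\circ\boldsymbol h=\boldsymbol h$ holds for $\boldsymbol h(x)=\boldsymbol f(\alpha,\dots,\alpha,x)$ as a self-map of $A^{n}$, but the map whose transport to $G$ is $y\mapsto t\cdot y$ is $\Psi\circ\boldsymbol h$ restricted to the block, and idempotence does not survive composition with the retraction $\Psi$: writing $u=\boldsymbol h^{(j)}(x)$ for $x\in A_j$, one has $g_j(g_j(x))=\Psi^{(j)}(\boldsymbol h^{(j)}(\Psi^{(j)}(u)))$, which collapses to $g_j(x)$ only if $u$ already lies in $\Psi^{(j)}(A)$, and there is no reason for $\boldsymbol f$ applied to core elements to land back in $\proj\boldsymbol\sigma$. (At the level of bare unary maps the transfer is simply false: there are idempotent $h$ and idempotent retractions $\psi$ for which $\psi\circ h$ acts as a transposition on the retract.) Your proposed repair --- that $\Psi$ fixes $\boldsymbol B$ pointwise, that $\boldsymbol h$ preserves the ambient component of $\widetilde{\boldsymbol\rho}$, and that strong richness makes $\Psi\circ\boldsymbol h$ ``the correct retraction'' --- restates the desired conclusion rather than proving it; none of those facts control where $\boldsymbol h^{(j)}$ sends $A_j$ inside the (possibly larger) component of $\widetilde{\boldsymbol\rho}$. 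A telltale sign is that your argument never uses the specific form $m=m_0^{|A|!}$, whereas the statement is calibrated to exactly that choice.

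The paper's proof goes around this entirely. It applies Lemma~\ref{WNUIsLinearInAnyBlock} to the arity-$m_0$ WNU of the core, extracts from idempotence only the congruence $t_0m_0\equiv 1$ and hence the coprimality of $m_0$ with the order $k$ of any element of $G$, and then observes that $k\le |G|\le |A|$ forces $\phi(k)\mid |A|!$, so Fermat--Euler gives $m-1=m_0^{|A|!}-1\equiv 0 \pmod k$. If you want to salvage your route, you would have to prove that the block-level unary map $y\mapsto t\cdot y$ is the identity on $G$ --- but given $tm\equiv 1$ that assertion is equivalent to the lemma itself, so some independent input (such as the Fermat--Euler computation with the specific $m$) is unavoidable.
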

\begin{proof}
To simplify explanation we assume that $A_{1} = G$ and $\phi_{1}(x) = x$.
By Lemma~\ref{coreispreservedbyWNU},
we can find a WNU $\boldsymbol {f_0'}$ of arity $m_0$
preserving $\boldsymbol\sigma$.
Using Lemma~\ref{DeriveRhoTilde} and Lemma~\ref{preserveConnectedComponent}, we show that $\boldsymbol {f_0'}$
preserves the block
$A_1\times\dots\times A_n$.
Then, by Lemma~\ref{WNUIsLinearInAnyBlock} there exists an integer $t_0$ such that
\[\boldsymbol {f_0'}^{(1)}(x_{1},\ldots,x_{m_0}) = t_0\cdot x_{1}+ t_0\cdot x_2+ \ldots + t_0\cdot x_{m_0}.\]
Let $k$ be the order of an element in the group $G$.
Since $\boldsymbol {f_0'}$ is idempotent,
$m_0$ and $k$ are coprime.
Recall that $m = m_{0}^{|A|!}$.
By Fermat-Euler theorem we have $m_{0}^{\phi(k)}\equiv 1 (\mod k)$,
where $\phi$ is the Euler's totient function.
Hence $m-1 = m_{0}^{|A|!}-1$ is divisible by $k$. This completes the proof.
\end{proof}

\begin{lem}\label{MapsBlockToBlock}

Suppose $\alpha = (a_{1},\ldots,a_{n}),\beta = (a_1,\ldots,a_{j-1},b_{j},a_{j+1},\ldots,a_{n})$,
$\alpha,\beta\in\widetilde{\boldsymbol\rho}$, $\alpha\notin\boldsymbol\rho$.
A vector-function $\Omega$ maps $\alpha$ to
the block $A_{1}\times \dots \times A_{n}$ of $\boldsymbol\sigma$,
$\Omega(\boldsymbol\rho)\subseteq\boldsymbol\sigma$,
and
$(A_{1}\times \dots \times A_{n})
\cap\boldsymbol\sigma = \{(x_{1},\ldots,x_{n})\mid \phi_{1}(x_{1})+\ldots+\phi_{n}(x_{n}) = 0\},$
where $\phi_{i}$ is a bijective mapping from $A_{i}$ to $G$, and $(G;+)$ is an abelian group.
Then for every $(c_{1},c_{2},\ldots,c_{m})\in\{a_{j},b_{j}\}^{m}$ we have
\[\Omega^{(j)}(\boldsymbol f^{(j)}(c_{1},c_{2},\ldots,c_{m})) =
\phi_{j}^{-1}(\phi_{j}(\Omega^{(j)}(c_{1}))+\dots+\phi_{j}(\Omega^{(j)}(c_{m}))).\]

\end{lem}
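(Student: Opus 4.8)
Without loss of generality take $j=1$. I would begin with two observations coming from pp-definability. By Lemma~\ref{DeriveRhoTilde} the relation $\widetilde{\boldsymbol\rho}$ is pp-definable from $\boldsymbol\rho$, so $\boldsymbol f$ preserves $\widetilde{\boldsymbol\rho}$; the same formula shows $\Omega(\widetilde{\boldsymbol\rho})\subseteq\widetilde{\boldsymbol\sigma}$, since given witnesses for a tuple in $\widetilde{\boldsymbol\rho}$ one applies $\Omega$ coordinatewise and uses $\Omega(\boldsymbol\rho)\subseteq\boldsymbol\sigma$. Now fix $(c_1,\ldots,c_m)\in\{a_1,b_1\}^m$ and put $\gamma^{(i)}=\alpha$ when $c_i=a_1$ and $\gamma^{(i)}=\beta$ when $c_i=b_1$. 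Then $\zeta:=\boldsymbol f(\gamma^{(1)},\ldots,\gamma^{(m)})\in\widetilde{\boldsymbol\rho}$, and because $\boldsymbol f^{(i)}(a_i,\ldots,a_i)=a_i$ for $i\ge 2$ we get $\zeta=(\boldsymbol f^{(1)}(c_1,\ldots,c_m),a_2,\ldots,a_n)$. Hence $\Omega(\zeta)\in\widetilde{\boldsymbol\sigma}$, it agrees with $\Omega(\alpha)$ on coordinates $2,\ldots,n$, and since $\Omega(\alpha)$ lies in the block $\boldsymbol B=A_1\times\dots\times A_n$ (a connected component of $\widetilde{\boldsymbol\sigma}$) and $\Omega(\zeta)$ differs from it in at most the first coordinate, $\Omega(\zeta)\in\boldsymbol B$. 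Taking all $c_i$ equal gives $\Omega^{(1)}(a_1),\Omega^{(1)}(b_1)\in A_1$, so both sides of the asserted identity lie in $A_1$.

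Next I would move the WNU to the core. By Lemma~\ref{coreispreservedbyWNU} there is a WNU $\boldsymbol g$ of arity $m$ preserving $\boldsymbol\sigma$ (concretely, $\boldsymbol g=\Psi\circ\boldsymbol f$ for a restricting vector-function $\Psi$ of $\boldsymbol\sigma$, extended to a WNU on $A$). By Lemma~\ref{DeriveRhoTilde} and Lemma~\ref{preserveConnectedComponent}, $\boldsymbol g$ preserves $\boldsymbol B$, hence preserves $\boldsymbol\sigma\cap\boldsymbol B$, which via $\phi_1,\ldots,\phi_n$ is the linear relation $\{x_1+\dots+x_n=0\}$ over $G$. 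Lemma~\ref{WNUIsLinearInAnyBlock} then gives that $\boldsymbol g^{(1)}$ acts on $\boldsymbol B$ as $(d_1,\ldots,d_m)\mapsto\phi_1^{-1}(t\cdot\phi_1(d_1)+\dots+t\cdot\phi_1(d_m))$ for some integer $t$; idempotence of $\boldsymbol g^{(1)}$ forces $mt\equiv 1$ modulo the exponent of $G$, and by Lemma~\ref{theOrderOfElementInGroupDividesM} that exponent divides $m-1$, so $t\equiv 1$. Thus $\boldsymbol g^{(1)}(d_1,\ldots,d_m)=\phi_1^{-1}(\phi_1(d_1)+\dots+\phi_1(d_m))$ on $A_1$, so the right-hand side of the lemma is exactly $\boldsymbol g^{(1)}(\Omega^{(1)}(c_1),\ldots,\Omega^{(1)}(c_m))$, and it remains to prove the intertwining identity $\Omega^{(1)}(\boldsymbol f^{(1)}(c_1,\ldots,c_m))=\boldsymbol g^{(1)}(\Omega^{(1)}(c_1),\ldots,\Omega^{(1)}(c_m))$.

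To prove this I would compare $\Omega(\zeta)$ with $\boldsymbol g(\Omega(\gamma^{(1)}),\ldots,\Omega(\gamma^{(m)}))$. Both lie in $\boldsymbol B$ and, by the idempotence computation above, both have the same coordinates $2,\ldots,n$, namely $\Omega^{(i)}(a_i)$; their first coordinates are precisely the two sides of the identity. In the linear block $\boldsymbol\sigma\cap\boldsymbol B$ the tuple with a prescribed tail $(x_2,\ldots,x_n)$ is unique, so it suffices to show that both of these tuples actually lie in $\boldsymbol\sigma$. For $\boldsymbol g(\Omega(\gamma^{(1)}),\ldots,\Omega(\gamma^{(m)}))$ this follows once each $\Omega(\gamma^{(i)})\in\boldsymbol\sigma$; for $\Omega(\zeta)$ it amounts to $\zeta\in\boldsymbol\rho$. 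Here I would use the essentiality of $\alpha$ together with Lemma~\ref{oneBlockForEverything}: picking a tuple witnessing that $\alpha$ is essential for $\boldsymbol\rho$ (with first coordinate $b_1$ when $\beta\in\boldsymbol\rho$; the general case needs a small extra step), Lemma~\ref{oneBlockForEverything} shows $\Omega$ sends the whole box $\boldsymbol f^{(1)}(\{a_1,b_1\}^m)\times\dots\times\boldsymbol f^{(n)}(\{a_n,b_n\}^m)$ into $\boldsymbol B$, and the path-tracking already carried out in its proof, combined with the fact that every tuple of a linear block is either in $\boldsymbol\sigma$ or essential for $\boldsymbol\sigma$, lets one force the images $\Omega(\zeta)$ and $\Omega(\gamma^{(i)})$ onto the $\boldsymbol\sigma$-level of $\boldsymbol B$.

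The step I expect to be the main obstacle is this last point: pinning down the coset, i.e.\ showing the relevant images land in $\boldsymbol\sigma$ rather than on a nonzero level of the linear block. The difficulty is that $\alpha\notin\boldsymbol\rho$ gives no a priori control over whether $\Omega(\alpha)\in\boldsymbol\sigma$, so one cannot directly invoke ``$\boldsymbol g$ preserves $\boldsymbol\sigma$'' together with uniqueness of the tail; one has to combine the essentiality of $\alpha$ with the rigidity of the linear block (off $\boldsymbol\sigma$ every tuple of $\boldsymbol B$ is essential, and essential tuples connect back to $\boldsymbol\sigma$ in a controlled way), and this is where the bulk of the bookkeeping lies.
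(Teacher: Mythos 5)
There is a genuine gap, and it is exactly the one you flag at the end: your argument reduces the lemma to the ``intertwining identity''
$\Omega^{(1)}(\boldsymbol f^{(1)}(c_1,\ldots,c_m))=\boldsymbol g^{(1)}(\Omega^{(1)}(c_1),\ldots,\Omega^{(1)}(c_m))$,
and your proposed proof of that identity (uniqueness of the element of the \emph{linear} block with a prescribed tail) requires both compared tuples to lie in $\boldsymbol\sigma\cap\boldsymbol B$, not merely in $\boldsymbol B$. That membership is precisely what you cannot get: $\alpha\notin\boldsymbol\rho$ by hypothesis, so $\zeta=\boldsymbol f(\gamma^{(1)},\ldots,\gamma^{(m)})$ need not be in $\boldsymbol\rho$ (indeed $\zeta=\alpha$ when all $c_i=a_1$), and $\Omega(\alpha)$ need not be in $\boldsymbol\sigma$. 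Knowing that both tuples lie in $\boldsymbol B=A_1\times\dots\times A_n$ and agree on coordinates $2,\ldots,n$ gives nothing, since every element of $A_1$ extends that tail inside $\boldsymbol B$. Your closing remark that one should ``combine essentiality with the rigidity of the linear block'' names the obstacle but does not supply an argument, so the proof is not complete. (Your preliminary reductions --- $\Omega^{(1)}(a_1),\Omega^{(1)}(b_1)\in A_1$, $\boldsymbol g^{(1)}$ acting as the plain sum via Lemmas~\ref{WNUIsLinearInAnyBlock} and~\ref{theOrderOfElementInGroupDividesM} --- are fine and agree with what the paper uses.)

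The paper closes this gap by never comparing $\Omega(\zeta)$ with a $\boldsymbol g$-image of tuples built from $\alpha$ and $\beta$ themselves. Instead it uses the witnessing tuple $(b_1,\ldots,b_n)$ for the essentiality of $\alpha$ to manufacture auxiliary tuples $\alpha_i,\beta_i$ that are guaranteed to lie in $\boldsymbol\rho$: one perturbs coordinates $2$ and $3$ simultaneously with coordinate $1$ (e.g.\ $\beta_i=(\boldsymbol f^{(1)}(b_1^i a_1^{m-i}),\boldsymbol f^{(2)}(a_2^i b_2^{m-i}),a_3,\ldots,a_n)$), so that each is an $\boldsymbol f$-image of columns from $\boldsymbol\rho$. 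By Lemma~\ref{oneBlockForEverything} their $\Omega$-images land in the block $\Delta$, hence satisfy the linear equation; subtracting the equations for consecutive $i$ cancels the unknown contributions of the other coordinates and isolates the increment $\boldsymbol g^{(1)}(b_1^i a_1^{m-i})-\boldsymbol g^{(1)}(b_1^{i-1}a_1^{m-i+1})$ as a constant, after which setting $i=m$ and invoking Lemma~\ref{theOrderOfElementInGroupDividesM} identifies that constant with $\Omega^{(1)}(b_1)-\Omega^{(1)}(a_1)$. The case where $\beta$ is also essential is then handled by a second round of the same subtraction trick, feeding in the already-established formula for coordinates $2$ and $3$. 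If you want to salvage your plan, this ``perturb the other coordinates so everything stays in $\boldsymbol\rho$, then difference the linear equations'' device is the missing ingredient.
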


\begin{proof}
To simplify explanation we assume that $A_{i} = G$ and $\phi_{i}(x) = x$ for every $i\in\{1,2,\ldots,n\}$.
Denote $\boldsymbol g(x_{1},\ldots,x_{m}) = \Omega(\boldsymbol f(x_{1},\ldots,x_{m}))$,
$\Delta = A_{1}\times \dots \times A_{n}$.
We consider two cases.

\textbf{Case 1}. Assume that $\beta\in\boldsymbol\rho$.
Without loss of generality assume that $j=1$.
It is sufficient to prove that for
every $i\in \{0,1,\ldots,m\}$
we have
\[\Omega^{(1)}(\boldsymbol f^{(1)}(\underbrace{b_{1},\ldots,b_{1}}_i,a_{1},\ldots,a_{1})) =
i\cdot \Omega^{(1)}(b_{1}) +(m-i)\cdot \Omega^{(1)}(a_{1}).\]
Let the tuple $(b_1,\ldots,b_n)$ witness that
$\alpha$ is an essential tuple.
Let
\begin{align*}
\alpha_{i} &= (\boldsymbol f^{(1)}(b_{1}^{i-1}a_{1}^{m-i+1}),
\boldsymbol f^{(2)}(a_{2}^{i}b_{2}^{m-i}),\boldsymbol f^{(3)}(a_{3}^{m-1}b_{3}),a_{4},\ldots,a_n),\\
\beta_{i} &= (\boldsymbol f^{(1)}(b_{1}^{i}a_{1}^{m-i}),\boldsymbol f^{(2)}(a_{2}^{i}b_{2}^{m-i}),a_3,\ldots,a_n).
\end{align*}
By Lemma~\ref{oneBlockForEverything}, $\Omega$ maps tuples
$\alpha_{i}$ and $\beta_{i}$ to $\Delta$. Since $\alpha_{i},\beta_{i}\in\boldsymbol\rho$, we have
\[\boldsymbol g^{(1)}(b_1^{i-1} a_{1}^{m-i+1})+
\boldsymbol g^{(2)}(a_{2}^{i}b_{2}^{m-i})+
\boldsymbol g^{(3)}(a_{3}^{m-1}b_{3})+
\boldsymbol g^{(4)}(a_{4}^{m})+\ldots +\boldsymbol g^{(n)}(a_{n}^{m})=0,\]
\[\boldsymbol g^{(1)}(b_1^{i} a_{1}^{m-i})+
\boldsymbol g^{(2)}(a_{2}^{i}b_{2}^{m-i})+
\boldsymbol g^{(3)}(a_{3}^{m})+
\boldsymbol g^{(4)}(a_{4}^{m})+\ldots +\boldsymbol g^{(n)}(a_{n}^{m})=0.\]
Subtracting these equations we obtain the following for every $i$
\[\boldsymbol g^{(1)}(b_1^{i} a_{1}^{m-i})-\boldsymbol g^{(1)}(b_1^{i-1} a_{1}^{m-i+1}) =
\boldsymbol g^{(3)}(a_{3}^{m-1}b_{3})-\boldsymbol g^{(3)}(a_{3}^{m}).\]
Therefore
\begin{equation}\label{eq1}
\boldsymbol g^{(1)}(b_1^{i} a_{1}^{m-i}) = \Omega^{(1)}(a_{1}) + i\cdot (\boldsymbol g^{(3)}(a_{3}^{m-1}b_{3})-\Omega^{(3)}(a_3)).
\end{equation}

Obviously
$\Omega\left(\begin{smallmatrix}
\boldsymbol f^{(1)}(a_{1}^{m-1}b_{1}^{1})\\
\boldsymbol f^{(2)}(b_{2}^{m-1}a_{2}^{1})\\
a_3\\
a_4\\
\\
\dots\\
a_n
\end{smallmatrix}\right),
\Omega\left(\begin{smallmatrix}
a_1\\
\boldsymbol f^{(2)}(b_{2}^{m-1}a_{2}^{1})\\
\boldsymbol f^{(3)}(a_{3}^{m-1}b_{3}^{1})\\
a_4\\
\\
\dots\\
a_n
\end{smallmatrix}\right)\in \boldsymbol\sigma
$ and, by Lemma~\ref{oneBlockForEverything}, these tuples belong to the block $\Delta$.
Therefore we can subtract the equations corresponding to these tuples to get
\[\boldsymbol g^{(1)}(a_{1}^{m-1}b_{1})-\Omega^{(1)}(a_{1}) =
\boldsymbol g^{(3)}(a_{3}^{m-1}b_{3})-\Omega^{(3)}(a_{3}).\]
Combining this with the equation~(\ref{eq1})
we get the following equation 
\begin{equation}\label{eq11}
\boldsymbol g^{(1)}(b_1^{i} a_{1}^{m-i}) = \Omega^{(1)}(a_{1}) + i\cdot (g^{(1)}(a_{1}^{m-1}b_{1})-\Omega^{(1)}(a_1)).
\end{equation}
Put $i= m$.
By Lemma~\ref{theOrderOfElementInGroupDividesM}
the order of any element in $G$ divides $m-1$, hence
\[\Omega^{(1)}(b_{1}) = \Omega^{(1)}(a_{1})+ m\cdot (\boldsymbol g^{(1)}(a_{1}^{m-1}b_{1})-\Omega^{(1)}(a_1))=
\boldsymbol g^{(1)}(a_{1}^{m-1}b_{1}).\]
By Equation~(\ref{eq11}) we get
$\boldsymbol g^{(1)}(b_1^{i} a_{1}^{m-i}) = i\cdot \Omega^{(1)}(b_1) + (m-i)\cdot\Omega^{(1)}(a_1),$
which completes the first case.

\textbf{Case 2}. Assume that $\alpha$ and $\beta$ are essential tuples for $\boldsymbol\rho$.
Without loss of generality, assume that $j=1$.
Since $\alpha$ and $\beta$ are essential,
we can find $b_{2}$ and $b_{3}$ such that
\[(a_{1},b_{2},a_{3},\ldots,a_{n}),(b_{1},a_{2},b_{3},a_{4},\ldots,a_{n})\in \boldsymbol\rho.\]

Obviously, it is sufficient to prove that for
every $i\in \{0,1,\ldots,n\}$
we have
\[\Omega^{(1)}(\boldsymbol f^{(1)}(\underbrace{a_{1},\ldots,a_{1}}_i,b_{1},\ldots,b_{1})) =
i\cdot \Omega^{(1)}(a_{1}) +(m-i)\cdot \Omega^{(1)}(b_{1}).\]
Let
$\alpha_{i} = (\boldsymbol f^{(1)}(a_{1}^{i}b_{1}^{m-i}),\boldsymbol f^{(2)}(b_{2}^{i}a_{2}^{m-i}),\boldsymbol f^{(3)}(a_{3}^{i}b_{3}^{m-i}),a_{4},\ldots,a_n).$
We know from Lemma~\ref{oneBlockForEverything} that $\Omega$ maps tuples
$\alpha_{i}$ to $\Delta$, and $\alpha_{i}\in \boldsymbol\rho$,
therefore $\Omega(\alpha_{i})\in \boldsymbol\sigma$, which means that
\begin{equation}\label{eq2}
\boldsymbol g^{(1)}(a_{1}^{i}b_{1}^{m-i})+
\boldsymbol g^{(2)}(b_{2}^{i}a_{2}^{m-i})+
\boldsymbol g^{(3)}(a_{3}^{i}b_{3}^{m-i})+
\boldsymbol g^{(4)}(a_{4}^{m})+\ldots +\boldsymbol g^{(n)}(a_{n}^{m})=0.
\end{equation}
Since we already proved case 1, we
have
\begin{align*}
\boldsymbol g^{(2)}(b_{2}^{i}a_{2}^{m-i}) &= i\cdot \Omega^{(2)}(b_{2}) + (m-i)\cdot \Omega^{(2)}(a_{2}),\\
\boldsymbol g^{(3)}(a_{3}^{i}b_{3}^{m-i}) &= i\cdot \Omega^{(3)}(a_{3}) + (m-i)\cdot \Omega^{(3)}(b_{3}).
\end{align*}
Subtracting the equation~(\ref{eq2}) for $i$ and $i-1$, and using the above equations we get
\[\boldsymbol g^{(1)}(a_{1}^{i}b_{1}^{m-i}) - \boldsymbol g^{(1)}(a_{1}^{i-1}b_{1}^{m-i+1}) =
\Omega^{(2)}(a_{2})- \Omega^{(2)}(b_{2})+\Omega^{(3)}(b_{3}) -\Omega^{(3)}(a_{3}).\]
Therefore, for every $i$ we have
\[\boldsymbol g^{(1)}(a_{1}^{i}b_{1}^{m-i}) = \Omega^{(1)}(b_{1}) + i \cdot
(\Omega^{(2)}(a_{2})- \Omega^{(2)}(b_{2})+\Omega^{(3)}(b_{3}) -\Omega^{(3)}(a_{3})).\]
Since $(a_1,b_2,a_3,a_4,\ldots,a_n)$,
$(b_1,a_2,b_3,a_4,\ldots,a_n)\in\boldsymbol\rho$, we have
 \[\Omega^{(1)}(a_1)+\Omega^{(2)}(b_{2}) + \Omega^{(3)}(a_{3}) =
\Omega^{(1)}(b_1)+\Omega^{(2)}(a_{2}) + \Omega^{(3)}(b_{3}) 
.\]
By Lemma~\ref{theOrderOfElementInGroupDividesM}
the order of any element in $G$ divides $m-1$, hence
\[\boldsymbol g^{(1)}(a_1^{i} b_{1}^{m-i}) = \Omega^{(1)}(b_{1}) + i\cdot (\Omega^{(1)}(a_{1}) - \Omega^{(1)}(b_{1}))=
i\cdot \Omega^{(1)}(a_{1})+(m-i)\cdot \Omega^{(1)}(b_{1}).\qedhere\]
\end{proof}

\begin{cor}\label{aaaabCor}

Suppose $\alpha = (a_{1},\ldots,a_{n}),\beta = (a_1,\ldots,a_{j-1},b_{j},a_{j+1},\ldots,a_{n})$,
$\alpha,\beta\in\widetilde{\boldsymbol\rho}$, $\alpha\notin\boldsymbol\rho$ or $\beta\notin\boldsymbol\rho$,
a vector-function $\Omega$ maps $\alpha$, $\beta$ to a nontrivial block of $\boldsymbol\sigma$ containing a key tuple and
$\Omega(\boldsymbol\rho)\subseteq\boldsymbol\sigma$.
Then
$\Omega(\boldsymbol f^{(j)}(a_j,\ldots,a_j,b_j)) = \Omega(b_{j})$.

\end{cor}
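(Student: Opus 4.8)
The plan is to read this off from Lemma~\ref{MapsBlockToBlock} together with Lemma~\ref{theOrderOfElementInGroupDividesM}, so essentially no new work is needed. First I would observe that the hypotheses of the corollary are precisely those of Lemma~\ref{MapsBlockToBlock}: we have $\alpha,\beta\in\widetilde{\boldsymbol\rho}$ with $\alpha\notin\boldsymbol\rho$, the vector-function $\Omega$ satisfies $\Omega(\boldsymbol\rho)\subseteq\boldsymbol\sigma$, and $\Omega$ sends $\alpha$ (and $\beta$) into a \emph{nontrivial} block of $\boldsymbol\sigma$. By Theorem~\ref{CoreBlockIsLinear}(2), this nontrivial block has the form $A_1\times\dots\times A_n$ with
\[
(A_1\times\dots\times A_n)\cap\boldsymbol\sigma=\{(x_1,\ldots,x_n)\mid \phi_1(x_1)+\ldots+\phi_n(x_n)=0\}
\]
for some abelian group $(G;+)$ whose order is a power of a prime and bijections $\phi_i\colon A_i\to G$. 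Since $\Omega(\alpha)$ and $\Omega(\beta)$ lie in this block, $\Omega^{(j)}(a_j)\in A_j$ and $\Omega^{(j)}(b_j)\in A_j$, so $\phi_j$ may be applied to both.

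Next I would apply Lemma~\ref{MapsBlockToBlock} to the coordinate $j$ with the $m$-tuple $(c_1,\ldots,c_m)=(a_j,\ldots,a_j,b_j)$ having $m-1$ entries equal to $a_j$ (here $m$ is the arity of $\boldsymbol f$ fixed in this section). The lemma yields
\[
\Omega^{(j)}(\boldsymbol f^{(j)}(a_j,\ldots,a_j,b_j))
=\phi_j^{-1}\bigl((m-1)\cdot\phi_j(\Omega^{(j)}(a_j))+\phi_j(\Omega^{(j)}(b_j))\bigr).
\]
It then remains to discard the term $(m-1)\cdot\phi_j(\Omega^{(j)}(a_j))$. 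Because the block $A_1\times\dots\times A_n$ is nontrivial, Lemma~\ref{theOrderOfElementInGroupDividesM} says that the order of every element of $G$ divides $m-1$; hence $(m-1)\cdot g=0$ for all $g\in G$, and in particular $(m-1)\cdot\phi_j(\Omega^{(j)}(a_j))=0$. The right-hand side therefore collapses to $\phi_j^{-1}(\phi_j(\Omega^{(j)}(b_j)))=\Omega^{(j)}(b_j)$, which is exactly the assertion $\Omega(\boldsymbol f^{(j)}(a_j,\ldots,a_j,b_j))=\Omega(b_j)$.

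I do not expect a genuine obstacle here: the substantive content has already been extracted in Lemma~\ref{MapsBlockToBlock} (the linear-over-$G$ behaviour of $\Omega\circ\boldsymbol f$ on a block, which relies on $\boldsymbol f$ satisfying the idempotency condition from Lemma~\ref{findbetterWNU}) and in Lemma~\ref{theOrderOfElementInGroupDividesM} (the exponent of $G$ divides $m-1$). The only things worth double-checking in the write-up are that the block in question is nontrivial — so that it really carries the group structure invoked above — and that the tuple $(a_j,\ldots,a_j,b_j)$ indeed lies in $\{a_j,b_j\}^m$, which is immediate.
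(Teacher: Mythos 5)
Your proposal is correct and follows exactly the paper's own proof: apply Lemma~\ref{MapsBlockToBlock} to the tuple $(a_j,\ldots,a_j,b_j)\in\{a_j,b_j\}^m$ and then use Lemma~\ref{theOrderOfElementInGroupDividesM} to annihilate the $(m-1)\cdot\phi_j(\Omega^{(j)}(a_j))$ term. The only (harmless) difference is that you explicitly invoke Theorem~\ref{CoreBlockIsLinear} to justify the linear structure on the nontrivial block, which the paper leaves implicit.
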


\begin{proof}

Suppose
$\Omega$ maps $\alpha$, $\beta$ to the block $A_{1}\times \dots \times A_{n}$ of $\boldsymbol\sigma$,
and
\[(A_{1}\times \dots \times A_{n})
\cap\boldsymbol\sigma = \{(x_{1},\ldots,x_{n})\mid \phi_{1}(x_{1})+\ldots+\phi_{n}(x_{n}) = 0\},\]
where $\phi_{i}$ is a bijective mapping from $A_{i}$ to $G$, and $(G;+)$ is an abelian group.
Then by Lemma~\ref{MapsBlockToBlock} we have
\[\Omega^{(j)}(\boldsymbol f^{(j)}(a_j,\ldots,a_j,b_j)) =
\phi_{j}^{-1}(\phi_{j}(\Omega^{(j)}(a_{j}))+\dots+\phi_{j}(\Omega^{(j)}(a_{j}))+\phi_{j}(\Omega^{(j)}(b_{j}))).\]
By Lemma~\ref{theOrderOfElementInGroupDividesM}
the order of any element in $G$ divides $m-1$, hence
\[\Omega^{(j)}(\boldsymbol f^{(j)}(a_j,\ldots,a_j,b_j)) =
\phi_{j}^{-1}(\phi_{j}(\Omega^{(j)}(b_{j}))) = \Omega^{(j)}(b_{j}).\qedhere\]
\end{proof}

\begin{lem}

Suppose
$
\left(\begin{smallmatrix}
a_{1} \\a_{2} \\a_{3}\\ \dots \\a_{n}
\end{smallmatrix}\right),
\left(\begin{smallmatrix}
b_{1} \\a_{2} \\a_{3}\\ \dots \\a_{n}
\end{smallmatrix}\right),
\left(\begin{smallmatrix}
a_{1} \\b_{2} \\a_{3}\\ \dots \\a_{n}
\end{smallmatrix}\right)\in \widetilde{\boldsymbol\rho}$.
Then
$\left(\begin{smallmatrix}
b_{1} \\b_{2} \\a_{3}\\ \dots \\a_{n}
\end{smallmatrix}\right)\in \widetilde{\boldsymbol\rho}$

\end{lem}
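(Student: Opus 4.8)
The plan is to reduce, via a restricting vector-function, to the block structure of the core $\boldsymbol\sigma$ (which is described by Theorem~\ref{CoreBlockIsLinear}), and then to transport the conclusion back to $\boldsymbol\rho$ using the transfer lemmas of this subsection. First I would clear away the easy part. Write $\gamma=(b_1,b_2,a_3,\ldots,a_n)$. If $\gamma\in\boldsymbol\rho$ there is nothing to do, so assume $\gamma\notin\boldsymbol\rho$; it then suffices to exhibit, for each coordinate $i$, a value $c$ such that the tuple obtained from $\gamma$ by putting $c$ in coordinate $i$ lies in $\boldsymbol\rho$. For $i=1$ such a $c$ is read off from $\beta_2:=(a_1,b_2,a_3,\ldots,a_n)\in\widetilde{\boldsymbol\rho}$ (take $c=a_1$ if $\beta_2\in\boldsymbol\rho$, and the coordinate-$1$ witness of $\beta_2$ otherwise), and symmetrically for $i=2$ from $\beta_1:=(b_1,a_2,a_3,\ldots,a_n)$. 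Moreover, if all of $\alpha,\beta_1,\beta_2$ lie in $\boldsymbol\rho$ then, since the pattern of $\boldsymbol\rho$ is full, applying the defining property of $1\overset{\boldsymbol\rho}{\sim}2$ to the tuple $\gamma\notin\boldsymbol\rho$ with $a_1,a_2$ as the alternative values forces $\gamma\in\boldsymbol\rho$, a contradiction; hence at least one of $\alpha,\beta_1,\beta_2$ is an essential tuple outside $\boldsymbol\rho$. It remains to treat coordinates $i\in\{3,\ldots,n\}$; by symmetry I fix $i=3$ and set $\vec w=(a_4,\ldots,a_n)$, so the target is a tuple $(b_1,b_2,c,\vec w)\in\boldsymbol\rho$.

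Next I would fix a restricting vector-function $\Omega$ with $\Omega(\boldsymbol\rho)=\boldsymbol\sigma$. Since $\Omega$ preserves $\boldsymbol\rho$ it preserves $\widetilde{\boldsymbol\rho}$ (Lemma~\ref{DeriveRhoTilde}) and sends it into $\widetilde{\boldsymbol\sigma}$; as $\alpha,\beta_1,\beta_2$ differ pairwise in a single coordinate, $\Omega(\alpha),\Omega(\beta_1),\Omega(\beta_2)$ lie in one connected component of $\widetilde{\boldsymbol\sigma}$, which by Theorem~\ref{CoreBlockIsLinear}(1) is a cube $A_1\times\dots\times A_n$, and this cube then also contains $\Omega(\gamma)$. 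I would split according to whether this block of $\boldsymbol\sigma$ is trivial or nontrivial, and whether $\alpha\in\boldsymbol\rho$ or not. In the principal case ($\alpha\notin\boldsymbol\rho$, nontrivial block), Corollary~\ref{aaaabCor}, applied to the pairs $(\alpha,\beta_1)$ and $(\alpha,\beta_2)$, gives the absorption identities $\Omega^{(1)}(\boldsymbol f^{(1)}(a_1,\ldots,a_1,b_1))=\Omega^{(1)}(b_1)$ and $\Omega^{(2)}(\boldsymbol f^{(2)}(a_2,\ldots,a_2,b_2))=\Omega^{(2)}(b_2)$. Choosing tuples $v_1,v_2,v_3\in\boldsymbol\rho$ that agree with $\beta_1,\beta_2,\alpha$ respectively on the coordinates $1,2,4,\ldots,n$ (obtained from the coordinate-$3$ essentiality witnesses, or taken equal to the corresponding tuple when it already lies in $\boldsymbol\rho$) and forming $\tau:=\boldsymbol f(v_1,v_2,v_3,\ldots,v_3)\in\boldsymbol\rho$, one gets a tuple with coordinates $\boldsymbol f^{(1)}(a_1^{m-1}b_1)$, $\boldsymbol f^{(2)}(a_2^{m-1}b_2)$, $c$, $a_4,\ldots,a_n$ for some $c$, and the absorption identities give $\Omega(\tau)=\Omega\bigl((b_1,b_2,c,\vec w)\bigr)$. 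The remaining and decisive step is to promote the first two coordinates of $\tau$ to exactly $b_1$ and $b_2$, i.e.\ to deduce $(b_1,b_2,c,\vec w)\in\boldsymbol\rho$ itself; here I would combine the linear description of $\boldsymbol\sigma\cap(A_1\times\dots\times A_n)$ from Theorem~\ref{CoreBlockIsLinear}(2), the fact (Lemma~\ref{theOrderOfElementInGroupDividesM}) that the order of every element of the corresponding group divides $m-1$, the precise evaluation of $\Omega\circ\boldsymbol f$ on such cubes (Lemma~\ref{MapsBlockToBlock}, Lemma~\ref{oneBlockForEverything}), and the idempotency of $\boldsymbol f$ in the sense of Lemma~\ref{findbetterWNU}. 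The trivial-block case and the residual cases with $\alpha\in\boldsymbol\rho$ are treated along the same lines but are lighter: there $\Omega$ already carries $\alpha,\beta_1,\beta_2,\gamma$ into $\boldsymbol\sigma$ (resp.\ several of the tuples are in $\boldsymbol\rho$), and one concludes using the cube structure of $\boldsymbol\sigma$ together with the full-pattern rectangularity of $\boldsymbol\rho$.

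The main obstacle is precisely this promotion/lifting step. The WNU $\boldsymbol f$ only guarantees that $\boldsymbol f^{(1)}(a_1,\ldots,a_1,\cdot)$ is an idempotent unary map, not the identity, so the tuple $\tau$ one naturally constructs carries $\boldsymbol f^{(1)}(a_1^{m-1}b_1)$ rather than $b_1$ in its first coordinate, and recovering the exact value $b_1$ (and likewise $b_2$) is where the group-theoretic input — the linearity of $\boldsymbol\sigma$ on the block and the divisibility of element orders by $m-1$ — together with the case split trivial vs.\ nontrivial block is really needed; everything else (the coordinate-$1$ and coordinate-$2$ witnesses, the transfer into one cube of $\widetilde{\boldsymbol\sigma}$, and the reduction when all three tuples are in $\boldsymbol\rho$) is routine. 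Once a witness has been found in every coordinate, $\gamma$ is an essential tuple for $\boldsymbol\rho$, i.e.\ $\gamma\in\widetilde{\boldsymbol\rho}$.
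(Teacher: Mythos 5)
Your overall architecture matches the paper's: reduce to showing the fourth tuple is essential, get the coordinate-$1$ and coordinate-$2$ witnesses from the two given ``mixed'' tuples, and for each coordinate $i\ge 3$ build a candidate witness of the form $\tau=\boldsymbol f(v_1,v_2,v_3,\ldots,v_3)\in\boldsymbol\rho$ whose first two coordinates are $\boldsymbol f^{(1)}(a_1^{m-1}b_1)$ and $\boldsymbol f^{(2)}(a_2^{m-1}b_2)$, then use the absorption identities of Corollary~\ref{aaaabCor} to identify these with $b_1,b_2$ after applying a vector-function into the core. The preliminary reduction (if $\alpha,\beta_1,\beta_2\in\boldsymbol\rho$ then $1\overset{\boldsymbol\rho}{\sim}2$ already forces the conclusion) is correct and fine.

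However, the step you yourself flag as ``the main obstacle'' is a genuine gap, and the tools you list will not close it. You fix a restricting vector-function $\Omega$ in advance and arrive at $\Omega(\tau)=\Omega\bigl((b_1,b_2,c,\vec w)\bigr)$ with $\tau\in\boldsymbol\rho$; from this you cannot conclude $(b_1,b_2,c,\vec w)\in\boldsymbol\rho$, because $\Omega$ is not injective and nothing in Theorem~\ref{CoreBlockIsLinear}, Lemma~\ref{theOrderOfElementInGroupDividesM} or Lemma~\ref{MapsBlockToBlock} forces a preimage of a $\boldsymbol\sigma$-tuple to lie in $\boldsymbol\rho$. Moreover, with a fixed $\Omega$ you have no guarantee that the images land in a \emph{nontrivial} block, which is a hypothesis of Corollary~\ref{aaaabCor}; in the trivial-block case that corollary gives you nothing, so that case is not ``lighter'' on your route — it is where the argument dies. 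The paper avoids lifting entirely by arguing by contradiction with a vector-function \emph{adapted to the candidate tuple}: assuming $\zeta=(b_1,b_2,\boldsymbol f^{(3)}(\ldots),\vec w)\notin\boldsymbol\rho$, the key-relation property supplies $\Psi$ with $\Psi(\boldsymbol\rho)\subseteq\boldsymbol\sigma$ sending $\zeta$ to a key tuple of $\boldsymbol\sigma$ (hence into a nontrivial block and \emph{outside} $\boldsymbol\sigma$); the absorption identities then give $\Psi(\zeta)=\Psi(\tau)\in\boldsymbol\sigma$, a contradiction. That choice of $\Psi$ depending on $\zeta$ is the missing idea: it converts the impossible ``promotion'' into a one-line contradiction, and it also handles your residual sub-case $\alpha,\beta_1\in\boldsymbol\rho$ (there $\Psi(\beta_1)$ and $\Psi(\boldsymbol f(\beta_1,\alpha,\ldots,\alpha))$ are two tuples of $\boldsymbol\sigma$ in one nontrivial, strongly rich block differing in at most one coordinate, hence equal). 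You should restructure the decisive step around this contradiction rather than around a fixed core projection.
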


\begin{proof}

It follows from the definition
that there exist $b_{3}, c_{3},d_{3}\in A$ such that
$
\left(\begin{smallmatrix}
a_{1} \\a_{2} \\b_{3}\\ a_{4}\\\dots \\a_{n}
\end{smallmatrix}\right),
\left(\begin{smallmatrix}
b_{1} \\a_{2} \\c_{3}\\ a_{4}\\\dots \\a_{n}
\end{smallmatrix}\right),
\left(\begin{smallmatrix}
a_{1} \\b_{2} \\d_{3}\\ a_{4}\\\dots \\a_{n}
\end{smallmatrix}\right)
\in\boldsymbol\rho$.
Denote
$\alpha = \left(\begin{smallmatrix}
a_{1} \\a_{2} \\a_{3}\\ \dots \\a_{n}
\end{smallmatrix}\right)$,
$\beta =\left(\begin{smallmatrix}
b_{1} \\a_{2} \\a_{3}\\ \dots \\a_{n}
\end{smallmatrix}\right)$,
$\gamma =
\left(\begin{smallmatrix}
a_{1} \\b_{2} \\a_{3}\\ \dots \\a_{n}
\end{smallmatrix}\right)$,
$\delta =
\left(\begin{smallmatrix}
b_{1} \\b_{2} \\a_{3}\\ \dots \\a_{n}
\end{smallmatrix}\right)$,
$
\alpha' = \left(\begin{smallmatrix}
a_{1} \\a_{2} \\b_{3}\\ a_{4}\\\dots \\a_{n}
\end{smallmatrix}\right),
\beta' = \left(\begin{smallmatrix}
b_{1} \\a_{2} \\c_{3}\\ a_{4}\\\dots \\a_{n}
\end{smallmatrix}\right),
\gamma' = \left(\begin{smallmatrix}
a_{1} \\b_{2} \\d_{3}\\ a_{4}\\\dots \\a_{n}
\end{smallmatrix}\right)$.

If $\delta\in \boldsymbol\rho$, then we are done.
Assume that $\delta\notin \boldsymbol\rho$, let us prove that
$\delta$ is essential for $\boldsymbol\rho$.

Let
$\zeta =
(b_{1},b_{2},\boldsymbol f^{(3)}(c_{3},b_{3},\ldots,b_{3},d_{3}),a_{4},\ldots,a_{n})$.
Assume that $\zeta\notin \boldsymbol\rho$.
Since $\boldsymbol\rho$ is a key relation, we can map $\zeta$ to a key tuple of $\boldsymbol\sigma$
,which by Lemma~\ref{CoreProperties} is a key tuple of $\boldsymbol\rho$, and then we can use a restricting vector-function for
the core $\boldsymbol\sigma$.
Thus, we get a vector-function $\Psi$ that maps $\zeta$ to a key tuple of $\boldsymbol\sigma$
such that $\Psi(\boldsymbol\rho)\subseteq \boldsymbol\sigma$.

Since $\beta$ and $\gamma$ are essential,
$(a_1',b_2,a_3,\ldots,a_{n}),(b_1,a_2',a_3,\ldots,a_{n})\in \boldsymbol\rho$ for some $a_{1}',a_{2}'\in A$.
Put $\beta'' = (a_1',b_2,a_3,\ldots,a_{n})$ and
$\gamma'' = (b_1,a_2',a_3,\ldots,a_{n})$.
Since $\Psi(\zeta)$ is key tuple, there exists a tuple $\zeta'\in\boldsymbol\sigma$ that differs from
$\Psi(\zeta)$ just in the third component.
Obviously, $\Psi(\delta)$ can differ from $\zeta'$ only in the third component,
can differ from $\Psi(\beta'')$ only in the first component,
can differ from $\Psi(\gamma'')$ only in the second component.
Thus $\Psi(\delta)$ is either a weak essential tuple for $\boldsymbol\sigma$, or $\Psi(\delta)\in\boldsymbol\sigma$.
If $\Psi(\delta)\notin\sigma$, we apply Lemma~\ref{CoreConnectedMain} to tuples $\zeta'-\Psi(\delta)-\Psi(\beta'')$
and show that $\Psi(\beta'')$ belongs to a key block.
Thus, $\Psi(\alpha),\Psi(\beta),\Psi(\gamma)$ belongs to a block of $\boldsymbol\sigma$ containing a key tuple.
By Theorem~\ref{CoreBlockIsCubic} every block of $\boldsymbol\sigma$ containing a key tuple
can be represented as $A_1\times\dots\times A_n$.
Hence,
$\Psi(\alpha),\Psi(\beta),\Psi(\gamma),\Psi(\delta), \Psi(\zeta)$ belong
to one nontrivial block of $\boldsymbol\sigma$ containing a key tuple.

Let us prove that
$\Psi^{(1)}(b_{1}) = \Psi^{(1)}(\boldsymbol f^{(1)}(b_{1},a_{1},\ldots,a_{1}))$.
If $\alpha\notin \boldsymbol\rho$ or $\beta\notin \boldsymbol\rho$
then it follows from Corollary~\ref{aaaabCor}.
Assume that $\alpha,\beta\in \boldsymbol\rho$.
Then
$\Psi(\beta)$ and $\Psi(\boldsymbol f(\beta,\alpha,\ldots,\alpha))$
belong to the nontrivial block of $\boldsymbol\sigma$ we mentioned before.
Hence, they belong to a nontrivial block containing a key tuple but differ at most in one coordinate.
By Theorem~\ref{CoreBlockIsLinear}, they are equal, and
$\Psi^{(1)}(b_{1}) = \Psi^{(1)}(\boldsymbol f^{(1)}(b_{1},a_{1},\ldots,a_{1}))$.

In the same way we prove that
$\Psi^{(2)}(b_{2}) = \Psi^{(2)}(\boldsymbol f^{(2)}(b_{2},a_{2},\ldots,a_{2}))$.

Therefore,
$\Psi(\zeta) = \Psi(\boldsymbol f(\beta',\alpha',\ldots,\alpha',\gamma'))$,
which contradicts the fact that
$\Psi(\zeta)\notin \boldsymbol\rho$.

Thus, we proved that $\zeta\in \boldsymbol\rho$, which means that
the third coordinate of $\delta$ can be changed to get a tuple from the relation $\boldsymbol\rho$.
In the same way we prove that we can change any other coordinate of $\delta$,
which means that $\delta$ is an essential tuple. This completes the proof.
\end{proof}

The following theorem follows directly from the previous lemma.

\begin{thm}\label{blockIsCubic}

Suppose $\boldsymbol\rho$ is a key relation of arity greater than 2 with full pattern preserved by a WNU.
Then every connected component of $\boldsymbol{\widetilde\rho}$ can be represented as
$A_{1}\times\dots\times A_{n}$ for some $A_1,\ldots,A_n\subseteq A$.

\end{thm}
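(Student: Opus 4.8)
The plan is to obtain the theorem from the preceding lemma, which supplies a ``rectangle completion'' property, together with the connectedness of a component. Fix a connected component $\boldsymbol\delta$ of $\widetilde{\boldsymbol\rho}$ and put $A_i=\proj_i\boldsymbol\delta$ for $i=1,\dots,n$. The inclusion $\boldsymbol\delta\subseteq A_1\times\dots\times A_n$ is immediate, so the content is the reverse inclusion.

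The key step is a \emph{coordinate-flip closure}: for every $\alpha\in\boldsymbol\delta$, every $i$, and every $c\in A_i$, the tuple obtained from $\alpha$ by replacing its $i$-th entry with $c$ again lies in $\boldsymbol\delta$. To prove this I would, for each $\alpha\in\boldsymbol\delta$ and each coordinate $i$, consider the set $T_i(\alpha)$ of all $c\in A$ such that the tuple obtained from $\alpha$ by replacing entry $i$ with $c$ belongs to $\widetilde{\boldsymbol\rho}$; note that such a tuple is adjacent to $\alpha$, hence in fact lies in $\boldsymbol\delta$. First, $T_i(\alpha)$ depends only on the entries of $\alpha$ other than the $i$-th, so it is unchanged when $\alpha$ moves along an edge that flips coordinate $i$. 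Second, if $\alpha,\alpha'\in\boldsymbol\delta$ are adjacent and differ in a coordinate $j\ne i$, then for any $c\in T_i(\alpha)$ the three tuples $\alpha$, $\alpha'$, and the one obtained from $\alpha$ by setting entry $i$ to $c$ all lie in $\widetilde{\boldsymbol\rho}$ in the configuration of the preceding lemma (with the roles of coordinates $1,2$ played by $j,i$); its conclusion is precisely that the tuple obtained from $\alpha'$ by setting entry $i$ to $c$ lies in $\widetilde{\boldsymbol\rho}$, and being adjacent to $\alpha'$ it lies in $\boldsymbol\delta$, so $c\in T_i(\alpha')$. Hence $T_i(\alpha)=T_i(\alpha')$ by symmetry. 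Thus $T_i$ is constant along every edge of $\boldsymbol\delta$, and since $\boldsymbol\delta$ is connected it is constant on $\boldsymbol\delta$. Since $\alpha(i)\in T_i(\alpha)$ we get $A_i\subseteq T_i$, while $T_i\subseteq A_i$ is clear; therefore $T_i(\alpha)=A_i$ for all $\alpha\in\boldsymbol\delta$, which is the asserted closure.

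Granting the closure, the reverse inclusion is routine: given $\alpha\in\boldsymbol\delta$ and an arbitrary target $(c_1,\dots,c_n)\in A_1\times\dots\times A_n$, replace entry $1$ by $c_1$, then in the result replace entry $2$ by $c_2$, and so on; each step stays inside $\boldsymbol\delta$ by the closure property, and after $n$ steps one reaches $(c_1,\dots,c_n)$. Hence $\boldsymbol\delta=A_1\times\dots\times A_n$.

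The only delicate point is the bookkeeping in the second paragraph: one must make sure that each invocation of the preceding lemma yields a tuple in the \emph{same} component $\boldsymbol\delta$ (which holds because it is adjacent to a tuple already known to lie in $\boldsymbol\delta$) and that the invariance of $T_i$ really does propagate across an arbitrary path in $\boldsymbol\delta$. Beyond this there is no genuine obstacle, since all the combinatorial substance is already contained in the rectangle-completion lemma.
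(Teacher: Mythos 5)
Your proof is correct and takes essentially the same route as the paper, which simply asserts that the theorem ``follows directly from the previous lemma'' (the rectangle-completion lemma); you have just made explicit the connectedness bookkeeping via the sets $T_i(\alpha)$ that the paper leaves to the reader.
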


Now we are ready to prove the main theorem for key relations with full pattern.
\begin{THMmainTheoremForFullPattern}

Suppose $\boldsymbol\rho$ is a key relation of arity greater than 2 preserved by a WNU $\boldsymbol f$, the pattern of $\boldsymbol\rho$ is
a full equivalence relation. Then
\begin{enumerate}
\item Every block of $\boldsymbol\rho$ equals $B_{1}\times \dots \times B_{n}$ for some
$B_{1},\ldots,B_{n}\subseteq A$.

\item For every nontrivial block $\boldsymbol B = B_{1}\times \dots \times B_{n}$ the intersection
$\boldsymbol\rho\cap\boldsymbol B$
can be defined as follows.
There exists an abelian group $(G;+)$, whose order is a power of a prime number,
and surjective mappings
$\phi_i: B_{i}\to G$ for $i=1,2,\ldots,n$ such that
\[\boldsymbol\rho\cap \boldsymbol B = \{(x_1,\ldots,x_n)\mid \phi_1(x_1)+\phi_2(x_2) + \ldots +\phi_n(x_n) = 0\}.\]

\end{enumerate}

\end{THMmainTheoremForFullPattern}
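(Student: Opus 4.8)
The plan is to deduce both assertions from the corresponding statements already proved for the \emph{core} $\boldsymbol\sigma$ of $\boldsymbol\rho$. Assertion (1) is immediate from Theorem~\ref{blockIsCubic}: every connected component of $\widetilde{\boldsymbol\rho}$ has the form $B_1\times\dots\times B_n$. I record for later use that a nontrivial block necessarily satisfies $|B_i|\ge 2$ for every $i$: a tuple $x\in\boldsymbol B\setminus\boldsymbol\rho$ lies in $\widetilde{\boldsymbol\rho}$, hence is essential for $\boldsymbol\rho$, and any tuple obtained from $x$ by changing its $i$-th coordinate to land in $\boldsymbol\rho$ differs from $x$ in exactly that coordinate, so it is adjacent to $x$ in $\widetilde{\boldsymbol\rho}$ and therefore lies in $\boldsymbol B$, forcing a second element into $B_i$.

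For (2), fix a nontrivial block $\boldsymbol B=B_1\times\dots\times B_n$ and, using Lemma~\ref{findbetterWNU}, assume the WNU $\boldsymbol f$ satisfies the diagonal-idempotence property. Choose an essential tuple $\alpha\in\boldsymbol B\setminus\boldsymbol\rho$, map it by a vector-function $\Lambda$ preserving $\boldsymbol\rho$ to a key tuple $\gamma_0$ of $\boldsymbol\rho$ that is fixed by a restricting vector-function $\Psi$ of $\boldsymbol\sigma$, and set $\Omega=\Psi\circ\Lambda$. Then $\Omega(\boldsymbol\rho)\subseteq\boldsymbol\sigma$, $\Omega$ preserves $\widetilde{\boldsymbol\rho}$ and $\widetilde{\boldsymbol\sigma}$ (Lemma~\ref{DeriveRhoTilde}), and $\Omega$ carries $\boldsymbol B$ into the connected component $\boldsymbol B'$ of $\widetilde{\boldsymbol\sigma}$ through $\gamma_0$, which by Theorem~\ref{CoreBlockIsCubic} is a box $A_1\times\dots\times A_n$ and a key block. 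By Theorem~\ref{CoreBlockIsLinear} there are an abelian group $(G;+)$ of prime-power order and bijections $\phi_i'\colon A_i\to G$ with $\boldsymbol\sigma\cap\boldsymbol B'=\{(x_1,\dots,x_n):\phi_1'(x_1)+\dots+\phi_n'(x_n)=0\}$; I then set $\phi_i=\phi_i'\circ\Omega^{(i)}\colon B_i\to G$. The prime-power claim for $G$ is inherited, the inclusion $\boldsymbol\rho\cap\boldsymbol B\subseteq\{(x_1,\dots,x_n):\sum_i\phi_i(x_i)=0\}$ holds because $\Omega$ sends $\boldsymbol\rho\cap\boldsymbol B$ into $\boldsymbol\sigma\cap\boldsymbol B'$, and surjectivity of each $\phi_i$ reduces to checking $\Omega^{(i)}(B_i)=A_i$, i.e. that $\Omega$ maps $\boldsymbol B$ \emph{onto} $\boldsymbol B'$, which I would extract from the properties of the restricting vector-function together with the essential-completeness of box-blocks used below.

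The substance of the proof is the reverse inclusion, and I would reduce it to the claim $(\star)$: \emph{if $\tau\in\boldsymbol\rho\cap\boldsymbol B$, if $\tau'\in\boldsymbol B$ agrees with $\tau$ outside a single coordinate $j$, and if $\Omega^{(j)}(\tau'(j))=\Omega^{(j)}(\tau(j))$, then $\tau'\in\boldsymbol\rho$.} Granting $(\star)$, take $x\in\boldsymbol B$ with $\sum_i\phi_i(x_i)=0$ and suppose $x\notin\boldsymbol\rho$; then $x$ is essential for $\boldsymbol\rho$, so some $\tau\in\boldsymbol\rho\cap\boldsymbol B$ agrees with $x$ outside one coordinate $j$, and subtracting the linear equations satisfied by $\tau$ and by $x$ forces $\Omega^{(j)}(x(j))=\Omega^{(j)}(\tau(j))$; then $(\star)$ yields $x\in\boldsymbol\rho$, a contradiction, giving $\boldsymbol\rho\cap\boldsymbol B=\{(x_1,\dots,x_n):\sum_i\phi_i(x_i)=0\}$. (A several-coordinate version of $(\star)$, needed along the way, follows from the one-coordinate case by induction on the number of differing coordinates.)

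To prove $(\star)$ I would argue in the spirit of the unlabelled lemma preceding Theorem~\ref{blockIsCubic}. Assume $\tau'\notin\boldsymbol\rho$; then $\tau'$ is essential, so for each $i\neq j$ there is $\tau_{(i)}\in\boldsymbol\rho\cap\boldsymbol B$ obtained from $\tau'$ by changing coordinate $i$, and comparing the linear equations for $\tau$ and $\tau_{(i)}$ (using the hypothesis $\Omega^{(j)}(\tau'(j))=\Omega^{(j)}(\tau(j))$) gives $\Omega^{(i)}(\tau_{(i)}(i))=\Omega^{(i)}(\tau(i))$. One then forms suitable $\boldsymbol f$-combinations of $\tau$ and the tuples $\tau_{(i)}$ — which stay in $\boldsymbol\rho\cap\boldsymbol B$ since $\boldsymbol f$ preserves both $\boldsymbol\rho$ and the component $\boldsymbol B$ (Lemma~\ref{preserveConnectedComponent}) — and applies Lemma~\ref{MapsBlockToBlock} and Corollary~\ref{aaaabCor} together with Lemma~\ref{theOrderOfElementInGroupDividesM} (so that all superfluous copies of elements vanish modulo $m-1$) to locate tuples of $\boldsymbol\rho\cap\boldsymbol B$ whose $\Omega$-images equal $\Omega(\tau')$ and which agree with $\tau'$ in more and more coordinates, finally producing $\tau'$ itself inside $\boldsymbol\rho$. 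I expect the genuine difficulty to lie exactly in organizing this last descent without circularity (one is, in effect, trying to bootstrap $(\star)$ from instances of itself): because $\Omega$ collapses several coordinates one must keep precise track of how $\boldsymbol f$ behaves across those collapses and of which combinations of genuine $\boldsymbol\rho$-tuples realise the required $\Omega$-images, and it is here that the hypotheses on the order of $G$ and on the shape of $\boldsymbol f$ are consumed. Once $(\star)$ is established, both displayed statements of the theorem follow as described above.
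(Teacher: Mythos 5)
Part (1) is fine and coincides with the paper (Theorem~\ref{blockIsCubic}). For part (2), however, your argument has a genuine gap, and it sits exactly where you suspect it does. Your plan is to fix one vector-function $\Omega$ with $\Omega(\boldsymbol\rho)\subseteq\boldsymbol\sigma$, pull the core-block's bijections $\phi_i'\colon A_i\to G$ back along $\Omega^{(i)}$, and prove the reverse inclusion from the claim $(\star)$ that $\Omega^{(j)}$-equality of two entries implies their interchangeability in $\boldsymbol\rho\cap\boldsymbol B$. But $(\star)$ is precisely the assertion that the fibers of $\Omega^{(j)}$ on $B_j$ coincide with the interchangeability classes, i.e.\ that the group governing the block $\boldsymbol B$ of $\boldsymbol\rho$ is \emph{equal} to the group $G$ of the core's block rather than some proper extension of it. Nothing you cite establishes this: Lemma~\ref{MapsBlockToBlock} and Corollary~\ref{aaaabCor} compute $\Omega$-images of $\boldsymbol f$-combinations, but they can never convert an equality of $\Omega$-images into membership of a new tuple in $\boldsymbol\rho$ — that direction is exactly what is at stake, which is why your "descent" feels circular. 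Note that the theorem itself only claims \emph{surjective} maps $\phi_i\colon B_i\to G$ and the paper's final step only proves a divisibility constraint between element orders of the two groups (via Lemma~\ref{NoMappingBetweenDifferentGroups}), not that they coincide; so the identification your construction presupposes is not available for free. The surjectivity $\Omega^{(i)}(B_i)=A_i$ is likewise asserted but not proved.

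The paper sidesteps this entirely by building the group \emph{intrinsically} from the block $\boldsymbol B$: it defines $\sigma_i(x,y)$ on $B_i$ as "$x$ and $y$ co-occur with a common completion in $\boldsymbol\rho\cap\boldsymbol B$", proves this is an equivalence relation by mapping a putative counterexample to the core (choosing a \emph{fresh} vector-function adapted to the offending tuple, which is why a single fixed $\Omega$ does not suffice), observes that the quotient relation $\boldsymbol\rho'$ on $\prod_i B_i/\sigma_i$ is strongly rich and preserved by the induced WNU, and applies Theorem~\ref{StronglyRichRelationTHM} to obtain the group $G'$; only afterwards is the core invoked, through Lemma~\ref{NoMappingBetweenDifferentGroups}, to pin down that $G'$ is a $p$-group. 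If you want to repair your argument, the cleanest route is to replace $(\star)$ by the paper's congruence step: prove that co-occurrence is an equivalence relation (this is where the map to the core is genuinely used, once per counterexample), and then work with the quotient rather than with the image under a fixed $\Omega$.
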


\begin{proof}
The first statement follows from Theorem~\ref{blockIsCubic}.
Let us prove the second statement.
Suppose $\boldsymbol B = B_{1}\times \dots \times B_{n}$ is a nontrivial block of $\boldsymbol\rho$.
By Lemma~\ref{DeriveRhoTilde} and Lemma~\ref{preserveConnectedComponent},
the connected component $\boldsymbol B$ is preserved by $\boldsymbol f$.

Let $\boldsymbol\sigma$ be a core of $\boldsymbol\rho$.
Suppose $\boldsymbol C = C_1\times \dots\times C_n$ is a nontrivial block of $\boldsymbol\sigma$ containing a key tuple for $\boldsymbol\sigma$.
By Theorem~\ref{CoreBlockIsLinear}
\[\boldsymbol \sigma\cap\boldsymbol C = \{(x_1,\ldots,x_n)\mid \phi_1(x_1)+\phi_2(x_2) + \ldots +\phi_n(x_n) = 0\}\]
for an abelian group $(G;+)$ and bijective mappings
$\phi_i: C_{i}\to G$.
Put
\begin{multline*}
\sigma_i(x,y) = \exists z_1 \dots\exists z_{i-1}\exists z_{i+1}\dots\exists z_{n}\;
\boldsymbol B(z_{1},\ldots,z_{i-1},x,z_{i+1},\ldots,z_{n})\wedge \\
\boldsymbol\rho(z_{1},\ldots,z_{i-1},x,z_{i+1},\ldots,z_{n})
\wedge \boldsymbol\rho(z_{1},\ldots,z_{i-1},y,z_{i+1},\ldots,z_{n}).
\end{multline*}
Let us show that
$\sigma_i(x,y)$ is an equivalence relation on $B_{i}$ for every $i\in\{1,2,\ldots,n\}$.
Without loss of generality we prove for $i=1$.
It is sufficient to prove that for every $\alpha,\beta \in A^{n-1}, a,b\in A$
such that $a \alpha,b\alpha\in\boldsymbol\rho\cap\boldsymbol B$
we have $a\beta\in \boldsymbol\rho\cap\boldsymbol B\Leftrightarrow b\beta\in \boldsymbol\rho\cap\boldsymbol B$.
Assume the converse. Let $a\beta\notin \boldsymbol\rho\cap\boldsymbol B$ and $b\beta\in \boldsymbol\rho\cap\boldsymbol B$.
Since $\boldsymbol\rho$ is a key relation and, by Lemma~\ref{CoreProperties}, any key tuple
of $\boldsymbol\sigma$ is also a key tuple of $\boldsymbol\rho$,
we can find a vector-function $\Psi$ that maps $a\beta$ to a key tuple of $\boldsymbol C$
such that $\Psi(\boldsymbol\rho)\subseteq\boldsymbol\sigma$.
We can easily check that $\Psi$ maps $a\beta$ and every tuple from $\boldsymbol B$
to the connected component $\boldsymbol C$.
Since $a\alpha,b\alpha\in\boldsymbol\rho$ we obtain
$\Psi(a\alpha),\Psi(b\alpha)\in\boldsymbol\sigma$ and
$\Psi^{(1)}(a) = \Psi^{(1)}(b)$. This contradicts the fact that
$\Psi(a\beta)\notin \boldsymbol \sigma$ and $\Psi(b\beta)\in \boldsymbol \sigma$.

Thus, $\sigma_i$ is an equivalence relation preserved by the WNU $\boldsymbol f^{(i)}$.
Then we have an algebra $\mathbb B_{i} = (B_{i};\boldsymbol f^{(i)})$
and $\sigma_i$ is a congruence for the algebra.
Then we consider the factor-algebra $\mathbb B_{i}/\sigma_{i}$ which we denote
as $(D_{i};g_i)$ and a natural homomorphism $\varphi_i$
that maps $B_i$ to $D_{i}$.
Put $\boldsymbol g = (g_1,\ldots,g_n)$.
Let us define a relation $\boldsymbol\rho'\subseteq D_1\times\dots \times D_n$.
Put $(\varphi_{1}(x_{1}),\ldots,\varphi_{n}(x_n))\in\boldsymbol \rho'\Leftrightarrow (x_{1},\ldots,x_{n})\in \boldsymbol\rho.$
As we proved before,
for every $\alpha,\beta \in A^{n-1}, a,b\in A$
such that $a \alpha,b\alpha\in\boldsymbol\rho\cap\boldsymbol B$
we have $a\beta\in \boldsymbol\rho\cap\boldsymbol B\Leftrightarrow b\beta\in \boldsymbol\rho\cap\boldsymbol B$.
Hence, the relation $\boldsymbol\rho'$ is well-defined.
We can check that $\boldsymbol\rho'$ is a strongly rich relation that is preserved by the WNU $\boldsymbol g$.
Then by Theorem~\ref{StronglyRichRelationTHM}
we have an abelian group $(G';+)$ and bijective mappings $\psi_i:D_{i}\to G'$ for $i=1,2,\ldots,n$
such that
\[\boldsymbol\rho' = \{(x_1,\ldots,x_n)\mid \psi_1(x_1)+\psi_2(x_2) + \ldots +\psi_n(x_n) = 0\}.\]
Then the original relation can be defined as follows
\[\boldsymbol\rho\cap \boldsymbol B = \{(x_1,\ldots,x_n)\mid \psi_1(\varphi_1(x_1))+\psi_2(\varphi_2(x_2)) + \ldots +\psi_n(\varphi_n(x_n)) = 0\}.\]

It remains to show that
the order of the group $G'$ is a power of a prime number.
We know from Theorem~\ref{CoreBlockIsLinear} that the order of the group $G$ is a power of a prime number $p$.
Assume that the order of the group $G'$ is not a power of a prime number.
Choose an element $c_1\in G'$ whose order is a prime number that differs from $p$,
and an element $c_2\in G$ of order $p$.
Denote one of the tuples corresponding to $(c_1,0,\ldots,0)$ in $\boldsymbol B$ by $\alpha_1$,
and the tuple corresponding to $(c_2,0,\ldots,0)$ in $\boldsymbol C$ by $\alpha_2$.
Since $\alpha_2$ is a key tuple for $\boldsymbol\rho$, there exists a
unary vector-function  $\Omega$ that preserves $\boldsymbol\rho$ and maps $\alpha_1$ to $\alpha_2$.
Since $\boldsymbol\sigma$ is a core, then there exists a restricting vector-function $\Omega'$.
Obviously, $\Omega'\circ\Omega$ maps
$\boldsymbol\rho$ to $\boldsymbol\sigma$
and maps the block $\boldsymbol B$ to the block $\boldsymbol C$.
Then, by Lemma~\ref{NoMappingBetweenDifferentGroups},
the order of $c_2$ in $G$ divides the order of $c_1$ in $G'$.
This contradiction completes the proof.
\end{proof}

\bibliographystyle{au_main}
\bibliography{refs}

\end{document}